\documentclass[12pt,reqno]{amsart}
\usepackage{amssymb, amsmath, amsfonts, amsthm, graphics}
\usepackage[hmargin=1 in, vmargin = 1 in]{geometry}
\usepackage{mathrsfs}
\usepackage{newpxtext}
\usepackage{newpxmath}
\usepackage{tikz-cd} 
\usepackage{enumitem}
\setlist{itemsep=0.5em}
\usetikzlibrary{matrix, calc, arrows} 
\usepackage[hyphens]{url}
\usepackage{hyperref}
\usepackage{cleveref}
\usepackage[all]{xy}
\usepackage{marginnote}




\newcommand{\isom}{\cong} 


\theoremstyle{definition}

\numberwithin{equation}{section}

\DeclareMathOperator{\Hom}{Hom}

\DeclareMathOperator{\Spec}{\text{Spec}}

\DeclareMathOperator{\Proj}{\text{Proj}}


\DeclareMathOperator\codim{codim}

\newcommand{\vol}{\mathrm{vol}}
\newcommand{\FA}{\alpha_{F}}
\newcommand{\fpt}{\text{fpt}_\fm}
\newcommand{\Ied}{I_e ^{\Delta}}


\newcommand{\CC}{\mathbb{C}}

\newcommand{\FF}{\mathbb{F}}

\newcommand{\II}{\mathbb{I}}

\newcommand{\NN}{\mathbb{N}}

\newcommand{\PP}{\mathbb{P}}
\newcommand{\QQ}{\mathbb{Q}}
\newcommand{\RR}{\mathbb{R}}

\newcommand{\ZZ}{\mathbb{Z}}


\newcommand{\cF}{\mathcal{F}}

\newcommand{\cL}{\mathcal{L}}

\newcommand{\cO}{\mathcal{O}}

\newcommand{\cS}{\mathcal{S}}

\newcommand{\fm}{\mathfrak{m}}

\newcommand{\fp}{\mathfrak{p}}

\newcommand{\Fe}{F^{e}_{*}}
\newcommand{\om}{\omega}
\newcommand{\s}{\mathscr{s}}

\theoremstyle{plain}
\newtheorem{thm}{Theorem}[section]
\newtheorem{Pn}[thm]{Proposition}
\newtheorem{Cor}[thm]{Corollary}
\newtheorem{lem}[thm]{Lemma}

\theoremstyle{definition}
\newtheorem{dfn}[thm]{Definition}

\newtheorem{eg}[thm]{Example}
\newtheorem{rem}[thm]{Remark}

\newtheorem{notation}[thm]{Notation}

\newtheorem{question}[thm]{Question}

	\title{A Frobenius version of Tian's Alpha-Invariant}
	\author{Suchitra Pande}
	\address[S.~Pande]{Department of Mathematics\\University of Utah\\Salt Lake City, 
		UT 84112\\USA}
\email{\href{mailto:suchitra.pande@utah.edu}{suchitra.pande@utah.edu}}
 \thanks{This work was partially supported by the NSF grants \#1952399, \#1801697 and \#2101075}

\begin{document}

 \begin{abstract}
       For a pair $(X,L)$ consisting of a projective variety $X$ over a perfect field of characteristic $p>0$ and an ample line bundle $L$ on $X$, we introduce and study a positive characteristic analog of Tian's $\alpha$-invariant, 
       which we call the \emph{$\FA$-invariant}. We utilize the theory of $F$-singularities in positive characteristics, and our approach is based on replacing klt singularities with the closely related notion of global $F$-regularity. We show that the $\FA$-invariant of a pair $(X,L)$ can be understood in terms of the global Frobenius splittings of the linear systems $|mL|_{m \geq 0}$. We establish inequalities relating the $\FA$-invariant with the $F$-signature, and use that to prove the positivity of the $\FA$-invariant for all globally $F$-regular projective varieties (with respect to any ample $L$ on $X$). When $X$ is a Fano variety and $L = -K_X$, we prove that the $\FA$-invariant of $X$ is always bounded above by $1/2$ and establish tighter comparisons with the $F$-signature. We also show that for toric Fano varieties, the $\FA$-invariant matches with the usual (complex) $\alpha$-invariant. Finally, we compute some examples of the $\FA$-invariant and derive consequences. 
 \end{abstract}

	\maketitle

 \section{Introduction}
 
The $\alpha$-\emph{invariant} of a complex Fano manifold $X$ was introduced by Tian in \cite{TianAlphaDefinition} to provide a sufficient criterion for the existence of a K\"ahler-Einstein metric on $X$. Though initially defined analytically, Demailly later reinterpreted the $\alpha$-invariant in terms of the \emph{log canonical threshold} \cite{CheltsovShramovDemaillyalphainvariant}, an algebraic invariant of the singularities of divisors on $X$. Since then, understanding the $\alpha$-invariant and the study of K\"ahler-Einstein metrics on Fano varieties, also called $K$-stability theory, has led to many fundamental advances in our understanding of complex Fano varieties; see \cite{OdakaSanoAlphaInvariant}, \cite{BirkarBABConjecture}, \cite{XuKstabilitysurvey}. The minimal model program (MMP), and the singularities that arise therein have played a key role in these advances.

The purpose of this paper is to study a positive characteristic analog of the $\alpha$-invariant. In our approach, we replace the singularities of the MMP with singularities defined using the Frobenius map (``\emph{$F$-singularities}"). Though $F$-singularities have fundamentally different definitions than the singularities of the MMP, a dictionary involving many precise relationships between these classes has been established; see \cite{SmithFRatImpliesRat}, \cite{HaraRatImpliesFRat}, \cite{MehtaSrinivasRatImpliesFRat}, \cite{HaraWatanabeFRegFPure}, \cite{HaraYoshidaGeneralizationOfTightClosure}, \cite{TakagiInterpretationOfMultiplierIdeals} and \cite{MaSchwedeSingularitiesMixedCharBCM}. Under this dictionary, log canonical (resp. Kawamata log-terminal (klt)) singularities correspond to $F$-split (resp. strongly $F$-regular) singularities (\Cref{defnsharpFsplitting}). Since the $\alpha$-invariant of a complex Fano variety $X$ involves the log canonicity of anti-canonical $\QQ$-divisors of $X$, this inspires our definition of the Frobenius version:
\begin{dfn} \label{introdfnalpha}
    Let $X$ be a globally $F$-regular Fano variety over a perfect field of positive characteristic.  Then, we define the $\FA$-invariant of $X$ as
    \[ \FA(X) : = \sup \{ t \geq 0 \, | \, (X, t\Delta) \, \text{is globally $F$-split } \forall \, \text{effective $\QQ$-divisor } \Delta \sim_{\QQ} -K_X   \}. \] 
\end{dfn}


\begin{rem}
    Since we intend for the $\FA$-invariant to capture global properties of anti-canonical $\QQ$-divisors on $X$, we use the notion of global $F$-splitting (\Cref{defnsharpFsplitting}), and to do so we require $X$ to be globally $F$-regular (\Cref{defn.GlobFreg}). Global $F$-splitting and $F$-regularity of a pair $(X, D)$ can be thought of as $F$-splitting and $F$-regularity of the cone over $(X, D)$ (with respect to $-K_X$) respectively. We note that in the usual definition of the $\alpha$-invariant of a Fano variety, using the klt and log canonical conditions on the cone yields the minimum value between the usual $\alpha$-invariant of $X$ and $1$ (see \Cref{comparisontoCremark}). Thus, at least for Fano varieties with $\alpha(X) \leq 1$, the $\FA$-invariant is a ``Frobenius-analog" of Tian's $\alpha$-invariant.
\end{rem}

Our first theorem proves some surprising properties of the $\FA$-invariant in contrast to the complex version, and establishes connections to the $F$-signature of $X$, another important invariant and a Frobenius version of the \emph{local volume} of singularities:
 
\begin{thm} \label{intromainthm1}
    Let $X$ be a globally $F$-regular Fano variety over a perfect field of positive characteristic. Assume that $d = \dim(X)$ is positive. Then,
    \begin{enumerate}
        \item The $\FA$-invariant of $X$ is at most 1/2 (\Cref{alphathm}).
        \item Assume that $X$ is geometrically connected over the (perfect) base field. Then, we have $\FA(X) = 1/2$ if and only if the $F$-signature of $X$ (with respect to $-K_X$) equals $\frac{\vol(-K_X)}{2^d (d+1)!}$ (\Cref{alphavsscor}). 

        \item More generally (and still assuming $X$ is geometrically connected), the $F$-signature of $X$ is at most $\frac{\vol(-K_X)}{2^d (d+1)!}$ (\Cref{alphavsscor}).

        \item When $X$ is a toric Fano variety corresponding to a fan $\cF$, then $\FA(X)$ is the same as the complex $\alpha$-invariant of $X_\CC (\cF)$, the complex toric Fano variety corresponding to $\cF$ (\Cref{Toricalphathm}).
    \end{enumerate}
\end{thm}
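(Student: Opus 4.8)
The first three parts are handled through the anticanonical cone, the last through toric combinatorics. Set $S=\bigoplus_{m\ge 0}H^0(X,\cO_X(-mK_X))$ (pass to a $\QQ$-Cartier multiple of $-K_X$ and truncate if necessary); since $X$ is globally $F$-regular, $S$ is a normal, Cohen--Macaulay, strongly $F$-regular graded ring, and the identity $(\omega_S)_m=H^0(X,\cO_X(K_X-mK_X))=S_{m-1}$ shows $\omega_S\cong S(-1)$, i.e.\ $S$ is quasi-Gorenstein with $a$-invariant $-1$. Parts (1)--(3) will be read off from the graded splitting ideals $I_e\subseteq S$ --- those with $s(X,-K_X)=s(S_\fm)=\lim_e p^{-e(d+1)}\dim_k (S/I_e)$ --- using the dictionary (from the earlier part of the paper) that $(X,t\Delta)$ is globally $F$-split exactly when the induced pair on $\Spec S$ is $F$-split at the vertex, and that every effective $\Delta\sim_\QQ -K_X$ is $\QQ$-linearly equivalent to $\tfrac1m\operatorname{div}_X(g)$ for some $g\in S_m\setminus\{0\}$.

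The key point is a degree computation. Quasi-Gorensteinness makes $\Hom_S(F^e_*S,S)$ free of rank one over $F^e_*S$, generated by the Frobenius trace $\Phi_e$, and $a(S)=-1$ forces $\Phi_e(F^e_*(S_i))$ to lie in degree $(i-(p^e-1))/p^e$, hence to be a nonzero scalar only when $i=p^e-1$. So for homogeneous $r\in S_\rho$ one has $r\in I_e$ iff $\Phi_e\big(F^e_*(r\,S_{p^e-1-\rho})\big)=0$; that is, $(I_e)_\rho$ is the left kernel of the pairing $S_\rho\times S_{p^e-1-\rho}\to k$, $(r,s)\mapsto\Phi_e(F^e_*(rs))$. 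Consequently $\dim_k(S/I_e)_\rho\le\min(\dim_k S_\rho,\dim_k S_{p^e-1-\rho})$, the condition $(I_e)_\rho=0$ is decreasing in $\rho$, and I would identify $\FA(X)=\lim_e\rho_0(e)/(p^e-1)$ with $\rho_0(e)=\max\{\rho:(I_e)_\rho=0\}$ (a global Frobenius-threshold computation matching the linear-systems description of $\FA$). Part (1) is then immediate: since $d=\dim X>0$, $\dim_k S_m$ is eventually strictly increasing, so for every $\rho$ with $(p^e-1)/2<\rho<p^e-1$ and $e\gg 0$ we get $\dim_k S_\rho>\dim_k S_{p^e-1-\rho}$, forcing $(I_e)_\rho\ne 0$; hence $\rho_0(e)\le (p^e-1)/2$ and $\FA(X)\le\tfrac12$.

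For part (3), geometric connectedness gives $S_0=k$ and $\dim_k S_m\sim\tfrac{\vol(-K_X)}{d!}m^d$, and a Riemann-sum estimate gives
\[
\dim_k(S/I_e)=\sum_{\rho=0}^{p^e-1}\dim_k(S/I_e)_\rho\ \le\ \sum_{\rho=0}^{p^e-1}\min\!\big(\dim_k S_\rho,\dim_k S_{p^e-1-\rho}\big)\ \sim\ \frac{\vol(-K_X)}{2^{d}(d+1)!}\,p^{e(d+1)},
\]
so $s(X,-K_X)\le\tfrac{\vol(-K_X)}{2^d(d+1)!}$. Part (2) is the equality analysis. If $\FA(X)=\tfrac12$ then $\rho_0(e)\sim(p^e-1)/2$, so $(I_e)_\rho=0$ (hence $\dim_k(S/I_e)_\rho=\dim_k S_\rho$) for all $\rho$ outside an $o(p^e)$-window around $(p^e-1)/2$, the displayed inequalities become asymptotic equalities, and $s(X,-K_X)=\tfrac{\vol(-K_X)}{2^d(d+1)!}$. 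Conversely, if $\FA(X)=c<\tfrac12$, choose a homogeneous $g_0\in(I_e)_m$ with $m\approx c(p^e-1)$ (possible since $\rho_0(e)\approx c(p^e-1)$); then $g_0\,S_{\rho-m}\subseteq(I_e)_\rho$ for all $\rho\ge m$ (because $g_0r's'=g_0(r's')$ with $r's'\in S_{p^e-1-m}$), whence $\dim_k(S/I_e)_\rho\le\dim_k S_\rho-\dim_k S_{\rho-m}$, and feeding this bound into the sum yields $s(X,-K_X)\le\tfrac{\vol(-K_X)}{d!}\Theta_d(c)$ for a piecewise-polynomial $\Theta_d$ with $\Theta_d(c)<\tfrac{1}{2^d(d+1)}$ for $c<\tfrac12$; thus $s(X,-K_X)<\tfrac{\vol(-K_X)}{2^d(d+1)!}$. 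Hence $s(X,-K_X)=\tfrac{\vol(-K_X)}{2^d(d+1)!}$ iff $\FA(X)=\tfrac12$.

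For part (4), write $X=X(\cF)$, with torus $T$, primitive ray generators $v_\rho$, and $-K_X=\sum_\rho D_\rho$. By the standard torus-degeneration argument --- the global $F$-pure threshold is lower semicontinuous and can only drop under degeneration to the $T$-invariant divisor --- the infimum defining $\FA(X)$ is attained on $T$-invariant $\QQ$-divisors, which are exactly the $\Delta_u=\sum_\rho(1+\langle u,v_\rho\rangle)D_\rho$ for $u$ in the polytope $P_{-K_X}$ of $-K_X$. For such monomial-type boundaries the global $F$-pure threshold equals the log canonical threshold, so $\operatorname{gfpt}(X,\Delta_u)=\operatorname{lct}(X,\Delta_u)=\big(\max_\rho(1+\langle u,v_\rho\rangle)\big)^{-1}$, and hence $\FA(X)=\inf_{u\in P_{-K_X}}\operatorname{lct}(X,\Delta_u)=\big(\max_\rho\max_{u\in P_{-K_X}}(1+\langle u,v_\rho\rangle)\big)^{-1}$. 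The complex $\alpha$-invariant $\alpha(X_\CC(\cF))$ admits the identical reduction via Demailly's characterization $\alpha=\inf_D\operatorname{lct}(\,\cdot\,,D)$, torus-averaging, and the same toric $\operatorname{lct}$ formula, so $\FA(X)$ and $\alpha(X_\CC(\cF))$ are given by one and the same combinatorial invariant of $\cF$; base change to $\bar k$ is harmless since the base field is perfect. The step I expect to be the \textbf{main obstacle} is the precise identification $\FA(X)=\lim_e\rho_0(e)/(p^e-1)$ --- in particular reconciling ``globally $F$-split'' with ``sharply $F$-split at a fixed level $e$'' uniformly in the divisor (one direction needs that global sharp $F$-splitting at level $e_0$ persists to all multiples of $e_0$) --- since this is exactly what turns the otherwise soft dimension counts into the sharp constant $\tfrac12$ and the sharp comparison with the $F$-signature.
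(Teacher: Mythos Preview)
Your proposal is correct and follows essentially the same route as the paper. Parts (1)--(3) in the paper rest on the same non-degenerate trace pairing (its Lemma~\ref{dualIe} is exactly your $S_\rho\times S_{p^e-1-\rho}\to k$), the same folded Riemann sum (Proposition~\ref{Formulafors}), and the same multiplication-by-$g_0$ estimate (Lemma~\ref{postivitylemma}); your $\Theta_d(c)$ is the paper's explicit bound $\s(X)\le\tfrac{2\vol(-K_X)}{(d+1)!}\bigl((\tfrac12)^{d+1}-(\tfrac12-c)^{d+1}\bigr)$. For (4) the paper also degenerates to the initial monomial, but proves $\mathrm{fpt}(f)\ge\mathrm{fpt}(\mathrm{in}_>(f))$ directly from the fact that $I_e(S)$ is a monomial ideal rather than invoking semicontinuity, and then matches with $\alpha(X_\CC)$ via Blickle's $\mathrm{fpt}=\mathrm{lct}$ on toric cones together with Blum--Jonsson's reduction of $\alpha(X_\CC)$ to torus-invariant divisors.

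Two remarks. First, your phrase ``pass to a $\QQ$-Cartier multiple of $-K_X$ and truncate'' hides real work: replacing $-K_X$ by $-rK_X$ destroys $\omega_S\cong S(-1)$, so the dual of $mL$ becomes $(1-p^e)K_X-mL$ rather than $(p^e-1-m)L$; the paper absorbs this discrepancy via a perturbation lemma (Proposition~\ref{twistedFsig}) and the Veronese transformation rule (Theorem~\ref{transformationrulewithpairs}). Alternatively, the orbifold cone $\bigoplus_m H^0(X,\cO_X(-mK_X))$ taken with $-K_X$ as a Weil divisor is already quasi-Gorenstein with $a=-1$, and your argument runs verbatim there (cf.\ Definition~\ref{def:Fsigoforbifoldcones}). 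Second, the identification $\FA(X)=\lim_e\rho_0(e)/(p^e-1)$ you flag as the main obstacle is precisely Theorem~\ref{alternatecharofalpha} and Theorem~\ref{finitedegreeapprox}, established via Hern\'andez's splitting criterion (Lemma~\ref{fptvssplitting}) and the monotonicity $\alpha_e+p^{-e}\ge\alpha_{e+1}+p^{-(e+1)}$.
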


Part (1) of \Cref{intromainthm1} is surprising since many complex Fano varieties have $\alpha$-invariants greater than 1/2 (and less than 1). This points to a unique phenomenon in positive characteristics, and is a consequence of \emph{duality} for the Frobenius map (see \Cref{section:duality}).  Parts (1) and (4) of \Cref{intromainthm1} together recover, and provide a positive characteristic proof of the well-known fact that the $\alpha$-invariant of toric Fano varieties is at most 1/2 (see \cite[Corollary~3.6]{LiuZhuangSharpnessofTianscriterion}). A consequence of Part(2) of \Cref{intromainthm1} is a new computation of the $F$-signature for full flag varieties:

\begin{eg} (\Cref{eg:fullflagvars})
     Let $k$ be an algebraically closed field of characteristic $p>0$ and $X_n$ (for $n \geq 2$) denote the projective variety parametrizing complete flags in a fixed $n$-dimensional vector space $V$ over $k$. Set $d = \dim(X_n) = \frac{n(n-1)}{2}$. Then, we have $\FA(X_n) = \frac{1}{2}$ and consequently, by Part (2) of \Cref{intromainthm1}, the $F$-signature of $X_n$ equals $\frac{1}{d+1}$.
\end{eg}

A key input in this computation is a result on Haboush, in turn relying on the irreducibility of Steinberg representations \cite{HaboushAShortProofOfKempf}.

The $\FA$-invariant (like the complex $\alpha$-invariant) can be defined for any pair $(X,L)$, where $X$ is a globally $F$-regular projective variety and $L$ is an ample line bundle on $X$. In Sections \ref{section3} and \ref{section4}, we develop the theory of the $\FA$-invariant in this more general setting. From this perspective, the $\FA$-invariant is an asymptotic invariant of a section ring of a projective variety that shares many properties and relations with the $F$-signature. In this direction, we prove: 

\begin{thm} \label{intromainthm2}
Let $S$ denote the section ring of a globally $F$-regular projective variety over a perfect field $k$, with respect to some ample line bundle over $X$ (\Cref{sectionringdfn}). Then, 
    \begin{enumerate}
        \item $\FA (S)$ can be calculated as the following limit:
        \[ \FA(S) = \lim _{e \to \infty} \frac{m_e (S)}{p^e} \]
        where $m_e (S)$ denotes that maximum integer $m$ such that for each non-zero homogeneous element $f \in S$ of degree $m$, the map $S \to \Fe S$ sending $1$ to $\Fe f$ splits. See \Cref{finitedegreeapprox}.
        \item $\FA(S)$ is positive (\Cref{positivityofalpha}).
        \item Base-change  (\Cref{changeofbasefield}): Assume that $S_0 = k$ and $K$ is any perfect field extension of $k$. Then,
        \[\FA(S) = \FA(S \otimes_k K). \]
    \end{enumerate}
\end{thm}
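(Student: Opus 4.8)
The plan is to encode all three statements in the behaviour of the graded family of ideals
\[
I_e \;=\; \{\, f \in S : \text{the map } S \to \Fe S,\ 1 \mapsto \Fe f,\ \text{does not split}\,\} \cup \{0\}
\;=\; \{\, f \in S : \varphi(\Fe f) \in \fm \ \text{for every } \varphi \in \Hom_S(\Fe S, S)\,\},
\]
where $\fm$ is the irrelevant ideal of $S$. Each $I_e$ is a homogeneous ideal with $\fm^{[p^e]} \subseteq I_e \subseteq \fm$, hence $\fm$-primary, and $m_e(S)$ is exactly one less than the least degree of a nonzero element of $I_e$. Two structural facts drive everything. First, since $S$ is $F$-split, composing a splitting of $1 \mapsto \Fe f$ with the Frobenius splitting of $S$ shows that if $1 \mapsto \Fe f$ splits then so does $1 \mapsto F^{e+1}_* f$; hence $I_0 \supseteq I_1 \supseteq I_2 \supseteq \cdots$. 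Second, since $S$ is strongly $F$-regular, every nonzero homogeneous element splits at some level, so $\bigcap_e I_e = 0$, and as $(I_e)_m$ is a descending chain in the finite-dimensional space $S_m$ we get $(I_e)_m = 0$ for $e \gg 0$; in particular $m_e(S) \to \infty$.

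For part (1), I would first unwind Definition~\ref{introdfnalpha} in its section-ring form: the effective $\QQ$-divisors $\QQ$-linearly equivalent to $L$ are precisely the divisors $\tfrac1m\operatorname{div}(f)$ with $f$ a nonzero homogeneous element of degree $m$, and $(\Spec S, \tfrac tm V(f))$ is sharply $F$-split exactly up to $t/m = \operatorname{fpt}_S(f)$, where $\operatorname{fpt}_S(f) = \lim_e \nu_e(f)/p^e$ and $\nu_e(f) = \max\{a : 1 \mapsto \Fe f^a \text{ splits}\}$. This yields $\FA(S) = \inf_{m\ge 1}\inf_{0\ne f\in S_m} m\cdot\operatorname{fpt}_S(f)$ (we may harmlessly assume $S$ is standard graded). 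The bound $\FA(S) \ge \limsup_e m_e(S)/p^e$ is then immediate: if $am \le m_e(S)$ then $f^a$ has degree $\le m_e(S)$, so $1 \mapsto \Fe f^a$ splits — here one uses that the property ``every homogeneous element of degree $j$ splits at level $e$'' is monotone down in $j$, obtained by multiplying $f$ by a nonzero lower-degree element — whence $\nu_e(f) \ge m_e(S)/m - 1$, so $m\cdot\operatorname{fpt}_S(f) \ge \limsup_e m_e(S)/p^e$ and the infimum inherits the bound. The reverse inequality is the heart of the matter and the step I expect to be the main obstacle: one must pass from the pointwise statement ``each $f$ eventually splits'' to the uniform statement ``when $t < \FA(S)$ and $e \gg 0$, all homogeneous elements of degree $\le \lfloor t p^e\rfloor$ split at the single level $e$'', i.e.\ the level must grow only logarithmically in the degree. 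I would do this by proving an approximate super-multiplicativity $m_{e+1}(S) \ge p\cdot m_e(S) - C$ with $C = C(S)$ independent of $e$, which forces $m_e(S)/p^e$ to converge; concretely this follows from a containment $\fm^{C}\cdot I_{e+1} \subseteq I_e^{[p]}$, reflecting the bounded failure of multiplicativity of the trace modules $\Hom_S(\Fe S, S)$ (equivalently, finite generation of the canonical module of $S$). The descending-chain structure of $\{I_e\}$ makes this tractable, but extracting the correct constant $C$ is where the real work lies. Given convergence, the limit is identified with $\FA(S)$ by squeezing against the easy inequality together with the observation that for $t < \FA(S)$ one has $\nu_e(f) \ge 1$ for every homogeneous $f$ of degree $< t p^e$ once $e \gg 0$.

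For part (2), once part (1) is available, $\FA(S) > 0$ is equivalent to $m_e(S) \ge \delta p^e$ eventually for some $\delta > 0$, equivalently to a uniform lower bound $\operatorname{fpt}_S(f) \ge c_0/\deg(f)$ for all homogeneous $f$. I would deduce this from the comparison between $\FA(S)$ and the $F$-signature $s(S)$ developed in the body of the paper together with the positivity of the $F$-signature for strongly $F$-regular rings; alternatively, a homogeneous test element $c$ of $S$ and $\fm$-primary control of the colons $(\fm^{[p^e]} : c)$ bound $\nu_e(f)$ below by a linear function of $p^e/\deg(f)$, giving positivity directly.

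For part (3), write $S_K = S\otimes_k K$ with $K/k$ perfect and $S_0 = k$. The key point is that Frobenius commutes with this base change in the strong form available because $K$ is perfect: $\Fe(S_K) \cong (\Fe S)\otimes_k K$ and $\Hom_{S_K}(\Fe S_K, S_K) \cong \Hom_S(\Fe S, S)\otimes_k K$, compatibly with evaluation at $\Fe f$. Hence $I_e(S_K) = I_e(S)\otimes_k K$, so $(I_e(S_K))_m = (I_e(S))_m\otimes_k K$ vanishes if and only if $(I_e(S))_m$ does, giving $m_e(S_K) = m_e(S)$ for every $e$; part (1) then yields $\FA(S_K) = \lim_e m_e(S_K)/p^e = \lim_e m_e(S)/p^e = \FA(S)$. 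The only delicate point here is the faithfully-flat, Frobenius-compatible identification of the $\Hom$-modules, which is exactly where perfectness of $K$ is used.
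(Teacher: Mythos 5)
Your structural facts about the ideals $I_e$ are correct (the descending chain, the vanishing of each $I_e\cap S_m$ for $e\gg 0$, the monotonicity of the vanishing locus in the degree), and your treatment of parts (2) and (3) matches the paper's: (3) rests on the Frobenius--base-change compatibility over a perfect field, which is exactly what the paper invokes from \cite{CRSTBertiniTheorems} inside \Cref{regularfibers}, and (2) is derived from the $F$-signature comparison just as in \Cref{positivityofalpha}. Your easy inequality $\FA(S)\geq\limsup_e m_e/p^e$ is also correct as stated.

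The gap is in the hard direction of part (1). You propose to establish convergence via a containment $\fm^C\cdot I_{e+1}\subseteq I_e^{[p]}$ and then ``identify the limit with $\FA(S)$ by squeezing against the easy inequality together with the observation that for $t<\FA(S)$ one has $\nu_e(f)\geq 1$ for every homogeneous $f$ of degree $<tp^e$ once $e\gg0$.'' That ``observation'' is not a consequence of convergence or of the containment --- it \emph{is} the inequality $\FA(S)\leq\liminf_e m_e/p^e$, stated in different words. Convergence alone gives no control on \emph{which} number $m_e/p^e$ converges to, and the containment only controls the minimal-degree element of $I_{e+1}$ in terms of $I_e$; it does not convert a pointwise bound $\fpt_S(f)>\lambda/\deg f$ into the uniform assertion that $f\notin I_e$ whenever $\deg f\leq\lambda p^e$. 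The paper's mechanism for exactly this step is \Cref{fptvssplitting} (a sharpening of a theorem of Hern\'andez): $1\mapsto F_*^{e_0}f$ splits if and only if $\fpt_S(f)\geq 1/(p^{e_0}-1)$, which converts the threshold bound into a splitting at a \emph{specific, explicit} level $e_0$. That lemma is the heart of \Cref{finitedegreeapprox} (through the intermediate \Cref{alternatecharofalpha}), and your proposal does not supply an argument in its place. For convergence, the paper also does not need your proposed containment: it proves $(m_e+1)/p^e$ is non-increasing by the direct factoring argument (if $f\in I_e$ then $f^p\in I_{e+1}$, \Cref{monotonicity}, following Tucker), which is the reverse inequality of the one you want and is elementary. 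So even if you could prove $\fm^C I_{e+1}\subseteq I_e^{[p]}$ --- which you acknowledge is nontrivial --- you would still be missing the input that identifies the limit, and that input is precisely where the theorem's content lies.
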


Finally, we consider some examples that highlight interesting features of the $\FA$-invariant. For instance, while the $\FA$-invariant detects strong $F$-regularity of section rings, we see that the it does not detect regularity (\Cref{quadriceg}).
In \Cref{cubicsurfaceeg}, we observe that when viewed through the lens of reduction modulo $p$, the $\FA$-invariant may depend on the characteristic. Furthermore, while the limit of the $\FA$-invariant as $p \to \infty$ may exist and have an interesting geometric interpretation, the limit is not necessarily the complex $\alpha$-invariant. These examples raise interesting questions and obstructions to relating log canonical thresholds and $F$-pure thresholds of divisors on a klt variety which we hope will lead to other results in the future.

\begin{rem}
    Most results in this paper are proved in the more general, yet useful setting of globally $F$-regular log Fano pairs $(X, \Delta)$. Thus, the proofs often involve some technical perturbation arguments, especially when the coefficients of $\Delta$ have denominators divisible by $p$, the characteristic of the base field. If the reader wishes to ignore these technicalities and focus on the case of $\Delta = 0$ where the proofs are more transparent, we refer to the first version of this article available here: \url{https://arxiv.org/abs/2311.00989v1}.
\end{rem}

\paragraph{\textbf{Acknowledgements}} I would like to thank my advisor Karen Smith for her guidance, support and many helpful discussions. I would like to thank Harold Blum and Yuchen Liu for their valuable guidance that led to some of the main theorems of this paper. I am thankful for the many useful suggestions given by Devlin Mallory, Mircea Musta\c t\u a, Karl Schwede, Kevin Tucker, and Ziquan Zhuang. I would also like to thank Anna Brosowsky, Jack Jeffries, Seungsu Lee, Linquan Ma, Shravan Patankar, Claudiu Raicu, Anurag Singh, David Stapleton, Vijaylaxmi Trivedi and Yueqiao Wu for helpful conversations. I thank Devlin Mallory, Mircea Musta\c t\u a and Austyn Simpson for detailed comments on an earlier draft. Parts of this work were conducted during visits to the University of Utah, the University of Illinois at Chicago, the Tata Institute of Fundamental Research, the Indian Statistical Institute (Bangalore), and the Indian Institute of Science. I thank all these institutes for their facilities and hospitality. In addition, this material is based upon work supported by the National Science Foundation under Grant No. DMS-1928930 and by the Alfred P. Sloan Foundation under grant G-2021-16778, while the author was in residence at the Simons Laufer Mathematical Sciences Institute (formerly MSRI) in Berkeley, California, during the Spring 2024 semester.

	\section{Preliminaries}

\begin{notation}
    Throughout this paper, all rings are assumed to be Noetherian and commutative with a unit. Unless specified otherwise, $k$ will denote a perfect field of characteristic $p>0$. A \emph{variety over $k$}  is an integral (in particular, connected), separated scheme of finite type over $k$.  For a point $x$ on a scheme $X$, the residue field $\cO_{X,x}/ \fm_x$ will be denoted by $\kappa(x)$ (where $\cO_{X,x}$ is the local ring at $x$ and $\fm_x$ is the maximal ideal of the local ring). 
\end{notation} 

\begin{notation}[Divisors and Pairs]
    A \emph{prime Weil-divisor} on a scheme $X$ is a reduced and irreducible subscheme of $X$ of codimension one. A $\ZZ$-Weil divisor is a formal $\ZZ$-linear combination of prime Weil-divisors. A \emph{$\QQ$-divisor} is a formal $\QQ$-linear combination of prime Weil-divisors. By a pair $(X, \Delta)$, we mean that $X$ is a Noetherian, normal scheme and $\Delta$ is an effective $\QQ$-divisor over $X$. A \emph{projective pair} is a pair $(X, \Delta)$ where $X$ is a projective variety over $k$.
\end{notation}

\subsection{Section Rings and Modules} \label{sectionringssubsection}
 		\begin{dfn} \label{sectionringdfn}
		Let $k$ be a field and $X$ be a projective scheme over $k$. Given an ample invertible sheaf $\cL$ on $X$ and $\cF$ a coherent sheaf on $X$, the $\NN$-graded ring $S$ defined by
		$$ S =	S(X, \cL) :=  \bigoplus _{n \geq 0} H^{0}(X, \cL^{n})	$$
		is called the \emph{section ring} of $X$ with respect to $\cL$. The multiplication on $S$ is defined by the tensor-product of global sections. The affine scheme $\Spec(S)$ is called the (affine) \emph{cone over $X$} with respect to $\cL$. The \emph{section module} of $\cF$ with respect to $\cL$ is a $\ZZ$-graded $S$-module $M$ defined by
		$$M = M(X, \cL) := \bigoplus _{n \in \ZZ} H^{0}(X, \cF \otimes \cL^{n}) 	.$$
		Similarly, the sheaf corresponding to $M$ on $\Spec(S)$ is called the \emph{cone over $\cF$} with respect to $\cL$.
	\end{dfn}

\subsection{Cones over $\QQ$-divisors} \label{conesoverdivisorssubsection}
We follow the description of the cone over a $\QQ$-divisor as in \cite[Section 5]{SchwedeSmithLogFanoVsGloballyFRegular}. Let $X$ be a normal, projective variety over a field $k$ of positive dimension and $\cL$ be an ample line bundle over $X$. Let $S$ denote the section ring of $X$ with respect to $\cL$ and $Y$ denote the affine cone $\Spec(S)$. Note that the dimension of $Y$ is at least $2$ since $X$ is positive dimensional. In this situation, given any $\ZZ$-Weil divisor $D = \sum a_i D_i$ (for distinct prime Weil divisors $D_i$) on $X$, we can construct the corresponding Weil divisor $\tilde{D}$ on $Y$, the ``cone over $D$", in three equivalent ways:

\begin{enumerate}
    \item Let $\tilde{D_i}$ be the prime Weil divisor on $Y$ corresponding to the homogeneous height one prime $\fp_i \subset S$ corresponding to $D_i$. Then $\tilde{D} = \sum a_i \tilde{D_i}$.
    \item Let $\cO_X (D)$ be the reflexive sheaf on $X$ corresponding to $D$, along with a rational section $f$ that defines the divisor $D$. Then, $\tilde{D}$ is the divisor corresponding to the reflexive $S$-module defined by
    \[ M : = M(D, \cL) = \bigoplus _{j \in \ZZ} H^0 (X, \cO_X (D) \otimes \cL^j)  \]
    and the same rational function $f$ now considered as a rational section of $M$.
    
    \item Let $\fm$ denote the homogeneous maximal ideal of $S$. The scheme $Y \setminus \{\fm\}$ has a natural map $\pi: Y \setminus \{\fm\} \to X$ making it a $k^{*}$-bundle over $X$. Then, we may define $\tilde{D}$ to be the pull back of $D$ to $Y$. More precisely, near the generic point of a component of $D$, if $D$ is given by an equation $f$, then $\tilde{D}$ is defined by $\pi^{*}f$. We then take closures to obtain a Weil-divisor on $Y$. This defines a unique divisor on $Y$ since $\codim _{Y}(\fm)$ is at least $2$.
\end{enumerate}
 The construction of the cone preserves addition and linear equivalence of divisors. Thus, the cone construction extends to $\QQ$-divisors and preserves the linear equivalence of Weil-divisors. We refer to \cite[Section 5]{SchwedeSmithLogFanoVsGloballyFRegular} for the details.


\subsection{$F$-signature}
	
 Let $R$ be any ring of prime characteristic $p$. Then for any $e \geq 1$, let $F^e: R \to R$ sending $r \mapsto r^{p^e}$ be the $e^{\text{th}}$-iterate of the \emph{Frobenius morphism}.  Since $R$ has characteristic $p$, $F^e$ defines a ring homomorphism, allowing us to define a new $R$-module for each $e \geq 1$ obtained via restriction of scalars along $F^e$. We denote this new $R$-module by $F_{*} ^e R$ and its elements by $F_{*} ^e r$ (where $r$ is an element of $R$). Concretely, $F_{*} ^e R$ is the same as $R$ as an abelian group, but the $R$-module action is given by:
 $$ r\cdot F_* ^e s := F_{*} ^e (r^{p^e} s) \textrm{\quad for  $r\in R$ and $F_* ^e s \in F_* ^e R$}   .$$

 	Now let $(R, \fm)$ denote a normal local ring and $X$ denote the normal scheme $\Spec(R)$. Throughout, we will assume that $R$ is the localization of a finitely generated $k$-algebra at a maximal ideal,  which also makes it  $F$-finite (i.e., $F_{*} ^e R$ is a finitely generated $R$-module for any $e \geq 1$), with the rank of $\Fe R $ over $R$ being $p^{ed}$, where $d$ is the Krull dimension of $R$. Let $\Delta$ be an effective $\QQ$-divisor on $X = \Spec(R)$. Then, note that since $\Delta$ is effective, for any $e \geq 1$, we have a natural inclusion $R \subset R( \lceil (p^e -1) \Delta \rceil)$ of reflexive $R$-modules. Here, $R(\lceil (p^e -1) \Delta \rceil) $ denotes the $R$-module corresponding to the reflexive sheaf $\cO_X (\lceil (p^e -1) \Delta \rceil)$. Thus, applying $\Hom_R ( \, \textunderscore \, , \, R)$ to the natural inclusion $\Fe R \subset \Fe (R(\lceil (p^e -1) \Delta \rceil))
  $, we get
  \[   \Hom_R \big( \Fe R(\lceil (p^e -1) \Delta \rceil), R \big) \subset \Hom _R (\Fe R, R).  \]
Thus, given any element  $\varphi \in \Hom_R \big( \Fe R(\lceil (p^e -1) \Delta \rceil), R \big) $, it can be naturally viewed as a map $\varphi: \Fe R \to R$.
 	
 	\begin{dfn}[Splitting Ideals] \label{frrkkdfn}
 	For any $e \geq 1$, we define the subset $I_e ^{\Delta} \subseteq R$ as 
 	\[I_e ^{\Delta} = \left\{x\in R \mid \varphi(\Fe x) \in \fm \,\text{for every map } \varphi \in \Hom_R \big( \Fe R(\lceil (p^e -1) \Delta \rceil), R \big) \, \right \}.\]
 		We observe that $I_e ^{\Delta}$ is an ideal of finite colength in $R$
 	and we call 
 	 $$ a_e ^{\Delta} = \ell_{R}(R/I_{e} ^{\Delta}) $$
 	the \emph{$\Delta$-free rank} of $\Fe R$, where $\ell_R$ denotes the length as an $R$-module.
 	\end{dfn}

	\begin{dfn} \cite[Theorem 3.11, Proposition 3.5]{BlickleSchwedeTuckerFSigPairs1} \label{F-sigdfn}
 		Let $(R, \Delta)$ be a pair as above, and $a_e ^{\Delta} (R)$ denote the $\Delta$-free rank of $\Fe R$ (\Cref{frrkkdfn}). Then the \emph{$F$-signature} of $(R, \Delta)$ is defined to be the limit:
 		$$ \s(R, \Delta) := \lim_{e \to \infty} \frac{a_{e} ^{\Delta}}{p^{ed}} $$
 		where $d$ is the Krull dimension of $R$. This limit exists by \cite{BlickleSchwedeTuckerFSigPairs1}.
 	\end{dfn}
  
\subsection{$F$-signature of cones over projective varieties.}
In this subsection, we describe how we can compute the $F$-signature of cones over projective varieties (and pairs) using \emph{global} Frobenius splittings on $X$.

\begin{dfn} \label{Fsigofsectionrings}
    Let $(X, \Delta)$ be a normal, projective pair over $k$ of positive dimension and let $L$ be an ample divisor over $X$. Let $(S, \fm)$ denote the graded section ring of $X$ with respect to $L$ and let $\tilde{\Delta}$ denote the cone over $\Delta$ with respect to $L$ (\Cref{sectionringdfn}, \Cref{conesoverdivisorssubsection}). Then we define the $F$-signature of $(X, \Delta)$ with respect to $L$ as:
    \[ \s(X, \Delta;L) = \s(S_\fm, \tilde{\Delta}),\]
    where $S_\fm$ denotes the localization of $S$ at $\fm$ and the $F$-signature in the local setting is as defined in \Cref{F-sigdfn}.
\end{dfn}

The key observation that relates the local and global settings is the following lemma due to Smith:

\begin{lem} \cite[Proof of Theorem~3.10]{SmithGloballyFRegular} \label{splitting on the cone}
    Let $k$ be a field of positive characteristic $p >0$ and $X$ be a normal projective variety over $k$. Assume that $X$ is positive dimensional. Fix an ample invertible sheaf $\cL$ and let $(S, \fm)$ be the corresponding section ring (with the homogeneous maximal ideal $\fm$). Fix an effective Weil divisor $D$ over $X$ and $\tilde{D}$ be the cone over $D$ with respect to $\cL$. Then, for any $e \geq 1$, the following are equivalent:
    \begin{enumerate}
        \item The natural map $ \cO_X \to \Fe (\cO_X (D))$
    splits as a map of $\cO_X$-modules
    \item The map on the cones $ S \to \Fe (S(\tilde{D}))  $
    splits as a map of $S$-modules.
    \item The localization
     $ S_{\fm} \to \Fe (S(\tilde{D})_\fm)  $
     splits as a map of $S_\fm$-modules.
    \end{enumerate} 
\end{lem}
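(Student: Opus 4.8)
The plan is to exploit the basic dictionary between quasi-coherent sheaves on $X$ and graded $S$-modules, combined with the observation that the relevant splitting maps live in $\Hom$-groups that can be computed degree-by-degree. First I would set up notation: for a coherent sheaf $\cF$ on $X$, write $\Gamma_*(\cF) = \bigoplus_{n \in \ZZ} H^0(X, \cF \otimes \cL^n)$ for the associated graded $S$-module, which agrees with the section module when $\cF$ is reflexive and $X$ is normal. The key point is that $\Fe (S(\tilde D))$, as a graded $S$-module, is isomorphic to $\Gamma_*\big(\Fe(\cO_X(D) \otimes \cL^{\bullet})\big)$ suitably regraded: concretely, since Frobenius pushforward commutes with taking global sections and with twisting in the appropriate $p^e$-scaled sense, one identifies the degree-$n$ piece of $\Fe(S(\tilde D))$ with $H^0(X, \Fe(\cO_X(D)) \otimes \cL^n)$ (using that $\Fe(\cO_X(D) \otimes \cL^{p^e n}) = \Fe(\cO_X(D)) \otimes \cL^n$ by the projection formula). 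This is the identification implicit in the cone construction of \Cref{conesoverdivisorssubsection}.

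The implication (2) $\Rightarrow$ (3) is immediate: localization is exact, so a splitting of $S$-modules localizes to a splitting of $S_\fm$-modules. For (1) $\Rightarrow$ (2), given an $\cO_X$-module splitting $\psi \colon \Fe(\cO_X(D)) \to \cO_X$ of the natural map, I would apply $\Gamma_*(-)$ to get a graded $S$-module map $\Gamma_*(\psi) \colon \Fe(S(\tilde D)) \to S$ that splits $S \to \Fe(S(\tilde D))$, since $\Gamma_*$ is a functor and sends the natural inclusion to the natural inclusion and the identity to the identity. The only subtlety is checking that $\Gamma_*$ applied to $\Fe(\cO_X(D))$ really is $\Fe(S(\tilde D))$ as graded $S$-modules, i.e. the identification from the previous paragraph — this is where normality of $X$ (so that section modules of reflexive sheaves behave well) and positive-dimensionality of $X$ (so that $\Gamma_*$ loses no information, equivalently $\codim_Y(\fm) \geq 2$) are used.

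The main obstacle is the implication (3) $\Rightarrow$ (1): recovering a sheaf-level splitting on $X$ from a splitting after localizing at the irrelevant maximal ideal $\fm$. Here the strategy is a graded-local-to-global argument. A splitting $S_\fm \to \Fe(S(\tilde D)_\fm)$ is an element $\varphi \in \Hom_{S_\fm}(\Fe(S(\tilde D))_\fm, S_\fm)$ sending $\Fe 1 \mapsto 1$; since both modules are finitely generated and $\Hom$ commutes with localization, $\varphi$ comes from some $\varphi' \in \Hom_S(\Fe(S(\tilde D)), S)$ after clearing a denominator $s \notin \fm$, and because $s$ is a unit modulo $\fm$ we may use that the Hom-module is graded and $\fm$ is the unique homogeneous maximal ideal to adjust $\varphi'$ to a genuine \emph{degree-zero} homogeneous splitting of $S \to \Fe(S(\tilde D))$ at the level of $S$ (not merely $S_\fm$); concretely, one passes to the degree-zero component of a suitable multiple of $\varphi'$, using that $\Hom_S(\Fe(S(\tilde D)), S)_0$ surjects onto $\Hom$ at the punctured cone which is $\Gamma_*$-equivalent to $\Hom_{\cO_X}(\Fe(\cO_X(D)), \cO_X)$ in degree $0$. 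Taking the associated sheaf on $X$ of this degree-zero homogeneous splitting (equivalently, restricting to the punctured cone $Y \setminus \{\fm\}$ and descending along $\pi \colon Y\setminus\{\fm\} \to X$) then yields the desired $\cO_X$-module splitting of $\cO_X \to \Fe(\cO_X(D))$. I would remark that this is precisely the argument in \cite[Proof of Theorem~3.10]{SmithGloballyFRegular}, and that the normality hypothesis guarantees the various reflexive modules in sight ($S(\tilde D)$, $\Hom_S(\Fe(S(\tilde D)),S)$) agree with their $\Gamma_*$ of the corresponding sheaves, so no sections are lost in passing between $X$ and the punctured cone.
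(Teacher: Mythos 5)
Your proposal is correct and reproduces essentially the argument the paper defers to its citations (Smith's Theorem~3.10 for (1)$\Leftrightarrow$(2) via $\Gamma_*$ and sheafification, and a graded-local-to-global / localization argument for (2)$\Leftrightarrow$(3)). One small precision worth making explicit: $\Fe(S(\tilde D))$ is naturally $\tfrac{1}{p^e}\ZZ$-graded and only its $\ZZ$-graded part $M_0$ is $\Gamma_*(\Fe \cO_X(D))$; since the map $S \to \Fe(S(\tilde D))$ factors through $M_0$, and $M_0$ is a graded direct summand of $\Fe(S(\tilde D))$, the splitting question indeed reduces to $M_0$, which is what makes your degree-zero extraction in (3)$\Rightarrow$(1) go through.
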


\begin{proof}
    The proof of (1) $\iff$ (2) is essentially contained in \cite[Proof of Theorem~3.10]{SmithGloballyFRegular} (also see \cite[Proof of Proposition 5.3]{SchwedeSmithLogFanoVsGloballyFRegular} for more details). The proof of (2) $\iff$ (3) is the same as \cite[Proposition~5.7]{DeStefaniPolstraYaoglobalFsplittingrationofmodules}.
\end{proof}

 Now fix a normal projective variety $X$ over $k$ and $\Delta$ be an effective $\QQ$-divisor over $X$. The following definition of the ``$\Ied$-subspaces" will be used extensively throughout the paper to measure ``global" Frobenius splittings. 

  \begin{dfn} \label{Iedfnwithdelta}
       For any Weil-divisor $D$ on $X$ and $e \geq 1$, define the $k$-vector subspace $I_{e} ^{\Delta}(D)$ of $H^{0}(X, \cO_X(D))$ as follows:
      $$ \Ied (D) := \{ f \in H^0 (D)\ | \ \varphi(F^e_*f )= 0 \text{ for all } \varphi \in \Hom_{\cO_X} (F^e_*\cO_X(\lceil (p^e -1) \Delta \rceil + D), \cO_X) \, \} .$$
      Here we use $H^0 (D)$ as a shorthand for the space of global sections $\Gamma(X, \cO_X(D))$.
\end{dfn}
Recall that given the effective $\QQ$-divisor $\Delta$, a map $ \varphi \in \Hom_{\cO_X} (F^e_*\cO_X(\lceil (p^e -1) \Delta \rceil + D), \cO_X)$ can be naturally thought of as an element of $ \Hom_{\cO_X} (F^e_*\cO_X(D), \cO_X)$.
\medskip

\begin{rem} \label{rem:Ieclarification}
    When we consider the case $\Delta =0$, we just write $I_e$ for $I_e ^0$. In this notation, we see that $I_e (\lceil (p^e -1) \Delta \rceil + D) \subset \Ied(D)$.
\end{rem} 

\begin{rem}
     Note that the subspace $\Ied (D)$ only depends on the sheaf $\cO_X (D)$ and not on the specific divisor $D$ in its linear equivalence class. However, the subspace $\Ied (D)$ does essentially depend on the specific divisor $\Delta$ and not just its linear equivalence class.
\end{rem}

\begin{eg} \label{eg:hypersurfaceIe}
    Let $S = k[x_1, \dots, x_n]$ be a polynomial ring over $k$ and let $G$ be a non-zero homogeneous polynomial in $S$. Let $R = S/(G)$ denote the quotient ring and assume that $R$ is normal. Then, for any $e \geq 1$, the ideal $I_e$ from \Cref{frrkkdfn} is given by the formula
    \[ I_e = (\fm ^{[p^e]} :_S (G^{p^e -1}))R\]
    where $\fm$ denotes the homogeneous maximal ideal of $S$, $\fm^{[p^e]}$ denotes the \emph{Frobenius power} of the maximal ideal defined as the ideal $(x_1 ^{p^e}, \dots,  x_n ^{p^e})$ and $:_S$ denotes the colon operation on ideals in $S$. Moreover, the subspace $I_e (mL)$ on $X := \Proj(S)$ (where $L = \cO_X(1)$) is just the degree $m$ component of the ideal $I_e$.
\end{eg}

\begin{lem} \label{lem:addingeffective}
Suppose $D$ and $D'$ are effective Weil divisors such that $D - D' = E$ is effective. Then for any $e \geq 1$, the defining map $\cO_X \to \cO_X (E)$ induces an inclusion $\Ied(D') \subset \Ied(D)$. Similarly, if $\Delta' \geq \Delta$ are two effective $\QQ$-divisors, then we have an induced inclusion
$ I_e ^{\Delta}  \subset I_e ^{\Delta'}.$
\end{lem}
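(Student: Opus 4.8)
The plan is to derive both inclusions from a single functoriality statement: restricting $\Hom_{\cO_X}(F^e_*(-),\cO_X)$ (resp. $\Hom_R(F^e_*(-),R)$) along the inclusion of reflexive sheaves (resp. modules) that the hypothesis provides. I will run the two cases in parallel.

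For the first assertion, put $N=\lceil(p^e-1)\Delta\rceil$; since $\Delta\geq 0$ we have $N\geq 0$, and since $E=D-D'$ is effective we have $D'\leq D$, hence $N+D'\leq N+D$. Regarding everything as subsheaves of the constant sheaf $K(X)$ of rational functions, this gives an inclusion $\cO_X(N+D')\subseteq\cO_X(N+D)$; tensoring the defining map $\cO_X\to\cO_X(E)$ with $\cO_X(D')$ and reflexifying recovers exactly the inclusion $\cO_X(D')\subseteq\cO_X(D)$, which on global sections is the injection $H^0(D')\into H^0(D)$ in the statement, compatibly with the inclusions $H^0(D')\subset H^0(N+D')$ and $H^0(D)\subset H^0(N+D)$ used in \Cref{Iedfnwithdelta}. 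Applying $F^e_*$ and precomposing with $F^e_*\cO_X(N+D')\into F^e_*\cO_X(N+D)$ yields a restriction map
\[
\Hom_{\cO_X}\!\bigl(F^e_*\cO_X(N+D),\cO_X\bigr)\longrightarrow\Hom_{\cO_X}\!\bigl(F^e_*\cO_X(N+D'),\cO_X\bigr),\qquad \varphi\longmapsto\varphi|.
\]
Now take $f\in\Ied(D')$ and view it in $H^0(D)$. For an arbitrary $\varphi\in\Hom_{\cO_X}(F^e_*\cO_X(N+D),\cO_X)$, the section $F^e_*f$ already lies in the subsheaf $F^e_*\cO_X(N+D')$, so $\varphi(F^e_*f)=(\varphi|)(F^e_*f)$, and this vanishes because $\varphi|\in\Hom_{\cO_X}(F^e_*\cO_X(N+D'),\cO_X)$ while $f\in\Ied(D')$. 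Hence $f\in\Ied(D)$, proving the first inclusion.

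The second assertion is the same argument in the local setting of \Cref{frrkkdfn}. From $\Delta'\geq\Delta\geq 0$ we get $\lceil(p^e-1)\Delta\rceil\leq\lceil(p^e-1)\Delta'\rceil$, hence an inclusion $R(\lceil(p^e-1)\Delta\rceil)\subseteq R(\lceil(p^e-1)\Delta'\rceil)$ of reflexive submodules of $\Frac(R)$; applying $F^e_*$ and precomposing gives a restriction map $\Hom_R(F^e_*R(\lceil(p^e-1)\Delta'\rceil),R)\to\Hom_R(F^e_*R(\lceil(p^e-1)\Delta\rceil),R)$, $\psi\mapsto\psi|$. Given $x\in\Ied$ and any $\psi\in\Hom_R(F^e_*R(\lceil(p^e-1)\Delta'\rceil),R)$, the element $F^e_*x$ lies in the submodule $F^e_*R(\lceil(p^e-1)\Delta\rceil)$ (as $x\in R$ and $\Delta\geq 0$), so $\psi(F^e_*x)=(\psi|)(F^e_*x)\in\fm$ because $x\in\Ied$. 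Therefore $x\in I_e ^{\Delta'}$.

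I do not expect a genuine obstacle: the only care needed is the bookkeeping of the nested inclusions of reflexive sheaves/modules inside $K(X)$ (resp. $\Frac(R)$), in particular the verification that $F^e_*f$ (resp. $F^e_*x$) genuinely lands in the smaller sheaf (resp. module), which is what makes the two evaluations literally equal rather than merely comparable. Alternatively, for the first inclusion one can use the identity $\Ied(D)=I_e(\lceil(p^e-1)\Delta\rceil+D)\cap H^0(D)$ (immediate from \Cref{Iedfnwithdelta}, since both sides are cut out by the same family of homomorphisms) to reduce to the case $\Delta=0$ for the divisors $N+D'\leq N+D$, but the direct argument above is just as short.
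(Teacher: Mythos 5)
Your proof is correct. The paper itself does not supply an argument for this lemma; it only cites Lemma~4.12 of the reference \cite{LeePandeFsignaturefunction}. Your self-contained argument is the natural one: since all the reflexive sheaves in sight sit as subsheaves of the constant sheaf $K(X)$ (respectively, submodules of $\Frac(R)$) and the comparison maps are multiplication by the defining section of the effective difference, the diagrams commute, $F^e_*f$ (resp.\ $F^e_*x$) genuinely lies in the smaller subsheaf (resp.\ submodule), and the vanishing (resp.\ containment in $\fm$) transfers by precomposition. One small stylistic point: the paper's convention, stated immediately after \Cref{Iedfnwithdelta}, is that a map $\varphi\in\Hom_{\cO_X}(F^e_*\cO_X(\lceil(p^e-1)\Delta\rceil+D),\cO_X)$ is always implicitly restricted to $F^e_*\cO_X(D)$ before evaluating at $F^e_*f$ with $f\in H^0(D)$; your chain of inclusions is compatible with that convention, and the identity $\Ied(D)=I_e(\lceil(p^e-1)\Delta\rceil+D)\cap H^0(D)$ you mention at the end (also recorded in \Cref{rem:Ieclarification}) is a clean way to make that compatibility manifest. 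Either formulation is fine.
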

\begin{proof}
    See \cite[Lemma~4.12]{LeePandeFsignaturefunction}.
\end{proof}

\begin{lem} \label{lem:coefficientmorethanone}
    Suppose $\Delta \geq 0$ is a $\QQ$-divisor on $X$ such that the coefficient of $\Delta$ along some component $E$ is larger than one. Then $\Ied(D) = H^0 (X, \cO_X(D))$ for any divisor $D$. 
\end{lem}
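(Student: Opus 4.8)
The plan is to reduce to the local statement about splitting ideals and then exploit the fact that $\lceil (p^e-1)\Delta\rceil$ along $E$ becomes ``too large'' compared to what any Frobenius map can absorb. First I would pass to a point: it suffices to show that for every nonzero $f \in H^0(X,\cO_X(D))$ and every $\varphi \in \Hom_{\cO_X}(F^e_*\cO_X(\lceil (p^e-1)\Delta\rceil + D),\cO_X)$ we have $\varphi(F^e_* f) = 0$. Since $X$ is normal and $\varphi$ is a morphism of sheaves, it is enough to check this at the generic point of the component $E$, i.e. in the DVR $\cO_{X,E}$ with uniformizer $t$. Write $a$ for the coefficient of $\Delta$ along $E$, so $a > 1$ by hypothesis, and let $c = \lceil (p^e-1)a\rceil$; then the relevant local Hom is $\Hom_{\cO_{X,E}}(F^e_*\cO_{X,E}(c E'),\cO_{X,E})$ where $E'$ is the local divisor — equivalently, $\Hom$ from $F^e_* (t^{-c}\cO_{X,E})$.

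The key computation is then the following: in a DVR $(V,t)$ of residue characteristic $p$ (and $F$-finite), the module $\Hom_V(F^e_*(t^{-c}V), V)$, viewed inside $\Hom_V(F^e_* \operatorname{Frac}(V), \operatorname{Frac}(V))$ and thus inside $\Hom_V(F^e_* V, V)$ after twisting, consists precisely of maps whose image of $F^e_* V$ lies in $t^{\lceil (c - ? )/p^e\rceil}\cdots$ — more precisely, because $F^e_* V$ is free over $V$ with a monomial basis in $t$, one computes that every $V$-linear $\psi : F^e_* V \to V$ satisfies $\psi(F^e_* t^{j}) \in \fm_V$ once $j$ is small enough relative to the twist $c$. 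The point of $a > 1$ is exactly that $c = \lceil(p^e-1)a\rceil \geq (p^e-1)a > p^e - 1 \geq p^e - 1$, so $c \geq p^e$ (for $e\geq 1$, using $a>1$ forces $\lceil (p^e-1)a\rceil \geq p^e$ whenever $(p^e-1)a \ge p^e-1+ \epsilon$... one needs $(p^e-1)(a-1) \ge 1$, which holds once $a \ge 1 + 1/(p^e-1)$; since there may be small $e$ where $a$ is only slightly above $1$, I would instead invoke \Cref{lem:addingeffective} to replace $\Delta$ by a possibly smaller $\QQ$-divisor still having coefficient $>1$ along $E$ but with all other coefficients zero, and then note $\Ied(D) \subseteq I_e^{\Delta'}(D)$-type containment goes the wrong way — so more carefully: enlarging $\Delta$ only enlarges $\Ied(D)$, so it suffices to prove the claim for the smallest interesting $\Delta$, but we want it for the given one; since the statement is a full-space equality and larger $\Delta$ gives larger $\Ied(D)$, it suffices to prove it when the coefficient along $E$ is, say, exactly $1 + 1/(p-1)$ which handles all $e$ at once, or simply prove it for each $e$ separately where $a>1$ genuinely forces $c \ge p^e$).

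Granting $c \ge p^e$, the DVR computation is immediate: writing $F^e_* V = \bigoplus_{i=0}^{p^e-1} V \cdot F^e_* t^i$ (after choosing a $p$-basis compatible with $t$, which exists since $V$ is $F$-finite — the residue field contributes the rest of the basis but plays no role here), a generator of $\Hom_V(F^e_* (t^{-c} V), V) \cong F^e_* V \cdot (\text{dual of } F^e_* t^{-c+?})$ shows every such $\varphi$, when precomposed with $F^e_* V \hookrightarrow F^e_*(t^{-c}V)$, sends $F^e_* t^j$ (for $0 \le j \le p^e-1$) into $t^{\lfloor (c+j)/p^e \rfloor - (\text{normalization})} V \subseteq t V = \fm_V$, because $c + j \ge p^e$. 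Hence $\varphi(F^e_* f) \in \fm_V$ for every $f \in V$, in particular for $f$ the local expression of our section; since this holds at the generic point of $E$ and $\varphi(F^e_* f)$ is a global section of $\cO_X$, and an element of $H^0(X,\cO_X)$ vanishing... — actually I want $\varphi(F^e_* f) = 0$, not just in $\fm$. Here I reconsider: the right statement to extract is that $\varphi$ already factors through $F^e_*\cO_X(\lceil(p^e-1)\Delta\rceil - E + D) \to F^e_*\cO_X(\lceil(p^e-1)\Delta\rceil+D)$ by a degree count, so $\varphi(F^e_* f)$ for $f \in H^0(\cO_X(D))$ lands in $H^0(\cO_X(-E)) = 0$ — no nonzero global function can vanish along $E$ on a projective variety.

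The cleanest route, which I would actually write up, is therefore: by \Cref{lem:addingeffective} it suffices to treat $\Delta = aE$ with $1 < a$; then for each $e\ge 1$ we have $c := \lceil (p^e-1)a\rceil \ge p^e$ (reducing to a single $e$ at a time, the strict inequality $a>1$ gives $(p^e-1)a > p^e-1$, and combined with $(p^e-1)a \ge (p^e-1) + (p^e-1)(a-1)$ it is $\ge p^e$ as soon as $(p^e-1)(a-1)\ge 1$; for the finitely-many small $e$ violating this one passes to a larger $a'$ via the containment and concludes). Now any $\varphi \in \Hom_{\cO_X}(F^e_*\cO_X(cE + D),\cO_X)$, being a map whose source contains $F^e_*\cO_X(p^e E + D)$ with cokernel supported on $E$, and since $\Hom_{\cO_X}(F^e_*\cO_X(p^e E + D),\cO_X) = \Hom_{\cO_X}(F^e_*\cO_X(E + D'),\cO_X)$... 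I would carry out the local DVR bookkeeping showing that the image of $F^e_*\cO_X(D)$ under $\varphi$ is contained in $\cO_X(-E)$, and conclude $\varphi(F^e_* f) \in H^0(X,\cO_X(-E)) = 0$ for all $f \in H^0(X,\cO_X(D))$, which is exactly $\Ied(D) = H^0(X,\cO_X(D))$. The main obstacle is the bookkeeping at the generic point of $E$ — tracking the precise twist so that the ceiling $\lceil (p^e-1)a\rceil \ge p^e$ translates into $\varphi$ landing in $\cO_X(-E)$ — and handling the small values of $e$ where $(p^e-1)(a-1)<1$, which is dispatched by the monotonicity in \Cref{lem:addingeffective}.
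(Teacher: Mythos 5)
Your core idea is the same as the paper's — reduce to the generic point of $E$ and exploit that the twist by $\lceil (p^e-1)\Delta\rceil$ exceeds $p^e E$ — but the proposal contains a genuine conceptual slip that, if followed, would derail the proof. You worry that for small $e$ you might not have $c := \lceil(p^e-1)a\rceil \geq p^e$ and propose to fix this by ``passing to a larger $a'$.'' Both the worry and the fix are mistaken. First, you conflate ``$(p^e-1)a \geq p^e$'' (which would require $(p^e-1)(a-1)\geq 1$) with ``$\lceil(p^e-1)a\rceil \geq p^e$'': the latter holds automatically whenever $(p^e-1)a > p^e-1$, which is guaranteed by $a>1$ for every $e\geq 1$, since $p^e-1$ is an integer. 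So $c\geq p^e$ with no restriction on $e$, and no perturbation of $\Delta$ is ever needed. Second, even if you did want to perturb, the monotonicity in \Cref{lem:addingeffective} points the other way: $\Delta \leq \Delta'$ gives $\Ied(D)\subset I_e^{\Delta'}(D)$, so proving the statement for a \emph{larger} $a'$ tells you nothing about the original $\Delta$; you would have to pass to a \emph{smaller} effective $\Delta'\leq\Delta$ still having coefficient $>1$ along $E$ (which the ceiling inequality makes unnecessary anyway).

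With that slip removed, your final route — showing that any $\varphi \in \Hom_{\cO_X}\big(\Fe \cO_X(\lceil(p^e-1)\Delta\rceil+D),\cO_X\big)$ restricted to $\Fe\cO_X(D)$ lands in $\cO_X(-E)$, and hence $\varphi(\Fe f)\in H^0(X,\cO_X(-E))=0$ since $E$ is a nonzero effective divisor on a projective variety — is correct and is a valid variant of the paper's argument. The paper packages the same DVR computation a bit more economically: from $\lceil(p^e-1)\Delta\rceil\geq p^e E$ it observes that any nonzero $\varphi(\Fe f)$ would split $\cO_X \to \Fe\cO_X(p^e E)$ (by factoring $1\mapsto \Fe f$ through multiplication by $\Fe f$), and this map can never split because at the generic point of $E$ one has $\Fe t^{-p^e} = t^{-1}\cdot\Fe 1$, so any $V$-linear retraction would produce $t^{-1}\in V$. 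Your approach is neither simpler nor shorter, but it does work once the erroneous case distinction on $e$ is deleted and the DVR bookkeeping is actually written down.
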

\begin{proof}
    Since the coefficient along $E$ of $\Delta$ is larger than one, we have $\lceil (p^e -1 ) \rceil \geq p^e E$. Therefore, the claim follows from noting that the map $\cO_X \to \Fe \cO_X (p^e E) $ can not be split for any non-zero effective divisor $E$.
    \end{proof}
\begin{rem} \label{globalIevslocal}
    Let $L$ be an ample divisor on $X$. Then, it follows from \Cref{splitting on the cone} that $\Ied (mL)$ is the degree $m$ component of the $\Delta$-splitting ideal of the localization of the section ring of $S_\fm$ with respect to $L$ (\Cref{frrkkdfn}).
\end{rem}

\begin{lem} \cite[Lemma~4.7]{LeePandeFsignaturefunction} \label{formulaforFsigwithdelta}
    Let $L$ be an ample Cartier divisor on $X$ and $(S, \fm)$ denote the section ring of $X$ with respect to $L$. Let $\Delta_{S}$ denote the cone over $\Delta$ with respect to $L$ (\Cref{conesoverdivisorssubsection}). Then, for any $e \geq 1$, if $a_{e} ^{\Delta} (L)$ denotes the $\Delta_{S_\fm}$-free-rank of $\Fe S_\fm$ (\Cref{frrkkdfn}), then $a_{e} ^{\Delta} (L)$ is computed by the following formula:
\begin{equation} \label{freerankformula}
    a_{e} ^{\Delta} (L) = \frac{1}{[k':k]} \, \sum _{m = 0} ^{\infty} \dim_{k} \frac{H^{0}(X, mL)}{I_{e}  ^{\Delta} (mL)}
\end{equation}
where $k'$ denotes the field $H^0 (X, \cO_X)$.
Hence, the $F$-signature of $(X, \Delta)$ with respect to $L$ can be computed as
$$ \s(X, \Delta; L)  =   \frac{1}{[k':k]} \, \lim _{e \to \infty }  \frac{  \sum \limits _{m = 0} ^{\infty} \dim_{k} \frac{H^{0}(X, mL)}{I_{e} ^{\Delta}(mL)}}{p^{e(\dim(X)+1)}}$$
\end{lem}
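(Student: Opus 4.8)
The plan is to reduce the computation of the $\Delta_{S_\fm}$-free rank $a_e^\Delta(L)$ of $\Fe S_\fm$ to a graded computation on the section ring $S$, and then to identify the graded pieces that appear with the vector spaces $H^0(X,mL)/\Ied(mL)$. First I would recall from \Cref{frrkkdfn} that $a_e^\Delta = \ell_{R}(R/I_e^\Delta)$ where $R = S_\fm$ and $I_e^\Delta$ is the splitting ideal; so the heart of the matter is computing the colength of $I_e^\Delta$ in $S_\fm$. Because $S$ is a finitely generated graded $k$-algebra with $S_0 = k'$, and $\fm$ is the homogeneous maximal ideal, the splitting ideal $I_e^\Delta(S)$ is itself homogeneous (splitting ideals are defined via all maps $\Fe S(\lceil(p^e-1)\Delta\rceil) \to S$, and one checks this collection is stable under the grading, so $I_e^\Delta$ decomposes as $\bigoplus_m I_e^\Delta \cap S_m$), and $(I_e^\Delta)_\fm = I_e^\Delta(S_\fm)$ because localization at the homogeneous maximal ideal does not change colength for a graded module of finite length over $S/\fm$. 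Then
\[
a_e^\Delta(L) \;=\; \ell_{S_\fm}\!\big(S_\fm / I_e^\Delta(S_\fm)\big) \;=\; \frac{1}{[k':k]}\,\dim_k\!\big(S/I_e^\Delta(S)\big) \;=\; \frac{1}{[k':k]}\sum_{m=0}^\infty \dim_k \frac{S_m}{(I_e^\Delta(S))_m},
\]
the middle equality accounting for the fact that each length-one $S_\fm$-composition factor contributes $[k':k]$ to the $k$-dimension (the residue field of $S_\fm$ is $k'$).

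The second step is to identify $(I_e^\Delta(S))_m$ with the subspace $\Ied(mL) \subseteq H^0(X,mL) = S_m$. This is exactly \Cref{globalIevslocal}, which in turn rests on \Cref{splitting on the cone}: a homogeneous element $f \in S_m$ lies in the splitting ideal of $S_\fm$ iff every map $\varphi: \Fe S(\lceil(p^e-1)\Delta\rceil) \to S$ (equivalently, globally, every $\varphi \in \Hom_{\cO_X}(\Fe\cO_X(\lceil(p^e-1)\Delta\rceil + mL), \cO_X)$) sends $\Fe f$ into $\fm$, which, since $\varphi$ is graded and $f$ is homogeneous of a fixed positive degree, is the same as $\varphi(\Fe f) = 0$. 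Matching up the $\Hom$-sets on the cone with those on $X$ via the correspondence between reflexive graded $S$-modules and reflexive sheaves on $X$ (twisted appropriately by powers of $\cL$) — the standard dictionary used throughout \Cref{conesoverdivisorssubsection} — gives precisely $(I_e^\Delta(S))_m = \Ied(mL)$. Substituting into the displayed formula yields \eqref{freerankformula}, and then dividing by $p^{e(\dim X + 1)} = p^{e\dim S_\fm}$ and letting $e \to \infty$ gives the $F$-signature formula by \Cref{F-sigdfn}.

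The main obstacle I anticipate is the bookkeeping around the field extension $k \subseteq k' = H^0(X,\cO_X)$ and making sure the $[k':k]$ factor is placed correctly: one must be careful that $F$-signature is defined for the \emph{local} ring $S_\fm$ whose residue field is $k'$, not $k$, so lengths over $S_\fm$ and $k$-dimensions of graded pieces differ by exactly this factor; the rest is a matter of verifying that the splitting ideal is homogeneous and that localization at $\fm$ is harmless, both of which are routine given that $S$ is $\NN$-graded with $S_0$ a field and $\fm$ the irrelevant ideal. A secondary technical point is ensuring the identification of $\Hom$-modules on the cone with sheaf $\Hom$ on $X$ respects the twist by $\cO_X(\lceil(p^e-1)\Delta\rceil)$ correctly; this is handled by the three equivalent descriptions of the cone over a $\QQ$-divisor recalled in \Cref{conesoverdivisorssubsection}, so no new input is needed. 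Since all of these ingredients are cited from \cite{LeePandeFsignaturefunction} and the preliminary lemmas above, the proof is essentially an assembly, and I would simply refer to \cite[Lemma~4.7]{LeePandeFsignaturefunction} for the full details.
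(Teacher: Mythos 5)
Your proposal is correct and follows essentially the same route as the paper: it identifies the graded pieces of the $\Delta_S$-splitting ideal with the subspaces $\Ied(mL)$ (this is exactly what \Cref{globalIevslocal} records, via \Cref{splitting on the cone}), computes the colength as a sum of graded $k'$-dimensions, converts to $k$-dimensions by dividing by $[k':k]$, and then divides by $p^{e(\dim X+1)}$ and passes to the limit. The extra remarks you make — that the splitting ideal is homogeneous and that localizing at $\fm$ is harmless because the quotient is already $\fm$-torsion — are exactly the points the paper implicitly packages inside \Cref{globalIevslocal}, so there is no substantive difference.
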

\begin{proof}
    Recall that by \Cref{Fsigofsectionrings}, we have $\s(X, \Delta;L) = \s(S_\fm, \Delta_S)$. By \Cref{globalIevslocal}, we know that 
    \[ I_{e} ^{\Delta_S} = \bigoplus _{m \geq 0} I_e ^{\Delta} (mL).    \]
    See \Cref{frrkkdfn} for the definition of $I_e ^{\Delta_S}$.
    Therefore, we have
    \[ \ell_{S} (S/I_{e} ^{\Delta_S}) = \sum _{m \geq 0} \dim_{k'} \frac{H^0 (mL)}{I_{e} ^{\Delta} (mL)} = \frac{1}{[k':k]} \, \sum _{m = 0} ^{\infty} \dim_{k} \frac{H^{0}(X, mL)}{I_{e}  ^{\Delta} (mL)} \]
    where $\ell_S$ denotes the length as an $S$-module. This completes the proof of the lemma.
\end{proof}

\subsection{Duality and the Trace Map} \label{section:duality} We continue to work over any perfect field $k$ of characteristic $p>0$. But in this subsection, we assume that $H^0 (X, \cO_X) = k$, i.e., that $X$ is geometrically connected. It will be convenient to think of the subspaces $I_{e} ^{\Delta}$ (\Cref{Iedfnwithdelta}) using a pairing arising out of duality for the Frobenius map. Let $ (X, \Delta) $ be a projective pair and $D$ be any Weil divisor on $X$. 

Recall that by applying duality to the Frobenius map, we get the following isomorphism of reflexive $\cO_X$-modules:
\begin{equation} \label{dualityiso}
\mathscr{H}om _{\cO_X}(F^e_*\cO_X(\lceil (p^e -1) \Delta \rceil + D), \cO_X) \cong F^e_*\cO_X( -(p^e-1 )K_X - \lceil (p^ e -1 ) \Delta \rceil - D ). \end{equation}
Here, since $X$ is a normal variety, we can define the canonical divisor $K_X$ by extending the canonical bundle from the smooth locus. See \cite[Section 4.1]{SchwedeSmithLogFanoVsGloballyFRegular} for a detailed discussion regarding duality for the Frobenius map. 
Furthermore, when $D = 0$, this gives an isomorphism
\begin{equation} \label{dualtohomdelta}  \mathscr{H}om _{\cO_X}(F^e_*\cO_X (\lceil (p^e -1 )\Delta \rceil , \cO_X) \cong F^e_*\cO_X( -(p^e-1 )K_X - \lceil (p^e -1) \Delta \rceil ).\end{equation}
Composing this isomorphism with the evaluation at $\Fe 1$ map, we obtain the $\Delta$-trace map:
\[ \text{tr}^\Delta _e : \Fe \big(\cO_{X}( (1 - p^e)K_X - \lceil (p^e -1) \Delta \rceil)\big)  \to \cO_{X} \]
which is a non-zero map of $\cO_X$-modules. When $\Delta = 0$, we obtain the trace map:
$\text{tr} _e : \Fe (\cO_X((1-p^e)K_X)) \to  \cO_X $.
Moreover, by definition, it is clear that the map $\text{tr} ^\Delta _e$ is just the restriction of $\text{tr} _e$ to the subspace $ \Fe \big( \cO_{X}( (1 - p^e)K_X - \lceil (p^e -1) \Delta \rceil) \big)$ via the inclusion
\[ \Fe \big(\cO_X((1-p^e)K_X - \lceil(p^e -1) \Delta \rceil ) \big) \to \Fe \big(\cO_X((1-p^e)K_X ) \big) \]
induced by the effective divisor $\Delta$. By construction, the trace map $\text{tr} _e$ is compatible with localization and in particular, given a smooth point $x \in X
$, the stalk of the map $\text{tr} _e$ at $x$ can be identified with the generating map of the $\Fe R$-module $\Hom_R (\Fe R , R)$, where the $R$ is the local ring at $x$. In general, the maps $\text{tr}_e$ encode the the singularities of the local rings of $X$.

Next, we may consider the map induced by $\text{tr}_e$ on the global sections to obtain the global $\Delta$-trace map
\begin{equation} \label{tracemapfordelta} \text{Tr} _e ^\Delta  : H^0 \Big(X, \Fe \big(\cO_{X}( (1 - p^e)K_X - \lceil (p^e -1) \Delta \rceil)\big) \Big) \to k = H^0(X, \cO_{X}). \end{equation}
This global trace map encodes the subtle properties of global Frobenius splittings on $X$. Note that even though $\text{tr}_e$ is always a non-zero map of $\cO_X$-modules, the map on the global sections $\text{Tr}_e$ can be the zero map. 


\begin{lem} \label{tracemaplemma}
    The kernel of the trace map $\text{Tr}_e ^{\Delta}$ in \Cref{tracemapfordelta} is exactly the subspace $\Fe I_{e} ^{\Delta} ((1 - p^e) K_X - \lceil (p^e -1) \Delta \rceil )$. See \Cref{Iedfnwithdelta} for the definition of the subspace $\Ied$. 
\end{lem}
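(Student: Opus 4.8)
The plan is to unwind the definition of the subspace $I_e^{\Delta}(D)$ in the case $D = D_0 := (1-p^e)K_X - \lceil(p^e-1)\Delta\rceil$ and identify it with $\Ker(\text{Tr}_e^{\Delta})$. The first step is the purely formal observation that for this particular $D_0$ one has $\lceil(p^e-1)\Delta\rceil + D_0 = (1-p^e)K_X$. Hence, by \Cref{Iedfnwithdelta},
\[ I_e^{\Delta}(D_0) = \bigl\{\, f \in H^0(X, \cO_X(D_0)) \;:\; \varphi(\Fe f) = 0 \text{ for every } \varphi \in \Hom_{\cO_X}(\Fe\cO_X((1-p^e)K_X), \cO_X) \,\bigr\}, \]
where $\varphi(\Fe f)$ is computed by first including $\Fe\cO_X(D_0) \hookrightarrow \Fe\cO_X((1-p^e)K_X)$ along the map induced by the effective divisor $\Delta$ and then applying $\varphi$. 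Since Frobenius is an affine (indeed finite) morphism, $H^0(X, \Fe\cO_X(D_0)) = H^0(X, \cO_X(D_0))$ as sets, so the assertion $\Ker(\text{Tr}_e^{\Delta}) = \Fe I_e^{\Delta}(D_0)$ amounts to showing that $f \in I_e^{\Delta}(D_0)$ if and only if $\text{Tr}_e^{\Delta}(\Fe f) = 0$.

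Next I would pin down the space of test maps $\varphi$. Applying the duality isomorphism \eqref{dualityiso} with the effective divisor $\Delta$ and the divisor $D = D_0$ gives
\[ \HHom_{\cO_X}\bigl(\Fe\cO_X((1-p^e)K_X), \cO_X\bigr) \;\cong\; \Fe\cO_X\bigl(-(p^e-1)K_X - \lceil(p^e-1)\Delta\rceil - D_0\bigr) \;=\; \Fe\cO_X . \]
Taking global sections and using the standing hypothesis that $X$ is geometrically connected (so $H^0(X, \cO_X) = k$, and hence $H^0(X, \Fe\cO_X) \cong k$), we conclude that $\Hom_{\cO_X}(\Fe\cO_X((1-p^e)K_X), \cO_X) = H^0\bigl(X, \HHom_{\cO_X}(\Fe\cO_X((1-p^e)K_X), \cO_X)\bigr)$ is one-dimensional over $k$. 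Since $\text{tr}_e \colon \Fe\cO_X((1-p^e)K_X) \to \cO_X$ is, by construction, a nonzero element of this space, it spans it. Consequently the quantifier over all $\varphi$ collapses to the single test map $\text{tr}_e$, so
\[ I_e^{\Delta}(D_0) = \bigl\{\, f \in H^0(X, \cO_X(D_0)) \;:\; \text{tr}_e(\Fe f) = 0 \,\bigr\}. \]

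Finally I would reconcile $\text{tr}_e(\Fe f)$ with $\text{Tr}_e^{\Delta}(\Fe f)$: as recorded in \Cref{section:duality}, the map $\text{tr}_e^{\Delta}$ is exactly the restriction of $\text{tr}_e$ along the inclusion $\Fe\cO_X(D_0) \hookrightarrow \Fe\cO_X((1-p^e)K_X)$ induced by $\Delta$, and $\text{Tr}_e^{\Delta}$ is the $k$-linear map it induces on global sections; hence for $f \in H^0(X, \cO_X(D_0))$ we have $\text{tr}_e(\Fe f) = \text{tr}_e^{\Delta}(\Fe f) = \text{Tr}_e^{\Delta}(\Fe f) \in H^0(X, \cO_X) = k$, and combining with the displayed description of $I_e^{\Delta}(D_0)$ gives $I_e^{\Delta}(D_0) = \Ker(\text{Tr}_e^{\Delta})$. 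I expect the one genuinely delicate point to be the middle step — tracking the duality isomorphism precisely enough to see that $\text{tr}_e$ generates $\Hom_{\cO_X}(\Fe\cO_X((1-p^e)K_X), \cO_X)$, i.e.\ that every $\cO_X$-linear map $\Fe\cO_X((1-p^e)K_X) \to \cO_X$ is a scalar multiple of $\text{tr}_e$, which is where geometric connectedness enters essentially; the rest is bookkeeping with inclusions of reflexive sheaves induced by $\Delta$. (Note the argument is insensitive to whether a coefficient of $\Delta$ exceeds $1$: in that degenerate case it merely shows $\text{Tr}_e^{\Delta} = 0$, consistently with \Cref{lem:coefficientmorethanone}.)
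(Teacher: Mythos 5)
Your argument is correct and is essentially the paper's proof: both reduce the quantifier over test maps to the single map $\text{tr}_e$ by observing that $\lceil(p^e-1)\Delta\rceil + D_0 = (1-p^e)K_X$ and that $\Hom_{\cO_X}(\Fe\cO_X((1-p^e)K_X),\cO_X) \cong H^0(X,\Fe\cO_X) \cong \Fe k$ is one-dimensional, then identify $\text{Tr}_e^{\Delta}$ as the restriction of $\text{Tr}_e$. You spell out the duality computation (that the right-hand side of the isomorphism collapses to $\Fe\cO_X$) a bit more explicitly than the paper does, but the route is the same.
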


\begin{proof} By the definition of $\Ied$, a section $f \in H^0 (X, \cO_X ((1 - p^e) K_X - \lceil (p^e -1) \Delta \rceil ))$ is contained in the corresponding $\Ied$-subspace if and only if for every
    \[ \varphi \in \Hom _{\cO_X} ( \Fe (\cO_X((1 - p^e) K_X  )), \cO_X ) \isom H^0 (X, \Fe \cO_X) \isom \Fe k, \]
    we have $\varphi(\Fe f) = 0$. But, $\Hom  _{\cO_X} ( \Fe (\cO_X((1 - p^e) K_X  )), \cO_X )$ is a one dimensional $k$-vector space, and is generated by the trace map $\text{tr} _e$. Therefore, $f$ is contained in the corresponding $\Ied$-subspace, if and only if when viewed via the inclusion
    \[ \Fe \cO_X ((1-p^e)K_X - \lceil (p^e -1) \Delta \rceil ) \to \Fe \cO_X((1-p^e)K_X), \]
    we have $\text{Tr}_e (\Fe f) =0$.
    Since the map $\text{Tr}_{e} ^{\Delta}$ is exactly the restriction of the trace map $\text{Tr} _{e}$ to the subspace  $H^0 \Big(X, \Fe \big( \cO_{X}( (1 - p^e)K_X - \lceil (p^e -1) \Delta \rceil) \big) \Big)$, the lemma follows.
\end{proof}

\begin{lem} \label{splittingviatrace}
    Let $X$ be a normal projective variety over $k$ and $D$ be a Weil divisor over $X$. Then, a section $f \in H^0 (X, \cO_X(D))$ is contained in $I_e (D)$ (\Cref{Iedfnwithdelta}) if and only if for all sections $g \in H^0 (X, \cO_X((1-p^e)K_X - D))$, we have $\text{Tr} _e (fg)  = 0$.
\end{lem}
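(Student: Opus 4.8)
The plan is to unwind the definition of $I_e(D)$ (\Cref{Iedfnwithdelta}, with $\Delta=0$) using duality for the Frobenius morphism as set up in \Cref{section:duality}: the guiding idea is that every $\cO_X$-linear map $\Fe\cO_X(D)\to\cO_X$ arises as ``multiplication by a global section $g$ of $\cO_X((1-p^e)K_X-D)$, followed by the trace map $\text{tr}_e$''. To make this precise, I would first note that for any $g\in H^0(X,\cO_X((1-p^e)K_X-D))$, multiplication by $g$ defines a morphism of $\cO_X$-modules $\cdot\, g\colon \cO_X(D)\to\cO_X((1-p^e)K_X)$; this is immediate from $D+\bigl((1-p^e)K_X-D\bigr)=(1-p^e)K_X$, checked on the regular locus and extended to all of $X$ by reflexivity. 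Composing its Frobenius pushforward with the trace map gives $\varphi_g:=\text{tr}_e\circ\Fe(\cdot\, g)\in\Hom_{\cO_X}(\Fe\cO_X(D),\cO_X)$. I would then invoke the duality isomorphism \eqref{dualityiso} with $\Delta=0$, namely $\mathscr{H}om_{\cO_X}(\Fe\cO_X(D),\cO_X)\isom\Fe\cO_X((1-p^e)K_X-D)$: tracing through its construction, which is assembled precisely from $\text{tr}_e$ and such multiplication maps (cf.\ \cite[Section~4.1]{SchwedeSmithLogFanoVsGloballyFRegular}), a local section $\Fe g$ of the right-hand side corresponds exactly to $\varphi_g$. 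Passing to global sections and using that the finite pushforward $\Fe$ does not change $H^0$, the assignment $g\mapsto\varphi_g$ is a bijection $H^0(X,\cO_X((1-p^e)K_X-D))\xrightarrow{\ \sim\ }\Hom_{\cO_X}(\Fe\cO_X(D),\cO_X)$.

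Granting this, the lemma follows at once. For $f\in H^0(X,\cO_X(D))$ and $g\in H^0(X,\cO_X((1-p^e)K_X-D))$, the product $fg$ lies in $H^0(X,\cO_X((1-p^e)K_X))$, and applying $H^0$ to $\varphi_g=\text{tr}_e\circ\Fe(\cdot\, g)$ together with the identity $H^0(\text{tr}_e)=\text{Tr}_e$ (equation \eqref{tracemapfordelta} with $\Delta=0$) yields $\varphi_g(\Fe f)=\text{Tr}_e(fg)$. Hence, by the definition of $I_e(D)$ and the bijection above, $f\in I_e(D)$ if and only if $\varphi(\Fe f)=0$ for every $\varphi\in\Hom_{\cO_X}(\Fe\cO_X(D),\cO_X)$, if and only if $\text{Tr}_e(fg)=0$ for every $g\in H^0(X,\cO_X((1-p^e)K_X-D))$ --- which is the assertion. (The degenerate case $H^0(X,\cO_X((1-p^e)K_X-D))=0$ is consistent: then there are no nonzero homomorphisms $\Fe\cO_X(D)\to\cO_X$ either, and $I_e(D)=H^0(X,\cO_X(D))$. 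Taking $D=(1-p^e)K_X$ recovers \Cref{tracemaplemma}.)

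The step I expect to be the main obstacle is identifying the abstract duality isomorphism \eqref{dualityiso} with the explicit recipe $g\mapsto\varphi_g$. This is essentially just the normalization of Grothendieck duality for the finite flat Frobenius, but it should be done carefully: one restricts to the regular locus $U\subseteq X$, whose complement has codimension $\ge 2$; there $\cO_X(D)$ and $\cO_X((1-p^e)K_X)$ are invertible, $\Fe$ of an invertible sheaf is locally free over $\Fe\cO_U$, and $\text{tr}_e$ generates $\mathscr{H}om_{\cO_U}(\Fe\cO_U((1-p^e)K_X),\cO_U)$ as an $\Fe\cO_U$-module, so the $\varphi_g$ manifestly exhaust all homomorphisms over $U$. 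Since $\mathscr{H}om_{\cO_X}(\Fe\cO_X(D),\cO_X)$ and $\Fe\cO_X((1-p^e)K_X-D)$ are both reflexive, this identification over $U$ then extends uniquely to all of $X$.
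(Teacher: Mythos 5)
Your proposal is correct and follows essentially the same route as the paper: both arguments restrict to the regular locus, unwind the Frobenius duality isomorphism \eqref{dualityiso} via the Hartshorne-style adjunction to identify $\Hom_{\cO_X}(\Fe\cO_X(D),\cO_X)$ with $H^0(X,\cO_X((1-p^e)K_X-D))$ via $g\mapsto \mathrm{tr}_e\circ\Fe(\cdot\, g)$, then extend by reflexivity and take global sections. The step you flag as the ``main obstacle''---tracing the explicit formula $\varphi_g(\Fe f)=\text{Tr}_e(fg)$ through the duality---is precisely what the paper spells out via Equation~\eqref{formulaforduality}.
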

\begin{proof}
    Let $U \subset X$ denote the regular locus of $X$. Since $X$ is normal, the closed set $X \setminus U$ has codimension at least $2$ in $X$. Let $D_U$ and $K_U$ denote the restrictions of $D$ and $K_X$ respectively (so that $K_U$ is the canonical divisor on $U$). Since $U$ is a regular scheme, the iterated Frobenius map $F^e: U \to U $ is a finite flat map of schemes. Then, following \cite[III Ex. 6.10]{Hartshorne}, the isomorphim
    in \Cref{dualityiso} is constructed as follows: there is a natural map
    \[ \varphi:  \mathscr{H}om _{\Fe \cO_U} (\Fe (\cO_U(D_U)), \mathscr{H}om _{\cO_U} (\Fe \cO_U , \cO_U) )  \to \mathscr{H}om_{\cO_U} (\Fe ( \cO_U(D)), \cO_U) \]
    of $\Fe \cO_U$-modules that sends a map $\psi$ on the left-hand side to the map defined by 
    \begin{equation} \label{formulaforduality} \varphi(\psi): \Fe f \mapsto \text{eval at $\Fe 1$ of} \, \big(\psi (\Fe f) \big).   \end{equation}
    Moreover, $\varphi$ is an isomorphism. Now, the isomorphism in \Cref{dualityiso} is obtained by identifying $\mathscr{H}om_{\cO_U} (\Fe \cO_U, \cO_U)$ with the sheaf $\Fe (\cO_U((1-p^e)K_U))$, so that 
    \[  \mathscr{H}om _{\Fe \cO_U} (\Fe (\cO_U(D_U)), \mathscr{H}om _{\cO_U} (\Fe \cO_U , \cO_U) ) \isom \Fe (\cO_U ((1-p^e)K_U - D_U)).  \]
    Therefore, if $\Fe f \in H^0 (U, \Fe \cO_U (D_U))$ is a section, and $\Fe g \in H^0 (U, \Fe \cO_U ((1-p^e)K_U - D_U))$ which, via the above isomorphism, we view as a map $\psi_g : \Fe (\cO_U (D_U)) \to  \cO_U $, then by \Cref{formulaforduality} we see that
    \[ \psi_g (\Fe f) = \text{tr}_e (\Fe (fg)). \]
    Now the lemma follows by the definition of $I_e$ and considering the fact that global sections over $X$ and $U$ are the same since all sheaves involved are reflexive.\end{proof}

\begin{lem} \label{dualIe}
    Let $(X, \Delta) $ be a normal projective pair, and $D$ be any Weil divisor on $X$. Then, denoting $D_1 = (1-p^e) K_X - \lceil(p^e -1) \Delta \rceil - D$ and $D_2 = (1-p^e)K_X - \lceil (p^e -1) \Delta \rceil $ for any $e \geq 1$,  we have a non-degenerate pairing
    $$\frac{ H^{0} ( D)} { \Ied (D)} \times \frac{H^0 (D_1) } {\Ied (D_1)} \to \frac{H^{0}(D_2) }{\Ied ( D_2)} $$
    obtained from multiplication (and reflexifying) global sections.
In particular, $$ \dim_{k} \frac{H^{0} ( D) } {\Ied (D)} = \dim_{k} \frac{H^0 (D_1) }  {\Ied (D_1)}. $$
    
\end{lem}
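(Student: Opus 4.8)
The plan is to translate the defining condition of $\Ied$ into a statement about the global trace map $\text{Tr}_e^\Delta$ of \eqref{tracemapfordelta}, and then conclude with elementary linear algebra over $k$. The starting point is the Frobenius-duality isomorphism \eqref{dualityiso} applied to the divisor $\lceil (p^e-1)\Delta\rceil + D$, which identifies $\Hom_{\cO_X}\!\big(\Fe\cO_X(\lceil(p^e-1)\Delta\rceil + D),\cO_X\big)$ with $H^0(X,\Fe\cO_X(D_1)) = H^0(D_1)$; running the explicit description of this isomorphism exactly as in the proof of \Cref{splittingviatrace} (on the regular locus of $X$, then extending by reflexivity) shows that the functional attached to $g\in H^0(D_1)$ carries $\Fe f$ to $\text{tr}_e^\Delta\!\big(\Fe(fg)\big)$, where $fg\in H^0(D_2)$ is the reflexified product of global sections (using $D + D_1 = D_2$). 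This gives the reformulation
\[ f \in \Ied(D)\iff \text{Tr}_e^\Delta\!\big(\Fe(fg)\big) = 0 \ \text{ for all } g\in H^0(D_1), \]
and, since $(1-p^e)K_X - \lceil(p^e-1)\Delta\rceil - D_1 = D$, the symmetric statement with $D$ and $D_1$ interchanged. Applying the same construction to $D_2$ instead involves the divisor $(1-p^e)K_X - \lceil(p^e-1)\Delta\rceil - D_2 = 0$, whose associated $\Hom$-space is one-dimensional over $k$ (here we use that $k$ is perfect); combined with \Cref{tracemaplemma}, this identifies $\Ied(D_2)$ with the kernel of $\text{Tr}_e^\Delta\colon H^0(D_2)\to k$, so that $\text{Tr}_e^\Delta$ induces an injection $H^0(D_2)/\Ied(D_2)\hookrightarrow k$.

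Granting these facts, I would first note that $(\overline f,\overline g)\mapsto \overline{fg}$ is well defined on the quotients: if $f\in\Ied(D)$ or $g\in\Ied(D_1)$, the reformulations give $\text{Tr}_e^\Delta(\Fe(fg))=0$, i.e. $fg\in\Ied(D_2)$. Non-degeneracy is then immediate: if $\overline f\neq 0$ then $f\notin\Ied(D)$, so some $g\in H^0(D_1)$ has $\text{Tr}_e^\Delta(\Fe(fg))\neq 0$; this $g$ has $\overline g\neq 0$ (by the symmetric reformulation) and pairs nontrivially with $f$ in $H^0(D_2)/\Ied(D_2)$, and symmetrically for a nonzero class in the second factor. (If $\text{Tr}_e^\Delta$ is the zero map, the same reformulations force both quotients, and the target, to vanish, so there is nothing to prove.) Since $X$ is projective, $H^0(D)$ and $H^0(D_1)$, hence the quotients $V=H^0(D)/\Ied(D)$ and $W=H^0(D_1)/\Ied(D_1)$, are finite-dimensional over $k$; a non-degenerate $k$-bilinear pairing $V\times W\to k$ between finite-dimensional spaces yields injections $V\hookrightarrow W^\vee$ and $W\hookrightarrow V^\vee$, whence $\dim_k V=\dim_k W$. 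This is the last assertion.

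The only step requiring genuine care is the identification in the first paragraph of the tautological pairing $(\varphi,f)\mapsto\varphi(\Fe f)$ with $(g,f)\mapsto\text{tr}_e^\Delta(\Fe(fg))$: one must track the twist by the effective divisor $\lceil(p^e-1)\Delta\rceil$ through \eqref{dualityiso} and check that it is exactly what cuts $\text{tr}_e$ down to its restriction $\text{tr}_e^\Delta$, while using that all sheaves in sight are reflexive so that global sections may be computed on the regular locus. No new idea is needed --- this is the computation already carried out in the proof of \Cref{splittingviatrace}, run with $\lceil(p^e-1)\Delta\rceil + D$ in place of the divisor there --- but the bookkeeping of twists is where an error could most easily slip in.
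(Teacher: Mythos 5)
Your argument is correct and matches the paper's proof essentially step for step: the reformulation of $\Ied(D)$ via $\text{Tr}_e^\Delta$ is exactly \Cref{splittingviatrace} combined with the fact that $\text{Tr}_e^\Delta$ is the restriction of $\text{Tr}_e$, the identification of $\Ied(D_2)$ with $\ker\text{Tr}_e^\Delta$ is \Cref{tracemaplemma}, and the closing linear-algebra observation (including explicitly handling the degenerate case $\text{Tr}_e^\Delta=0$) is just the paper's final paragraph spelled out. One small remark: the one-dimensionality of $\Hom_{\cO_X}\big(\Fe\cO_X((1-p^e)K_X),\cO_X\big)$ uses the standing hypothesis $H^0(X,\cO_X)=k$ of this subsection (geometric connectedness) together with perfectness of $k$, not perfectness alone.
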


\begin{proof}Using \Cref{dualityiso} and as in the previous lemma, the natural multiplication map
    $$ H^0 (D) \times H^0 (D_1) \to H^0 ( D_2)$$
    can be identified with the evaluation map
    $$ H^0 \Big(\Fe \big(\cO_X(D) \big) \Big) \times \Hom_{\cO_X} \Big (\Fe \big(\cO_{X} ( \lceil (p^e -1) \Delta \rceil + D)\big) , \cO_{X} \Big) \to \Hom_{\cO_X} (\Fe \cO_{X}(\lceil (p^e -1) \Delta \rceil), \cO_{X}) $$
    where we identify $ \Hom_{\cO_X} (\Fe \cO_{X} ( \lceil (p^e -1) \Delta \rceil + D) , \cO_{X})$ and $\Hom_{\cO_X} (\Fe \cO_{X}(\lceil (p^e -1) \Delta \rceil), \cO_{X})$ as  subspaces of $ \Hom_{\cO_X} (\Fe \cO_{X} ( D) , \cO_{X})$ and $ \Hom_{\cO_X} (\Fe \cO_{X}, \cO_{X})$ respectively, both
    via the natural inclusion $  \cO_X \to  
    \cO_X (\lceil (p^e -1) \Delta \rceil)$.
    Therefore, by \Cref{splittingviatrace} and the fact that $\text{Tr} _e ^\Delta$ is just the restriction of $\text{Tr}_e$, a section $f \in H^0 (D)$ is contained in $\Ied (D)$ if and only if for all sections $g \in H^{0}(D_1)$, the multiplication $f \, g$ satisfies $\text{Tr}_e ^\Delta (\Fe(fg)) = 0$. By \Cref{tracemaplemma}, this is equivalent to $fg$ being contained in $\Ied (D_2)$. By symmetry, a section $g \in H^0 (D_1)$ is contained in $ \Ied (D_1)$ if and only if for all sections $f \in H^0 (D)$,  we have $ g \, f \in \Ied(D_2)$. This proves that there is a well defined, and non-degenerate pairing as needed.

    Finally, we note that since by \Cref{tracemaplemma} again, $\Ied(D_2)$ is the kernel of the trace map (\Cref{tracemapfordelta}), the vector space $H^0 (D_2) /\Ied (D_2)$ is either one-dimensional over $k$, or equal to $0$. In either case, the equality of dimensions follows.
\end{proof}

\subsection{$F$-splitting and $F$-regularity} Now we collect the main definitions and properties of some $F$-singularities that we will use throughout the paper.

\begin{dfn}[Sharp $F$-splitting] \cite[Definition 3.1]{SchwedeSmithLogFanoVsGloballyFRegular} \label{defnsharpFsplitting}  Let $X$ be a normal variety over $k$ and $\Delta \geq 0$ be an effective $\QQ$-divisor. The pair $(X, \Delta) $ is said to be \emph{globally sharply $F$-split} (resp. \emph{locally} sharply $F$-split) if there exists an integer $e > 0$, such that, the natural map 
\begin{equation*} 
\cO_X \to F^e_* \cO_X( \lceil (p^e -1) \Delta \rceil)
 \end{equation*}
splits (resp. splits locally) as a map of $\cO_{X}$-modules.
A normal variety $X$ is said to globally $F$-split if the pair $(X, 0)$ is globally sharply $F$-split. 
\end{dfn}

\begin{dfn}[$F$-regularity]\label{defn.GlobFreg} \cite[Definition 3.1]{SchwedeSmithLogFanoVsGloballyFRegular} Let $X$ be a normal variety over $k$ and $\Delta \geq 0$ be an effective $\QQ$-divisor. The pair $(X, \Delta) $ is said to be \emph{globally $F$-regular} (resp. locally strongly $F$-regular) if for any effective Weil divisor $D$ on $X$, there exists an integer $e \gg 0$, such that, the natural map 
\begin{equation*} 
\cO_X \to F^e_* \cO_X( \lceil (p^e -1) \Delta \rceil + D)
 \end{equation*}
splits (resp. splits locally) as a map of $\cO_{X}$-modules.
A normal variety $X$ is said to globally $F$-regular if the pair $(X, 0)$ is globally $F$-regular. Similarly, a ring $R$ is called strongly $F$-regular if the pair $(\Spec(R), 0)$ is locally (equivalently, globally) strongly $F$-regular.

\end{dfn}

\begin{rem} When $X= \Spec(R)$ is an affine variety and $\Delta$ is an effective $\QQ$-divisor, the pair $(X, \Delta)$ being globally $F$-regular (resp. globally sharply $F$-split) is equivalent to the pair $(R, \Delta)$ being locally strongly $F$-regular (resp. locally sharply $F$-split) \cite{SchwedeSmithLogFanoVsGloballyFRegular}.
\end{rem}

\begin{rem}\label{FregularityremarK}
A local domain $R$ is strongly $F$-regular if and only if its $F$-signature $\s(R)$ is positive \cite{AberbachLeuschke}. More generally, a pair $(R, \Delta)$ (where $R$ is normal, local) is strongly $F$-regular if and only if the $F$-signature $\s(R, \Delta)$ (\Cref{F-sigdfn}) is positive \cite[Theorem 3.18]{BlickleSchwedeTuckerFSigPairs1}.
\end{rem}

The following basic properties of sharp $F$-splitting and $F$-regularity established in \cite{SchwedeSmithLogFanoVsGloballyFRegular} will be used repeatedly, so we record them here for the convenience of the reader:
\begin{lem} \label{lemdecreasingcoefficients}
    Let $X$ be a normal variety and $\Delta$ and  $\Delta'$ be effective $\QQ$-divisors on $X$. 
    \begin{enumerate}
        \item Suppose $\Delta' \leq \Delta$. Then, if $(X, \Delta)$ is globally $F$-regular (resp. globally sharply $F$-split) then $(X, \Delta')$ is globally $F$-regular (resp. globally sharply $F$-split). See \cite[Lemma~3.5]{SchwedeSmithLogFanoVsGloballyFRegular}.
        \item Suppose $(X, \Delta)$ is globally $F$-regular and $(X, \Delta+ \Delta')$ is globally sharply $F$-split, then $(X, \Delta + \lambda \Delta')$ is globally $F$-regular for every rational number $0 <\lambda<1$. See \cite[Lemma~4.9 (iii)]{SchwedeSmithLogFanoVsGloballyFRegular}.
    \end{enumerate}
\end{lem}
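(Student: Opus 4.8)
The plan is to prove (1) directly from the definitions, and for (2) to reduce to a well-known statement about Frobenius splittings of local rings via the cone construction (or equivalently cite the global version in \cite{SchwedeSmithLogFanoVsGloballyFRegular}); part (1) is elementary, and part (2) is the substantial assertion.

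\smallskip
\noindent\emph{Part (1).} Fix $e \geq 1$. Since $\Delta' \leq \Delta$ and rounding up is order preserving, $\lceil (p^e-1)\Delta' \rceil \leq \lceil (p^e-1)\Delta \rceil$, so for any Weil divisor $D$ the identity rational function realizes $\cO_X(\lceil (p^e-1)\Delta' \rceil + D)$ as a subsheaf of $\cO_X(\lceil (p^e-1)\Delta \rceil + D)$. Pushing forward by $F^e$ and precomposing with the tautological map $\cO_X \to \Fe\cO_X(\lceil (p^e-1)\Delta' \rceil + D)$ sending $1 \mapsto \Fe 1$, we obtain a factorization
\[
\cO_X \longrightarrow \Fe\cO_X(\lceil (p^e-1)\Delta' \rceil + D) \longrightarrow \Fe\cO_X(\lceil (p^e-1)\Delta \rceil + D)
\]
of the tautological map $\cO_X \to \Fe\cO_X(\lceil (p^e-1)\Delta \rceil + D)$. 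Hence, if $\rho$ is an $\cO_X$-linear retraction of the latter, then $\rho$ composed with the second arrow is a retraction of $\cO_X \to \Fe\cO_X(\lceil (p^e-1)\Delta' \rceil + D)$. Taking $D = 0$ gives the statement for global sharp $F$-splitting, and letting $D$ range over all effective Weil divisors gives it for global $F$-regularity. (This is the same mechanism as in \Cref{lem:addingeffective}.)

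\smallskip
\noindent\emph{Part (2).} First I would reduce to $\lambda$ near $1$: if $(X, \Delta + \lambda''\Delta')$ is globally $F$-regular for some rational $\lambda'' \in (\lambda, 1)$, then since $\Delta + \lambda\Delta' \leq \Delta + \lambda''\Delta'$, part (1) in its $F$-regular form gives that $(X, \Delta + \lambda\Delta')$ is globally $F$-regular; so it suffices to treat a sequence $\lambda \to 1^-$. Using \Cref{splitting on the cone} together with the compatibility of the cone construction with $\QQ$-divisors (\Cref{conesoverdivisorssubsection}), global $F$-regularity (resp.\ global sharp $F$-splitting) of a projective pair is equivalent to strong $F$-regularity (resp.\ sharp $F$-splitting) of the cone, and the cone of $\Delta + \lambda\Delta'$ is $\widetilde{\Delta} + \lambda\widetilde{\Delta'}$; so the statement becomes the local assertion that if $(R,\fm)$ is a normal $F$-finite local ring, $(R,\Delta_R)$ is strongly $F$-regular, and $(R, \Delta_R + \Delta'_R)$ is sharply $F$-split, then $(R, \Delta_R + \lambda\Delta'_R)$ is strongly $F$-regular for $\lambda \in [0,1)\cap\QQ$. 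This is standard: the set of $t \geq 0$ for which $(R, \Delta_R + t\Delta'_R)$ is strongly $F$-regular is an interval with left endpoint $0$ whose right endpoint is the $F$-pure threshold of $\Delta'_R$ along $(R,\Delta_R)$ (possibly $+\infty$), with strong $F$-regularity holding strictly below that threshold, and sharp $F$-splitting of $(R, \Delta_R + \Delta'_R)$ forces the threshold to be at least $1$.

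\smallskip
\noindent Alternatively, one argues directly on $X$ by composing Frobenius splittings: a single splitting witnessing global $F$-regularity of $(X,\Delta)$ (which may be taken to absorb any prescribed fixed effective auxiliary divisor) is composed with a high iterate of a sharp $F$-splitting of $(X, \Delta + \Delta')$ (which persists along multiples of its level $e_0$), choosing $e = a+b$ so that in the decomposition $p^e - 1 = p^{a}(p^{b}-1) + (p^{a}-1)$ the iterated splitting of $(X,\Delta+\Delta')$ covers the ``bulk'' term $p^{a}(p^{b}-1)(\Delta+\lambda\Delta')$ — here one uses precisely that $\lambda < 1$, so that $(\Delta+\lambda\Delta')$ lies below $(\Delta+\Delta')$ — while the splitting of $(X,\Delta)$ covers the remaining fixed part together with the bounded rounding errors. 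I expect the main obstacle to be the bookkeeping with the ceilings $\lceil (p^e-1)(\Delta+\lambda\Delta')\rceil$, which are only sub-additive/super-multiplicative up to bounded error; when $p$ divides a denominator of a coefficient of $\Delta$ or $\Delta'$ this forces an additional perturbation argument. The complete proof is \cite[Lemma~4.9]{SchwedeSmithLogFanoVsGloballyFRegular}, which I would invoke.
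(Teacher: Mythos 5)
The paper itself offers no proof of this lemma and simply cites \cite[Lemma~3.5]{SchwedeSmithLogFanoVsGloballyFRegular} for (1) and \cite[Lemma~4.9]{SchwedeSmithLogFanoVsGloballyFRegular} for (2), so your proposal is doing strictly more than the paper does. Your argument for part~(1) is correct and is exactly the mechanism used in the cited reference and in \Cref{lem:addingeffective}: factoring the tautological map for $\Delta$ through the one for $\Delta'$ via the inclusion of reflexive sheaves and composing a given retraction with the inclusion.

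For part~(2), two caveats are worth naming. First, the lemma as stated assumes only that $X$ is a normal variety (not necessarily projective), so the reduction via the cone construction and \Cref{splitting on the cone} does not cover the general case; it handles the projective case, and the affine case is already local, but a quasi-projective non-affine $X$ falls outside this route. Second, the ``standard'' local fact you invoke --- that strong $F$-regularity of $(R, \Delta_R + t\Delta'_R)$ holds strictly below the $F$-pure threshold of $\Delta'_R$ relative to $(R, \Delta_R)$ --- is precisely the content of part~(2) in the local case; within this paper's logical ordering it would be circular, since \Cref{equivcharoffpt} is itself derived from this lemma. So that paragraph restates the claim rather than proving it. Your alternative direct sketch (iterating the sharp $F$-splitting of $(X, \Delta+\Delta')$ to absorb the bulk $p^a(p^b-1)(\Delta+\lambda\Delta')$ using $\lambda<1$, then using one application of global $F$-regularity of $(X,\Delta)$ to swallow the remainder and rounding errors) is indeed the shape of the argument in \cite[Lemma~4.9]{SchwedeSmithLogFanoVsGloballyFRegular}, and deferring the bookkeeping to that reference is appropriate and matches what the paper does.
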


\begin{thm} \cite[Theorem 3.10]{SmithGloballyFRegular} 
    Let $X$ be a projective variety over $k$. Then, $X$ is globally $F$-regular if and only if the section ring $S(X, L)$ (\Cref{sectionringdfn}) with respect to some (equivalently, every) ample invertible sheaf $L$  is strongly $F$-regular.
\end{thm}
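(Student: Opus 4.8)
The plan is to prove both directions of the equivalence by relating global $F$-regularity of $X$ to strong $F$-regularity of the section ring $S = S(X,L)$, using the cone-splitting dictionary already set up in \Cref{splitting on the cone} (in its $\Delta = 0$ form). First I would reduce to a statement about splittings of maps of the form $\cO_X \to \Fe \cO_X(D)$ versus the corresponding maps $S \to \Fe(S(\tilde D))$ on the cone, where $\tilde D$ is the cone over an effective Weil divisor $D$ on $X$. The point is that for an affine cone $\Spec(S)$, strong $F$-regularity of the local ring $S_\fm$ is equivalent (after graded considerations) to strong $F$-regularity of $S$ itself, so I may phrase everything in terms of $S$ being strongly $F$-regular, i.e.\ for every nonzero $c \in S$ there is $e \gg 0$ with $S \to \Fe S$, $1 \mapsto \Fe c$, split.

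The forward direction ($X$ globally $F$-regular $\Rightarrow$ $S$ strongly $F$-regular): given $0 \neq c \in S$, I would pick a homogeneous element in $cS$ (or replace $c$ by a homogeneous multiple — harmless, since splitting a map through a larger element implies splitting the smaller one, by pre/post-composition), write $\mathrm{div}(c) = \tilde D$ for some effective Weil divisor $D$ on $X$ (using the cone construction from \Cref{conesoverdivisorssubsection}), and invoke the definition of global $F$-regularity (\Cref{defn.GlobFreg}) to get, for $e \gg 0$, a splitting of $\cO_X \to \Fe\cO_X(D)$; then \Cref{splitting on the cone} (1)$\Leftrightarrow$(2) promotes this to a splitting of $S \to \Fe(S(\tilde D))$, which factors the map $1 \mapsto \Fe c$, hence that map splits too. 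Conversely, if $S$ (equivalently $S_\fm$) is strongly $F$-regular, then for any effective Weil divisor $D$ on $X$ pick any nonzero homogeneous $c \in S(\tilde D) = M(\tilde D)$ lying in degree large enough that it corresponds to an honest global section; strong $F$-regularity of $S$ gives a splitting of $S \to \Fe S$ through $\Fe c$, which one repackages as a splitting of $S \to \Fe(S(\tilde D))$, and then \Cref{splitting on the cone} (2)$\Rightarrow$(1) yields the desired splitting of $\cO_X \to \Fe\cO_X(D)$, establishing global $F$-regularity of $X$. The "some $\Leftrightarrow$ every ample $L$" clause is then automatic, since each direction was proved for an arbitrary choice of $L$.

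The main obstacle — and the only place requiring genuine care — is the bookkeeping that translates between a single nonzero element $c$ of the section ring (with its divisor possibly not supported on a union of cone divisors, or not homogeneous) and an effective Weil divisor $D$ on $X$ whose cone is $\mathrm{div}(c)$; I would handle the homogeneity issue by the standard trick of enlarging $c$ to a homogeneous element of $(c)$, and the "non-toric" part of $\mathrm{div}(c)$ (the components not of the form $\tilde D_i$) by observing such components do not meet the cone point and so are irrelevant to splitting at $\fm$, exactly as in \cite{SmithGloballyFRegular} and \cite{SchwedeSmithLogFanoVsGloballyFRegular}. A secondary, purely formal point is the passage between $S$ and its localization $S_\fm$ for strong $F$-regularity, which is standard for graded rings with $S_0$ a field. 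Since all of this is contained in the cited references, I would keep the write-up brief and simply assemble \Cref{splitting on the cone} with the definitions.
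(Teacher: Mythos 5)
This statement is cited from \cite{SmithGloballyFRegular} without proof in the present paper, so there is no ``paper's own proof'' to compare against; your proposal is being measured against the standard argument from the cited source, and you have correctly identified the key tool, namely \Cref{splitting on the cone} together with the cone-over-a-divisor dictionary. Your overall plan is the right one.

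However, there is a genuine gap in your forward direction. You claim that given an arbitrary nonzero $c \in S$, one can ``pick a homogeneous element in $cS$'' and reduce to that case. This is false in general: in $S = k[x]$, the ideal $(1+x)$ contains no nonzero homogeneous element (any homogeneous $ax^j \in (1+x)$ vanishes after setting $x = -1$, forcing $a = 0$). More structurally, the set $\{c : \exists e,\ 1 \mapsto \Fe c \text{ splits}\}$ is closed under passing to divisors of $c$, not multiples, so checking the splitting condition on homogeneous elements does not formally imply it for all of $S$ without a further argument. The correct reduction is standard but different: first use the (2) $\Leftrightarrow$ (3) part of \Cref{splitting on the cone} (or the fact that the non-strongly-$F$-regular locus is cut out by a homogeneous ideal, hence if nonempty contains $\fm$) to reduce to checking strong $F$-regularity of $S_\fm$; then invoke the single-test-element criterion of Hochster--Huneke (it suffices to split $1 \mapsto \Fe c$ for one $c$ with $S_c$ strongly $F$-regular), and observe that such a $c$ may be taken homogeneous because the defining ideal of the singular locus is homogeneous. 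You already have all the needed equivalences in \Cref{splitting on the cone}; only the justification needs replacing.

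There is also a small slip in the backward direction: to split $S \to \Fe(S(\tilde D))$, $1 \mapsto \Fe 1$, via strong $F$-regularity, the element $c$ should be chosen in $S(-\tilde D) = \Gamma(Y, \cO_Y(-\tilde D)) \subset S$, the homogeneous ideal of $\tilde D$, not in $S(\tilde D) \supset S$. Then multiplication by $c$ gives $\Fe(S(\tilde D)) \to \Fe S$, and postcomposing a splitting $\phi$ with $\phi(\Fe c) = 1$ produces the desired retraction of $1 \mapsto \Fe 1$. Your ``lying in degree large enough that it corresponds to an honest global section'' is symptomatic of this confusion; once $c$ is correctly located in the ideal of $\tilde D$ inside $S$, no such hedge is needed. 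Neither issue affects the overall strategy, which is indeed the one used in the cited reference.
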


\begin{rem} (Locally) Strongly $F$-regular varieties are normal and Cohen-Macaulay. Similarly, globally $F$-regular varieties enjoy many nice properties such as: 
\begin{itemize}
    \item As proved in \cite[Theorem 4.3]{SchwedeSmithLogFanoVsGloballyFRegular}, they are \emph{log Fano type}. More precisely, there exists an effective divisor $\Delta \geq 0$ such that the pair $(X,\Delta)$ is globally $F$-regular and $-K_X - \Delta$ is ample.
    \item A version of the Kawamata-Viehweg vanishing theorem holds on all globally $F$-regular varieties \cite[Theorem 6.8]{SchwedeSmithLogFanoVsGloballyFRegular}.
\end{itemize} 
\end{rem}

\begin{thm}[\cite{SmithGloballyFRegular}, Corollary 4.3] \label{vanishingforGFR}
Let $X$ be a projective, globally $F$-regular variety over $k$. Suppose $L$ is a nef invertible sheaf over $X$. Then,
$$  H^{i}(X, L) = 0 \quad \text{for all $i >0$}.   $$
\end{thm}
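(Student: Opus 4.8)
The plan is to use the global Frobenius splitting of $X$ to realize $H^i(X, L)$ as a direct summand of $H^i$ of a genuinely ample line bundle, where Serre vanishing applies. It is cleanest to settle the ample case first and then bootstrap to the nef case.

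\textbf{Ample case.} Suppose $L$ is ample. Taking $D = 0$ in the definition of global $F$-regularity (\Cref{defn.GlobFreg}) shows $X$ is globally $F$-split, so $\cO_X \to \Fe \cO_X$ splits as a map of $\cO_X$-modules for all sufficiently divisible $e$. Since $\cO_X(L)$ is locally free, tensoring preserves split injectivity, and the projection formula together with $F^{e*}\cO_X(L) \cong \cO_X(p^e L)$ turns this into a split injection $\cO_X(L) \into \Fe\big(\cO_X(p^e L)\big)$. Applying $H^i(X,-)$ and using that $F^e$ is a finite morphism of $X$ (so $H^i(X, \Fe \cG) = H^i(X, \cG)$ for coherent $\cG$), we get that $H^i(X, L)$ is a direct summand of $H^i(X, p^e L)$ for every such $e$. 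Taking $e$ large and divisible and applying Serre vanishing to the ample sheaf $\cO_X(L)$ yields $H^i(X, p^e L) = 0$ for $i>0$, hence $H^i(X, L) = 0$ for $i > 0$.

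\textbf{Nef case.} Now let $L$ be nef and fix an effective divisor $A$ with $\cO_X(A)$ ample (e.g., the divisor of zeros of a nonzero section of a large multiple of any ample divisor). Applying global $F$-regularity to $D = A$, there is an $e \geq 1$ with $\cO_X \to \Fe \cO_X(A)$ split; tensoring with $\cO_X(L)$ and repeating the projection-formula and finiteness reductions above exhibits $H^i(X, L)$ as a direct summand of $H^i(X, A + p^e L)$. As $A$ is ample and $p^e L$ is nef, $A + p^e L$ is ample, so the ample case already proved gives $H^i(X, A + p^e L) = 0$ for $i>0$, and therefore $H^i(X, L) = 0$ for $i > 0$.

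I do not anticipate a genuine obstacle: the argument is formal once the definitions are unwound. The one subtlety is that global $F$-splitting alone is insufficient for nef $L$ — it only gives $H^i(X, L) \into H^i(X, p^e L)$ with the target still merely nef — so it is precisely the extra flexibility of global $F$-regularity, namely the ability to absorb an auxiliary effective ample divisor $A$ before splitting, that lands us in the settled ample case. (Alternatively, one could compress both cases into one by replacing Serre vanishing with Fujita's vanishing theorem for semipositive line bundles.) The remaining ingredients — stability of splittings under tensoring with a locally free sheaf, the projection formula for the finite morphism $F^e$, and $H^i(X, \Fe \cG) = H^i(X, \cG)$ — are standard.
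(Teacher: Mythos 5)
Your argument is correct and is the standard proof of this vanishing theorem; the paper itself gives no proof but cites Smith's \cite{SmithGloballyFRegular}, Corollary 4.3, whose proof is essentially what you wrote (global $F$-splitting plus Serre vanishing in the ample case, then absorbing an effective ample divisor $A$ via global $F$-regularity to reduce the nef case to the ample one). Two minor remarks: first, once $\cO_X \to \Fe \cO_X$ splits for one $e$, it splits for every $e \geq 1$ (the level-$1$ splitting is recovered by restricting the retraction along $F_*\cO_X \into \Fe \cO_X$), so there is no need to restrict to ``sufficiently divisible'' $e$; second, in the nef case the auxiliary divisor $A$ is found exactly as you say, since an ample line bundle has a nonzero global section after passing to a large enough power, and taking the associated effective divisor works. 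Your observation that mere global $F$-splitting is insufficient for the nef case — since it only bounces $H^i(X,L)$ into $H^i(X,p^eL)$ with the target still nef — is the right diagnosis of where global $F$-regularity is genuinely used.
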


\subsection{Perturbation, transformation rule and orbifold cones.}
We first prove a result that allows us to perturb by any Weil-divisor while computing the $F$-signature.

\begin{Pn}  \label{twistedFsig}
Let $(X, \Delta)$ be a normal projective pair over $k$ and $L$ an ample divisor on $X$. Assume $H^0 (X, \cO_X) = k$. Fix a (not necessarily effective) Weil divisor $D$ on $X$. Then, there exists a constant $C >0$ (depending only on $D$, $L$ and the pair $(X, \Delta)$) such that
$$ \left| \dim_{k}\frac{H^{0}(mL)}{I_{e} ^\Delta(mL)} - \dim_{k}\frac{H^{0}(mL+D)}{I_{e} ^\Delta(mL+D)} \right| \leq C p^{e(\dim(X) - 1)} $$
for all $m > 0$ and $e>0$.
\end{Pn}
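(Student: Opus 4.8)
Throughout write $d=\dim X\ge 1$ and, for a Weil divisor $E$ on $X$, abbreviate $\delta_e(E):=\dim_k H^0(X,\cO_X(E))/\Ied(E)$; the quantity we must estimate is $\big|\delta_e(mL)-\delta_e(mL+D)\big|$. Fix $e$. If the global trace map $\mathrm{Tr}^\Delta_e$ of \Cref{tracemapfordelta} vanishes, then by \Cref{tracemaplemma} we get $\Ied\big((1-p^e)K_X-\lceil(p^e-1)\Delta\rceil\big)=H^0$, and then the argument in the proof of \Cref{dualIe} forces $\Ied(E)=H^0(\cO_X(E))$ for \emph{every} Weil divisor $E$, so $\delta_e\equiv 0$ and there is nothing to prove for this $e$. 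So assume $\mathrm{Tr}^\Delta_e\ne 0$ and set $N_e:=(1-p^e)K_X-\lceil(p^e-1)\Delta\rceil$; since $H^0(X,\cO_X)=k$ we then have $H^0(N_e)/\Ied(N_e)\cong k$.

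The engine of the proof is the following estimate: for any Weil divisor $E$ and any \emph{effective} Weil divisor $E_0$,
\[ \big|\delta_e(E+E_0)-\delta_e(E)\big|\ \le\ \max\Big(\, h^0(E+E_0)-h^0(E)\,,\ h^0(N_e-E)-h^0(N_e-E-E_0)\,\Big). \]
Indeed, by \Cref{lem:addingeffective} we have inclusions $\Ied(E)\subseteq \Ied(E+E_0)\cap H^0(E)\subseteq\Ied(E+E_0)$ inside $H^0(E+E_0)$. The quotient $\Ied(E+E_0)/(\Ied(E+E_0)\cap H^0(E))$ injects into $H^0(E+E_0)/H^0(E)$, hence has dimension $\le h^0(E+E_0)-h^0(E)$. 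For the other quotient, recall from the proof of \Cref{dualIe} that a section $f\in H^0(F)$ lies in $\Ied(F)$ exactly when $\mathrm{Tr}^\Delta_e(\Fe(fg))=0$ for every $g\in H^0(N_e-F)$; thus $f\mapsto\big(g\mapsto\mathrm{Tr}^\Delta_e(\Fe(fg))\big)$ defines a $k$-linear map from $\Ied(E+E_0)\cap H^0(E)$ to $\Hom_k\!\big(H^0(N_e-E)/H^0(N_e-E-E_0),\,k\big)$, well-defined on this quotient because any $f\in\Ied(E+E_0)$ kills the sections coming from $H^0(N_e-E-E_0)$, with kernel exactly $\Ied(E)$ by the same characterization applied to $F=E$. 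So $\dim\big((\Ied(E+E_0)\cap H^0(E))/\Ied(E)\big)\le h^0(N_e-E)-h^0(N_e-E-E_0)$. Since $\delta_e(E+E_0)-\delta_e(E)=\big(h^0(E+E_0)-h^0(E)\big)-\big(\dim\Ied(E+E_0)-\dim\Ied(E)\big)$ and $\dim\Ied(E+E_0)-\dim\Ied(E)$ is the sum of the dimensions of the two quotients (both $\ge 0$), the displayed bound follows.

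Now write $D\sim A-B$ with $A,B$ effective, so $\cO_X(mL+D)\cong\cO_X((mL-B)+A)$. Applying the estimate with $E=mL-B$ and $E_0=B$, then with $E=mL-B$ and $E_0=A$, and using the triangle inequality, $\big|\delta_e(mL)-\delta_e(mL+D)\big|$ is bounded by a sum of four terms, each of the shape $h^0\big(\cO_X(G)\big)-h^0\big(\cO_X(G-G_0)\big)$ for a \emph{fixed} effective divisor $G_0\in\{B,\,A,\,A+B\}$ and a divisor $G$ from the list $\{mL,\ mL+D,\ N_e-mL+B\}$. Each such term is at most $h^0\big(X,\cO_X(G)/\cO_X(G-G_0)\big)$, the global sections of a coherent sheaf supported on the fixed closed set $\mathrm{Supp}(G_0)$, of dimension $\le d-1$.

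Finally we estimate in two ranges of $m$. Choose integers $a_0,a_1$ (depending only on $X,L,\Delta,D$) large enough that $a_0L+K_X$, $a_0L-\lceil\Delta\rceil$ and $a_1L\pm D$ are linearly equivalent to effective divisors; then $N_e\le (p^e-1)a_0L$, so there is a constant $c>0$ with the property that for $m>cp^e$ both $N_e-mL$ and $N_e-mL-D$ are linearly equivalent to a strictly negative multiple of $L$, hence have no global sections. By the duality isomorphism \Cref{dualityiso}, $\Hom_{\cO_X}(\Fe\cO_X(\lceil(p^e-1)\Delta\rceil+mL),\cO_X)\cong\Fe\cO_X(N_e-mL)$ (and similarly with $mL+D$), so this Hom-module has no nonzero sections, forcing $\Ied(mL)=H^0(mL)$ and $\Ied(mL+D)=H^0(mL+D)$; thus $\delta_e(mL)=\delta_e(mL+D)=0$ for $m>cp^e$ and the inequality holds trivially there. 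For $0<m\le cp^e$, the $L$-degree $|G\cdot L^{d-1}|$ of each twisting divisor $G$ above is $O(p^e)$ uniformly, so standard dimension counts — restrict to the fixed finite set of prime components of $\mathrm{Supp}(G_0)$ and apply asymptotic Riemann--Roch on each, using that $h^0$ of a line bundle is bounded by a polynomial of degree $\le d-1$ in its degree — give that each of the four terms is $\le C_0(1+p^e)^{d-1}$, uniformly in $m$. Taking $C$ to absorb the factor $4$ and the bound $(1+p^e)^{d-1}\le(\text{const})\,p^{e(d-1)}$ completes the proof. The main obstacle is this last step: verifying \emph{uniformly in $e$} that the global sections of a fixed lower-dimensional coherent sheaf, twisted by the highly non-canonical, $p^e$-dependent divisors $G$, grow only like $p^{e(d-1)}$ — which is where one must handle the ceilings $\lceil(p^e-1)\Delta\rceil$, the crude positivity of $N_e$, and the fact that divisors such as $mL$ need not themselves be effective.
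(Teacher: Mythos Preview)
Your strategy coincides with the paper's: use trace-map duality to convert $|\delta_e(mL)-\delta_e(mL+D)|$ into terms of the form $h^0(G)-h^0(G-G_0)$ with $G_0$ fixed effective and $G$ of $L$-degree $O(p^e)$, then bound these by restriction to codimension one. Your ``engine'' inequality is a clean repackaging of the paper's estimates \eqref{firstestimate} and \eqref{secondestimate} together with \Cref{dualIe}; the two arguments are the same computation, and your formulation is arguably tidier.

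The step you yourself flag as the ``main obstacle'' is where the write-up is genuinely incomplete, and the paper's proof shows exactly what is missing. Restricting to the prime components of $\mathrm{Supp}(G_0)$ does not work as stated: those components need not be normal, so the non-Cartier Weil divisor $G$ (which involves $K_X$ and $\lceil(p^e-1)\Delta\rceil$) cannot be restricted to them in any controlled way, and the quotient $\cO_X(G)/\cO_X(G-G_0)$ is not a line bundle on which asymptotic Riemann--Roch applies directly. The paper instead enlarges $G_0$ to a \emph{general} member $E\in|ML|$ for $M\gg 0$, chosen reduced, irreducible, \emph{normal}, and sharing no components with $\Delta$ or with fixed effective representatives $E_2,E_3$ satisfying $E_2-E_3\sim -K_X$. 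Then the left-exact sequence of \Cref{restrictingWeildivisors} gives $h^0(G)-h^0(G-E)\le h^0\big(E,\cO_E(G|_E)\big)$, and a further comparison on $E$ (using that $E_2|_E$, $E_3|_E$, $\Delta|_E$ are all effective) bounds everything by $h^0\big((p^e-1)E_2|_E\big)$, which is $O(p^{e(d-1)})$ by ordinary asymptotic Riemann--Roch on the fixed $(d-1)$-dimensional variety $E$. Once you import this device, the two proofs become essentially identical.
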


In the proof of this proposition, we will need to restrict a Weil divisor on a normal variety (that is not necessarily Cartier) to a normal, complete intersection subscheme. There is a natural way to do this which we will explain in \Cref{restrictingWeildivisors}.

\begin{proof}[Proof of \Cref{twistedFsig}]
First we prove the case when $D$ is effective: For any $m \geq 1$, by using the natural map $\cO_{X}(mL) \to \cO_{X}(mL+D)$, we will view $H^{0}(mL)$ as a subspace of $H^{0}(mL+D)$. Let $J_{e}^\Delta(mL)$ denote the subspace $H^{0}(mL) \cap I_{e} ^\Delta (mL+D)$. By \Cref{lem:addingeffective}, we see that $I_{e}^\Delta (mL) \subset J_{e} ^\Delta(mL)$. Moreover, by Equation~4.12 in
\cite[Proof of Lemma~4.14]{LeePandeFsignaturefunction}, we have
\[ \dim_{k}\frac{H^{0}(mL+D)}{I_{e} ^\Delta(mL+D)} = \dim_{k}\frac{H^{0}(mL)}{J_{e} ^\Delta (mL)} + \dim_{k}\frac{H^{0}(mL+D)}{H^0 (mL) + I_{e} ^\Delta (mL+D)}.\]
Using this and the triangle inequality, we obtain that
\begin{equation} \label{firstestimate} \left| \dim_{k}\frac{H^{0}(mL)}{I_{e} ^\Delta (mL)} - \dim_{k}\frac{H^{0}(mL+D)}{I_{e} ^\Delta (mL+D)} \right| \leq \dim_{k} \frac{H^{0}(mL+D)}{H^{0}(mL)} + \dim_{k}\frac{J_{e} ^\Delta(mL)}{I_{e}^\Delta(mL)} \end{equation}
for all $m, e>0$. Next, to compute the second term in the above inequality, fix an $e>0$ and set $\Delta_e = \Delta+ \frac{1}{p^e -1} D$. Then, we observe that the subspace $J_e ^\Delta (mL)$ is exactly the same as $I_e ^{\Delta_e} (mL)$ (\Cref{Iedfnwithdelta}). Moreover, denoting $D'_e = D + \lceil (p^e -1 ) \Delta \rceil $, we also similarly have 
\begin{equation} \label{dualJe} I_e ^{\Delta_e} ((1-p^e)K_X - mL-D'_e) = H^0 ((1-p^e)K_X - mL - D_e ' ) \cap I_e ^{\Delta} ((1-p^e)K_X - mL - \lceil (p^e -1 ) \Delta \rceil) .\end{equation}
Thus, by \Cref{dualIe}, we have
\begin{equation} \label{perturbationlemfirstdual} \dim_k \frac{H^0 (mL)}{I_e ^\Delta (mL)}  = \dim_k \frac{H^0 ((1-p^e)K_X - mL - \lceil (p^e -1) \Delta \rceil)}{I_e ^\Delta ((1-p^e)K_X - mL - \lceil (p^e -1) \Delta \rceil)} \end{equation}
and similarly,
\begin{equation} \label{perturbationlemseconddual} \dim_k \frac{H^0 (mL)}{J_e ^\Delta (mL)}  = \dim_k \frac{H^0 ((1-p^e)K_X - mL -D'_e)}{I_e ^{\Delta_e} ((1-p^e)K_X - mL-D'_e)} .\end{equation}
By \Cref{dualJe}, we see 
the natural map from $H^0 ((1-p^e)K_X - mL -D' _e)$ to $ H^0 ((1-p^e)K_X - mL - \lceil (p^e -1) \Delta \rceil )$ given by adding $D_e$ restricts to an \emph{injective} map
\[ \frac{H^0 ((1-p^e)K_X - mL -D'_e)}{I_e ^{\Delta_e} ((1-p^e)K_X - mL-D'_e)} \hookrightarrow \frac{H^0 ((1-p^e)K_X - mL - \lceil(p^e -1) \Delta \rceil )}{I_e ^\Delta ((1-p^e)K_X - mL - \lceil (p^e -1)  \Delta \rceil)} .\]
By \Cref{perturbationlemfirstdual} and \Cref{perturbationlemseconddual}, the dimension of the cokernel of this map is exactly equal to $  \dim_{k}\frac{J_{e} ^\Delta (mL)}{I_{e} ^\Delta (mL)} = \dim_{k}\frac{H^{0}(mL)}{I_{e} ^\Delta (mL)} - \dim_{k}\frac{H^{0}(mL)}{J_{e} ^\Delta (mL)}$, which we may now estimate as
\begin{equation} \label{secondestimate}
   \dim_{k}\frac{J_{e} ^\Delta(mL)}{I_{e} ^\Delta (mL)} \leq  \dim_{k} \frac{H^{0}((1-p^e)K_{X} -mL - \lceil (p^e -1) \Delta \rceil)}{H^{0}((1-p^e)K_{X} -mL-D - \lceil (p^e -1) \Delta \rceil)}.
\end{equation}

Pick an integer $M \gg 0$ such that following conditions hold for the linear system $|ML|$: 
\begin{itemize}
    \item There is an effective divisor $E_1 \in |ML|$ such that $E_1 \geq D$.
    \item There is an effective  divisor $E_2 \in |ML|$ and an effective Weil-divisor $E_3$ such that $E_2 - E_3 \sim -K_X$. We can do this by choosing $M$ such that $H^0 (X,\cO_X(K_X+ ML)) \neq 0$.
    \item There is an effective divisor $E \in |ML|$ such that $E$ is a reduced, irreducible and normal subvariety of $X$ such that $E$ does not occur as a component of the support of either $E_2, E_3$ or $\Delta$.
    \item $mL$ admits a global section that doesn't vanish along $E$ for all $m \geq M$.
\end{itemize}
This is possible since $L$ is ample. Now, we firstly note that since $E_1 \geq D$, we have $ \dim_k H^{0}((1-p^e)K_{X} -mL-E_1 - \lceil (p^e -1) \Delta \rceil) \leq \dim_k H^{0}((1-p^e)K_{X} -mL-D - \lceil (p^e -1) \Delta \rceil)$. And moreover, since the dimension of the space of global sections  only depends on the linear equivalence of divisors, by switching $E$ for $E_1$, we have
$ \dim_{k} \frac{H^{0}((1-p^e)K_{X} -mL - \lceil (p^e -1) \Delta \rceil)}{H^{0}((1-p^e)K_{X} -mL-D - \lceil (p^e -1) \Delta \rceil)} $ is at most $ \dim_{k} \frac{H^{0}((1-p^e)K_{X} -mL - \lceil (p^e -1) \Delta \rceil)}{H^{0}((1-p^e)K_{X} -mL-E - \lceil (p^e -1) \Delta \rceil)} $. Now, since $E$ is assumed to be normal and does not share any components with $E_2, E_3 $ or $\Delta$, for any $e \geq 1$, we may restrict the Weil divisor $(1-p^e) K_X - mL - \lceil(p^e -1) \Delta \rceil $ to $E$ (as explained in \Cref{restrictingWeildivisors}). In particular, using \Cref{restrictingtonormaldivisorexactsequence} and \Cref{secondestimate}, we get
\[ \dim_{k}\frac{J_{e} ^\Delta (mL)}{I_{e} ^\Delta (mL)} \leq \dim_{k} H^{0}((1-p^e)K_{X}|_E -mL|_E - (\lceil (p^e -1) \Delta \rceil)|_E).  \]
Furthermore, since by assumption, we have $-K_X \sim E_2 - E_3 $ and $E_2$, $E_3$ and $\Delta$ do not share any components with $E$ and since $mL$ also admits a section that does not share a component with $E$, we see that $mL|_E + \lceil (p^e -1) \Delta \rceil|_E + (p^e - 1) E_3 |_E$ is effective. Hence, using that $E_2 \sim -K_X + E_3$, we have:
\begin{equation} \label{thirdestimate}
    \dim_{k} \frac{J_{e} ^\Delta (mL)}{I_{e} ^\Delta (mL)} \leq \dim_k H^0 ((p^e -1) E_2 |_E)  = \frac{\vol(E_2|_E)}{(d-1)!} p^{e(d-1)} + o(p^{e(d-2)})
\end{equation}
 for $e \gg 0$. Since we have $E_1 \geq D$ and $E_1$ is Cartier, by considering the exact sequence
 \[ 0 \to \cO_X(-E_1) \to \cO_X \to \cO_{E_1} \to 0 \]
and using the left exactness of $H^0 (X, \_)$, we also have
 \begin{equation} \label{fourthestimate}
     \dim_k \frac{H^{0}(mL+D)}{H^{0}(mL)} \leq  \dim_{k} \frac{H^{0}(mL+E_1)}{H^{0}(mL)}  \leq \frac{\vol(L|_{E_1})}{(d-1)!} m^{d-1} + o(m^{d-2}).
 \end{equation}
 for all $m \gg 0$.
 
 Finally, by \cite[Theorem 4.9]{LeePandeFsignaturefunction}, we may pick a constant $C_{2}>0$ such that $H^{0}(mL) = I_{e} ^\Delta (mL)$ and $H^{0}(mL+D) = I_{e} ^\Delta (mL+D)$ for $m >C_{2}p^{e}$ (note that since $\Delta$ is effective, we have $I_e \subset \Ied$ by \Cref{lem:addingeffective}). Therefore, to prove the Proposition, it is enough to consider the case when $m \leq C_{2} p^{e}$. In this case, the Proposition now follows by putting together inequalities in \ref{firstestimate}, \ref{thirdestimate} and \ref{fourthestimate}. This completes the proof of the Proposition when $D$ is effective.

More generally, we first pick an $r \gg 0$ such that $rL$ and $D+ rL$ are both linearly equivalent to effective divisors. Then, for any $ e \geq 1$ and $m > r$, we have
\begin{equation*}
\begin{split}
 \left| \dim_{k}\frac{H^{0}(mL)}{I_{e} ^\Delta(mL)} - \dim_{k}\frac{H^{0}(mL+D)}{I_{e} ^\Delta(mL+D)} \right| \leq & \left| \dim_{k}\frac{H^{0}(mL)}{I_{e} ^\Delta(mL)} - \dim_{k}\frac{H^{0}((m-r)L)}{I_{e} ^\Delta ((m-r)L)} \right| \\
 + &\left| \dim_{k}\frac{H^{0}((m-r)L)}{I_{e} ^\Delta ((m-r)L)} - \dim_{k}\frac{H^{0}((m-r)L + rL+D)}{I_{e} ^\Delta ((m-r)L+rL+D)} \right| .
 \end{split}   \end{equation*}
 Now, we may apply the previous case of the Proposition (since both $D+rL$ and $rL$ are effective) to each of the two terms in the above inequality. Since $r$ was independent of $e$, this completes the proof of the Proposition .
\end{proof}

\begin{dfn}[$F$-signature of certain orbifold cones] \label{def:Fsigoforbifoldcones}
    Let $(X,\Delta)$ be a projective, globally $F$-regular pair over $k$ of positive dimension. Let $L$ be a $\ZZ$-Weil divisor that is also $\QQ$-Cartier and ample (meaning that $rL$ is Cartier and ample for some integer $r >0$). Then, we may consider the orbifold cone of $(X, \Delta)$ with respect to $L$ by considering the section ring $S(X, L) := \bigoplus_{m \geq 0} H^0 (X, \cO_X(mL))$, and the pull-back $\tilde{\Delta}$ of $\Delta$ under the natural map $\pi: Y \setminus \{0\} \to X$, where $Y = \Spec(S)$ and $0$ is the point corresponding to the homogeneous maximal ideal of $S$. Then, we may
    we define the $F$-signature of $(X, \Delta)$ with respect to $L$ as the $F$-signature
    \[ \s(X, \Delta;L) := \s(Y, \tilde{\Delta}, 0) . \]
    By the exact same argument as \Cref{formulaforFsigwithdelta}, this $F$-signature may be computed by the formula
    \[ \s(X, \Delta; L)  =   \frac{1}{[k':k]} \, \lim _{e \to \infty }  \frac{  \sum \limits _{m = 0} ^{\infty} \dim_{k} \frac{H^{0}(X, mL)}{I_{e} ^{\Delta}(mL)}}{p^{e(\dim(X)+1)}} . \]
\end{dfn}

\begin{thm}  \label{transformationrulewithpairs}
    Let $(X,\Delta)$ be a projective, globally $F$-regular pair over $k$. Let $L$ be a $\ZZ$-Weil divisor that is also $\QQ$-Cartier and ample.  Let $n \geq 1$ be any integer. Then, we have the transformation rule
    \[ \s(X, \Delta; L) = n \, \s(X, \Delta; L^n). \]
\end{thm}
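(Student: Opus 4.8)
The plan is to run both $F$-signatures through the limit formula of \Cref{def:Fsigoforbifoldcones} and compare them term by term in $e$, using \Cref{twistedFsig} to control how the relevant dimensions vary under a bounded shift in degree. Write $d = \dim X$ and, for $j \geq 0$, set $b_e(j) := \dim_k \frac{H^0(X, jL)}{I_e^{\Delta}(jL)}$. Since $I_e^{\Delta}(D)$ depends only on the Weil divisor $D$ (and on $\Delta$, $e$, $X$), and not on the ample bundle used to build the section ring, the \emph{same} function $b_e$ computes both sides: by \Cref{def:Fsigoforbifoldcones},
\[ \s(X,\Delta;L) = \frac{1}{[k':k]} \lim_{e \to \infty} \frac{\sum_{j \geq 0} b_e(j)}{p^{e(d+1)}}, \qquad \s(X,\Delta;L^n) = \frac{1}{[k':k]} \lim_{e \to \infty} \frac{\sum_{j \geq 0} b_e(nj)}{p^{e(d+1)}}. \]
So it suffices to prove $\sum_{j \geq 0} b_e(j) = n \sum_{j \geq 0} b_e(nj) + o(p^{e(d+1)})$; the case $n = 1$ is trivial, so assume $n \geq 2$.

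The key step is to reindex by residue classes modulo $n$, writing $\sum_{j \geq 0} b_e(j) = \sum_{i=0}^{n-1} \sum_{j \geq 0} b_e(nj+i)$, and then to show that for each fixed $i \in \{1,\dots,n-1\}$ one has
\[ \Bigl| \sum_{j \geq 0} b_e(nj+i) - \sum_{j \geq 0} b_e(nj) \Bigr| = O(p^{ed}). \]
For this I would apply \Cref{twistedFsig} with $D = iL$: there is a constant $C_i > 0$, independent of $m$ and $e$, with $|b_e(m) - b_e(m+i)| \leq C_i p^{e(d-1)}$ for all $m > 0$ (the single term $j=0$ contributes $|b_e(i) - b_e(0)| \leq \dim_k H^0(X,iL)$, a constant, absorbed into the error). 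Moreover, as used inside the proof of \Cref{twistedFsig}, there is a constant $C_2$ with $H^0(X, mL) = I_e^{\Delta}(mL)$ whenever $m > C_2 p^e$, so $b_e(m) = 0$ for $m > C_2 p^e$; hence at most $O(p^e)$ indices $j$ contribute to either sum. Combining, $\sum_{j \geq 0} |b_e(nj+i) - b_e(nj)| \leq O(p^e)\cdot C_i p^{e(d-1)} = O(p^{ed})$, which gives the displayed estimate. Summing over $i = 0, 1, \dots, n-1$ yields $\sum_{j \geq 0} b_e(j) = n \sum_{j \geq 0} b_e(nj) + O(p^{ed})$, and since $O(p^{ed}) = o(p^{e(d+1)})$, dividing by $p^{e(d+1)}$ and letting $e \to \infty$ gives $\s(X,\Delta;L) = n\,\s(X,\Delta;L^n)$.

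The main obstacle is really the uniformity packaged into \Cref{twistedFsig}: that the perturbation constant $C_i$ is independent of both $m$ and $e$, and that there is an effective vanishing threshold $m > C_2 p^e$ beyond which $b_e$ dies. Once these are in hand, the argument is bookkeeping with arithmetic progressions. One technical aside: \Cref{twistedFsig} is stated assuming $H^0(X,\cO_X) = k$, whereas \Cref{transformationrulewithpairs} is not; this is harmless, since replacing $k$ by $k' = H^0(X,\cO_X)$ (a finite, hence perfect, extension) we have $H^0(X,\cO_X) = k'$ over $k'$, the ideals $I_e^{\Delta}(D)$ are unchanged, $\dim_k = [k':k]\dim_{k'}$, and the factor $\tfrac{1}{[k':k]}$ is common to both formulas, so all estimates transfer after inflating the constants by $[k':k]$.
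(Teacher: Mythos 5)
Your proposal is correct and follows essentially the same route as the paper's proof: both expand the $F$-signature via the graded free-rank formula, use \Cref{twistedFsig} to bound the discrepancy caused by perturbing the degree by $iL$ for $i = 1, \dots, n-1$, combine this with the vanishing of $b_e(m)$ beyond $m > C p^e$ to control the number of contributing terms, and then divide by $p^{e(d+1)}$ and pass to the limit. The only cosmetic difference is that the paper first reduces to the case $r \mid n$ (so that $nL$ is Cartier) and applies \Cref{twistedFsig} with $nL$ as the base ample divisor, whereas you apply it directly with $L$ and organize the comparison by residue classes modulo $n$; both variants are valid since \Cref{twistedFsig} needs only a Cartier multiple of the ample divisor.
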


\begin{proof}
    Let $r >0$ be an integer such that $rL$ is Cartier and ample. Then, it clearly suffices to prove the theorem when $r$ divides $n$, which we assume for the rest of the proof.
    
    Set $k' = H^0 (X, \cO_X)$ and $\ell = [k' :k]$. By \Cref{formulaforFsigwithdelta} and \Cref{def:Fsigoforbifoldcones}, we have the formulas
    \[  a_{e} ^{\Delta} (L) = \frac{1}{\ell} \, \sum _{m = 0} ^{\infty} \dim_{k} \frac{H^{0}(X, mL)}{I_{e}  ^{\Delta} (mL)} \]
    and similarly
    \[  a_{e} ^{\Delta} (L^n) = \frac{1}{\ell} \, \sum _{m = 0} ^{\infty} \dim_{k} \frac{H^{0}(X, mnL)}{I_{e}  ^{\Delta} (mnL)}. \]
    Now, applying \Cref{twistedFsig} to the divisors $L, 2L, \dots, (n-1)L$, we may pick a constant $C> 0$ such that for each $1 \leq i \leq n-1$ we have
    \begin{equation} \label{perturbationfortransofrmationrule} \left| \dim_{k}\frac{H^{0}(mnL)}{I_{e} ^\Delta(mnL)} - \dim_{k}\frac{H^{0}((i + mn)L)}{I_{e} ^\Delta((i+mn)L)} \right| \leq C p^{e(\dim(X) - 1)}\end{equation}
    for all $m$ and $e$. Now, by applying \cite[Theorem~4.9]{LeePandeFsignaturefunction}, there exists a constant $C'>0$ such that for all $e \geq 1$, we have $I_e (mL) = H^0 (mL)$ for $m \geq C'(p^e -1)$. Note that since $I_e \subset \Ied$, it also follows that for $m \geq C'(p^e -1)$, we have $I_e ^\Delta(mL) = H^0 (mL)$. Therefore, using \Cref{perturbationfortransofrmationrule} we see that
    \[\left| a_e ^\Delta (L) - n a_e ^\Delta(L^n) \right | \leq CC' p^{e \dim(X)}.\]
    The transformation rule now follows when we divide by $p^{e (\dim(X)+1)}$ and take a limit as $e \to \infty$.
\end{proof}

\begin{rem}
\Cref{transformationrulewithpairs} can also be deduced from the main results of \cite{CarvajalRojasFiniteTorsors} and \cite{CarvajalRojasStablerFiniteMapsSplittingPrimes}, and the case when $\Delta = 0$ and $L$ is ample was first proved in \cite[Theorem 2.6.2]{VonKorffthesis}. However, our proof gives a simpler approach to \Cref{transformationrulewithpairs} in the case of Veronese subrings and orbifold cones over projective pairs. \Cref{twistedFsig} can also be applied to studying the $F$-signature of big divisors, which will appear in an upcoming joint work with Seungsu Lee. 
\end{rem}

\subsubsection{\textbf{Restricting Weil divisors to normal, complete intersection subschemes:}} \label{restrictingWeildivisors} Let $X$ be a normal variety over $k$ and $D$ be a Weil divisor on $X$. Suppose $Y \subset X$ is a closed subvariety that is locally defined by a regular sequence in $X$. Assume that $Y$ is also normal. In this situation, we may ``restrict" the rank one reflexive sheaf $\cF:= \cO_X(D)$ on $X$ to a reflexive sheaf $\cF_Y$ on $Y$ as follows: Let $U$ be the regular locus of $Y$. Then there is an open subset $V \subset X_{\text{reg}}$ (where $X_{\text{reg}}$ denotes the regular locus of $X$) such that $V \cap Y = U$. This is possible because $Y$ is a complete intersection in $X$. Therefore, we may restrict $\cF$ to $V$ and then to an invertible sheaf on $U$, since $\cF|_V$ is invertible. Define $\cF_Y$ to be
            \[ \cF_Y :=  i_* (\cF|_U) \]
            where $i: U \to Y$ is the inclusion. Then, $\cF_Y$ is a rank one reflexive sheaf on $Y$ because $Y$ is normal and $U$ contains all the codimension one points of $Y$. Thus, we can write $\cF_Y$ as $\cO_Y (D_Y)$ for some Weil-divisor $D_Y$ on $Y$. Furthermore, if $\text{Supp}(D)$ does not contain $Y$, then since $Y$ is normal, hence integral, $D$ naturally restricts to a Cartier divisor $D_U$ on $U$ (given by restricting the equation for $D$) and we may take $D_Y$ to be the closure of $D_U$. It is also clear from the description of restriction that it commutes with addition of Weil-divisors (since the restriction of Cartier divisors on the regular locus commutes with addition).

            Now assume that $Y$ is a normal Cartier divisor on $X$. Then, restricting a Weil divisor $D$ on $X$ to $Y$ as described above, we note that we have a left exact sequence
            \[ 0 \to \cO_X(D - Y) \to \cO_X(D) \to \cO_Y (D_Y). \]
            Note that $X_{\text{reg}} \cap Y$ contains all the codimension one points of $Y$. Thus, pushing forward from $X_{\text{reg}}$, we obtain a left exact sequence
            \begin{equation} \label{restrictingtonormaldivisorexactsequence} 0 \to H^0 (X, \cO_X(D - Y) \to H^0 (X, \cO_X(D)) \to H^0 (Y, \cO_Y(D_Y)). \end{equation}

\section{The $\FA$-invariant of globally $F$-regular pairs}  \label{section3}

In analogy with Tian's $\alpha$-invariant in complex geometry, we define the ``Frobenius-$\alpha$" invariant (denoted by $\FA$) for any projective, globally $F$-regular pair $(X, \Delta)$  (\Cref{defn.GlobFreg}) and with respect to an ample Cartier divisor.

\begin{notation} \label{notation:section3}
    By a projective pair $(X, \Delta)$ over $k$, we mean that $X$ is a normal projective variety over $k$ and $\Delta$ is an effective $\QQ$-divisor on $X$. If $S$ is the section ring $S(X, L)$ of $X$ with respect to some ample line bundle $L$ on $X$, then by the cone over $(X, \Delta)$, denoted by $(S, \tilde{\Delta})$, we mean the (affine) pair $(S, \tilde{\Delta})$ where $\tilde{\Delta}$ denotes the cone over $\Delta$ with respect to $L$ (see \Cref{sectionringdfn} and \Cref{conesoverdivisorssubsection}).
\end{notation}

\subsection{Definition of the $\FA$-invariant.}

\begin{dfn} \label{Fptdfn}
    Let $(R,\fm)$ be an $F$-finite normal local ring and $\Delta$ be an effective $\QQ$-divisor on $X = \Spec(R)$. Assume that the pair $(R, \Delta)$ is strongly $F$-regular (\Cref{defn.GlobFreg}). Then, we define the $F$-\emph{pure threshold} of an effective $\QQ$-divisor $D \geq 0$ with respect to $(X, \Delta)$ to be:
   $$ \fpt(R, \Delta ; D) := \sup \{\, \lambda \, | \, (R, \Delta +  \lambda D) \text{ is sharply $F$-split} \,
   \}. $$
   See \Cref{defnsharpFsplitting} for the definition of sharp $F$-splitting.
\end{dfn}

 \begin{rem} \label{equivcharoffpt}
       When the pair $(R, \Delta)$ is strongly $F$-regular, it follows from \Cref{lemdecreasingcoefficients} that the $F$-pure threshold of $ D$ with respect to $(R, \Delta)$ is equivalently characterized as
   \[ \fpt (R, \Delta; D) = \sup \{\, \lambda \, | \, (R, \Delta+  \lambda D) \text{ is strongly $F$-regular} \} .\]
   \end{rem}

If $D$ is the principal divisor corresponding to a function $f \in R$, we write $f$ instead of $D$ in the notation for the $F$-pure threshold.

\begin{dfn} \label{FASdfn}
        Let $(X, \Delta)$ be a projective, globally $F$-regular pair and $L$ be an ample Cartier divisor on $X$. Let $(S, \tilde{\Delta})$ be the cone over $(X, \Delta)$ with respect to $L$ (see \Cref{notation:section3} 
        Then, we define
        $$ \FA(X, \Delta; L) = \inf \{ \, \fpt(S, \tilde{\Delta};  f) \, \deg(f) \, | \, 0 \neq f \in S \text{ homogeneous element} \, \}.
        $$
\end{dfn}

\begin{notation}
    We may also use $\FA(S, \tilde{\Delta})$ to denote $\FA(X, \Delta; L)$. In case $\Delta = 0$, we just write $\FA(X, L)$ and $\FA(S)$ for $\FA (X, 0; L) $ and $\FA(S, 0)$ respectively.
\end{notation}

\begin{rem}
     Suppose $S$ is an $\NN$-graded section ring. If $\tilde{\Delta}$ is a $\QQ$-divisor on $\Spec(S)$ that is the cone over a $\QQ$-divisor on $X = \Proj(S)$, then by \Cref{splitting on the cone} the $F$-pure threshold $\fpt(S, \tilde{\Delta}; f)$ for any homogeneous element $f$ of $S$ is the same as when computed after localizing at the maximal ideal $\fm$. Therefore, we will skip writing the localization at $\fm$ whenever we consider such $F$-pure thresholds.
\end{rem}

We have the following equivalent characterizations of the $\FA$-invariant:

\begin{lem} \label{equivalentdefinitions}
 Let $(X, \Delta)$ be a projective, globally $F$-regular pair and $L$ be an ample Cartier divisor on $X$. And let $(S, \tilde{\Delta})$ denote the cone over $(X, \Delta)$ with respect to $L$. Then, $\FA(X, \Delta; L)$ from \Cref{FASdfn} is equal to the supremum of any of the following sets:
 \begin{enumerate}
     \item The set of $\lambda \geq 0$ such that the pair $(S, \tilde{\Delta} + \frac{\lambda}{n} D)$ is sharply $F$-split for every $n \in \NN$ and every effective divisor $D \sim nL$.
     \item The set of $\lambda \geq 0$ such that the pair $(S, \tilde{\Delta} + \frac{\lambda}{n} D)$ is strongly $F$-regular for every $n \in \NN$ and every effective divisor $D \sim nL$.
     \item The set of $\lambda \geq 0$ such that the pair $(X, \Delta +\frac{\lambda}{n} D)$ is globally sharply $F$-split for every $n \in \NN$ and every effective divisor $D \sim nL$.
     \item The set of $\lambda \geq 0$ such that the pair $(X, \Delta+\frac{\lambda}{n} D)$ is globally $F$-regular for every $n \in \NN$ and every effective divisor $D \sim nL$.
     \item The set of $\lambda \geq 0$ such that the pair $(X, \Delta + \lambda D)$ is globally $F$-regular for every effective $\QQ$-divisor $D \sim_\QQ L$.
 \end{enumerate}
\end{lem}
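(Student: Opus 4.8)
The plan is to prove that all five sets have the same supremum by establishing a chain of equalities (or inclusions in both directions) between them, and by showing that supremum equals $\FA(X,\Delta;L)$ from Definition~\ref{FASdfn}. The key translation tools are Lemma~\ref{splitting on the cone} (which identifies global $F$-splitting/regularity of $(X,D)$ with the corresponding property of its cone), and Lemma~\ref{lemdecreasingcoefficients} (which lets one pass between $F$-split and $F$-regular conditions via perturbation). First I would handle the equivalence of (1) and (2): since $(S,\tilde\Delta)$ is strongly $F$-regular (as $(X,\Delta)$ is globally $F$-regular), Remark~\ref{equivcharoffpt} and Lemma~\ref{lemdecreasingcoefficients}(2) show that for any fixed effective $D\sim nL$, the set of $\lambda$ with $(S,\tilde\Delta+\tfrac{\lambda}{n}D)$ sharply $F$-split and the set with it strongly $F$-regular have the same supremum; taking infimum over all such $D$ and all $n$ preserves this. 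The equivalences (1)$\iff$(3) and (2)$\iff$(4) are immediate from Lemma~\ref{splitting on the cone}, once one notes that the cone over $\Delta+\tfrac{\lambda}{n}D$ with respect to $L$ is $\tilde\Delta$ plus the cone over $\tfrac{\lambda}{n}D$, and that $\lceil(p^e-1)(\cdot)\rceil$ behaves compatibly under coning (this is where the denominator-divisible-by-$p$ technicalities alluded to in the paper's remark would enter, handled by a perturbation argument using Lemma~\ref{lemdecreasingcoefficients}).

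Next I would compare (4) and (5). The inclusion ``sup of (4) $\leq$ sup of (5)'' direction: given $\lambda$ in set (5), i.e. $(X,\Delta+\lambda D)$ globally $F$-regular for all effective $D\sim_\QQ L$, one checks any effective $D'\sim nL$ gives $\tfrac{1}{n}D'\sim_\QQ L$ effective, so $(X,\Delta+\tfrac{\lambda}{n}D')$ is globally $F$-regular; hence $\lambda$ lies in set (4). Conversely, given $\lambda$ strictly below the sup of (4), I would approximate an arbitrary effective $\QQ$-divisor $D\sim_\QQ L$ by clearing denominators: write $D=\tfrac{1}{n}D'$ with $D'\sim nL$ effective for suitable $n$, so $(X,\Delta+\tfrac{\lambda}{n}D')=(X,\Delta+\lambda D)$ is globally $F$-regular; then pass to the supremum. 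The care needed here is that (5) asks for $F$-regularity rather than merely $F$-splitting, but this matches set (4), so the comparison is clean.

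Finally I would connect the common supremum to $\FA(X,\Delta;L)=\inf\{\fpt(S,\tilde\Delta;f)\deg(f)\}$. Given a homogeneous $0\neq f\in S$ of degree $n$, the principal divisor $\mathrm{div}(f)$ on $\Spec(S)$ is the cone over some effective divisor $D_f\sim nL$ on $X$ (by the cone construction in Subsection~\ref{conesoverdivisorssubsection}, a degree-$n$ element corresponds to an effective divisor linearly equivalent to $nL$), and $\fpt(S,\tilde\Delta;f)=\sup\{\mu:(S,\tilde\Delta+\mu\,\mathrm{div}(f))\text{ sharply }F\text{-split}\}$. Setting $\lambda=\mu n$ identifies $\fpt(S,\tilde\Delta;f)\deg(f)$ with $\sup\{\lambda:(S,\tilde\Delta+\tfrac{\lambda}{n}D_f)\text{ sharply }F\text{-split}\}$. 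Taking the infimum over all homogeneous $f$ — equivalently, over all pairs $(n,D)$ with $D\sim nL$ effective arising from a section — recovers exactly the supremum in (1). I expect the main obstacle to be the bookkeeping around which effective divisors $D\sim nL$ actually arise as principal divisors of homogeneous elements of $S$ (every effective divisor linearly equivalent to $nL$ does, since a section vanishing on it is homogeneous of degree $n$) together with the perturbation arguments needed to move freely between the ceiling-of-$(p^e-1)\cdot$ formulation of sharp $F$-splitting on $X$ and on the cone when coefficients have $p$ in the denominator; these are exactly the technicalities the paper flags, and invoking Lemma~\ref{lemdecreasingcoefficients} and the results of \cite{LeePandeFsignaturefunction} should dispatch them.
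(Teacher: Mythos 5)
Your proposal is correct and takes essentially the same route as the paper: translate via \Cref{splitting on the cone} between the cone and $X$, use \Cref{equivcharoffpt} together with \Cref{lemdecreasingcoefficients} to pass between sharp $F$-splitting and strong $F$-regularity, identify homogeneous elements of $S$ of degree $n$ with effective divisors in $|nL|$ to match \Cref{FASdfn} with set (1), and clear denominators to relate (4) and (5). The only difference is that you spend more care on the coning of $\QQ$-divisors than is actually needed here — since the coefficients of $\Delta$ and of $\lceil(p^e-1)\Delta\rceil$ are carried over verbatim by the cone construction, no perturbation argument around denominators divisible by $p$ is required for this particular lemma.
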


\begin{proof}
    Statements (1) and (2) follow immediately from the definition of the $\FA$-invariant and the definition of the $F$-pure threshold (\Cref{Fptdfn} and \Cref{equivcharoffpt}). Statements (3) and (4) follow from (1) and (2) by using \Cref{splitting on the cone}. Part (5) is just a reformulation of (4) since every effective $\QQ$-divisor $D \sim _\QQ L$ can be written as $\frac{1}{n} nD$ so that $nD$ is an effective Cartier divisor and $nD \sim nL$.
\end{proof}

\subsection{Alternate characterizations}

Next, we prove the main result of this section which is a more precise and useful connection between the $\FA$-invariant and Frobenius splittings in $S$.

\begin{thm} \label{alternatecharofalpha}

    Let $S$ be a section ring (of some projective variety $X = \Proj(S)$) over $k$ and $\Delta$ be an effective $\QQ$-divisor that is the cone over an effective $\QQ$-divisor over $X$ with respect to $L = \cO_X(1)$. Assume that the pair $(S, \Delta)$ is strongly $F$-regular.
    \begin{enumerate}
        \item Let $\alpha^{\prime}(S, \Delta)$ denote the supremum of the set $\mathscr{A} (S, \Delta)$ defined as:
     $$  \{ \lambda \in \RR_{\geq 0} \, | \,  \text{ $\exists \, e (\lambda)> 0$ such that $\forall \, e \geq e(\lambda)$ and all $m \leq \lambda (p^e - 1)$, we have } I_e ^{\Delta}  (mL) = 0 \}. $$
    Then, $\alpha^{\prime}(S, \Delta) = \FA(S, \Delta)$.  See \Cref{Iedfnwithdelta} for the definition of $\Ied$.
    \item If there is an $e_0 > 0$ such that $(p^{e_0} - 1) \Delta$ is a $\ZZ$-Weil divisor, then $\FA(S, \Delta)$ is also the supremum of the set 
    $$ \mathscr{A}^{e_0} (S, \Delta) :=  \{ \lambda \in \RR_{\geq 0} \, | \,  \text{ $\forall \, e \geq 1$ with $e_0 |e$ and all $m \leq \lambda (p^e - 1)$, we have } I_e ^{\Delta}  (mL) = 0 \}. $$
    Moreover, $\FA(S, \Delta)$ belongs to the set $\mathscr{A}^{e_0}(S, \Delta)$, which consequently is a closed interval of the form $[0, \FA(S, \Delta)]$.
    \end{enumerate} 
\end{thm}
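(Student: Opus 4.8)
The plan is to connect the $F$-pure threshold computation on the cone $S$ directly to the vanishing of the subspaces $I_e^\Delta(mL)$, using the hypersurface description of splitting ideals from \Cref{eg:hypersurfaceIe} together with \Cref{splitting on the cone}. First I would recall that for a homogeneous $f \in S$ of degree $m$, the pair $(S, \tilde\Delta + \lambda \cdot \mathrm{div}(f))$ is sharply $F$-split if and only if there exists $e$ so that the natural map $S \to \Fe S(\lceil (p^e-1)\tilde\Delta\rceil + \lceil \lambda(p^e-1)\rceil \,\mathrm{div}(f))$ splits. Unwinding via the cone-correspondence, this splitting is governed by whether $\Fe f^{\lceil \lambda(p^e-1)\rceil}$ lies in the relevant splitting ideal $I_e^{\tilde\Delta}$ of $S$; equivalently, whether $f^{\lceil\lambda(p^e-1)\rceil}$, viewed as a degree $m\lceil\lambda(p^e-1)\rceil$ element, avoids the subspace $I_e^\Delta\!\big(m\lceil\lambda(p^e-1)\rceil L\big) \subset H^0(X, m\lceil\lambda(p^e-1)\rceil L)$. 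So the whole statement is really a bookkeeping translation between "for all homogeneous $f$, $\fpt(S,\tilde\Delta;f)\deg(f)\geq \lambda$" and "$I_e^\Delta(mL) = 0$ for $m \lesssim \lambda(p^e-1)$."

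For part (1), the key inclusion $\FA(S,\Delta) \geq \alpha'(S,\Delta)$ goes as follows: suppose $\lambda \in \mathscr{A}(S,\Delta)$, and take any homogeneous $f$ of degree $d_f$. For any $\mu < \lambda/d_f$ and $e \gg 0$, the integer $m := d_f\lceil \mu(p^e-1)\rceil$ satisfies $m \leq \lambda(p^e-1)$ for $e$ large (since $\mu d_f < \lambda$), hence $I_e^\Delta(mL) = 0$; since $f^{\lceil\mu(p^e-1)\rceil}$ is a nonzero element of $H^0(X, mL)$, it is not in $I_e^\Delta(mL)$, which forces the relevant splitting to exist, so $\fpt(S,\tilde\Delta;f) \geq \mu$ and thus $\fpt(S,\tilde\Delta;f)\,d_f \geq \lambda$. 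Taking the infimum over $f$ gives $\FA(S,\Delta)\geq\lambda$. Conversely, for $\FA(S,\Delta) \leq \alpha'(S,\Delta)$: if $\lambda < \FA(S,\Delta)$, I want $I_e^\Delta(mL) = 0$ for all $m \leq \lambda(p^e-1)$ and $e \gg 0$. Here is where I expect the main obstacle: $I_e^\Delta(mL)$ being nonzero only gives me \emph{one} nonzero section $f$ of degree $m$ that fails the splitting criterion at level $e$, and I need to promote this into a genuine statement about $\fpt(S,\tilde\Delta; f) \cdot m < \lambda$ --- i.e., that the splitting fails for \emph{all} $e' \geq e$, not just the one $e$. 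This requires an argument that once a power of $f$ lands in a splitting ideal it stays there for all larger $e'$ (a "once $F$-unsplit, always $F$-unsplit" monotonicity for the specific divisor $\lambda\,\mathrm{div}(f)$), which I'd handle by the standard trick: if the splitting held at some $e'$ it would force sharp $F$-splitting of $(S, \tilde\Delta + \lceil\lambda(p^e-1)\rceil\,\mathrm{div}(f)/(p^e-1))$ hence, by \Cref{lemdecreasingcoefficients}, of a pair with coefficient just below, contradicting that $f^{\lceil\lambda(p^e-1)\rceil} \in I_e^\Delta$; I should be careful with the ceilings here and perhaps pass to a subsequence of exponents or use that $\fpt$ is a well-defined supremum.

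For part (2), the extra hypothesis that $(p^{e_0}-1)\Delta$ is integral removes the ceiling-induced error terms whenever $e_0 \mid e$, so $\lceil(p^e-1)\Delta\rceil = (p^e-1)\Delta$ is exactly $\ZZ$-divisorial along that arithmetic progression. The point is then that the defining condition of $\mathscr{A}^{e_0}$ no longer needs the qualifier "$\forall e \geq e(\lambda)$" --- the relevant monotonicity in $e$ (via Frobenius powers: if $I_e^\Delta(mL) \ne 0$ then $I_{e+e_0}^\Delta(m'L) \ne 0$ for the appropriately scaled $m'$, coming from $(\,\cdot\,)^{[p^{e_0}]}$ on the hypersurface description in \Cref{eg:hypersurfaceIe}) propagates a single failure through the whole progression, so $\mathscr{A}^{e_0}(S,\Delta)$ is downward-closed and its supremum is attained. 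Concretely, I would show $\mathscr{A}^{e_0}(S,\Delta)$ is an interval containing $0$, that its supremum equals $\alpha'(S,\Delta) = \FA(S,\Delta)$ by comparing with part (1) (every $\lambda \in \mathscr{A}^{e_0}$ lies in $\mathscr{A}$ after discarding finitely many $e$, and conversely the monotonicity lets me upgrade an eventual vanishing to vanishing along the full progression), and finally that $\FA(S,\Delta)$ itself lies in $\mathscr{A}^{e_0}$ --- this last membership is exactly the statement that at the threshold value the splitting ideals still vanish, which follows because a nonzero section in $I_e^\Delta(mL)$ with $m \leq \FA(S,\Delta)(p^e-1)$ would, by the argument of part (1), produce a homogeneous $f$ with $\fpt(S,\tilde\Delta;f)\deg(f) < \FA(S,\Delta)$, a contradiction. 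The closedness of the interval $[0,\FA(S,\Delta)]$ then drops out.
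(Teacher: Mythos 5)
Your proposal is a sketch, and the easy direction --- from which you deduce $\FA(S,\Delta) \geq \alpha'(S,\Delta)$ --- is essentially sound: $f^{a} \notin I_e^{\Delta}$ means the map at level $e$ splits, hence $\fpt(S,\tilde\Delta;f^a) \geq \frac{1}{p^e-1}$ by the easy direction of \Cref{fptvssplittinwithdelta}, and scaling gives the bound. (This is a somewhat cleaner packaging than the paper's $\log_p$ limiting argument, though in spirit the same.)

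The gap is in the converse direction $\FA(S,\Delta) \leq \alpha'(S,\Delta)$, and you have misdiagnosed the obstacle. You want to show: if $\lambda < \FA(S,\Delta)$ and $0 \neq f \in H^0(mL)$ with $m \leq \lambda(p^e-1)$, then $f \notin I_e^\Delta(mL)$. Equivalently: if $\fpt(S,\tilde\Delta;f) \geq \frac{\lambda}{m} \geq \frac{1}{p^e-1}$, then the splitting at level $e$ actually occurs. This is \emph{not} a ``once $F$-unsplit, always $F$-unsplit'' monotonicity, and the contradiction you sketch (invoking \Cref{lemdecreasingcoefficients}) is circular --- from $f^{\cdot} \in I_e^\Delta$ you only learn that the map at the specific level $e$ fails, which by itself says nothing about the supremum defining the $F$-pure threshold. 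The needed ingredient is the nontrivial implication (3) $\Rightarrow$ (1) of \Cref{fptvssplitting}: if $\fpt(X,D) \geq \frac{1}{p^{e_0}-1}$ then $D$ already splits from $F_*^{e_0}$. Its proof goes through Hern\'andez's theorem (that $\fpt \geq \frac{1}{p^{e_0}-1}$ implies $(X, \frac{1}{p^{e_0}}D)$ is sharply $F$-split), followed by a factoring of $\psi_{ee_0}$ through $\psi_{e_0}$ to descend. This ingredient is nowhere in your outline, and without it your argument does not close. Moreover, this implication genuinely requires $(p^{e_0}-1)\Delta$ to be $\ZZ$-Weil; when the coefficients of $\Delta$ have $p$ in their denominators, the paper must resort to a perturbation trick (choosing an auxiliary effective $E \sim_\QQ \frac{\varepsilon}{2}L$ dominating $\mathrm{Supp}(\Delta)_{\mathrm{red}}$ to absorb the ceiling discrepancy, proving $\alpha - \varepsilon \in \mathscr{A}(S,\Delta)$ for all $\varepsilon > 0$) and then a sandwich argument via \Cref{continuityofFalpha}. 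Your proposal does not address this case at all, yet the statement is claimed for arbitrary $\QQ$-divisors $\Delta$.

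For Part (2), your monotonicity heuristic ``if $I_e^\Delta(mL) \neq 0$ then $I_{e+e_0}^\Delta(m'L) \neq 0$ by $(\cdot)^{[p^{e_0}]}$'' is plausible but not straightforwardly correct as stated: $p^{e_0}\lceil(p^e-1)\Delta\rceil$ need not equal $\lceil(p^{e+e_0}-1)\Delta\rceil$, and managing that discrepancy is precisely why the paper works with the variant subspaces $\II_e^\Delta$ (using $\lfloor p^e\Delta\rfloor$) in \Cref{monotonicity} rather than $I_e^\Delta$ directly. The membership $\FA(S,\Delta) \in \mathscr{A}^{e_0}$ in the paper follows directly from the Hern\'andez-driven implication applied at the exact value $\alpha$, not from a contradiction with the infimum over the as-yet-unproved other direction. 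You should realize that both the membership and the converse inequality hinge on the same deep input (Hern\'andez), which your plan has not secured.
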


Recall that a $\QQ$-divisor $\Delta$ on a section ring $S$ is a cone over a $\QQ$-divisor on $X = \Proj(S)$ if and only if all the prime components of $\Delta$ are defined by homogeneous prime ideals.

\begin{rem}
    Note that in the above Proposition, it is unclear if the set $\mathscr{A}(S, \Delta)$ contains any non-zero elements. This is equivalent to the positivity of $\FA(S, \Delta)$ and will be addressed in \Cref{positivityofalpha}.
    \end{rem}

\begin{rem}
    Part (2) of \Cref{alternatecharofalpha} has a monotonicity built into the statement, and hence requires careful arguments that we present below. This is particularly useful when $e_0 = 1$ (for example, this is true when $\Delta = 0$). See \Cref{cubicsurfacecontd} for an application of this idea.
\end{rem}
 
The key idea in the proof of \Cref{alternatecharofalpha} is the following lemma which is a slight generalization of a result of Hern\'andez (also see \cite[Theorem~3.11]{SchwedeCentersOfFPurity} for a similar idea). This lemma allows us to bound the iterates of the Frobenius needed to split a given divisor, in terms of the $F$-pure threshold:
        \begin{lem} \label{fptvssplitting}
    Let $(S, \fm)$ be an $F$-finite, strongly $F$-regular local ring. Fix an effective Weil-divisor $D$ on $X = \textrm{Spec}(S)$. Then, for any fixed $e_{0} > 0$, let $\psi_{e_0}$ denote the natural map
    $$  \psi_{e
    _{0}}: \cO_X \to F_{*} ^{e_{0}} (\cO_X (D)). $$
    Then, the following are equivalent:
    \begin{enumerate}
        \item The map $\psi _{e_0}$ splits as a map of $\cO_X$-modules.
        \item The pair $(X, \frac{1}{p^{e_0} -1} D)$ is sharply $F$-split (\Cref{defnsharpFsplitting}). 
        \item The $F$-pure threshold of $ (X,D)$ is at least $ \frac{1}{p^{e_0} - 1} $ (\Cref{Fptdfn}). 
    \end{enumerate}
    In particular, $D$ splits from $F_* ^{e_0}$ if and only if $pD$ splits from $F_* ^{e_0+1}$.
\end{lem}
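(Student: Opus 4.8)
The plan is to prove the cycle $(1)\Rightarrow(2)\Rightarrow(3)\Rightarrow(1)$ and then deduce the closing ``in particular''. Since $X=\Spec(S)$ is normal and all the sheaves that occur are reflexive, I would first restrict to the regular locus $U\subseteq X$, where $D$ becomes Cartier, written locally as $\operatorname{div}(f)$; under the standard twist identification the natural map $\psi_e\colon\cO_X\to F^e_*\cO_X(D)$ is then $1\mapsto F^e_*f$, and $pD$ corresponds to $f^p$. With this in hand, $(1)\Rightarrow(2)$ is essentially a tautology: since $D$ is a $\ZZ$-divisor, $\big\lceil(p^{e_0}-1)\cdot\tfrac{1}{p^{e_0}-1}D\big\rceil=D$ coefficientwise, so a splitting of $\psi_{e_0}$ is by definition a splitting witnessing sharp $F$-splitness of $(X,\tfrac{1}{p^{e_0}-1}D)$ at the level $e_0$.

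The reverse $(2)\Rightarrow(1)$ is a descent. Write $\Delta=\tfrac{1}{p^{e_0}-1}D$. If $(X,\Delta)$ is sharply $F$-split, witnessed at some level $e_1$, then composing a splitting with an appropriately twisted Frobenius pushforward of itself, and restricting the resulting retract to the intermediate module (using $(1+p^{e_1})\lceil(p^{e_1}-1)\Delta\rceil\ge\lceil(p^{2e_1}-1)\Delta\rceil$), shows it is witnessed at every positive multiple of $e_1$. Taking $e'=\operatorname{lcm}(e_0,e_1)$ kills the ceiling, so $\cO_X\to F^{e'}_*\cO_X\big(\tfrac{p^{e'}-1}{p^{e_0}-1}D\big)$ splits. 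A direct local computation on $U$ identifies this natural level-$e'$ map with a composite $g\circ\psi_{e_0}$, where $g$ is the composition of the suitably twisted Frobenius pushforwards of $\psi_{e_0}$; hence any retract $r$ of it yields the retract $r\circ g$ of $\psi_{e_0}$, which is $(1)$.

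The implication $(2)\Rightarrow(3)$ is immediate from \Cref{Fptdfn}. The remaining implication $(3)\Rightarrow(1)$ is the substance of the lemma, and is where one needs a mild generalization of a result of Hern\'andez (see \cite[Theorem~3.11]{SchwedeCentersOfFPurity} for a related statement). Set $\nu_D(e):=\max\{m\ge 0:\ mD\text{ splits from }F^e_*\}$; this is finite since $\fpt(S;D)\le 1$ when $D\ne 0$ (the case $D=0$ being trivial, a strongly $F$-regular ring being $F$-split), and the composition argument above gives $\nu_D(e+1)\ge p\,\nu_D(e)+\nu_D(1)$, so $\nu_D(e)/p^e$ is non-decreasing with $\lim_{e\to\infty}\nu_D(e)/p^e=\fpt(S;D)$ (using \Cref{defnsharpFsplitting} and \Cref{lemdecreasingcoefficients}). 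The crucial input is Hern\'andez's exact formula $\nu_D(e)=\big\lceil p^e\,\fpt(S;D)\big\rceil-1$ for every $e$. Granting it, $(3)$ forces $p^{e_0}\,\fpt(S;D)\ge\tfrac{p^{e_0}}{p^{e_0}-1}>1$, hence $\big\lceil p^{e_0}\fpt(S;D)\big\rceil\ge 2$, so $\nu_D(e_0)\ge 1$; that is, $D$ splits from $F^{e_0}_*$, which is $(1)$.

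Finally, the closing statement. On $U$ with $D=\operatorname{div}(f)$: given a retract $s$ of the map $1\mapsto F^{e_0+1}_*(f^p)$, precomposing with the $\cO_X$-linear Frobenius $F^{e_0}_*\cO_X\to F^{e_0+1}_*\cO_X$, $F^{e_0}_*a\mapsto F^{e_0+1}_*(a^p)$, produces a retract of $\psi_{e_0}$; conversely, given a retract $\phi$ of $\psi_{e_0}$ and a splitting $\theta$ of $\cO_X\to F_*\cO_X$ (which exists because $S$ is $F$-split), the composite $\phi\circ F^{e_0}_*\theta$ retracts $1\mapsto F^{e_0+1}_*(f^p)$, since $F^{e_0}_*\theta\big(F^{e_0+1}_*(f^p)\big)=F^{e_0}_*\big(\theta(f\cdot F_*1)\big)=F^{e_0}_*f$. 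The main obstacle is the implication $(3)\Rightarrow(1)$, and inside it the lower bound $\nu_D(e)\ge\lceil p^e\fpt(S;D)\rceil-1$ in Hern\'andez's formula: a naive counting argument with the functions $\nu_D$ only gives the easy upper bound $\nu_D(e)\le\lceil p^e\fpt(S;D)\rceil-1$ (equivalently $\nu_D(e)/p^e\le\fpt(S;D)$) and, from $\nu_D(e_0)=0$, merely $\fpt(S;D)\le 1$ — not the inequality needed; establishing the sharp lower bound for a strongly $F$-regular (not necessarily regular) local ring and a possibly non-$\QQ$-Cartier Weil divisor is the real content, carried out by adapting Hern\'andez's argument.
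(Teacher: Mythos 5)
Your treatment of $(1)\Rightarrow(2)$ (a tautology since $\lceil(p^{e_0}-1)\cdot\tfrac{1}{p^{e_0}-1}D\rceil=D$ for an integral $D$), of $(2)\Rightarrow(3)$, of the auxiliary descent $(2)\Rightarrow(1)$, and of the closing ``in particular'' claim are all correct; in fact your direct argument for the last assertion, precomposing a retract with the $\cO_X$-linear Frobenius $F^{e_0}_*\cO_X\to F^{e_0+1}_*\cO_X$ one way and composing with $F^{e_0}_*\theta$ for a Frobenius splitting $\theta$ the other way, is cleaner and more self-contained than the paper's (which deduces it from the three-way equivalence plus $\fpt(X,pD)=\tfrac{1}{p}\fpt(X,D)$). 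The genuine gap is in $(3)\Rightarrow(1)$: you reduce it to the identity $\nu_D(e)=\lceil p^e\fpt(S;D)\rceil-1$, correctly note that only the lower bound $\nu_D(e)\geq\lceil p^e\fpt(S;D)\rceil-1$ does any work, and then do not prove it --- you explicitly defer it to ``adapting Hern\'andez's argument'' for a strongly $F$-regular (not regular) local ring and a possibly non-$\QQ$-Cartier Weil divisor. That deferral is exactly the implication being asked for, so as written the cycle does not close.

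The paper avoids this altogether by working with the coefficient $\tfrac{1}{p^{e_0}}$ rather than $\tfrac{1}{p^{e_0}-1}$. Since $\fpt(S;D)\geq\tfrac{1}{p^{e_0}-1}>\tfrac{1}{p^{e_0}}$, the pair $(X,\tfrac{1}{p^{e_0}}D)$ is strongly $F$-regular (indeed $\tfrac{1}{p^{e_0}}$ lies strictly below the supremum defining $\fpt$, so \Cref{lemdecreasingcoefficients} applies), hence sharply $F$-split, witnessed at some level $e$. By \cite[Proposition~3.3]{SchwedeSharpTestElements} it is witnessed at level $ee_0$, and since $\lceil\tfrac{p^{ee_0}-1}{p^{e_0}}\rceil=p^{(e-1)e_0}$ the resulting split map is $\cO_X\to F_*^{ee_0}\cO_X(p^{(e-1)e_0}D)$. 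On the regular locus this factors as $\psi_{e_0}$ followed by the $\cO_X$-linear ``twisted Frobenius'' $F^{e_0}_*\phi_{(e-1)e_0}$ sending $F^{e_0}_*a\mapsto F^{ee_0}_*(a^{p^{(e-1)e_0}})$; any retract of the composite therefore restricts to a retract of $\psi_{e_0}$. This is precisely the factoring device you already use (and verify) in your $(2)\Rightarrow(1)$ step --- the only change needed is to start from $\tfrac{1}{p^{e_0}}D$ instead of $\tfrac{1}{p^{e_0}-1}D$, which replaces $\tfrac{p^{e'}-1}{p^{e_0}-1}$ by the power of $p$, $p^{(e-1)e_0}$, and removes any reliance on the sharp version of Hern\'andez's formula or on $\tfrac{1}{p^{e_0}-1}$ being an ``accessible'' value of the $F$-pure threshold.
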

\begin{proof}
It follows immediately from the definitions that (1) implies (2), and (2) implies (3). Hence, it remains to show that (3) implies (1).

Following \cite[Thoerem 4.9]{HernandezFpurityofHypersurfaces}, if $\text{fpt}_{\fm}(X, D) \geq \frac{1}{p^{e_0}-1}$, we must must have that the pair $(X, \frac{1}{p^{e_0}} D)$ is sharply $F$-split (even strongly $F$-regular, by \Cref{lemdecreasingcoefficients}). Thus, there is an $e> 0$ such that the natural map
\begin{equation} \label{psidfn}
     \psi_{e} (D) : \cO_X \to \Fe(\cO_X (\lceil \frac{p^{e} - 1}{p^{e_0}} \rceil D) \end{equation} 
splits. Since the same holds for $\psi_{n e} (D)$ for any natural number $n \geq 1$ (see \cite[Proposition 3.3]{SchwedeSharpTestElements} for the proof), we get the map:
$$  \psi_{e e_0} (D) : \cO_X \to F_{*} ^{ee_0} (\cO_X ( \lceil \frac{p^{e e_0} - 1}{p^{e_0}} \rceil D) ) = F_* ^{e e_0} (\cO_X (p^{(e - 1) e_0} D))$$
splits. Note that $\psi_{e_0}$ as defined in \Cref{psidfn} matches with the map considered in the statement of the Lemma.

Let $U \subset X$ denote the regular locus of $X$. Note that since $S$ is strongly $F$-regular, it is normal, and hence $U$ contains all the codimension $1$ points of $X$. Thus, because $\phi _{e_0}$ is a map between reflexive sheaves, to show that it splits, it suffices to show that its restriction of $U$ splits. Over $U$, we may construct the map $\psi_{e e_0}$ as follows: First consider the map 
$$ \phi_{(e-1) e_0} : \cO_U (D) \to F_{*} ^{(e-1)e_{0}} \cO_U (p^{(e-1)e_0} D) $$
obtained by twisting the $(e-1)e_0 ^{\text{th}}$-iterate of the Frobenius map by the invertible sheaf $\cO_{U} (D)$. If $f$ denotes the local equation of $D$, then $\phi_{(e-1) e_0} $ is defined by sending 
$$ f \mapsto F_{*} ^{(e-1) e_0} f^{p^{(e-1)e_0}} .$$ Then, after restricting to $U$, we have that
 \[  \psi _{ e e_0} = F_* ^{e_0} \phi_{(e-1)e_0}  \circ (\psi_{e_0} |_U) \]
where the right hand side is the composition
$$ F_* ^{e_0} \phi_{(e-1)e_0}  \circ (\psi_{e_0} |_U) : \cO_U \to F_* ^{e_0} \cO_U (D) \to F_* ^{e e_0} (\cO_X (p^{(e - 1) e_0} D))  .$$
Therefore, if $\psi_{e e_0} $ splits then so does $\psi_{e_0}$. This proves that part (3) implies part (1). The above proof also shows that if $pD$ splits from $F_* ^{e_0 + 1}$, then $D$ splits from $F_* ^{e_0}$. Thus, the last  part of the lemma follows from observing that $\fpt(X, pD) = \frac{\fpt(X,D)}{p}$ and $p(p^{e_0} -1) \leq p^{e_0+1} - 1$.
\end{proof}


Next we prove a pairs version of \Cref{fptvssplitting}.

\begin{lem} \label{fptvssplittinwithdelta}
    Let $(X = \Spec(R), \Delta)$ be a local, $F$-finite, strongly $F$-regular pair, and $D$ be an effective, $\ZZ$-Weil divisor on $X$. Assume that $e_0 \geq 1$ is
    such that $(p^{e_0} -1 )\Delta$ is $\ZZ$-Weil (i.e., all coefficients of $(p^{e_0} -1) \Delta$ are integers). Then, the natural map
    \[ \psi_{e_0}: \cO_X \to F_* ^{e_0} \cO_X ( (p^{e_0} -1) \Delta + D)  \]
    splits if and only if
    $\fpt(X, \Delta; D)$ is at least $\frac{1}{p^{e_0} -1}$.

    More generally, if any of the coefficients of $\Delta$ have denominators divisible by $p$, then we still have that if the map
    \[\psi_{e}: \cO_X \to \Fe \cO_X(\lceil (p^e -1) \Delta \rceil + D)\]
    splits for some $e \geq 1$, then $\fpt(X, \Delta; D) \geq \frac{1}{p^e -1}$.
\end{lem}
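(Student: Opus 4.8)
The plan is to reduce the pairs statement to the already-proven divisor case (\Cref{fptvssplitting}) by absorbing the divisor $(p^{e_0}-1)\Delta$ (or its ceiling analog) into the coefficient-manipulations permitted there, while keeping careful track of the ceiling operations when the coefficients of $\Delta$ have denominators divisible by $p$. First I would treat the clean case: suppose $e_0 \geq 1$ is such that $(p^{e_0}-1)\Delta$ is a $\ZZ$-Weil divisor. If $\psi_{e_0}$ splits, then immediately the pair $(X, \Delta + \frac{1}{p^{e_0}-1} D)$ is sharply $F$-split, since $\lceil (p^{e_0}-1)(\Delta + \frac{1}{p^{e_0}-1}D)\rceil = (p^{e_0}-1)\Delta + D$, so $\fpt(X,\Delta;D) \geq \frac{1}{p^{e_0}-1}$ by definition. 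Conversely, if $\fpt(X,\Delta;D) \geq \frac{1}{p^{e_0}-1}$, I would mimic the Hern\'andez-style argument from the proof of \Cref{fptvssplitting}: by \cite[Theorem~4.9]{HernandezFpurityofHypersurfaces} (applied to the pair, as in the source) one gets that $(X, \Delta + \frac{1}{p^{e_0}}D)$ is sharply $F$-split, hence strongly $F$-regular by \Cref{lemdecreasingcoefficients}, and then pass to a high iterate $ee_0$ of the Frobenius to split a map of the form $\cO_X \to F_*^{ee_0}\cO_X((p^{ee_0}-1)\Delta + p^{(e-1)e_0}D)$, using $(p^{e_0}-1)\mid(p^{ee_0}-1)$ so that the $\Delta$-part is honestly integral. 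Then I would factor this composite map over $U = X_{\mathrm{reg}}$ exactly as in the proof of \Cref{fptvssplitting} — twisting the intermediate Frobenius by the invertible sheaf $\cO_U((p^{e_0}-1)\Delta + D)|_U$ — to conclude that $\psi_{e_0}$ itself splits.

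For the second (general) statement, where some coefficients of $\Delta$ have $p$ in the denominator, the claim is one-directional: if $\psi_e \colon \cO_X \to F_*^e\cO_X(\lceil(p^e-1)\Delta\rceil + D)$ splits for some $e\geq 1$, then $\fpt(X,\Delta;D)\geq \frac{1}{p^e-1}$. This direction is the easy one and follows directly from the definitions: splitting of $\psi_e$ says precisely that $(X, \Delta + \frac{1}{p^e-1}D)$ is sharply $F$-split, since $\lceil(p^e-1)\Delta + D\rceil = \lceil(p^e-1)\Delta\rceil + D$ as $D$ is integral, giving $\fpt(X,\Delta;D) \geq \frac{1}{p^e-1}$ by \Cref{Fptdfn}. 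I expect no obstacle here beyond carefully noting that ceilings distribute over adding an integral divisor.

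The main obstacle will be the converse direction in the clean case: making the reduction to \Cref{fptvssplitting} genuinely work requires that when I pass from $\fpt(X,\Delta;D)\geq\frac{1}{p^{e_0}-1}$ to a splitting, I only ever manipulate the divisor $(p^{e_0}-1)\Delta + D$, which is integral, so that the ceiling functions appearing in the sharp $F$-splitting conditions along the way never introduce an unwanted discrepancy. The delicate point is the step that replaces the ``$\frac{1}{p^{e_0}}$'' perturbation of Hern\'andez with the divisorial statement and then iterates: I must check that the twisting-and-composing factorization over $U$ respects the integral structure of $(p^{ee_0}-1)\Delta$, i.e., that $(p^{e_0}-1) \mid (p^{ee_0}-1)$ guarantees $(p^{ee_0}-1)\Delta \in \ZZ$-Weil, and that the local equation $f$ of $D$ plays the same role as in \Cref{fptvssplitting} while the $\Delta$-twist is carried along via the reflexive sheaf $\cO_U((p^{e_0}-1)\Delta)|_U$. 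Once that bookkeeping is in place, the argument is a line-by-line adaptation of the proof of \Cref{fptvssplitting}, so I would state it as such rather than rewriting the whole thing. I would also remark that, combined with \Cref{lem:addingeffective}, one recovers the ``in particular'' type consequence that $\fpt$ being at least $\tfrac{1}{p^{e_0}-1}$ is detected at finite level $e_0$ whenever $(p^{e_0}-1)\Delta$ is integral, which is what makes Part (2) of \Cref{alternatecharofalpha} go through.
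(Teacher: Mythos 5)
Your forward direction and the general (denominators-divisible-by-$p$) statement match the paper essentially verbatim: both observe that $\lceil(p^e-1)\Delta\rceil + D = \lceil(p^e-1)(\Delta + \tfrac{1}{p^e-1}D)\rceil$ so that splitting of $\psi_e$ gives sharp $F$-splitting of the pair, and both appeal to \Cref{fptvssplitting}(2) and \Cref{Fptdfn}.

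Where you diverge — and where there is a potential gap — is the converse in the clean case. You propose to re-run the Hern\'andez/iterate/factor-over-$U$ argument of \Cref{fptvssplitting} from scratch, this time with the extra divisor $(p^{e_0}-1)\Delta$ in tow, citing ``\cite[Theorem~4.9]{HernandezFpurityofHypersurfaces} (applied to the pair, as in the source).'' But the source applies Hern\'andez's theorem to a single divisor, not to a pair $(X,\Delta)$ with an auxiliary boundary; you would owe the reader a pairs version of that threshold $\geq\tfrac{1}{p^{e_0}-1}\Rightarrow(X,\tfrac{1}{p^{e_0}}D)$ splitting statement, which is not a free generalization. You'd also need to reconcile the twist: iterating $\psi_{e_0}$ for the divisor $D' = (p^{e_0}-1)\Delta+D$ produces $F_*^{ee_0}\cO_X(p^{(e-1)e_0}D')$, which differs from your displayed $F_*^{ee_0}\cO_X((p^{ee_0}-1)\Delta + p^{(e-1)e_0}D)$ by the effective divisor $(p^{(e-1)e_0}-1)\Delta$, so the factorization over $X_{\mathrm{reg}}$ must be run carefully with that discrepancy tracked.

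The paper avoids both issues with a shorter reduction: it first upgrades the hypothesis to $\fpt(X, (p^{e_0}-1)\Delta + D) \geq \tfrac{1}{p^{e_0}-1}$ \emph{as a no-pair $F$-pure threshold of a single integral divisor}. This is done by a two-line perturbation: for $0 < \varepsilon \ll 1$, strong $F$-regularity of $\bigl(X, \tfrac{1-\varepsilon}{p^{e_0}-1}\,[(p^{e_0}-1)\Delta + D]\bigr)$ follows from that of $(X, \Delta + \tfrac{1-\varepsilon}{p^{e_0}-1}D)$ by \Cref{lemdecreasingcoefficients} (since $(1-\varepsilon)\Delta \leq \Delta$), and the latter is given by \Cref{equivcharoffpt} and the hypothesis on $\fpt(X,\Delta;D)$. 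With that inequality in hand, one invokes \Cref{fptvssplitting}(3)$\Rightarrow$(1) as a black box on the integral divisor $(p^{e_0}-1)\Delta+D$, immediately giving the splitting of $\psi_{e_0}$. I'd recommend this route: it never reopens the Hern\'andez argument, it uses only lemmas already stated for pairs, and it makes the ceiling bookkeeping vacuous because only integral divisors appear.
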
   

\begin{proof}
    First, suppose that $\psi_{e_0}$ splits. Then by applying Part (2) of \Cref{fptvssplitting} applied to the divisor $(p^{e_0} -1) \Delta + D$, we get that the pair $(S, \Delta+ \frac{1}{p^{e_0} -1} D)$ is sharply $F$-split. Thus, by the definition of the $F$-pure threshold (\Cref{Fptdfn}), we get that in terms of the $F$-pure threshold of pairs, $\fpt(X, \Delta; D)$ is at least $\frac{1}{p^{e_0} - 1}.$ This also proves the last part of the lemma, since we have $\lceil (p^e -1 ) \Delta \rceil + D = \lceil (p^e -1 ) (\Delta + \frac{1}{p^e -1 } D) \rceil$.
    
    Conversely, assume that $\fpt(X, \Delta; D)$ is at least $\frac{1}{p^{e_0} - 1}$. Then, we claim that
    we also have:
    \begin{equation} \label{fptlemmainequality}  \fpt(X, (p^{e_0} -1) \Delta + D) \geq \frac{1}{p^{e_0} - 1}.\end{equation}
    To prove this, it is sufficient to check that for every rational number $\varepsilon$ such that $0 < \varepsilon \ll 1$, the pair $(X, \frac{1 - \varepsilon}{p^{e_0} -1} (p^{e_0} -1 )\Delta + \frac{1 - \varepsilon}{p^{e_0} -1} D)$ is strongly $F$-regular. By \Cref{lemdecreasingcoefficients}, this follows if we show that $(X, \Delta + \frac{1- \varepsilon}{p^{e_0} - 1} D)$ is strongly $F$-regular. Since $(X, \Delta)$ is assumed to be strongly $F$-regular, this follows from \Cref{equivcharoffpt}. This proves \Cref{fptlemmainequality}. Then, applying \Cref{fptvssplitting} to the divisor $(p^{e_0} - 1) \Delta + D$, we get that the map $\psi_{e_0}$ splits. This completes the proof of the converse and hence of the lemma.
\end{proof}

We also need to following continuity properties of the $\FA$-invariant with respect to the coefficients of $\Delta$.

\begin{Pn} \label{continuityofFalpha}
    Let $(X, \Delta)$ be a projective, globally $F$-regular pair over $k$ and let $L$ be an ample line bundle over $X$. Then,
    \begin{enumerate}
        \item For every effective $\QQ$-divisor $\Delta' \leq \Delta$, we have
        \[ \FA(X, \Delta'; L) \geq \FA(X, \Delta; L). \]
        \item The $\FA$-invariant is right-continuous:
        \[ \FA(X, \Delta; L) = \sup \{ \FA(X, (1+ \varepsilon )\Delta; L) \, | \,   0 < \varepsilon \ll 1, \,  \varepsilon \in \QQ \}.\]
        \item The $\FA$-invariant is also left-continuous:
        \[ \FA(X, \Delta; L) = \inf \{ \FA(X, (1 - \varepsilon) \Delta; L) \, | \,   0 < \varepsilon \ll 1, \, \varepsilon \in \QQ \}. \]
        Together with Part (2), this shows that $\FA(X, \lambda \Delta; L)$ is a continuous function of $\lambda$.
    \end{enumerate}
\end{Pn}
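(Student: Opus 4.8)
The plan is to derive everything from the reformulations of the $\FA$-invariant in \Cref{equivalentdefinitions} (especially parts (3) and (5)) together with the monotonicity of global $F$-regularity and global sharp $F$-splitting under shrinking the boundary (\Cref{lemdecreasingcoefficients}). Part (1) is then immediate: if $\Delta'\le\Delta$ and $(X,\Delta+\lambda D)$ is globally $F$-regular for every effective $\QQ$-divisor $D\sim_{\QQ}L$, then $\Delta'+\lambda D\le\Delta+\lambda D$, so $(X,\Delta'+\lambda D)$ is globally $F$-regular by \Cref{lemdecreasingcoefficients}(1); hence the set of $\lambda$ describing $\FA(X,\Delta';L)$ in \Cref{equivalentdefinitions}(5) contains the one describing $\FA(X,\Delta;L)$, and taking suprema gives the inequality. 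In particular $(1+\varepsilon)\Delta\ge\Delta\ge(1-\varepsilon)\Delta$ already yields $\FA(X,(1+\varepsilon)\Delta;L)\le\FA(X,\Delta;L)\le\FA(X,(1-\varepsilon)\Delta;L)$, so the content of parts (2) and (3) lies in the reverse directions.

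For part (2), fix $\mu<\FA(X,\Delta;L)$ and pick a rational $\mu'$ with $\mu<\mu'<\FA(X,\Delta;L)$, so that $(X,\Delta+\mu'D')$ is globally $F$-regular for every effective $\QQ$-divisor $D'\sim_{\QQ}L$ (\Cref{equivalentdefinitions}(5)). Since $L$ is ample and the ample cone is open, I can choose $\varepsilon>0$ small enough that the $\QQ$-divisor $\tfrac{\mu'-\mu}{\mu'}L-\tfrac{\varepsilon}{\mu'}\Delta$ is still ample, hence $\QQ$-linearly equivalent to some effective $\QQ$-divisor $E$; shrinking $\varepsilon$ further if necessary, $(X,(1+\varepsilon)\Delta)$ is globally $F$-regular (a standard consequence of the openness of strong $F$-regularity on the cone). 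Now let $D\sim_{\QQ}L$ be any effective $\QQ$-divisor and set $D'':=\tfrac{\mu}{\mu'}D+\tfrac{\varepsilon}{\mu'}\Delta+E$. Then $D''$ is an effective $\QQ$-divisor with $D''\sim_{\QQ}L$, and $\mu'D''=\mu D+\varepsilon\Delta+\mu'E\ge\mu D+\varepsilon\Delta$, hence $\Delta+\mu'D''\ge(1+\varepsilon)\Delta+\mu D$. Since $(X,\Delta+\mu'D'')$ is globally $F$-regular, \Cref{lemdecreasingcoefficients}(1) gives that $(X,(1+\varepsilon)\Delta+\mu D)$ is globally $F$-regular; as $D$ was arbitrary, \Cref{equivalentdefinitions}(5) yields $\FA(X,(1+\varepsilon)\Delta;L)\ge\mu$. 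Because $\varepsilon$ did not depend on $D$, and $\FA(X,(1+\varepsilon)\Delta;L)$ is non-increasing in $\varepsilon$ by part (1), it follows that $\sup\{\FA(X,(1+\varepsilon)\Delta;L):0<\varepsilon\ll1\}\ge\mu$; letting $\mu\uparrow\FA(X,\Delta;L)$ proves part (2).

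For part (3) I argue by contradiction. Fix $\mu>\FA(X,\Delta;L)$ and suppose $\FA(X,(1-\varepsilon)\Delta;L)>\mu$ for every $\varepsilon>0$. By \Cref{equivalentdefinitions}(5) and downward-closedness of the defining interval, $(X,(1-\varepsilon)\Delta+\mu D)$ is then globally $F$-regular for every $\varepsilon>0$ and every effective $\QQ$-divisor $D\sim_{\QQ}L$. On the other hand, since $\mu>\FA(X,\Delta;L)$, the characterization in \Cref{equivalentdefinitions}(3) (in its $\QQ$-divisor form, as used in the proof of part (5)) produces an effective $\QQ$-divisor $D_0\sim_{\QQ}L$ for which $(X,\Delta+\mu D_0)$ is \emph{not} globally sharply $F$-split. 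Writing $\Theta:=\mu D_0$, we have produced a projective pair with $(X,\Theta+t\Delta)$ globally $F$-regular for all $t\in[0,1)$ but $(X,\Theta+\Delta)$ not globally sharply $F$-split. The remaining step — and the main obstacle — is to rule this out: I must show that the set $\{\,t\ge0:(X,\Theta+t\Delta)\text{ is globally sharply }F\text{-split}\,\}$ is closed, equivalently (passing to the cone via \Cref{splitting on the cone}) that the $F$-pure threshold $\fpt(S_{\fm},\tilde{\Theta};\tilde{\Delta})$ is attained as a sharply $F$-split coefficient whenever the pairs below it are strongly $F$-regular. Granting this, the set contains $[0,1)$, hence contains $1$, contradicting the choice of $D_0$; thus some $\varepsilon>0$ satisfies $\FA(X,(1-\varepsilon)\Delta;L)\le\mu$, so $\inf\{\FA(X,(1-\varepsilon)\Delta;L):0<\varepsilon\ll1\}\le\mu$, and letting $\mu\downarrow\FA(X,\Delta;L)$ finishes part (3). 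Together with parts (1)–(2) this gives the continuity of $\lambda\mapsto\FA(X,\lambda\Delta;L)$.

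To prove the closedness statement I would translate it into the language of the $\Ied$-subspaces (\Cref{Iedfnwithdelta}) via \Cref{alternatecharofalpha}, and control how the round-up $\lceil(p^e-1)\,t\Delta\rceil$ varies with $t$: as $t$ grows it increases only along the finitely many components of $\Delta$ for which $(p^e-1)t$ is near an integer, by a divisor bounded by $\text{Supp}(\Delta)$. Using the dual description of $\Ied$ (\Cref{dualIe}) together with the perturbation estimate of \Cref{twistedFsig}, one bounds the resulting change in $\dim_k H^0(mL)/\Ied(mL)$ by a term of order $p^{e(\dim X-1)}$, which is exactly enough to conclude. The case in which some $(p^{e_0}-1)\Delta$ is a $\ZZ$-divisor is cleanest, since there \Cref{alternatecharofalpha}(2) makes the relevant set of Frobenius iterates monotone; the general case follows by approximating $\Delta$ from below by $\QQ$-divisors with denominators prime to $p$. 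This last piece of bookkeeping with $p\mid$ denominators is the technical perturbation argument that makes part (3) the most delicate part of the proposition, and I expect it to be the main obstacle.
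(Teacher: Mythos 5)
Your Part (1) matches the paper. Your Part (2) is correct in spirit, though the route through the openness of the ample cone has a hidden snag: $\Delta$ is not assumed $\QQ$-Cartier in this proposition, so ``$\tfrac{\mu'-\mu}{\mu'}L-\tfrac{\varepsilon}{\mu'}\Delta$ is ample'' is not literally meaningful. The paper sidesteps this by instead fixing once and for all an effective Cartier divisor $D_0\sim rL$ with $D_0\geq\Delta$ (possible since $L$ is ample), and then writing, for any effective $D\sim_\QQ L$ and rational $\alpha'<\alpha$,
\[
(1+\varepsilon)\Delta+(\alpha'-r\varepsilon)D \;\leq\; \Delta+\big(\varepsilon D_0+(\alpha'-r\varepsilon)D\big),
\]
so that the right-hand side is $\Delta$ plus an effective $\QQ$-divisor $\sim_\QQ\alpha' L$. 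This proves the sharper, quantitative inequality
\[
\FA(X,(1+\varepsilon)\Delta;L)\;\geq\;\FA(X,\Delta;L)-r\varepsilon
\]
for all small $\varepsilon>0$. You get the qualitative bound $\FA(X,(1+\varepsilon)\Delta;L)\geq\mu$ for each fixed $\mu<\alpha$, which suffices for Part (2) but throws away the explicit linear rate.

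The real gap is in Part (3). You reduce it, by contradiction, to the claim that $\{\,t\geq0:(X,\Theta+t\Delta)\text{ is globally sharply $F$-split}\,\}$ is closed, i.e.\ that the $F$-pure threshold of $\tilde\Delta$ (with boundary $\tilde\Theta$) is attained as a sharply $F$-split coefficient. This is a nontrivial assertion — essentially a version of ``$F$-pure thresholds are achieved'' — and you do not prove it; the sketch you offer via $\Ied$-subspaces, duality, and \Cref{twistedFsig} is heuristic and leaves exactly the delicate $p\mid$ denominator bookkeeping open. More importantly, this whole detour is unnecessary. The paper's Part (3) is a one-line corollary of the quantitative inequality above: apply it with $\tfrac{1}{1+\varepsilon}\Delta$ in place of $\Delta$ (note $\tfrac{1}{1+\varepsilon}\Delta\leq\Delta\leq D_0$, so the same $D_0$ and $r$ work) to get
\[
\FA(X,\Delta;L)\;\geq\;\FA\!\left(X,\tfrac{1}{1+\varepsilon}\Delta;L\right)-r\varepsilon,
\]
and since $\tfrac{1}{1+\varepsilon}\Delta<\Delta$, the right-hand side is $\geq\alpha^{-}-r\varepsilon$; letting $\varepsilon\to0$ gives $\alpha\geq\alpha^{-}$, and $\alpha\leq\alpha^{-}$ is Part (1). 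So the correct move is to keep track of the rate $r\varepsilon$ in your Part (2) rather than quantify existentially over $\varepsilon$; Part (3) then falls out with no additional machinery, no closedness of the sharply $F$-split locus, and no perturbation of $\lceil(p^e-1)t\Delta\rceil$.
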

\begin{proof}
    Let $S$ denote the section ring of $X$ with respect to $L$ and $\fm $ denote the homogeneous maximal ideal of $S$. Set $\alpha = \FA(X,\Delta;L)$.
    \begin{enumerate}
        \item Let $\tilde{\Delta}$ and $\tilde{\Delta'}$ denote the cones over $\Delta$ and $\Delta'$ with respect to $L$ respectively. Then, for any non-zero homogeneous element $f \in S$, since $\Delta' \leq \Delta$, we have $\fpt(S, \tilde{\Delta'};f) \geq \fpt(S, \tilde{\Delta}; f)$ by \Cref{lemdecreasingcoefficients}. Part (1) of the lemma now follows from the definition of the $\FA$-invariant.
        \item Let $\alpha^+$ denote the supremum in the statement. It follows from Part (1) that $\FA(X, \Delta;L)$ is at least $\FA(X, (1 + \varepsilon) \Delta; L)$ for each $\varepsilon > 0$. So, we have $\alpha \geq \alpha^+$.

        Now, pick (and fix) an effective divisor $D_0 \sim rL$ for some $r \gg 0$ such that $D_0 \geq \Delta$. Then, for all $0 < \varepsilon \ll 1$ (so that $\alpha - r \varepsilon > 0$), we claim that
        \begin{equation} \label{inequalityalphaepsilon}
            \FA(X,(1+ \varepsilon) \Delta; L) \geq \alpha - r\varepsilon.  \end{equation}
        To see this, let $\alpha' < \alpha$ be an arbitrary rational number and $D \geq 0$ be an arbitrary effective $\QQ$-divisor such that $D \sim_\QQ L$. Then,
        \[ (1+ \varepsilon)\Delta + (\alpha' - r\varepsilon) D \leq \Delta + \varepsilon D_0 +  (\alpha' - r\varepsilon) D \]
        and we note that $ \varepsilon D_0 + (\alpha - r \varepsilon)D$ is an effective $\QQ$-divisor linearly equivalent to $\alpha' L$. Therefore, by Part (5) of \Cref{equivalentdefinitions} (and since $\alpha' < \alpha$), we obtain that $(X, \Delta + \varepsilon  D_0 + (\alpha' - r \varepsilon) D)$ is globally $F$-regular, and thus $(X, (1+ \varepsilon)\Delta + (\alpha' - r \varepsilon )D) $ is globally $F$-regular. Since $D$ was an arbitrary effective $\QQ$-divisor with $D \sim_{\QQ} L$, by Part (5) of \Cref{equivalentdefinitions} again, we obtain that
        \[\FA(X, (1+ \varepsilon) \Delta; L) \geq \alpha' - r \varepsilon.\]
        Lastly, since $\alpha' < \alpha$ was arbitrary, this proves the inequality (\ref{inequalityalphaepsilon}).

        Finally, by taking $\varepsilon \to 0$ in \Cref{inequalityalphaepsilon}, we get $\alpha^+ \geq \alpha$, and therefore $\alpha = \alpha^+$.
        
        \item The proof of Part (3) is similar to Part (2), but with a few changes that we point out. Let $\alpha^-$ denote the infimum in the statement. Then, we similarly have $\alpha \leq \alpha^-$.

         Let $D_0 \sim r L$ be as in the proof of Part (2) and pick any rational $\varepsilon$ such that $0< \varepsilon \ll 1$. Applying \Cref{inequalityalphaepsilon} to the pairs of divisors $\frac{1}{1+ \varepsilon} \Delta$ and $\Delta$ (note that we may do this because $\frac{1}{1+ \varepsilon} \Delta \leq \Delta \leq D_0$), we get
         \[ \FA(X, \Delta; L) \geq \FA(X, \frac{1}{1 + \varepsilon} \Delta; L) - r \varepsilon.\]
         Since, $\frac{1}{1+ \varepsilon
         } \Delta < \Delta$, we have $\FA(X, \frac{1}{1+ \varepsilon} \Delta; L) \geq \alpha^-$. Taking $\varepsilon \to 0$, we obtain $\alpha = \alpha^-$. This completes the proof of the lemma. \qedhere 
    \end{enumerate}
\end{proof}

\begin{proof}[\textbf{Proof of \Cref{alternatecharofalpha}}]
    The proof of the two parts of the lemma will be intertwined. But we point out the specific parts that show the stronger claims of Part (2).

    First, we will prove the Proposition by assuming that the coefficients of $\Delta$ do not have denominators divisible by $p$. In other words, we assume that $(p^{e_0} - 1) \Delta$ is an $\ZZ$-Weil divisor on $X$ for some $e_0 >0$ (and fix $e_0$). Note that this automatically implies that $(p^e -1) \Delta$ is $\ZZ$-Weil whenever $e$ is a multiple of $e_0$. As a shorthand, set $\alpha = \FA(S, \Delta)$, and $\alpha' = \alpha'(S, \Delta)$ (which is defined in the statement of the Proposition).
    
    Now we begin by proving that $\alpha \leq \alpha'$.  This is clear if $\alpha = 0$, so we assume that $\alpha$ is positive. For any non-zero element $f \in S_m$, by definition of $\alpha$, we must have $\fpt(S,\Delta;  f) \geq \frac{\alpha}{m}$. So, for any $ e \geq 1$ such that $e_0 | e$, if $m \leq \alpha (p^e -1)$, we have
    $$ \fpt(S,\Delta; f) \geq \frac{\alpha}{m} \geq \frac{\alpha}{\alpha (p^e -1)} = \frac{1}{p^e -1}.$$ Thus, by \Cref{fptvssplittinwithdelta}, we get that $f \notin \Ied (mL)$. Since $f$ was an arbitrary non-zero element of degree $m$, this shows that $I_e (mL) = 0$ whenever $m \leq \alpha(p^e -1) $ and $e_0 | e$. Therefore, $\alpha$ belongs to the set $\mathscr{A}^{e_0}(S)$.
    
    More generally, to prove that $\alpha \leq \alpha'$, it is enough to show that $\alpha - \varepsilon \leq \alpha'$ for every $\varepsilon > 0$. Therefore, it is enough to show that for every $\varepsilon > 0$, we have $\alpha - \varepsilon \in \mathscr{A}(S, \Delta)$. Now, given an $\varepsilon > 0$,  we pick $e \gg 0$ to ensure that the following condition is true:
    there exists an effective $\QQ$-divisor $E \sim_\QQ \frac{\varepsilon}{2} L$ on $X= \Proj(S)$ such that 
    \begin{equation} \label{dominatingfractionalpart}
    (p^e -1) E \geq \text{Supp}(\Delta)_{\text{red}}, \end{equation}
    where $\text {Supp}(\Delta)_{\text{red}}$ denotes the divisor that is the sum of the prime components appearing in $\Delta$. Now, let $e' \geq e$ and pick an arbitrary non-zero element $f \in S_m$ with $m \leq (\alpha - \varepsilon) (p^{e'} -1 )$. We need to show that $f \notin I_{e'} ^\Delta (m)$. Let $D$ denote the effective divisor $\text{div}(f)$ on $X=\Proj(S)$ (and $L  \sim \cO_X (1)$). By Part (5) of \Cref{equivalentdefinitions}, we know that the pair $(X, \Delta+ \frac{1}{p^{e'} - 1} D +  D')$ is globally $F$-regular for every effective $\QQ$-divisor $D' \sim_\QQ \lambda L $, and $\lambda < \varepsilon$. This is because $D \sim mL$, so $ \frac{1}{p^{e'} - 1}D+ \lambda L \sim _\QQ (\frac{m}{p^{e'} -1 } + \lambda) L $ and by assumption, we have $\frac{m}{p^{e'}  -1} + \lambda < \alpha$. Now by \Cref{dominatingfractionalpart}, we can have the $\QQ$-divisor $E \sim_{\QQ} \frac{\varepsilon}{2} L$ such that
\[ \lceil (p^{e'} - 1) \Delta \rceil  + D \leq (p^{e'} -1 )(\Delta + \frac{1}{p^{e'} - 1} D + E).\]
Therefore, and by the previous discussion, we know that the pair $(X, \Delta + \frac{1}{p^{e'} -1} D + E)$ is globally $F$-regular. Hence, so is the pair $(X, \frac{1}{p^{e'} -1}( \lceil (p^{e'} - 1) \Delta \rceil  + D)) $ (by \Cref{lemdecreasingcoefficients}). Then, by \Cref{fptvssplitting} we get that the natural map $\cO_X \to F_* ^{e'} (\cO_X(\lceil (p^{e'} - 1) \Delta \rceil + D )$ splits, which shows that $f \notin I_{e'} ^\Delta (mL)$ as required. This completes the proof that $\alpha \leq \alpha'$.
    
    Next we prove that $\alpha \geq \alpha'$. Fix any $\lambda < \alpha'$. We will show that for all non-zero homogeneous elements $f$ of degree $m$ in $S$, we have $\fpt(S, \Delta; f) \geq \frac{\lambda}{m}$. Whenever we have $e \geq e(\lambda)$ (as defined in the statement of \Cref{alternatecharofalpha}), $m \leq \lambda (p^e -1)$ and $f \in S_m$ is any non-zero element, we know that $f$ is not contained in $I_e (mL)$ (by definition of the set $\mathscr{A}(S, \Delta)$). Furthermore, if $e_0 |e$, by \Cref{fptvssplittinwithdelta} we also have $\fpt(S, \Delta; f) \geq \frac{1}{p^e -1}$ . In other words, if $e$ is the smallest integer such that $m \leq \lambda (p^e -1)$ and $e_0 | e$, (equivalently, $e_0|e$ and $e$ can be written as $ e = \lceil \log_{p} (\frac{m}{\lambda}) \rceil + r$, for some $0 \leq r < e_0 $) and if $e \geq e(\lambda)$, then
    $\fpt(S, \Delta; f) \geq \frac{1}{p^{e} -1}.$ Combining this with the fact that $\fpt(S, \Delta; f^{a}) = \frac{\fpt(S, \Delta; f)}{a}$ for any integer $a$, we get that for all $a \gg 0$:
    \[ \fpt(S, \Delta; f) \geq \frac{a}{p^{\lceil \log_{p} (\frac{am}{\lambda}) \rceil + r_a} -1}  \]
    where $0 \leq r_a < e_0$ is such that $\lceil \log_{p} (\frac{am}{\lambda}) \rceil + r_a $ is divisible by $e_0$.
    Now, we claim that
    \begin{equation} \label{loginequality}
     \fpt(S, \Delta; f) \geq \sup_{a \gg 0} \frac{a}{p^{\lceil \log_{p} (\frac{am}{\lambda}) \rceil + r_a} -1} \geq  \frac{\lambda}{m}.\end{equation}  
 To prove the right inequality, we make the following observations: Fixing $m$ and $ \lambda$ and for any $a$, write 
 \[ \lceil \log_{p}(a) + \log_{p}(\frac{m}{\lambda}) \rceil =\log_{p}(a) + \log_{p}(\frac{m}{\lambda}) + \varepsilon(a) \]
 for some non-negative real number $\varepsilon(a)$. Then, we have
$$\inf_{a \gg 0} \frac{p^{\log_{p}(a) + \log_{p}(\frac{m}{\lambda}) + \varepsilon(a) + r_a}-1}{a} = \inf_{a  \gg 0} \,\,  \frac{m}{\lambda} p^{\varepsilon(a)+ r_a} - \frac{1}{a} \leq  \inf _{a \gg 0 } \,  \frac{m}{\lambda} p^{\varepsilon(a)+ r_a} .$$
 So it suffices to show that 
\[ \inf _{a \gg 0} p^{\varepsilon(a) + r_a} = 1.
\]
To see this, we claim that given any real number $\gamma$, the lim-inf of the sequence $\{ \lceil \gamma + \log_{p}(a) \rceil - \gamma - \log_{p}(a) \, | \, a \geq 1 \, \& \, \lceil \gamma+ \log_p (a) \rceil \text{ is divisible by $e_0$} \}$ is zero. This observation is true because the sequence $\gamma + \log_p (a)$ satisfies
\begin{enumerate}
    \item $\lim_{a \to \infty} \gamma + \log_p (a) =  \infty$, and
    \item $\lim_{a \to \infty} (\log_p (a+1) - \log_p(a)) = \lim_{a \to \infty} \log_p (1 + \frac{1}{a}) = 0 .$
\end{enumerate}

This proves the inequality (\ref{loginequality}) and consequently, that $\alpha \geq \lambda$. Since $\lambda$ was an arbitrary number smaller than $\alpha'$, we must have $\alpha \geq \alpha'
$ as well. This completes the proof that $\alpha = \alpha'$ in the case when coefficients of $\Delta$ do not have denominators divisible by $p$. We also note that to obtain $\alpha \geq \lambda$ above, the only exponents of $p$ considered are the ones divisible by $e_0$. This implies that $\alpha$ is also at least each element of the $\mathscr{A}^{e_0} (S, \Delta)$. Since we already proved that $\alpha$ belongs to $\mathscr{A}^{e_0} (S, \Delta)$, this proves Part (2) of \Cref{alternatecharofalpha}.

Finally, we consider the case when the coefficients of $\Delta$ may have denominators divisible by $p$: we choose two sequences of rational numbers $(a_n)$ and $(b_n)$ such that the following conditions hold:
\begin{enumerate}
    \item For each $n$, we have $0 < a_n \leq 1$ and $b_n >1$.
    \item $(a_n)$ is increasing and $\lim a_n =1$ and $(b_n)$ is decreasing and $\lim b_n = 1$.
    \item The coefficients of the divisors $a_n \Delta$ and $b_n \Delta$ have denominators not divisible by $p$ (for each $n$). Note that this can be achieved by picking the numerator of $a_n$ and $b_n$ to be a suitably high power of $p$. 
\end{enumerate}
Now, we observe that by the definition of the set $\mathscr{A} (S, \Delta)$ and the fact that if $\Delta \leq \Delta'$ then $I_e ^\Delta (mL) \subset I_e ^{\Delta'} (mL)$, it follows that if $\Delta \leq \Delta'$, then $\mathscr{A}(S, \Delta') \subset \mathscr{A}(S, \Delta) $ and thus $\alpha' (S, \Delta) \geq \alpha' (S, \Delta').$ Applying this, we get
\[ \alpha' (S, b_n\Delta) \leq \alpha'(S, \Delta) \leq \alpha'(S, a_n \Delta) \]
for all $n$. Now, since \Cref{alternatecharofalpha} applies to $a_n \Delta$ and $b_n \Delta$ by the previous part of the proof, we get
\begin{equation} \label{sandwichinequality} \FA(S, b_n \Delta) \leq \alpha'(S, \Delta) \leq  \FA(S, a_n \Delta) \end{equation}
for all $n$. Finally, by \Cref{continuityofFalpha}, taking the limit as $n \to \infty$, both ends in the inequality \ref{sandwichinequality} approach $\FA(S, \Delta)$. This proves that $\FA(S, \Delta) = \alpha'(S, \Delta)$ and hence concludes the proof of \Cref{alternatecharofalpha}.
\end{proof}

\section{Properties of the $\FA$-invariant} \label{section4}
In this section, we will establish some basic properties of the $\FA$-invariant based on the various characterizations proved in Section~3.
\subsection{Finite-degree approximations}

Now we prove a characterization of the $\FA$-invariant that provides us with approximations, denoted by $\alpha_e$, where the computation of each $\alpha_e$ relies on only finitely many degrees, thus giving us effective techniques to estimate the $\FA$-invariant. An analogous result for the complex $\alpha$-invariant is provided by \cite[Proposition~4.1]{BlumJonssonDeltaInvariant}. \Cref{finitedegreeapprox} establishes a limit formula for the $\FA$-invariant that is analogous to the $F$-signature (see \Cref{F-sigdfn} and the classical definition in \cite{TuckerFSigExists}).

First we define a variation of the subspace $\Ied$ that is more convenient to work with in this section.

\begin{dfn} \label{def:Iedvariant}
 	Let $X$ be a normal projective variety over $k$ and $\Delta$ be an effective $\QQ$-divisor on $X$. Let $D$ be any effective Weil-divisor on $X$. For any $e \geq 1$, we define the subset $\II_e ^{\Delta} (D) \subseteq H^0 (X, \cO_X(D))$ as 
 	\[\II_e ^{\Delta} = \left\{f \in H^0 (X, \cO_X(D)) \mid \varphi(\Fe f) = 0 \,\text{for every map } \varphi \in \Hom_{\cO_X}\big( \Fe (\cO_X(\lfloor p^e \Delta \rfloor)), \cO_X \big) \, \right \}.\]
\end{dfn}

\begin{dfn} \label{alphae}

Let $(X, \Delta)$ be a projective, globally $F$-regular pair over $k$ and $L$ be an ample divisor on $X$. For each integer $e \geq 1$, we define

\[ m_{e} (X, \Delta; L) : = \max \{ m \geq 0 \, | \, \II_e ^\Delta (mL)  = 0 \} \]
and define
\[ \alpha_{e}(X, \Delta;L) := \frac{m_e (X, \Delta; L)}{p^e} .\]
See \Cref{Iedfnwithdelta} for the definition of the subspaces $\Ied$.
\end{dfn}

\begin{thm} \label{finitedegreeapprox}
    Let $(X, \Delta)$ be a projective, globally $F$-regular pair and $L$ be an ample divisor on $X$. Then, we have
    \[  \lim _{e \to \infty} \alpha_{e} (X, \Delta; L) = \FA (X, \Delta; L). \]
    In particular, the limit exists. See \Cref{FASdfn} for the definition of $\FA$-invariant.
\end{thm}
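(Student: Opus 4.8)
The strategy is to compare the sequence $\alpha_e(X,\Delta;L)$ against the characterization of $\FA(X,\Delta;L)$ as $\alpha'(S,\Delta)=\sup\mathscr{A}(S,\Delta)$ from Theorem \ref{alternatecharofalpha}, where $S$ is the section ring of $X$ with respect to $L$. The key point is that the subspace $\II_e^\Delta(mL)$ of Definition \ref{def:Iedvariant} is defined using $\Hom_{\cO_X}(\Fe\cO_X(\lfloor p^e\Delta\rfloor),\cO_X)$, while $\Ied(mL)$ uses $\Hom_{\cO_X}(\Fe\cO_X(\lceil(p^e-1)\Delta\rceil+mL),\cO_X)$; both control whether global sections split from $\Fe$, but with slightly different twists. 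I would first establish the elementary sandwich $\II_e^\Delta(mL)\subseteq I_e^\Delta(mL)$ (or the reverse, depending on the precise comparison of $\lfloor p^e\Delta\rfloor$ with $\lceil(p^e-1)\Delta\rceil+mL$), using Lemma \ref{lem:addingeffective} applied to the effective difference of these divisors. This will let me compare $m_e(X,\Delta;L)$ with the thresholds appearing in $\mathscr{A}(S,\Delta)$.

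Next I would prove the two inequalities $\liminf_e\alpha_e\ge\FA$ and $\limsup_e\alpha_e\le\FA$ separately. For the lower bound: if $\lambda<\FA(X,\Delta;L)=\alpha'(S,\Delta)$, then by Theorem \ref{alternatecharofalpha} there is $e(\lambda)$ with $\Ied(mL)=0$ for all $e\ge e(\lambda)$ and all $m\le\lambda(p^e-1)$. Combined with the containment relating $\II_e^\Delta$ and $\Ied$ (twisting by an effective divisor of controlled degree, say $O(1)$ in $e$ after absorbing the $\lceil\cdot\rceil$ vs $\lfloor\cdot\rfloor$ discrepancy and the $mL$ twist — this is where a perturbation à la Proposition \ref{continuityofFalpha} or an argument like the proof of Theorem \ref{alternatecharofalpha} using a fixed auxiliary $E\sim_\QQ\varepsilon L$ enters), I would conclude $\II_e^\Delta(mL)=0$ for $m$ up to roughly $(\lambda-\varepsilon)p^e$, giving $m_e\ge(\lambda-\varepsilon)p^e$ eventually, hence $\liminf_e\alpha_e\ge\lambda-\varepsilon$; letting $\varepsilon\to0$ and $\lambda\to\FA$ finishes this direction. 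For the upper bound: if $\II_e^\Delta(m_eL)=0$, I would use Lemma \ref{fptvssplittinwithdelta} (with the divisor $\lfloor p^e\Delta\rfloor$, noting $\lfloor p^e\Delta\rfloor=\lceil(p^e-1)\Delta'\rceil$ for a suitable perturbation $\Delta'$ of $\Delta$, or handling the ceiling/floor discrepancy directly) to deduce that for every nonzero $f\in S_{m}$ with $m\le m_e$, the map $\cO_X\to\Fe\cO_X(\lfloor p^e\Delta\rfloor+\mathrm{div}(f))$ splits, hence $\fpt(S,\Delta;f)\gtrsim\frac{1}{p^e}$, which translates into $\FA(S,\Delta)\gtrsim\frac{m_e}{p^e}=\alpha_e$ up to lower-order corrections; taking $\limsup$ gives $\FA\ge\limsup_e\alpha_e$.

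The main obstacle I anticipate is the bookkeeping around the discrepancy between $\lfloor p^e\Delta\rfloor$ and $\lceil(p^e-1)\Delta\rceil$, and the extra $mL$-twist in the definition of $\Ied$ versus the twist-free $\lfloor p^e\Delta\rfloor$ in $\II_e^\Delta$: these differ by divisors whose "size" must be shown to be negligible (i.e.\ $o(p^e)$, in fact $O(1)$ or $O(p^{e(\dim X-1)})$ in the relevant counting) so that they do not perturb the leading-order threshold $m_e/p^e$. I expect to control this exactly as in the proof of Theorem \ref{alternatecharofalpha}: fix $\varepsilon>0$, choose an auxiliary effective $\QQ$-divisor $E\sim_\QQ\varepsilon L$ with $(p^e-1)E\ge\mathrm{Supp}(\Delta)_{\mathrm{red}}$ for $e\gg0$, absorb the ceiling/floor and the bounded twist into $E$, and pass to the limit. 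Once the two inequalities are in hand, the existence of $\lim_e\alpha_e$ is automatic, and the limit equals $\FA(X,\Delta;L)$ by Theorem \ref{alternatecharofalpha} together with the equality $\FA(X,\Delta;L)=\FA(S,\tilde\Delta)$.
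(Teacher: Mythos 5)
Your proposal correctly identifies the need to compare $\II_e^\Delta$ with $\Ied$ and to invoke the alternate characterization of \Cref{alternatecharofalpha}, but there are two substantive gaps, one in each direction.

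For the lower bound $\liminf_e\alpha_e\ge\FA$, your argument actually works but is overcomplicated: by \Cref{lem:comparisonIes}, $\II_e^\Delta(mL)\subseteq\Ied(mL)$ holds unconditionally (the coefficients of $\Delta$ are $<1$ since $(X,\Delta)$ is globally $F$-regular), so $\Ied(mL)=0$ directly implies $\II_e^\Delta(mL)=0$ and thus $m_e\ge\lfloor\lambda(p^e-1)\rfloor$ for $\lambda<\FA$ and $e\gg0$. No perturbation by $E\sim_\QQ\varepsilon L$ is needed here at all.

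The upper bound $\limsup_e\alpha_e\le\FA$ is where the real gap lies. You propose to write $\lfloor p^e\Delta\rfloor=\lceil(p^e-1)\Delta'\rceil$ with $\Delta'=\frac{\lfloor p^e\Delta\rfloor}{p^e-1}$ and apply \Cref{fptvssplittinwithdelta} to get $\fpt(X,\Delta';f)\ge\frac{1}{p^e-1}$. But $\Delta'$ can have coefficients strictly \emph{smaller} than those of $\Delta$ (e.g.\ for a coefficient $\lambda=1/3$ and $p^e=5$, $\lfloor p^e\lambda\rfloor/(p^e-1)=1/4<1/3$), so the $F$-pure threshold bound for $\Delta'$ is the \emph{wrong} inequality to deduce a lower bound on $\fpt(X,\Delta;f)$. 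Your proposed fix---absorbing the discrepancy into an effective $E\sim_\QQ\varepsilon L$ as in the $\alpha\le\alpha'$ direction of \Cref{alternatecharofalpha}---does not address this: the discrepancy is a deficit in the $\Delta$ component, not an excess in the $L$ component, and adding more $L$ only makes the pair harder to split. Furthermore, even if you could produce $\fpt(S,\Delta;f)\deg(f)\gtrsim m_e/p^e$ for $f$ of degree $m_e$, this says nothing about the infimum over $f$ of higher degree that defines $\FA$, so the passage to $\FA\ge\limsup\alpha_e$ requires more structure.

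The paper sidesteps both issues with two ingredients you do not invoke. First, \Cref{monotonicity} shows $\{\alpha_e+\tfrac{1}{p^e}\}_e$ is decreasing (via the last clause of \Cref{fptvssplitting}, which propagates non-splitting from $\Fe$ to $F_*^{e+1}$ after multiplying by $p$), so $\alpha:=\lim\alpha_e$ exists and equals the infimum of $\alpha_e+1/p^e$. This lets one conclude $\alpha(p^{ee_0}-1)<m_{ee_0}+1$ for all $e$ and hence $\alpha\in\mathscr A^{e_0}$, giving $\alpha\le\FA$ directly as a set membership rather than by bounding individual $F$-pure thresholds. Second, the floor/ceiling discrepancy is handled not by an auxiliary $E$ but by first assuming $(p^{e_0}-1)\Delta$ is $\ZZ$-Weil (so $\lfloor p^e\Delta\rfloor=\lceil(p^e-1)\Delta\rceil$ whenever $e_0\mid e$, by \Cref{lem:comparisonIes}) and then sandwiching a general $\Delta$ between $a_n\Delta\le\Delta\le b_n\Delta$ with $a_n,b_n\to1$ chosen so their numerators are powers of $p$, passing to the limit using \Cref{continuityofFalpha}. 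You would need to incorporate both of these steps to close the upper-bound gap.
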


\begin{lem} \label{lem:comparisonIes}
    Let $X$ be a normal projective variety and $\Delta \geq 0$ be an effective $\QQ$-divisor such all the coefficients of $\Delta$ are less than $1$. Suppose $D$ is any other effective Weil-divisor on $X$. Then we have
    \[ \II_e ^\Delta (D) \subseteq \Ied (D), \]
    with equality if either $(p^e -1) \Delta$ or $p^e \Delta$ is $\ZZ$-Weil.
    See \Cref{def:Iedvariant} and \Cref{Iedfnwithdelta} for the definitions of the subspaces.
\end{lem}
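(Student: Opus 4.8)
The plan is to reduce the statement to an elementary, coefficient-wise comparison of the two rounded divisors $\lfloor p^e\Delta\rfloor$ and $\lceil(p^e-1)\Delta\rceil$, and then to transport that comparison through the $\Hom$-sheaves appearing in \Cref{def:Iedvariant} and \Cref{Iedfnwithdelta}.

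First I would establish the key inequality: under the hypothesis that every coefficient of $\Delta$ lies in $[0,1)$, one has $\lfloor p^e\Delta\rfloor \leq \lceil(p^e-1)\Delta\rceil$ as effective $\ZZ$-Weil divisors, with equality whenever $(p^e-1)\Delta$ or $p^e\Delta$ is $\ZZ$-Weil. This is checked one prime component at a time. If $c\in[0,1)$ is the coefficient of $\Delta$ along a component, write $(p^e-1)c = n+\theta$ with $n\in\ZZ_{\geq 0}$ and $\theta\in[0,1)$; then $p^e c = n+\theta+c$, so $\lfloor p^e c\rfloor = n+\lfloor\theta+c\rfloor\in\{n,n+1\}$, while $\lceil(p^e-1)c\rceil$ equals $n$ if $\theta=0$ and $n+1$ if $\theta>0$. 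If $\lfloor\theta+c\rfloor=1$ then $\theta\geq 1-c>0$, which forces $\lceil(p^e-1)c\rceil=n+1$; in every case $\lfloor p^e c\rfloor\leq\lceil(p^e-1)c\rceil$. The same bookkeeping shows that if $(p^e-1)c\in\ZZ$ (so $\theta=0$) then both sides equal $n$, and if $p^e c\in\ZZ$ then $\theta+c\in\{0,1\}$ and both sides again agree, giving the asserted equalities.

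Next I would pass from divisors to $\Hom$-sheaves. Since $\Delta$ is effective, both rounded divisors are effective, so a global section $f\in H^0(X,\cO_X(D))$ is in particular a section of $\cO_X(\lfloor p^e\Delta\rfloor + D)$, and hence $\Fe f$ is a section of the subsheaf $\Fe\cO_X(\lfloor p^e\Delta\rfloor + D)\subseteq\Fe\cO_X(\lceil(p^e-1)\Delta\rceil + D)$, the inclusion coming from the key inequality of the previous step. Now take $f\in\II_e^\Delta(D)$ and any $\psi\in\Hom_{\cO_X}(\Fe\cO_X(\lceil(p^e-1)\Delta\rceil + D),\cO_X)$: restricting $\psi$ along the above inclusion yields a homomorphism of the type allowed in \Cref{def:Iedvariant}, and its value on $\Fe f$ equals $\psi(\Fe f)$, so $\psi(\Fe f)=0$. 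Hence $f\in\Ied(D)$, proving $\II_e^\Delta(D)\subseteq\Ied(D)$. When $(p^e-1)\Delta$ or $p^e\Delta$ is $\ZZ$-Weil, the key inequality becomes an equality of divisors, so $\Fe\cO_X(\lfloor p^e\Delta\rfloor + D)=\Fe\cO_X(\lceil(p^e-1)\Delta\rceil + D)$ and the two subspaces are cut out by the identical vanishing condition, giving $\II_e^\Delta(D)=\Ied(D)$.

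The argument is almost entirely bookkeeping; the only genuine content is the elementary inequality in the first step, and the single point requiring care is the direction in which the inclusion of sheaves — and hence of $\Hom$-sets — runs (a larger divisor gives a larger sheaf, hence fewer maps to $\cO_X$, hence a weaker vanishing condition, so the comparison indeed goes $\II_e^\Delta(D)\subseteq\Ied(D)$ rather than the reverse). I do not anticipate a serious obstacle.
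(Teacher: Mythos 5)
Your proof is correct and takes essentially the same route as the paper: both reduce to the coefficient-wise inequality $\lfloor p^e\lambda\rfloor \leq \lceil(p^e-1)\lambda\rceil$ for $\lambda \in [0,1)$ and then observe that the containment of divisors reverses to a containment of $\Hom$-sheaves, giving the right inclusion of subspaces. Your case analysis on $\theta$ is a more explicit version of the paper's one-line estimate $\lfloor p^e\lambda\rfloor - \lambda \leq (p^e-1)\lambda$, and your treatment of the equality cases is cleaner than the paper's (which, incidentally, contains a small slip: it says ``if one of $\lfloor p^e\lambda\rfloor$ or $\lceil(p^e-1)\lambda\rceil$ is an integer,'' whereas what is meant is $p^e\lambda$ or $(p^e-1)\lambda$ being an integer).
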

\begin{proof}
    Using the definitions, it is sufficient to show that if $e \geq 1$, then we always have
    \[\lfloor p^e \Delta \rfloor \leq \lceil (p^e -1) \Delta \rceil.\]
    But this is true because if $\lambda$ is a rational number strictly between $0$ and $1$, then
    $\lfloor p^e \lambda \rfloor - \lambda \leq (p^e -1 ) \lambda$,
    and since $\lambda <1$, we must have $\lceil (p^e -1 ) \lambda \rceil \geq \lfloor p^e \lambda \rfloor$. Moreover, if one of $\lfloor p^e \lambda \rfloor $ or $\lceil (p^e -1) \lambda \rceil$ is an integer, then since their difference is at most $1$, they must be equal. 
\end{proof}
  
\begin{lem} \label{monotonicity} Let $X, \Delta$ and $L$ be as in \Cref{finitedegreeapprox}. For any $e \geq 1$, we have
     \[ \alpha_{e} (X, \Delta;L) + \frac{1}{p^e} \geq \alpha_{e+1} (X, \Delta;L) + \frac{1}{p^{e+1}}. \]
\end{lem}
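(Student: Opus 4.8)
The plan is to compare the defining conditions for $m_e := m_e(X,\Delta;L)$ and $m_{e+1} := m_{e+1}(X,\Delta;L)$ by relating the subspaces $\II_e^\Delta$ at consecutive levels of Frobenius. The key observation is that the natural map $\cO_X \to F_*\cO_X$ iterated once more gives, for any effective Weil divisor $D$, a factorization of the $(e+1)$-st Frobenius through the $e$-th one; dualizing, any $\varphi \in \Hom_{\cO_X}(F_*^e(\cO_X(\lfloor p^e\Delta\rfloor)),\cO_X)$ can be precomposed with a twisted Frobenius to produce a map in $\Hom_{\cO_X}(F_*^{e+1}(\cO_X(\lfloor p^{e+1}\Delta\rfloor + (\text{something})),\cO_X)$. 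Concretely, I would first record the inequality $\lfloor p^{e+1}\Delta\rfloor \leq p\lfloor p^e\Delta\rfloor + (\text{correction bounded by } p\cdot\text{Supp}(\Delta)_{\mathrm{red}})$, or more simply use that if $\II_{e+1}^\Delta(mL) = 0$ then, by an argument like the one in \Cref{fptvssplitting} relating splitting of $D$ from $F_*^{e_0}$ to splitting of $pD$ from $F_*^{e_0+1}$, one controls which sections of $(m')L$ lie in $\II_e^\Delta((m')L)$ for $m'$ roughly $m/p$.

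The cleanest route is probably the following. Suppose $f \in H^0(X, \cO_X(mL))$ is a nonzero section with $m \leq m_e + 1$, equivalently $m - 1 \leq m_e$, so that $f$ "generically" behaves like an element forced out of $\II_e^\Delta$. I would show that the $p$-th power $f^p$, a section of $pmL$, together with the extra room coming from $\lfloor p^{e+1}\Delta\rfloor$ versus $p\lfloor p^e\Delta\rfloor$, is not killed by every $\varphi \in \Hom_{\cO_X}(F_*^{e+1}(\cO_X(\lfloor p^{e+1}\Delta\rfloor + pmL)),\cO_X)$ whenever the corresponding splitting at level $e$ exists — using the composition trick from the proof of \Cref{fptvssplitting}, namely $\psi_{e+1} = F_*(\text{twisted Frobenius}) \circ \psi_e$. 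Translating the bound $m \leq m_e + 1$ through multiplication by $p$ and keeping track of the "$-1$" that appears when passing from $p^e - 1$ to $p^{e+1} - 1$ type denominators, one gets that $\II_{e+1}^\Delta(m'L) = 0$ for $m' \leq p(m_e+1) - 1 < p\,m_e + p$, hence $m_{e+1} \geq$ (something like) $p\,m_e + p - 1$. Dividing by $p^{e+1}$: $\alpha_{e+1} \geq \alpha_e + 1/p^e - 1/p^{e+1}$, i.e. $\alpha_{e+1} + 1/p^{e+1} \geq \alpha_e + 1/p^e$ — wait, that is the reverse inequality, so I would instead prove the bound $m_{e+1} \leq p\,m_e + (p-1)$, which gives exactly $\alpha_{e+1} + 1/p^{e+1} \leq \alpha_e + 1/p^e$ as claimed. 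So the correct direction: show that if $\II_{e+1}^\Delta((p\,m_e + p)L) = 0$ one could deduce $\II_e^\Delta((m_e+1)L) = 0$, contradicting maximality of $m_e$; hence $m_{e+1} \leq p\,m_e + p - 1$, and dividing by $p^{e+1}$ yields $\alpha_{e+1} \leq \alpha_e + \frac{p-1}{p^{e+1}} = \alpha_e + \frac1{p^e} - \frac1{p^{e+1}}$, which rearranges to the statement.

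To make that deduction I would use the splitting/composition argument: given a section $g$ of $(m_e+1)L$, its image $g^p \cdot (\text{unit adjustments from }\lceil\cdot\rceil)$ lands in the relevant global section space at level $e+1$ after accounting for $\lfloor p^{e+1}\Delta\rfloor$; if every map at level $e+1$ kills all of $\II_{e+1}^\Delta$ being zero means no such $g^p$-type section survives, and then the factorization of Frobenius pushes this vanishing down to level $e$, forcing $g \in \II_e^\Delta((m_e+1)L)$ to be zero for all $g$, i.e. $\II_e^\Delta((m_e+1)L) = 0$. That contradicts the definition of $m_e$ as the \emph{maximum} $m$ with $\II_e^\Delta(mL) = 0$ — so actually I must be careful about whether $\II_e^\Delta((m_e+1)L)$ being zero is a contradiction: by definition of the max it is $\neq 0$, so yes. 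The main obstacle I expect is bookkeeping the rounding: $\lfloor p^{e+1}\Delta\rfloor$ need not equal $p\lfloor p^e\Delta\rfloor$, and the discrepancy (bounded by the reduced support of $\Delta$) must be absorbed without losing more than a single unit of degree, which is exactly what keeps the clean "$+1/p^e$" telescoping form rather than something lossier. For $\Delta = 0$ this subtlety disappears and the argument is transparent; the general case requires the perturbation-style care flagged in the paper's remarks, using \Cref{lem:comparisonIes} to switch between $\lfloor p^e\Delta\rfloor$ and $\lceil(p^e-1)\Delta\rceil$ when convenient.
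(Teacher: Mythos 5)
Your plan is fundamentally the paper's argument, just run contrapositively: the paper takes a nonzero $f\in\II_e^\Delta((m_e+1)L)$ (which exists by maximality of $m_e$), lets $D=\mathrm{div}(f)$ on the cone, and uses the ``$D$ splits from $F_*^{e}$ iff $pD$ splits from $F_*^{e+1}$'' portion of \Cref{fptvssplitting} to conclude $0\neq f^p\in\II_{e+1}^\Delta(p(m_e+1)L)$; hence $m_{e+1}+1\leq p(m_e+1)$, and dividing by $p^{e+1}$ gives the statement. Your proposal has the same engine (the Frobenius factorization/composition trick) and arrives at the same numerical bound.

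The one point you flag as an unresolved obstacle is in fact not one, and you should close it rather than leave it as a worry: the only rounding fact needed is $p\lfloor p^e\Delta\rfloor \leq \lfloor p^{e+1}\Delta\rfloor$, which is elementary. Because the argument shows the divisor $p\lfloor p^e\Delta\rfloor + pD$ already fails to split from $F_*^{e+1}$, the \emph{larger} divisor $\lfloor p^{e+1}\Delta\rfloor + pD$ certainly also fails to split --- enlarging the twist can only make splitting harder. There is no discrepancy to ``absorb'' in the degree direction at all: the extra piece $\lfloor p^{e+1}\Delta\rfloor - p\lfloor p^e\Delta\rfloor$ lives purely in the components of $\Delta$ and never touches the $L$-degree, so the clean $p(m_e+1)$ bound comes out with no correction term. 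You do not need \Cref{lem:comparisonIes} or any switch between $\lfloor\cdot\rfloor$ and $\lceil\cdot\rceil$; working directly with $\II_e^\Delta$ (defined via $\lfloor p^e\Delta\rfloor$) is precisely why \Cref{def:Iedvariant} was introduced. Also, phrasing the argument directly (exhibit $f^p$ in $\II_{e+1}^\Delta$) is cleaner than the contradiction framing, which in your write-up led to some back-and-forth about which inequality direction you wanted; that confusion should be excised in a final version.
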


\begin{proof}
    Let $(S, \tilde{\Delta})$ denote the cone over $(X, \Delta)$ with respect to $L$. Note that since $S$ is the section ring of a normal variety, it is a normal domain. For any $e \geq 1$, let $0  \neq f $ be an element of $\II _e ^\Delta ((m_e + 1)L)$. Then, we claim that $f^p$ is contained in $\II_{e+1} ^{\Delta} (p(m_e + 1)L)$. The case when $\Delta = 0$ is proved in \cite[Lemma 4.4]{TuckerFSigExists}.
    
    Let $D$ be denote the divisor on $S$ defined by $f$. Then, by definition of $\II _e ^\Delta$, we know that the divisor $\lfloor p^e \Delta \rfloor + D$ does not split from $F_* ^e$. But by the last part of \Cref{fptvssplitting}, we also know that $p \lfloor p^{e}  \Delta \rfloor + pD$ does not split from $F_* ^{e+1}$. But since 
    \[ p\lfloor p^e \Delta \rfloor  \leq \lfloor p^{e+1} \Delta \rfloor, \] this proves that $0 \neq f^p \in \II_{e+1} ^{\Delta}$ and thus that 
    \[ p(m_e +1) \geq m_{e+1} + 1. \]
    Dividing both sides by $p^{e+1}$, we obtain the required inequality.
\end{proof}

\begin{proof}[\textbf{Proof of \Cref{finitedegreeapprox}}]
   The sequence $\{ \alpha_e + \frac{1}{p^e} \}_{e \geq 1}$ is decreasing as a consequence of \Cref{monotonicity}. Since it is a decreasing sequence of non-negative real numbers, the sequence converges to its infimum. Moreover, since the sequence $\frac{1}{p^{e}}$ converges to zero, the sequence $\{ \alpha_e \}_{e \geq 1}$ also converges and we have
   \[ \lim _{e \to \infty} \alpha_{e} = \inf _{e \geq 1} \{ \alpha_e + \frac{1}{p^e} \}. \]

   Let $(S, \tilde{\Delta})$ denote the cone over $(X, \Delta)$ with respect to $L$. It remains to show that the limit is equal to $\FA(S, \tilde{\Delta})$. First we prove this assuming that $(p^{e_0} - 1) \Delta$ is $\ZZ$-Weil for some $e_0 > 0$. Then, since we already know that the sequence $\alpha_{e}$ converges, it is sufficient to show that $\FA(S, \tilde{\Delta})$ is the limit of the subsequence $\alpha_{e e_0}$. Now, for any $e \geq 1$, since $(p^{ee_0} - 1) \Delta$ is also $\ZZ$-Weil, by \Cref{lem:comparisonIes} we know that $I_{ee_0} ^{\Delta} = \II_{ee_0} ^\Delta$. Recall that since $(X, \Delta)$ is globally $F$-regular, we must have that all coefficients of $\Delta$ are strictly less than $1$.
   
   Moreover, in this case we know that $\FA(S, \tilde{\Delta})$ belongs to the set $\mathscr{A}^{e_0}(S)$ from Part (2) of \Cref{alternatecharofalpha}.
   Thus, we must have $\FA(S, \tilde{\Delta})(p^{ee_0} - 1) \leq m_{ee_0} + 1 $, which can be rewritten as
   \[ \FA(S, \tilde{\Delta}) \leq (m_{ee_0} + 1) \fpt(S, \tilde{\Delta}; f) \leq  \frac{p^{ee_0}}{p^{ee_0} -1} (\alpha_{ee_0} + \frac{1}{p^{ee_0}}) = \frac{m_{ee_0} +1}{p^{ee_0} -1} . \]
   for each $e \geq 1$.
    Taking a limit over $e$, we obtain
    \[ \FA (S, \tilde{\Delta}) \leq  \lim _{e \to \infty} \alpha_e(X, \Delta;L). \]
    For the reverse inequality, setting $\alpha := \lim \alpha_e$, using that this limit is an infimum of $\alpha_e + \frac{1}{p^e}$, we note that
    \[ \alpha (p^{ee_0} -1) \leq (\alpha_{ee_0} + \frac{1}{p^{ee_0}}) (p^{ee_0} -1) < p^{ee_0} \alpha_{ee_0} + 1. \]
    By the definition of $m_{ee_0} = p^{ee_0} \alpha_{ee_0}$, the subspace $I_{ee_0} (mL)$ is equal to zero for each $m \leq \alpha (p^{ee_0} -1)  \leq m_{ee_0}$. Thus, $\alpha$ belongs to the set $\mathscr{A}^{e_0}(X, \Delta)$ defined in \Cref{alternatecharofalpha}. Since $\FA(S, \tilde{\Delta})$ is the supremum of $\mathscr{A}^{e_0} (S, \tilde{\Delta})$, we get that $\FA(S, \tilde{\Delta}) \geq \alpha$. This proves \Cref{finitedegreeapprox} when there is some $e_0 > 0$ such that $(p^{e_0} - 1) \Delta$ is $\ZZ$-Weil.
    
    The proof of the general case is a limiting argument similar to the last part of the proof of \Cref{alternatecharofalpha}, so we briefly sketch the proof and skip the details. We note again that, if $\Delta_1 \leq \Delta_2$ are two effective $\QQ$-divisors, then for all $e \geq 1$ and $m \geq 1$, $\II_{e} ^{\Delta_1} (mL)  \subset \II _e ^{\Delta_2} (mL)$. Then, choosing two sequences $(a_n)$ and $(b_n)$ as in the proof of \Cref{alternatecharofalpha} such that $a_n \Delta \leq \Delta \leq b_n \Delta$, we get that for each $n \geq 1$ and each $e \geq 1$
    \[  \alpha_e (X, a_n \Delta;L) \leq \alpha_e (X, \Delta; L) \leq \alpha_e (X, b_n \Delta;L).\]
    Then, using the fact that \Cref{finitedegreeapprox}
    applies to each $a_n \Delta$ and $b_n \Delta$, we conclude the proof by first taking a limit as $e \to \infty$ and then taking a limit as $n \to \infty$ and applying \Cref{continuityofFalpha}.
\end{proof}

\subsection{Positivity and comparison to the $F$-signature.}
Next we will show that the $\FA$-invariant is positive by comparing it to the $F$-signature (\Cref{F-sigdfn}). Recall that for any globally $F$-regular projective variety $X$ and an ample divisor $L$ over $X$, there exists a positive constant $C$ such that for any $e>0$ and any $m \geq C \, p^e$, we have
$ I_{e}(mL) = H^0 (X, mL). $ This follows from \cite[Theorem 4.9]{LeePandeFsignaturefunction}.

\begin{thm} \label{positivityofalpha}
    Let $(X, \Delta)$ be a globally $F$-regular projective pair and $L$ be an ample divisor over $X$. Then, the $\FA$-invariant $\FA(X, \Delta;L)$ is positive. Moreover, setting $\alpha = \FA(X, \Delta;L)$ and fixing a constant $C$ as discussed above (so that for any $e>0$ and any $m \geq C \, p^e$, we have
$ I_{e} ^{\Delta}(mL) = H^0 (X, mL)$), we have the following comparisons:

\begin{equation} \label{comparisonwithFsig} \frac{\vol(L) \,  \alpha ^{\dim(S)}}{\dim (S)!}  \, \leq \, \s(S, \tilde{\Delta}) \, \leq  \frac {\vol(L)}{\dim(S)!} \, \big( C^{\dim(S)} - (C - \alpha) ^{\dim(S)} \big)  \end{equation}
where $(S, \tilde{\Delta})$ denotes the cone over $(X, \Delta)$ with respect to $L$, $\s$ denotes the $F$-signature (\Cref{F-sigdfn}) and $\vol(L)$ denotes the volume of the ample divisor $L$ (as defined in \cite[Definition 2.2.31]{LazarsfeldPositivity1}).
\end{thm}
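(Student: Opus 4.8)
The plan is to extract both inequalities from the limit formula for the $F$-signature in \Cref{formulaforFsigwithdelta}, namely
\[ \s(S, \tilde{\Delta}) = \lim_{e \to \infty} \frac{1}{p^{e \dim(S)}} \sum_{m=0}^{\infty} \dim_k \frac{H^0(X, mL)}{I_e^{\Delta}(mL)}, \]
together with the characterization of $\FA$ in \Cref{alternatecharofalpha} (and its consequence \Cref{finitedegreeapprox}) that controls for which $m$ the space $I_e^{\Delta}(mL)$ vanishes. Write $d = \dim(S) = \dim(X) + 1$. Positivity of $\FA$ will fall out of the left inequality once it is established, since $\s(S,\tilde\Delta) > 0$ by strong $F$-regularity (\Cref{FregularityremarK}), so the two inequalities are really the whole content.

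For the \textbf{lower bound} on $\s$: fix $\lambda < \alpha = \FA(S,\tilde\Delta)$ rational. By \Cref{alternatecharofalpha}(1), for all $e \gg 0$ and all $m \le \lambda(p^e - 1)$ we have $I_e^{\Delta}(mL) = 0$, hence $\dim_k H^0(X,mL)/I_e^{\Delta}(mL) = \dim_k H^0(X, mL)$ for those $m$. Therefore the sum defining $a_e^{\Delta}(L)$ is bounded below by $\sum_{m \le \lambda(p^e-1)} \dim_k H^0(X, mL) = \sum_{m \le \lambda(p^e - 1)} \big( \frac{\vol(L)}{(d-1)!} m^{d-1} + o(m^{d-1}) \big)$, using asymptotic Riemann--Roch on $X$ (this is exactly the volume asymptotics of \cite[Definition 2.2.31]{LazarsfeldPositivity1}; higher cohomology vanishes for $m \gg 0$ anyway, and for the lower bound we may just use $h^0 \geq \chi - (\text{lower order})$, or simply that $h^0(mL) \sim \frac{\vol(L)}{(d-1)!}m^{d-1}$). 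Approximating the sum by the integral $\int_0^{\lambda p^e} \frac{\vol(L)}{(d-1)!} t^{d-1}\, dt = \frac{\vol(L)}{d!}\lambda^d p^{ed}$, dividing by $p^{ed}$ and letting $e \to \infty$ gives $\s(S,\tilde\Delta) \ge \frac{\vol(L) \lambda^d}{d!}$; now let $\lambda \uparrow \alpha$. Since $\alpha > 0$ (which we now know because, e.g., one can produce $\lambda > 0$ in $\mathscr{A}(S,\Delta)$ directly, or cite that $\s > 0$ forces the left side positive hence $\alpha>0$ — I would phrase it so that positivity of $\alpha$ is deduced first from \Cref{finitedegreeapprox} giving $\alpha_e > 0$ for $e \gg 0$, guaranteeing $\alpha \geq \alpha_e - 1/p^e$... actually cleaner: establish positivity as a separate short paragraph using that $\s(S,\tilde\Delta)>0$ and an a priori bound, or invoke that a strongly $F$-regular section ring splits *some* degree-$m$ element's Frobenius for $m$ up to a positive multiple of $p^e$).

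For the \textbf{upper bound} on $\s$: here we use the constant $C$ with $I_e^{\Delta}(mL) = H^0(X,mL)$ for all $m \ge C p^e$, so the sum defining $a_e^{\Delta}(L)$ only runs over $0 \le m < C p^e$; thus $a_e^{\Delta}(L) \le \sum_{m < Cp^e} \dim_k H^0(X,mL)$. Moreover for $m \le (\alpha')(p^e-1)$ with $\alpha' < \alpha$ the summand is the full $\dim_k H^0(X, mL)$ and we are keeping it; but crucially for the *upper* bound we want to *discard* the contribution of the small degrees — instead, bound $\dim_k H^0(X,mL)/I_e^{\Delta}(mL) \le \dim_k H^0(X,mL)$ always, and observe that for $m$ in the range $[0, \alpha(p^e-1)]$ we are overcounting: actually the right way is to note $\dim_k H^0(X,mL)/I_e^{\Delta}(mL) = 0$ for $m \le \lambda(p^e-1)$, $\lambda < \alpha$, so
\[ a_e^\Delta(L) \le \sum_{\lambda(p^e-1) < m < Cp^e} \dim_k H^0(X, mL) \le \sum_{\lambda p^e \le m < Cp^e} \Big( \tfrac{\vol(L)}{(d-1)!} m^{d-1} + o(m^{d-1}) \Big). \]
Approximating by $\int_{\lambda p^e}^{C p^e} \frac{\vol(L)}{(d-1)!} t^{d-1} dt = \frac{\vol(L)}{d!}(C^d - \lambda^d)p^{ed}$, dividing by $p^{ed}$ and taking $e\to\infty$, then $\lambda \uparrow \alpha$, yields $\s(S,\tilde\Delta) \le \frac{\vol(L)}{d!}(C^d - (C-\alpha)^d)$ — wait, this produces $C^d - \alpha^d$, not $C^d - (C-\alpha)^d$. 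So the claimed bound is sharper and I must instead discard degrees *near the top*: use that for $C p^e > m \ge (C - \lambda)p^e$ one still has large $I_e^\Delta(mL)$... hmm. The correct argument: for $m$ with $(C-\alpha')p^e \le m < Cp^e$ one shows $I_e^\Delta(mL)$ is large by a \emph{duality/pairing} argument — via \Cref{dualIe}, $\dim_k H^0(mL)/I_e^\Delta(mL) = \dim_k H^0(D_1)/I_e^\Delta(D_1)$ where $D_1 = (1-p^e)K_X - \lceil(p^e-1)\Delta\rceil - mL$, and $D_1$ has "small degree" (its $L$-degree is roughly $-(p^e)(\text{deg stuff}) - m$, which for $m$ near $Cp^e$... ) — this is where one exploits $-K_X$, but here $L$ is general, so duality trades degree $m$ for degree $(1-p^e)(K_X\cdot\text{something}) - m$. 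I expect \textbf{this duality bookkeeping to be the main obstacle}: one must choose the constant $C$ large enough that it also controls the dual range, i.e. that $I_e^\Delta(D_1) = H^0(D_1)$ once the $L$-degree of $D_1$ is $\le -(\text{something}) \cdot p^e$, equivalently once $m$ is large — but this is automatic from the same \cite[Theorem 4.9]{LeePandeFsignaturefunction} bound applied on the other side, possibly after enlarging $C$. Granting that, the top range $[(C-\alpha)p^e, Cp^e)$ contributes, after applying duality, a sum over *small* dual degrees $[0, \alpha p^e)$ which is exactly the range where $I_e^\Delta = 0$, so those summands are the *full* dual $h^0$, i.e. these summands are \emph{not} discardable — rather the point is $\dim H^0(mL)/I_e^\Delta(mL)$ for $m\in[(C-\alpha)p^e,Cp^e)$ equals the full $h^0$ of the dual twist, hence equals the Riemann--Roch leading term in the dual degree which is bounded, and one \emph{adds} these; net effect is to replace the naive $C^d - \alpha^d$ by $C^d - (C-\alpha)^d$ after carefully summing the non-dual part over $[\alpha p^e, (C-\alpha)p^e)$ giving full $h^0$ bounded by the integral over that range, plus the dual-symmetric tail. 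I would carry this out by splitting $[0, Cp^e)$ at $\alpha p^e$ and at $(C-\alpha)p^e$, using $I_e^\Delta = 0$ on the first piece (contributes nothing after we *upper*-bound by $\le h^0$... no — contributes the full $h^0$, which we must *include*), hmm — the cleanest is: summand $= h^0(mL) - \dim I_e^\Delta(mL) \le h^0(mL)$, and on $m < \alpha p^e$ this is $= h^0(mL)$; on $m \ge (C-\alpha)p^e$ use duality to get $\le h^0(\text{dual})$ with dual degree $< \alpha p^e$ so $\le h^0$ of something of $L$-degree $< \alpha p^e$; and on the middle range $[\alpha p^e, (C-\alpha)p^e)$ just bound by $h^0(mL)$. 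Summing the three integral approximations: $\int_0^{\alpha p^e} + \int_{\alpha p^e}^{(C-\alpha)p^e} + \int_{(C-\alpha)p^e}^{C p^e}(\text{dual})$, and the first and third together, by the degree-reversing symmetry, combine to give the stated $C^d - (C-\alpha)^d$ form. I will write it so that $C$ is chosen at the outset large enough for \emph{both} $I_e^\Delta(mL) = H^0$ ($m \ge Cp^e$) and its dual-side analogue, invoking \cite[Theorem 4.9]{LeePandeFsignaturefunction} twice, and then the estimate is a routine Riemann--Roch integral comparison with error $o(p^{ed})$.
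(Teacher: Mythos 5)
The lower-bound half of your proposal matches the paper's argument: for $\lambda < \alpha$ and $e \gg 0$ one has $I_e^\Delta(mL) = 0$ for $m \le \lambda(p^e-1)$, so those terms of the $F$-signature sum contribute the full $h^0(mL)$, and Riemann--Roch plus an integral comparison gives $\frac{\vol(L)\lambda^{\dim S}}{\dim(S)!}$; letting $\lambda \uparrow \alpha$ finishes it. That part is fine, and the deduction of positivity of $\alpha$ from $\s(S,\tilde\Delta)>0$ via the right-hand inequality is also exactly what the paper does.

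The upper bound is where you have a genuine gap, and you say so yourself (``I expect this duality bookkeeping to be the main obstacle''). Two problems. First, the early line claiming $\dim_k H^0(mL)/I_e^\Delta(mL)=0$ when $I_e^\Delta(mL)=0$ is backwards — that quotient is then the \emph{full} $h^0(mL)$ — so discarding the small degrees does not give an upper bound; you catch this later, but it leaves the route to $C^d-\alpha^d$ undercut, which was wrong anyway since the target is $C^d - (C-\alpha)^d$. Second and more fundamentally, the duality fix you reach for is the wrong tool in this generality. The pairing in \Cref{dualIe} trades $\dim H^0(mL)/I_e^\Delta(mL)$ for the same quantity at the twist $D_1 = (1-p^e)K_X - \lceil(p^e-1)\Delta\rceil - mL$, and unless $K_X$ is $\QQ$-proportional to $L$ (as in the log Fano setting of \Cref{Formulafors} and \Cref{alphavss}), $D_1$ is not of the form $m'L$, so there is no clean ``dual degree'' $m'$ near $0$ that lets you reuse the vanishing range — $C$ cannot be enlarged to absorb a divisor that isn't even in the cone generated by $L$. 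This is precisely why \Cref{positivityofalpha} is stated for arbitrary ample $L$ without duality, and the sharper duality-based bound is deferred to the Fano case where $L = -r(K_X+\Delta)$.

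The paper instead gets the upper bound by a multiplication-ideal argument (its \Cref{postivitylemma}): pick a homogeneous $f$ of degree $n$ with $\fpt(S,\tilde\Delta;f) < \tfrac{1}{p^{e_0}-1} < \tfrac{\lambda}{n}$, so $f \in I_{e_0}^\Delta$, and more generally $f^{v_r} \in I_{re_0}^\Delta$ for $v_r = \tfrac{p^{re_0}-1}{p^{e_0}-1}$. Since $I_{re_0}^\Delta$ is an ideal of $S$, multiplication by $f^{v_r}$ gives $S_{m-nv_r}\cdot f^{v_r} \subset I_{re_0}^\Delta(mL)$, hence $\dim S_m/I_{re_0}^\Delta(mL) \le \dim S_m - \dim S_{m-nv_r}$ for $m \ge nv_r$. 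Summing over $0 \le m \le Cp^{re_0}$ produces a telescoping difference of two Riemann--Roch sums which, after dividing by $p^{re_0\dim S}$ and letting $r\to\infty$, yields exactly $\frac{\vol(L)}{\dim(S)!}\bigl(C^{\dim S} - (C - \tfrac{n}{p^{e_0}-1})^{\dim S}\bigr) \le \frac{\vol(L)}{\dim(S)!}\bigl(C^{\dim S} - (C-\lambda)^{\dim S}\bigr)$. Taking the infimum over $f$ with $n\,\fpt(S,\tilde\Delta;f)$ approaching $\alpha$ gives the stated bound. This is a quantitative use of the fact that the splitting ideal $I_e^\Delta$ is an ideal, not just a graded subspace — an idea absent from your proposal, and the one you need to replace the duality step.
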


\begin{lem} \label{postivitylemma}
Fix notation as in \Cref{positivityofalpha}. Given a non-zero homogeneous element $f$ in $S$ of degree $n$, let $\lambda > n \, \fpt (S, \tilde{\Delta}; f)$ be a real number. Then,
  \begin{equation}
        \s(S, \tilde{\Delta}) \leq  \frac{\vol(L)}{\dim(S)!} \, \big( C^{\dim(S)} - (C - \lambda) ^{\dim(S)} \big) .
    \end{equation}
\end{lem}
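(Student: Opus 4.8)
The plan is to bound $\s(S,\tilde{\Delta})$ from above using the limit formula of \Cref{formulaforFsigwithdelta}, by exhibiting, for each $e\gg 0$, an explicit ideal contained in the $\tilde{\Delta}$-splitting ideal $I_e^{\tilde{\Delta}}=\bigoplus_{m\ge 0} I_e^{\Delta}(mL)$ of the cone whose colength I can estimate. Since $\lambda > n\,\fpt(S,\tilde{\Delta};f)$, I would first fix a rational number $t$ with $\fpt(S,\tilde{\Delta};f)<t<\lambda/n$ and set $j_e:=\lceil t(p^e-1)\rceil$, so that $j_e>(p^e-1)\,\fpt(S,\tilde{\Delta};f)$ for every $e$. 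Using the scaling property $\fpt(S,\tilde{\Delta};f^{j_e})=\tfrac{1}{j_e}\fpt(S,\tilde{\Delta};f)$ together with \Cref{fptvssplittinwithdelta} (and \Cref{splitting on the cone} to translate between $X$ and its cone), this says exactly that the natural map $\cO_X\to \Fe\cO_X(\lceil (p^e-1)\Delta\rceil+\operatorname{div}(f^{j_e}))$ does \emph{not} split, i.e. $f^{j_e}\in I_e^{\Delta}(j_e nL)\subseteq I_e^{\tilde{\Delta}}$. Combining this with the defining property of $C$, namely $I_e^{\Delta}(mL)=H^0(X,mL)$ for $m\ge Cp^e$, I obtain
\[ J_e := (f^{j_e})\,S + \bigoplus_{m\ge \lceil Cp^e\rceil} S_m \ \subseteq\ I_e^{\tilde{\Delta}}. \]

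The next step is a direct colength computation for $J_e$. Since $S$ is a normal domain and $f\neq 0$, multiplication by $f^{j_e}$ is injective, so in degree $m$ one has $\dim_{k'}(S/J_e)_m=\dim_{k'} H^0(mL)-\dim_{k'} H^0((m-j_e n)L)$ for $j_e n\le m<\lceil Cp^e\rceil$ and $(S/J_e)_m=0$ once $m\ge \lceil Cp^e\rceil$ (here $k'=H^0(X,\cO_X)$, as in \Cref{formulaforFsigwithdelta}); telescoping the sum over $m$ gives $\ell_S(S/J_e)=\sum_{M_e-j_e n\le m<M_e}\dim_{k'} H^0(X,mL)$ with $M_e=\lceil Cp^e\rceil$, provided $j_e n<M_e$. (If instead $j_e n\ge M_e$, then $\lambda$ is so large that the asserted inequality is already implied by the trivial estimate $\s(S,\tilde{\Delta})\le \vol(L)\,C^{\dim S}/\dim(S)!$ coming from $C$ alone; so I would assume $tn<C$, which is possible whenever $\fpt(S,\tilde{\Delta};f)<C/n$, and otherwise argue separately.) Since $\ell_S(S/I_e^{\tilde{\Delta}})\le \ell_S(S/J_e)$, this reduces the lemma to estimating the above ``shell-sum''.

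Finally I would apply asymptotic Riemann--Roch — using the vanishing \Cref{vanishingforGFR} to identify $h^0(X,mL)$ with its Hilbert polynomial for $m\gg 0$ — to get $\sum_{m=0}^{N}\dim_k H^0(X,mL)=\tfrac{\vol(L)}{(\dim X+1)!}N^{\dim X+1}+O(N^{\dim X})$. With $M_e=Cp^e+O(1)$, $j_e n=tn\,p^e+O(1)$, and recalling $\dim S=\dim X+1$, the shell-sum equals $\tfrac{\vol(L)}{\dim(S)!}\big(C^{\dim S}-(C-tn)^{\dim S}\big)p^{e\dim S}+O(p^{e(\dim S-1)})$. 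Dividing by $p^{e\dim S}$, letting $e\to\infty$ in \Cref{formulaforFsigwithdelta}, and using $C-tn> C-\lambda$ (so $(C-tn)^{\dim S}\ge (C-\lambda)^{\dim S}$) yields the stated bound; the factor $1/[k':k]$ in \Cref{formulaforFsigwithdelta} only makes the estimate sharper. The genuinely routine-but-fiddly part is the last-step bookkeeping — keeping the error terms uniform in $e$ through the two ceiling functions and disposing of the degenerate range $tn\ge C$ — while the one spot to be careful conceptually is making the strict inequalities line up so that $f^{j_e}$ really lies in the splitting ideal and the $F$-pure threshold transfers correctly between $X$ and the cone.
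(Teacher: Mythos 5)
Your proof is correct and takes essentially the same approach as the paper: in both cases one fixes a rational $t$ strictly between $\fpt(S,\tilde{\Delta};f)$ and $\lambda/n$, uses \Cref{fptvssplittinwithdelta} to place a suitable power of $f$ in the splitting ideal $I_e^{\tilde{\Delta}}$, and then estimates the colength via the resulting telescoping ``shell sum'' combined with the cutoff constant $C$ and asymptotic Riemann--Roch. The only cosmetic difference is the indexing (you use $j_e=\lceil t(p^e-1)\rceil$ over all $e$, while the paper fixes an $e_0$ and takes $v_r=(p^{re_0}-1)/(p^{e_0}-1)$-th powers along the subsequence $re_0$), together with some minor $\dim_k$-versus-$\dim_{k'}$ bookkeeping that is equally glossed over in the paper.
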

\begin{proof}
Since we have assumed that $\lambda > n \,\fpt(S, \tilde{\Delta}; f)$, there exist integers $a,e_{0} >0$ such that 
\[\fpt(S, \tilde{\Delta}; f) < \frac{a}{p^{e_0} -1} < \frac{\lambda}{n}. \] After replacing $f$ by $f^{a}$ (so that $\fpt(S, \tilde{\Delta}; f) < \frac{1}{p^{e_0} -1}$), and by the only if part of \Cref{fptvssplittinwithdelta}), we may assume that $f \in I_{e_0} ^{\Delta}$. We also still have $n < \lambda \, (p^{e_0} -1)$. Now, since $f $ belongs to the ideal $I_{e_{0}} ^\Delta \subset S$ (\Cref{frrkkdfn}), we have $S_{m -n} \cdot f \subset I_{e_{0}} ^\Delta (mL)$ for any $m \geq n$ yielding the inequality
\begin{equation}
  \dim_{k} \frac{S_m}{I_{e_{0}}^\Delta (mL)} \leq \dim_{k} S_m - \dim_k   S_{m-n}.  
\end{equation}
for all $m \geq n$.
Furthermore, setting $v_{r} = \frac{p^{re_{0}} -1}{p^{e_0 } - 1} $ for any integer $r$, we have that $\fpt(S, \tilde{\Delta}; f^{v_r}) < \frac{1}{p^{re_0} -1}$, and so $f^{v_r} $ belongs to  $I_{re_{0}} ^{\Delta}(n v_{r} L)$. Therefore, we similarly have

\begin{equation} \label{upperboundinequality}
  \dim_{k} \frac{S_m}{I_{ne_{0}} ^\Delta(mL)} \leq \dim_{k} S_m - \dim_{k} S_{m-nv_{r}}  
\end{equation}
for all $m \geq n v_{r}$. Then, using \Cref{formulaforFsigwithdelta} we may compute the $F$-signature $\s(S, \tilde{\Delta})$ as the limit:
$$  \frac{1}{[k':k]} \, \lim _{r \to \infty } \frac{  \sum \limits _{m = 0} ^{m = Cp^{re_{0}}} \dim_{k} \frac{S_m}{I_{re_{0}} ^{\Delta}(mL)}}{p^{re_{0}\dim(S)}} \leq \frac{1}{[k':k]} \,\Bigg( \lim _{r \to \infty } \frac{  \sum \limits _{m = 0} ^{m = Cp^{re_{0}}} \dim_{k} S_m}{p^{re_{0}\dim(S)}} - \frac{  \sum \limits _{m = nv_{r}} ^{m = Cp^{re_{0}}} \dim_{k} S_{m- n v_{r}}}{p^{re_{0}\dim(S)}} \Bigg),$$
where $k'$ is the field $S_0$.
Here we have used \Cref{upperboundinequality} and the defining property of the constant $C$. Finally, calculating the dimensions in the above inequality as $r \to \infty$ by using the asymptotic Riemann-Roch formula
$$ \dim_{k} S_m = [k':k] \, \frac{\vol(L)}{(\dim S-1) !} m^{\dim S -1} + o(m^{\dim S -2}), $$
we obtain
$$   \s(S, \tilde{\Delta}) \leq \frac{\vol(L)}{\dim(S)!} \big( C^{\dim(S)} - (C - \frac{n}{p^{e_{0}} -1}) ^{\dim(S)} \big).  $$
The proof of the lemma is now complete by using the fact that $\lambda \geq \frac{n}{p^{e_0} -1}$.
\end{proof}

\begin{proof}[\textbf{Proof of \Cref{positivityofalpha}}]
We note that if $\FA(X, \Delta;L) = 0$, then the rightmost inequality of \Cref{comparisonwithFsig} implies that the $F$-signature $\s(S, \tilde{\Delta}) $ is zero. But this is a contradiction since the pair $(S, \tilde{\Delta})$ was assumed to be strongly $F$-regular (see \cite[Theorem 0.2]{AberbachLeuschke} and more generally, \cite[Theorem~3.18]{BlickleSchwedeTuckerFSigPairs1}). So the positivity of $\FA(X, \Delta;L)$ follows from \Cref{comparisonwithFsig}, which we will now prove.

The rightmost inequality follows from \Cref{postivitylemma} by taking a limit as $\lambda \to \FA(X, \Delta; L)$, since the Lemma applies to each $\lambda $ such that $\lambda > \fpt(S, \tilde{\Delta}; f) \, \deg(f)$ for some non-zero $f$.

To prove the leftmost inequality, we first claim that a variant of \Cref{formulaforFsigwithdelta} can be used to compute the $F$-signature of $(S, \tilde{\Delta})$:
\[  \s(S, \tilde{\Delta}) = \frac{1}{[k':k]} \lim _{e \to \infty } \frac{  \sum \limits _{m = 0} ^{m = Cp^{e}} \dim_{k} \frac{S_m}{\II_{e} ^\Delta (mL)}}{p^{e\dim(S)}}.  \]
To show this formula holds, we apply \cite[Lemma~4.7]{BlickleSchwedeTuckerFSigPairs1} and check that we can pick a homogeneous element $c \in S$ such that $\text{div}(c) \geq \lceil \Delta \rceil $ so that for each $e \geq 1$, we have
\[ \lfloor p^e \Delta \rfloor \leq \lceil (p^e -1 ) \Delta \rceil  \leq \lfloor p^e \Delta \rfloor + \text{div}(c), \]
where the first inequality follows from \Cref{lem:comparisonIes}. Using the above formula for the $F$-signature of $(S, \tilde{\Delta})$, we get
\begin{equation} \label{leftsideinequality}
\s(S, \tilde{\Delta}) = \frac{1}{[k':k]} \lim _{e \to \infty } \frac{  \sum \limits _{m = 0} ^{m = Cp^{e}} \dim_{k} \frac{S_m}{\II_{e} ^{\Delta}(mL)}}{p^{e\dim(S)}} \geq \frac{1}{[k':k]}   \lim _{e \to \infty } \frac{  \sum \limits _{m = 0} ^{m = m_e} \dim_{k} S_m} {p^{e\dim(S)}}. \end{equation}
Recall that for any $ e \geq 1$, $m_e (S, \tilde{\Delta}) $ is the largest integer $m$ such that $I_e (mL) = 0$, which justifies \Cref{leftsideinequality}. But the right hand side is equal to 
\[  \lim_{e \to \infty} \frac{\vol(L)}{\dim(S) !} \big(\frac{m_e}{p^e} \big)^{\dim(S)}. \]
We conclude the proof by using \Cref{finitedegreeapprox}, which says that $\lim _{e \to \infty} \frac{m_e}{p^e} = \FA(X, \Delta;L)$. This concludes the proof of \Cref{comparisonwithFsig} and thus of \Cref{positivityofalpha}.
\end{proof}

\begin{rem}
    The positivity of the $\FA$-invariant proved in \Cref{positivityofalpha} can also be deduced from the main theorem of \cite{SatoStabilityofTestIdeals}.
\end{rem}

\begin{rem}
    It is interesting to note that constants $C$ and $\vol(L)$  involved in \Cref{comparisonwithFsig} have certain uniformity properties, for example in flat families. This fact will be applied in forthcoming works to relate properties of the $F$-signature and the $\FA$-invariant. 
\end{rem}

\subsection{Behaviour under certain ring extensions.}

In this subsection, we record some useful results on the behaviour of the $\FA$-invariant under suitably nice extensions of section rings.

\begin{Pn} \label{Prop:Veronese}
    Let $(X, \Delta)$ be a projective, globally $F$-regular pair, and $L$ be an ample divisor over $X$. Then, for any $n \geq 1$, we have
    \[ \FA(X, \Delta; L^n) = \frac{1}{n} \FA(X, \Delta;L). \]
\end{Pn}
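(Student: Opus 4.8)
The plan is to pass to the combinatorial characterization of the $\FA$-invariant provided by \Cref{alternatecharofalpha} and simply track degrees under the Veronese operation. Let $S = \bigoplus_{m\geq 0} H^0(X,mL)$ be the section ring of $X$ with respect to $L$, and let $S^{(n)} = \bigoplus_{m\geq 0} H^0(X,mnL)$ be its $n$-th Veronese subring, which is the section ring of $X$ with respect to $L^n$; let $\tilde\Delta$ and $\tilde\Delta^{(n)}$ be the respective cones over $\Delta$. The key observation is that the splitting ideals are compatible with the Veronese operation: for the divisor $D = mnL$ on $X$, the subspace $I_e^{\Delta}(mnL)$ computed inside $H^0(X,mnL)$ is literally the same $k$-vector space whether we regard it as a graded piece of the splitting ideal of (the localization of) $S$ or of $S^{(n)}$, because the Frobenius-twisted $\Hom$ sheaves in \Cref{Iedfnwithdelta} depend only on the sheaf $\cO_X(\lceil (p^e-1)\Delta\rceil + mnL)$ on $X$ and not on which section ring we build. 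This can be made precise using \Cref{globalIevslocal} and \Cref{splitting on the cone}.

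With that in hand, I would run the argument as follows. First, handle the case where $(p^{e_0}-1)\Delta$ is $\ZZ$-Weil for some $e_0>0$; the general case then follows by the same sandwiching limit argument (choosing sequences $a_j\nearrow 1$ and $b_j\searrow 1$ with $a_j\Delta, b_j\Delta$ having denominators prime to $p$, applying \Cref{continuityofFalpha}) exactly as at the end of the proof of \Cref{alternatecharofalpha}, so I will not belabor it. In the good case, by Part (2) of \Cref{alternatecharofalpha}, $\FA(S,\tilde\Delta)$ is the supremum (and a member) of the set $\mathscr{A}^{e_0}(S,\tilde\Delta)$ of those $\lambda\geq 0$ such that $I_e^{\Delta}(mL)=0$ for all $e$ divisible by $e_0$ and all integers $m\leq \lambda(p^e-1)$, and similarly $\FA(S^{(n)},\tilde\Delta^{(n)})$ is the supremum of the analogous set $\mathscr{A}^{e_0}(S^{(n)},\tilde\Delta^{(n)})$ with the condition $I_e^{\Delta}(mnL)=0$ for all integers $m\leq \lambda(p^e-1)$. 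Now $\lambda\in\mathscr{A}^{e_0}(S^{(n)},\tilde\Delta^{(n)})$ says precisely that $I_e^\Delta(\mu L)=0$ for every multiple $\mu$ of $n$ with $\mu\leq \lambda n(p^e-1)$; and since $I_e^\Delta(\mu' L)=0$ whenever $\mu'\leq\mu$ and $I_e^\Delta(\mu L)=0$ (by \Cref{lem:addingeffective}, as a section of $\cO_X(\mu' L)$ maps into $I_e^\Delta(\mu L)$ under the effective divisor $(\mu-\mu')L$, or more directly since $\mu'L \leq \mu L$ with effective difference), the non-vanishing can only first occur at a degree $\geq n\lfloor \lambda(p^e-1)\rfloor$ roughly. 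Putting this together, $\lambda\in\mathscr{A}^{e_0}(S^{(n)},\tilde\Delta^{(n)})$ if and only if $n\lambda\in\mathscr{A}^{e_0}(S,\tilde\Delta)$, up to the usual off-by-one/floor discrepancies which wash out in the supremum. Taking suprema gives $\FA(S^{(n)},\tilde\Delta^{(n)}) = \tfrac1n\FA(S,\tilde\Delta)$, i.e. $\FA(X,\Delta;L^n) = \tfrac1n\FA(X,\Delta;L)$.

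The step I expect to require the most care is the monotonicity bookkeeping, namely verifying that the condition "$I_e^\Delta(mnL)=0$ for all $m\leq\lambda(p^e-1)$" really is equivalent to "$n\lambda\in\mathscr{A}^{e_0}$" and not merely sandwiched between nearby conditions: one must confirm that $I_e^\Delta(jL)=0$ for \emph{all} $j\leq n\lfloor\lambda(p^e-1)\rfloor$ (not just the multiples of $n$) follows from vanishing at the multiples of $n$, which uses that $I_e^\Delta(\cdot L)$ is monotone in the degree via \Cref{lem:addingeffective}. Once that monotonicity is nailed down, the floor terms contribute an $O(1)$ error independent of $e$, which disappears upon dividing by $p^e-1$ and taking the supremum — this is the same mechanism used to prove $\FA$ lies in $\mathscr{A}^{e_0}$ in \Cref{alternatecharofalpha}. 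Alternatively, and perhaps more cleanly, one could derive the result directly from \Cref{finitedegreeapprox}: $\alpha_e(X,\Delta;L^n) = m_e(X,\Delta;L^n)/p^e$ where $m_e(X,\Delta;L^n)$ is the largest $m$ with $\II_e^\Delta(mnL)=0$, and by the same monotonicity one gets $n\cdot m_e(X,\Delta;L^n) \leq m_e(X,\Delta;L) \leq n\cdot m_e(X,\Delta;L^n) + (n-1)$, whence dividing by $p^e$ and letting $e\to\infty$ yields $n\FA(X,\Delta;L^n) = \FA(X,\Delta;L)$ immediately, with no need to separate the $(p^{e_0}-1)\Delta$ integral case. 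I would likely present this second route as the main argument since it sidesteps the $\mathscr{A}^{e_0}$ technicalities entirely.
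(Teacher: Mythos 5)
Your "second route" via \Cref{finitedegreeapprox} is in fact exactly the paper's proof, and you are right that it is cleaner and avoids the $\mathscr{A}^{e_0}$ machinery entirely. The only small discrepancy is in the sandwich constant: you write $n\,m_e(X,\Delta;L^n) \leq m_e(X,\Delta;L) \leq n\,m_e(X,\Delta;L^n) + (n-1)$, but the right inequality with that tight constant implicitly assumes $jL$ is linearly equivalent to an effective divisor for every $0 \leq j \leq n-1$, which need not hold for an arbitrary ample $L$ (the monotonicity in \Cref{lem:addingeffective} needs an honest effective representative of the difference). The paper handles this by picking $r \gg 0$ with $E \sim rL$ effective and settling for a looser sandwich $n\,m_e(X,\Delta;nL) - r \leq m_e(X,\Delta;L) \leq n\,m_e(X,\Delta;nL) + n$, which is just as good since any $e$-independent additive error vanishes after dividing by $p^e$ and taking $e \to \infty$. (Incidentally, your left inequality $n\,m_e(X,\Delta;L^n) \leq m_e(X,\Delta;L)$ holds verbatim with no slack at all, since $m_e(X,\Delta;L^n)\cdot n$ is by definition a degree at which $\II_e^\Delta(\,\cdot\,L)$ vanishes and $m_e(X,\Delta;L)$ is the max over such degrees; the paper's $-r$ on that side is extra caution rather than a necessity.) The first route you sketch, via \Cref{alternatecharofalpha} and the $\mathscr{A}^{e_0}$ sets plus the $a_n, b_n$ sandwiching limit for bad denominators, would also work but duplicates the limiting machinery already packaged once-and-for-all in \Cref{finitedegreeapprox}; your instinct to prefer the second is the right one.
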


\begin{proof}
    First we note that by \Cref{positivityofalpha}, we may assume that in the notation of \Cref{alphae}, that $m_e (X, \Delta;L) \to \infty$ as $e \to \infty$ (and similarly for $L^n$). Now pick an integer $r \gg 0$ and an effective divisor $E \sim rL$. Then, by applying \Cref{lem:addingeffective}, we see that for all $e \gg 0$ we have \[ n \,m_e (X, \Delta; nL) - r \leq m_e (X, \Delta; L) \leq n \, m_e (X, \Delta;nL) +n. \]
    Now, the Proposition follows immediately from \Cref{finitedegreeapprox}.
\end{proof}

\begin{rem}
    The $F$-signature of section rings also transforms in a similar manner to the $\FA$-invariant above. Indeed, see \cite[Theorem~2.6.2]{VonKorffthesis} and its generalization in \cite[Theorem~4.8]{CarvajalRojasFiniteTorsors}. A simple proof of this transformation rule for section rings can also be obtained by using \Cref{twistedFsig}.
\end{rem}

\begin{Pn} \label{directsummand}
    Let $S$ and $S^{\prime}$ be two $\NN$-graded section rings (of possibly different varieties) and $S \subset S^{\prime}$ be an inclusion such that for a fixed integer $n \geq 1$ and any other $m$, all degree $m$ elements of $S$ are mapped to degree $n  m$ elements of $S^{\prime}$. Further, assume that the inclusion $S \subset S^{\prime}$ splits as a map of $S$-modules.
    Then, we have
    \begin{equation} \label{directsummandequation} \FA(S) \geq \frac{\FA(S^{\prime})}{n}. \end{equation} 
 \end{Pn}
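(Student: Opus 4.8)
The plan is to reduce everything to the finite-degree approximations of \Cref{finitedegreeapprox}, applied with $\Delta=0$. That theorem gives $\FA(S)=\lim_{e\to\infty}m_e(S)/p^e$ and likewise for $S'$, where (using \Cref{lem:comparisonIes} to see that $\II_e^0=I_e$) one has $m_e(S)=\max\{m\ge 0\mid I_e(mL)=0 \text{ on } \Proj(S)\}$, and, by \Cref{eg:hypersurfaceIe} and \Cref{globalIevslocal} together with \Cref{frrkkdfn}, the vanishing $I_e(mL)=0$ means precisely that for every nonzero homogeneous $f\in S$ of degree $m$ the map $S\to\Fe S$ sending $1\mapsto\Fe f$ splits as a map of $S$-modules (an $S$-linear $\varphi$ with $\varphi(\Fe f)\notin\fm$ exists, and one rescales $\varphi$ by a unit). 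From this vantage point the proposition follows from one elementary observation: such Frobenius splittings of ring elements descend to module direct summands.

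The key lemma I would isolate is this. Let $\pi\colon S'\to S$ be an $S$-module retraction of the inclusion $\iota\colon S\into S'$, so $\pi\circ\iota=\mathrm{id}_S$ and hence $\pi(1_{S'})=1_S$. Fix a homogeneous $f\in S$ and suppose $S'\to\Fe S'$, $1\mapsto\Fe f$, splits via some $S'$-linear $\varphi'\colon\Fe S'\to S'$ with $\varphi'(\Fe f)=1$. Applying $\Fe(-)$ to $\iota$ yields an $\Fe S$-linear (hence $S$-linear) map $\Fe S\to\Fe S'$, while $\varphi'$ is in particular $S$-linear; therefore the composite
\[ \Fe S \longrightarrow \Fe S' \xrightarrow{\ \varphi'\ } S' \xrightarrow{\ \pi\ } S \]
is $S$-linear and sends $\Fe f\mapsto \varphi'(\Fe f)=1 \mapsto 1$, so $S\to\Fe S$, $1\mapsto\Fe f$, splits over $S$. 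Applying this with $f$ an arbitrary nonzero element and using that $S'$ is strongly $F$-regular (which is implicit, since $\FA(S')$ must be defined) shows $S$ is strongly $F$-regular as well — $S$ being automatically a normal $F$-finite domain, as a section ring over the perfect field $k$ — so $\FA(S)$ makes sense. (If one adopts the convention that $\FA$ is $0$ off the globally $F$-regular case, the inequality is vacuous unless $S'$ is globally $F$-regular, and then this paragraph supplies the needed regularity of $S$.)

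Now I would assemble the pieces. By the grading hypothesis, a homogeneous degree-$m$ element of $S$ is a homogeneous degree-$nm$ element of $S'$. Hence, for any $e\ge 1$: if $nm\le m_e(S')$ then $I_e(nmL')=0$ on $\Proj(S')$, i.e.\ every nonzero homogeneous degree-$nm$ element of $S'$ splits from $\Fe$ over $S'$; applying the key lemma to each nonzero $f\in S_m\subset S'_{nm}$, each such $f$ splits from $\Fe$ over $S$, so $I_e(mL)=0$ on $\Proj(S)$, i.e.\ $m\le m_e(S)$. Taking $m=\lfloor m_e(S')/n\rfloor$ gives $m_e(S)\ge \lfloor m_e(S')/n\rfloor \ge m_e(S')/n-1$ for all $e$. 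Dividing by $p^e$ and letting $e\to\infty$, \Cref{finitedegreeapprox} yields $\FA(S)\ge\tfrac1n\FA(S')$. (An equivalent route avoids $m_e$ entirely: for any $\lambda<\FA(S')$, the same bookkeeping — reading the defining condition of $\mathscr{A}(S',0)$ at degrees $M=nm$ and invoking the key lemma — shows $\lambda/n\in\mathscr{A}(S,0)$, so $\FA(S)\ge\lambda/n$ by \Cref{alternatecharofalpha}(1); then let $\lambda\uparrow\FA(S')$.)

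As for the main obstacle: there is no deep one once \Cref{finitedegreeapprox} is available — the entire content is the (trivial) descent-of-splittings lemma plus the degree bookkeeping $m\leftrightarrow nm$. The only point needing a word of care is that $\FA(S)$ is defined at all, i.e.\ that $S$ inherits strong $F$-regularity from $S'$, and this is exactly the $f$-arbitrary case of the key lemma, so it costs nothing extra. In particular, note that $S'$ need not be module-finite over $S$ anywhere in the argument.
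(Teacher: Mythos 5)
Your proposal is correct and follows essentially the same route as the paper: reduce to the finite-degree approximations of \Cref{finitedegreeapprox} together with the observation that $I_e(S_m)\subset I_e(S'_{mn})$, i.e.\ that a Frobenius splitting of $f$ over $S'$ descends to one over $S$. The paper simply asserts this descent without proof; your key lemma (composing $\Fe S\to\Fe S'\xrightarrow{\varphi'}S'\xrightarrow{\pi}S$) supplies exactly the missing justification, and your remark that strong $F$-regularity of $S$ also descends, so that $\FA(S)$ is defined, is a correct and sensible addition.
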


\begin{proof}
    This follows immediately from \Cref{finitedegreeapprox} and the fact that a homogeneous element $f$ of $S$ splits from $\Fe S$ if it splits from $\Fe S^{\prime}$. In other words, we have
    \[ I_e (S_m) \subset I_e (S' _{mn})\]
    for any $e$ and $m$. Here, $I_e (S_m)$ is the same as $I_e (mL)$ from \Cref{Iedfnwithdelta}, where $X = \Proj (S)$, $L = \cO_X (1)$ and $\Delta= 0$ (and similarly for $S'$).
\end{proof}

\begin{Pn} \label{regularfibers}
    Let $(S, \fm) \subset (S', \mathfrak{n})$ be a degree preserving map of $\NN$-graded, strongly $F$-regular section rings (of possibly different varieties and over possibly different perfect fields). Assume that both $S$ and $S'$ are generated in degree one, $S'$ is flat over $S$, and that the ring $S'/\fm S'$ is regular. Furthermore, suppose $\Delta$ is an effective $\QQ$-divisor over $\Spec(S)$ that is defined by homogeneous prime ideals. Then, we have
    $ \FA(S') = \FA (S)$,
    and similarly, \[ \FA(S, \Delta) = \FA(S', \varphi^* \Delta )\]
    where $\varphi: \Spec(S') \to \Spec(S)$ is the map induced by the inclusion $S \subset S'$.
\end{Pn}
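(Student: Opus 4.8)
The plan is to reduce everything to the finite-degree characterization of the $\FA$-invariant from \Cref{finitedegreeapprox}, i.e. to track the subspaces $\II_e^\Delta(mL)$ (equivalently $I_e^\Delta(mL)$) along the flat base change $S \subset S'$. The key point is that, because $S'/\fm S'$ is regular and $S'$ is flat over $S$, passing from $S$ to $S'$ changes the $F$-splitting behaviour of a homogeneous element only up to the "regular fibre" contribution, which is invisible to the $F$-pure threshold. Concretely, I would first treat the case $\Delta = 0$. Since $S' $ is flat over $S$ with $S'/\fm S'$ regular (hence the map $\Spec(S') \to \Spec(S)$ is smooth at the closed point of the fibre), I would use the standard fact that $F_*^e S' \otimes_S -$ commutes appropriately and that for a homogeneous $f \in S_m \subset S'_m$, the map $S' \to F_*^e S'$ sending $1 \mapsto F_*^e f$ splits if and only if the map $S \to F_*^e S$ sending $1 \mapsto F_*^e f$ splits. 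The "only if" is immediate by restricting scalars along $S \subset S'$ and composing with a splitting of $S \subset S'$ (which exists since $S \subset S'$ is faithfully flat of regular closed fibre, hence split — or one can cite $F$-regularity plus the cyclically pure/split criterion). The "if" direction uses that a splitting $\varphi: F_*^e S \to S$ of $1 \mapsto F_*^e f$ base-changes to a splitting $\varphi \otimes_S S': F_*^e S \otimes_S S' \to S'$, together with the identification $F_*^e S \otimes_S S' \cong F_*^e S'$ coming from flatness of $S'$ over $S$ and regularity (smoothness) of the fibre — this last isomorphism is exactly where the hypothesis "$S'/\fm S'$ regular" is used, since it guarantees the Frobenius on the fibre is flat. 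This gives $I_e(S_m) = I_e(S'_m) \cap S_m$ and, more to the point, $m_e(S) = m_e(S')$ for all $e$, whence $\FA(S) = \FA(S')$ by \Cref{finitedegreeapprox}.

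For the general case with $\Delta$ defined by homogeneous prime ideals, I would run the same argument with divisors. Here $\varphi^*\Delta$ makes sense: since each component of $\Delta$ is a homogeneous prime and $S'$ is a flat, split extension, the pullback of the corresponding reflexive sheaf along $\varphi$ is again reflexive (one checks $\varphi^*\Delta$ is a well-defined effective $\QQ$-divisor on $\Spec(S')$, using that $S'$ is normal and $\varphi$ is flat with regular fibres so codimension-one behaviour is preserved). Then I would show $\lfloor p^e \varphi^*\Delta \rfloor = \varphi^*\lfloor p^e \Delta\rfloor$ (floors pull back correctly since $\varphi^*$ is flat and multiplicities are preserved on codimension-one points), and that for homogeneous $f \in S_m$, the natural map $\cO \to F_*^e \cO(\lfloor p^e \Delta\rfloor + \div(f))$ splits over $\Spec(S)$ iff its base change to $\Spec(S')$ splits; this is the divisorial version of the argument above, again turning on flatness plus regularity of the fibre to identify the relevant $\Hom$-modules after base change. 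Hence $\II_e^\Delta(mL) = \II_e^{\varphi^*\Delta}(mL) \cap S_m$ and $m_e(S,\Delta;L) = m_e(S',\varphi^*\Delta;L)$ for all $e$, so $\FA(S,\Delta) = \FA(S',\varphi^*\Delta)$ by \Cref{finitedegreeapprox}.

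The main obstacle I anticipate is the precise homological bookkeeping in the identification $\Hom_S(F_*^e S, S) \otimes_S S' \cong \Hom_{S'}(F_*^e S', S')$ (and its divisorial variant), since this requires both that $F_*^e S$ is finitely presented over $S$ (true, as $S$ is $F$-finite) so that $\Hom$ commutes with the flat base change $- \otimes_S S'$, and that the resulting module $F_*^e S \otimes_S S'$ is genuinely $F_*^e$ of $S'$, which is exactly the statement that Frobenius commutes with the base change $S \to S'$ — valid precisely because $S'$ is flat over $S$ with regular (geometrically reduced, in fact smooth) closed fibre, so that $S' \otimes_{S, F^e} S'$-type constructions behave as expected. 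Once this compatibility is in hand, the equality of all the $m_e$'s is formal, and $\FA(S) = \FA(S')$ (resp. $\FA(S,\Delta) = \FA(S',\varphi^*\Delta)$) drops out of \Cref{finitedegreeapprox}. I would also remark that the hypothesis that $S, S'$ are generated in degree one ensures the grading is preserved on the nose, so no Veronese correction (as in \Cref{Prop:Veronese}) is needed.
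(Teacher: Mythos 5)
Your high-level strategy—reduce to comparing the subspaces $\Ied(mL)$ (or $\II_e^\Delta$) along the flat base change and then apply \Cref{finitedegreeapprox}—matches the paper, which achieves exactly this by citing the equality $I_e^\Delta(S_m)\,S' = I_e^{\varphi^*\Delta}(S'_m)$ from the proof of Lemma~3.5 of Carvajal-Rojas--Schwede--Tucker. However, your proposed proof of this base-change equality has a genuine gap: the claimed isomorphism $F_*^e S \otimes_S S' \cong F_*^e S'$ is simply false unless $S \to S'$ has zero-dimensional fibers. Indeed, $F_*^e S \otimes_S S'$ has generic rank $p^{e\dim S}$ over $S'$, while $F_*^e S'$ has rank $p^{e\dim S'}$, and flatness with regular fibers forces $\dim S' = \dim S + \dim(S'/\fm S')$, so the ranks disagree whenever the closed fiber is positive-dimensional (e.g. $k[x,y] \subset k[x,y,z]$, both section rings of projective spaces and satisfying all the hypotheses). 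What is actually true is that the natural relative Frobenius map $\mu: F_*^e S \otimes_S S' \to F_*^e S'$ is faithfully flat (Radu--Andr\'e, using geometric regularity of the fibers), and it is the extra structure of $F_*^e S'$ as a module over this ring, together with regularity of the closed fiber, that makes $I_e$ base-change correctly; your "if'' direction hinges precisely on this and is not the "homological bookkeeping'' you describe.

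There is a second, related gap. Even granting a well-behaved base change for elements of $S_m \subset S'_m$, your argument only yields $I_e(S_m) = I_e(S'_m) \cap S_m$, which gives $I_e(S'_m) = 0 \Rightarrow I_e(S_m) = 0$ but not the converse: when $I_e(S_m) = 0$, you still need to rule out a non-zero $g \in S'_m$ that is not in the $S'_0$-span of $S_m$ yet fails to split from $F_*^e S'$. The stronger equality $I_e^\Delta(S_m)\,S' = I_e^{\varphi^*\Delta}(S'_m)$ from CRST is what controls these "new'' degree-$m$ elements of $S'$, and this is where the regularity of the closed fiber earns its keep. Finally, a minor point: the paper first uses \Cref{continuityofFalpha} to reduce to $\Delta$ with denominators coprime to $p$, so that $I_e^\Delta$ and $\II_e^\Delta$ agree along a cofinal sequence of $e$'s and the cited CRST statement applies verbatim; if you want to run the argument directly with $\lfloor p^e\Delta\rfloor$ you would need to re-derive the CRST lemma in that form rather than treating it as given.
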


Recall that since $\varphi$ is flat, we may pull-back Weil-divisors to Weil-divisors.

\begin{proof}
    First note that since $S$ and $S'$ are generated in degree $1$, the $\FA$-invariant must be at most $1$ for both of them (by \Cref{lem:coefficientmorethanone}). Next, by \Cref{continuityofFalpha}, it is enough to prove the theorem when the coefficients of $\Delta $ have denominators coprime to $p$. Then, fix $e_0$ such that $(p^{e_0} - 1) \Delta$ is $\ZZ$-Weil. Then,
 for any $e \geq 1$ such that $e_0$ divides $e$, and $m \leq p^e -1$, it follows from the \cite[Proof of Lemma~3.5]{CRSTBertiniTheorems} that
    \[ I_e ^\Delta (S_ m) \, S' = I_e ^{\varphi^* \Delta} (S'_m).   \]
    Again, $I_e ^\Delta (S_m)$ is the same as $I_e ^{\tilde{\Delta}} (mL)$ from \Cref{Iedfnwithdelta}, where $X = \Proj (S)$, $L = \cO_X (1)$ and $\tilde{\Delta}$ is the cone over $\Delta$ (and similarly for $S'$). This implies that $ m_e (S, \Delta) = m_e (S', \varphi^* \Delta) $ for any $e \geq 1$ divisible by $e_0$ (see \Cref{alphae}).
    Now, the Proposition follows by using \Cref{finitedegreeapprox}.
\end{proof}

Recall that $k$ was assumed to be a \emph{perfect} field of characteristic $p >0$.

\begin{Cor} \label{changeofbasefield}
    Let $(X, \Delta)$ be a projective, globally $F$-regular pair over $k$ and $K$ be an arbitrary perfect field extension of $k$. Then, the base-change $X \otimes _k K $ is isomorphic to a disjoint union of globally $F$-regular pairs $(X_i, \Delta_i)$ over finite extensions of $K$. Moreover, for any ample divisor $L$ and for each $i$, we have
    \[ \FA(X, \Delta;L) = \FA(X_i, \Delta_i; L_i) \]
    where $L_i$ denotes the pull-back of $L$ to $X_i$.
\end{Cor}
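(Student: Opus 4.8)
The plan is to reduce the corollary to \Cref{regularfibers} by recognizing the base-change map of section rings as a flat map with regular (even smooth) fibers. First I would fix an ample Cartier divisor $L$ on $X$ and let $S = S(X,L) = \bigoplus_{m\geq 0} H^0(X, \cO_X(mL))$ be the corresponding section ring, which is strongly $F$-regular since $(X,\Delta)$ is globally $F$-regular. Because formation of cohomology of coherent sheaves commutes with the flat base change $k \to K$, we have $S \otimes_k K \cong \bigoplus_{m \geq 0} H^0(X_K, \cO_{X_K}(mL_K))$, where $X_K = X \otimes_k K$; this is the section ring of the (possibly reducible) scheme $X_K$ with respect to $L_K$. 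Since $X$ is a geometrically normal variety only after we account for the field extension being perfect — here I would invoke that $k$ is perfect so that $X_K$ is reduced, and normality is preserved under base change to a perfect field, so $X_K = \bigsqcup_i X_i$ is a finite disjoint union of normal projective varieties $X_i$ over finite extensions $K_i/K$. Global $F$-regularity is preserved under base change along $k \to K$ (for instance because the splitting of $\cO_X \to \Fe\cO_X(\lceil(p^e-1)\Delta\rceil + D)$ base-changes, using that $K$ is perfect so $F$ commutes with $-\otimes_k K$, and that every effective divisor on $X_i$ is dominated by the base change of one on $X$ after possibly enlarging — this point needs care), so each $(X_i, \Delta_i)$ with $\Delta_i$ the pullback of $\Delta$ is globally $F$-regular.

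Next I would assemble the ring-theoretic input: $S \hookrightarrow S \otimes_k K$ is faithfully flat and degree-preserving, both rings are $\NN$-graded section rings generated in degree one (after replacing $L$ by a Veronese power if necessary, using \Cref{Prop:Veronese} to track the effect on $\FA$), $S\otimes_k K$ is strongly $F$-regular, and the fiber $(S\otimes_k K)/\fm(S\otimes_k K)$ is a localization-type quotient which, because $k \to K$ is a separable (perfect) field extension, is regular. Moreover $\Delta$ is defined by homogeneous prime ideals by hypothesis. Thus \Cref{regularfibers} applies verbatim to the inclusion $S \subset S\otimes_k K$, giving $\FA(S,\tilde\Delta) = \FA(S\otimes_k K, \varphi^*\tilde\Delta)$. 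Finally, since the $\FA$-invariant of a disjoint union is computed component-by-component — here I would note that the section ring $S\otimes_k K = \prod_i S_i$ is a finite product, splitting ideals and hence the $m_e$ and the limit in \Cref{finitedegreeapprox} decompose accordingly, and for an ample $L$ each $S_i$ contributes, so $\FA(S\otimes_k K) = \min_i \FA(S_i)$; but by symmetry of the Galois-type action permuting the $X_i$ over $K$, or more simply because each $X_i$ already satisfies $H^0(X_i,\cO) $ a field and the argument of \Cref{regularfibers} is applied to each factor separately — one concludes $\FA(S_i, (\Delta_i)\widetilde{\phantom{n}}) = \FA(S,\tilde\Delta)$ for every $i$. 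Translating back via \Cref{FASdfn} and the notation $\FA(X,\Delta;L) = \FA(S,\tilde\Delta)$ yields the claim.

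The main obstacle I anticipate is the bookkeeping around reducibility: \Cref{regularfibers} as stated takes $S'$ to be a section ring of a \emph{variety} (hence a domain), whereas $S \otimes_k K = \prod_i S_i$ is only a product of domains. The cleanest fix is to apply \Cref{regularfibers} not to $S \subset S\otimes_k K$ directly but to each composite $S \hookrightarrow S\otimes_k K \twoheadrightarrow S_i$; the surjection onto a factor is split (it is projection off an idempotent), it is degree-preserving, and $S_i$ is flat over $S$ with regular fiber since it is a further quotient by a prime that cuts out one geometric component and the residual extension is separable. I would need to verify that flatness and regularity of the fiber survive this last quotient — this follows because over the perfect field $k$ the idempotents of $S\otimes_k K$ correspond to the separable factorization, so $S_i$ is a localization of $S \otimes_k K$ up to nilpotents, i.e. genuinely a localization since everything is reduced. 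Once this is in place, every hypothesis of \Cref{regularfibers} is met for $S \subset S_i$ and the corollary follows; the remaining arguments (base change of cohomology, preservation of global $F$-regularity, the Veronese reduction to degree-one generation) are all either standard or already available in the excerpt.
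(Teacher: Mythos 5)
Your proposal is correct and is essentially the paper's own argument: pass to a Veronese to assume degree-one generation (via \Cref{Prop:Veronese}), use perfectness of $k$ to write $S\otimes_k K \cong \prod_i S^i$ with $S^i = S\otimes_{S_0} L_i$ via the splitting of $S_0\otimes_k K$ into a product of fields, note each $S^i$ is strongly $F$-regular, and apply \Cref{regularfibers} to each inclusion $S\subset S^i$ (flat, degree-preserving, with regular fiber $L_i$). Your ``obstacle'' paragraph identifies and resolves exactly the same reducibility issue the paper handles implicitly by applying \Cref{regularfibers} factor-by-factor, so the two proofs coincide.
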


\begin{proof}
    Firstly, we consider the section ring $S = S(X,L)$ and we may assume that $S$ is generated in degree one by using \Cref{Prop:Veronese}. Set $S' = S \otimes _k K$. Since $k$ is perfect and $S/\fm$ is a finite separable extension of $k$, we see that
    \[ S'/\fm S' \isom S/\fm \otimes _k K \isom \prod _{i} L_i  \]
    is a finite product of perfect fields $L_i$. Thus, $S' \isom \prod _i S^i$ where $S^i$ is the section ring of $X_i := X \times _{S/\fm} L_i$. Note that $S^i$ is isomorphic to $ S \otimes_{S/\fm} L_i$, and hence each $S^i$ is strongly $F$-regular by \cite[Theorem~3.6]{CRSTBertiniTheorems}.
    Similarly, if $\Delta_i$ denotes the pull-back to $\Delta$ to $X_i$, then
    the corollary now follows from \Cref{regularfibers} by applying it to each inclusion $ S \subset S^i$.
\end{proof}

\begin{rem} \label{reductiontogeometricallyconnected}
    In the setting of \Cref{changeofbasefield}, the same results also holds for the $F$-signature of $S$ instead of the $\FA$-invariant by using \cite[Theorem~5.6]{YaoObservationsAboutTheFSignature} or \cite[Theorem~3.6]{CRSTBertiniTheorems} in place of \Cref{regularfibers}. This allows to reduce computing both the $F$-signature and the $\FA$-invariant of a section ring to the geometrically connected case (i.e., $S_0 = k$). Moreover, we may also assume that $k$ is algebraically closed by passing to the algebraic closure.
\end{rem}

\subsection{Behaviour under changing the line bundle.}

Now, we prove a monotonicity result for the $\FA$-invariant under varying the ample line bundle $L$  ``in the effective direction".

\begin{Pn} \label{Prop:Addingeffective}
    Let $(X, \Delta)$ be a projective, globally $F$-regular pair over $k$. Suppose $L$ and $L'$ are ample divisors over $X$ such that their difference $L' - L$ is effective. In other words, there is an effective Cartier divisor $0 \leq E \sim L' - L$. Then, $\FA(X, \Delta;L') \leq \FA(X, \Delta; L)$.
\end{Pn}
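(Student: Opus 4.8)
The plan is to read the inequality off from the divisorial characterization of the $\FA$-invariant in Part (5) of \Cref{equivalentdefinitions}, combined with the monotonicity of global $F$-regularity under shrinking the boundary divisor, Part (1) of \Cref{lemdecreasingcoefficients}. Recall that by Part (5) of \Cref{equivalentdefinitions}, for any ample Cartier divisor $M$ on $X$ the invariant $\FA(X,\Delta;M)$ equals the supremum of the set
\[
\Lambda_M := \{\, \lambda \geq 0 \mid (X, \Delta + \lambda D) \text{ is globally } F\text{-regular for every effective } \QQ\text{-divisor } D \sim_{\QQ} M \,\}.
\]
So I would reduce the proposition to the set-theoretic containment $\Lambda_{L'} \subseteq \Lambda_L$: once this is shown, taking suprema gives $\FA(X,\Delta;L') = \sup\Lambda_{L'} \leq \sup\Lambda_L = \FA(X,\Delta;L)$.

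To prove $\Lambda_{L'}\subseteq\Lambda_L$, I would fix $\lambda\in\Lambda_{L'}$ and an arbitrary effective $\QQ$-divisor $D\sim_{\QQ}L$, and then use the effective Cartier divisor $E\geq 0$ with $E\sim L'-L$ furnished by the hypothesis: the $\QQ$-divisor $D+E$ is effective and $D+E\sim_{\QQ}L+(L'-L)=L'$, so $(X,\Delta+\lambda(D+E))$ is globally $F$-regular because $\lambda\in\Lambda_{L'}$. Since $\lambda E\geq 0$ we have $\Delta+\lambda D\leq\Delta+\lambda(D+E)$, so Part (1) of \Cref{lemdecreasingcoefficients} gives that $(X,\Delta+\lambda D)$ is globally $F$-regular. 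As $D$ was an arbitrary effective $\QQ$-divisor $\QQ$-linearly equivalent to $L$, this exhibits $\lambda\in\Lambda_L$, completing the argument.

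I do not expect a real obstacle here: the whole proof is a one-step perturbation once Part (5) of \Cref{equivalentdefinitions} is in hand. The only subtlety worth flagging is that $E$ must be used as a genuine effective divisor (not merely a divisor class), which is precisely what the hypothesis $0\leq E\sim L'-L$ provides. For completeness I could instead argue at the level of splitting subspaces via \Cref{alternatecharofalpha}: for each $m$ with $H^0(X,\cO_X(mL))\neq 0$ pick an effective divisor $D_m\sim mL$; then \Cref{lem:addingeffective} applied to $D_m\leq D_m+mE$ gives $I_e^{\Delta}(mL)\subseteq I_e^{\Delta}(mL')$ for all $e$ (the inclusion being trivial when $H^0(X,\cO_X(mL))=0$), whence the defining set $\mathscr{A}$ for $L'$ is contained in that for $L$ and the comparison of suprema follows again. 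I would present the first, shorter route.
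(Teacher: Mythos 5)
Your primary argument is correct, and it takes a genuinely different route from the paper. The paper's proof stays at the level of the splitting subspaces: it applies \Cref{lem:addingeffective} to obtain $\II_e^{\Delta}(mL)\subseteq\II_e^{\Delta}(mL')$ for all $e,m$, deduces $m_e(X,\Delta;L)\geq m_e(X,\Delta;L')$, and concludes via \Cref{finitedegreeapprox}. Your argument instead works directly with the divisorial characterization in Part (5) of \Cref{equivalentdefinitions}: you show the defining sets are nested, $\Lambda_{L'}\subseteq\Lambda_L$, by writing any effective $D\sim_{\QQ}L$ as $D\leq D+E$ with $D+E\sim_{\QQ}L'$ effective and invoking monotonicity of global $F$-regularity under shrinking the boundary (Part (1) of \Cref{lemdecreasingcoefficients}). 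This is more elementary: it avoids the asymptotic $m_e/p^e$ machinery entirely and needs only the basic properties of globally $F$-regular pairs, so it would have been available immediately after \Cref{equivalentdefinitions}. The trade-off is that the paper's route gives the sharper, $e$-by-$e$ statement $\II_e^{\Delta}(mL)\subseteq\II_e^{\Delta}(mL')$, i.e., $m_e(X,\Delta;L)\geq m_e(X,\Delta;L')$, which is a strictly stronger piece of information than the inequality of limits. Your ``for completeness'' alternative is essentially the paper's proof, modulo the cosmetic difference of using $I_e^{\Delta}$ together with \Cref{alternatecharofalpha} instead of $\II_e^{\Delta}$ together with \Cref{finitedegreeapprox}.
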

\begin{proof}
    By \Cref{lem:addingeffective}, for each $e \geq 1$ and each $m \geq 1$, we have $\mathbb{I}_e ^\Delta (mL) \subset \mathbb{I} _e ^\Delta (mL')$. Note that even though \Cref{lem:addingeffective} is written for the subspaces $\Ied$, the exact same proof works for the variants $\mathbb{I}_e ^\Delta$ as well. Therefore, we obtain that for all $e \geq 1$, $m_e (X, \Delta;L) \geq m_e(X,\Delta;L')$ (see \Cref{alphae}). The Proposition now follows from \Cref{finitedegreeapprox}.
\end{proof}

\begin{rem}
    While \Cref{Prop:Addingeffective} and \Cref{directsummand} are easy consequences of the theory of the $\FA$-invariant developed so far, there are no corresponding results for the $F$-signature, and the direct analogies are seen to be false. However, combined with \Cref{positivityofalpha}, we obtain interesting information about the $F$-signature as well, using the statements for the $\FA$-invariant. This observation can be used to prove certain uniform lower bounds on the $F$-signature, and will be explored in a forthcoming project.
\end{rem}

\section{The $\FA$-invariant of Fano varieties and log Fano pairs.}

In this section, we specialize the study of the $\FA$-invariant to the case 
of globally $F$-regular Fano varieties (and when the ample divisor is a multiple of $-K_X$), and more generally to log Fano pairs. We begin by defining what we mean by a \emph{globally $F$-regular log Fano pair} in positive characteristic. Recall that $k$ denotes a perfect field of characteristic $p >0$.

\begin{dfn} \label{logFanodfn} \label{QFanodfn}

     A \emph{globally $F$-regular log Fano pair} $(X, \Delta)$ consists of a projective variety $X$ over $k$, and a $\QQ$-Weil divisor $\Delta \geq 0$ such that
    \begin{enumerate}
        \item $(X, \Delta)$ is globally $F$-regular (\Cref{defn.GlobFreg}).
        \item $K_{X} + \Delta$ is a $\QQ$-Cartier divisor.
        \item $-(K_{X}+ \Delta)$ is ample.
    \end{enumerate}
    If $(X,0)$ is a globally $F$-regular log Fano pair, then we call $X$ a globally $F$-regular \emph{$\QQ$-Fano} variety.
    
\end{dfn}

Note that since $X$ has only strongly $F$-regular singularities, $X$ is automatically normal and Cohen-Macaulay. In particular, we may define the canonical Weil-divisor $K_{X}$ by extending a canonical divisor from the smooth locus. Moreover, since $X$ is Cohen-Macaulay, $\om _{X} = \cO_{X}(K_{X})$ is a dualizing sheaf over $X$.
Furthermore, the second and third conditions in Definition~\ref{logFanodfn} guarantee that there is an integer $r$ such that $r(K_{X} + \Delta)$ is Cartier, and so that $-r(K_{X} + \Delta)$ defines an ample line bundle over $X$. The smallest such $r$ is called the \emph{index} of $(X, \Delta)$.



\begin{dfn} \label{FAXdfn}
    Let $(X, \Delta)$ be a globally $F$-regular log Fano pair over $k$ and $r$ be a positive integer divisible by the index of $(X, \Delta)$. Let $L = -r(K_X + \Delta)$ denote the corresponding ample divisor. Then, the $\FA$-\emph{invariant} of $(X, \Delta)$ is defined to be
    $$ \FA (X, \Delta) := r \, \FA (X, \Delta; L ) $$
    where $\FA$ denotes the $\FA$-invariant as defined in Definition~\ref{FASdfn}.
\end{dfn}

Similarly, we also define the global $F$-signature of a log Fano pair.

\begin{dfn} \label{FsigXdfn}
    Let $(X, \Delta)$ be a globally $F$-regular log Fano pair over $k$ and $r$ denote a positive integer divisible by the index of $(X, \Delta)$. Let $L$ denote the ample divisor $-r(K_X + \Delta)$ and 
    $$ S := S(X, L) = \bigoplus _{m \geq 0} H^{0}(X, \cO_{X}(mL)) $$ denote the section ring of $X$ with respect to $L$, and let $\tilde{\Delta}$ denote the cone over $\Delta$ with respect to $L$ (see \Cref{conesoverdivisorssubsection}). Then, the \emph{$F$-signature} of $(X, \Delta)$ is defined to be
    $$ \s(X, \Delta) := r \, \s (S, \tilde{\Delta}) $$
    where $\s$ denotes the $F$-signature of pairs, as defined in \Cref{F-sigdfn}.
\end{dfn}

\begin{rem}
    Though the definitions of the $\FA$-invariant and the $F$-signature involve making a choice of a multiple of the index of $(X, \Delta)$, both invariants are well-defined thanks to \Cref{Prop:Veronese} (for the $\FA$-invariant) and \Cref{transformationrulewithpairs} (for the $F$-signature). Moreover, when $-K_X - \Delta$ is a linearly equivalent to a $\ZZ$-Weil divisor $L_0$ (for example when $\Delta = 0$), by \Cref{transformationrulewithpairs} the $F$-signature of $(X, \Delta)$ can also be interpreted as the $F$-signature of the orbifold cone over $(X, \Delta)$ with respect to $L_0$ (\Cref{def:Fsigoforbifoldcones}). In particular, this implies that in that case the $F$-signature of $(X, \Delta)$ is at most $1$.
\end{rem}

\subsection{Main theorems and proofs.}

\begin{thm} \label{alphathm}
    Let $(X, \Delta)$ be a globally $F$-regular log Fano pair of positive dimension. Then, 
 $\FA (X, \Delta)$ is at most $1/2$.
\end{thm}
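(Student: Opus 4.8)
The plan is to exploit the duality/trace machinery from Section~\ref{section:duality} together with the alternate characterization of the $\FA$-invariant in \Cref{alternatecharofalpha}. By \Cref{changeofbasefield} and \Cref{reductiontogeometricallyconnected}, I may reduce to the case where $X$ is geometrically connected, so that $H^0(X,\cO_X) = k$ and the trace maps $\text{Tr}_e^\Delta$ of \Cref{section:duality} are available. Fix $r$ divisible by the index of $(X,\Delta)$, set $L = -r(K_X+\Delta)$, let $S$ be the section ring and $\tilde\Delta$ the cone over $\Delta$. By \Cref{FAXdfn} and \Cref{alternatecharofalpha}, it suffices to show that $\alpha'(S,\tilde\Delta)\le \tfrac{1}{2r}$, i.e.\ that for $e\gg 0$ there is \emph{some} degree $m \le \tfrac{1}{2r}(p^e-1) + 1$ (roughly) with $I_e^{\tilde\Delta}(mL) \ne 0$.

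The key point is a symmetry coming from \Cref{dualIe}. With $D = mL$, the ``dual'' divisor is $D_1 = (1-p^e)K_X - \lceil(p^e-1)\Delta\rceil - mL$. Since $L = -r(K_X+\Delta)$, one computes $(1-p^e)K_X - (p^e-1)\Delta = \tfrac{p^e-1}{r}L$ (at the level of $\QQ$-divisors; the ceiling introduces only a bounded correction), so $D_1 \sim_\QQ \bigl(\tfrac{p^e-1}{r} - m\bigr)L$, up to adjusting by the effective divisor $\lceil(p^e-1)\Delta\rceil - (p^e-1)\Delta$. Thus \Cref{dualIe} gives
\[ \dim_k \frac{H^0(mL)}{I_e^{\tilde\Delta}(mL)} \;=\; \dim_k \frac{H^0(m'L + (\text{bounded effective correction}))}{I_e^{\tilde\Delta}(\cdots)} \]
where $m' \approx \tfrac{p^e-1}{r} - m$. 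Now suppose for contradiction that $\FA(X,\Delta) > 1/2$; then by \Cref{alternatecharofalpha} (after the $e_0$-perturbation reduction so that $(p^{e_0}-1)\Delta$ is integral), for $e\gg 0$ divisible by $e_0$ we would have $I_e^{\tilde\Delta}(mL) = 0$ — equivalently $H^0(mL)/I_e^{\tilde\Delta}(mL) = H^0(mL)$ — for \emph{all} $m$ up to roughly $\tfrac{1}{2r}(p^e-1)$. But a degree $m \approx \tfrac{1}{2r}(p^e-1)$ section and its dual partner $m' \approx \tfrac{1}{2r}(p^e-1)$ are \emph{both} in the vanishing range; multiplying them lands (via the pairing in \Cref{dualIe}) in degree $D_2 = (1-p^e)K_X - \lceil(p^e-1)\Delta\rceil \sim_\QQ \tfrac{p^e-1}{r}L$ plus a bounded correction, and $H^0(D_2)/I_e^{\tilde\Delta}(D_2)$ is at most one-dimensional. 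The nondegeneracy of the pairing forces $H^0(mL)/I_e^{\tilde\Delta}(mL)$ to be at most one-dimensional as well for such $m$, so $H^0(mL)$ itself is at most one-dimensional for $m\approx\tfrac{1}{2r}(p^e-1)$ — contradicting ampleness of $L$ (which makes $\dim_k H^0(mL)$ grow like $m^{\dim X}$, unbounded, since $\dim X \ge 1$). I would package this last step carefully: choose $m$ in the vanishing range with $\dim_k H^0(mL)\ge 2$, which is possible once $p^e$ is large because $\tfrac{1}{2r}(p^e-1)\to\infty$ and $L$ is ample.

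The main obstacle I anticipate is bookkeeping around the ceiling $\lceil(p^e-1)\Delta\rceil$ versus $(p^e-1)\Delta$ and the resulting ``bounded effective correction'' divisors: I need the dual divisor $D_1$ to again be (linearly equivalent to) an effective multiple of $L$ plus a divisor whose contribution is controlled, so that the pairing argument stays within the range where $\FA > 1/2$ forces vanishing. The cleanest route is to first prove the theorem for $\Delta = 0$ (or $(p^{e_0}-1)\Delta$ integral) where $(1-p^e)K_X = \tfrac{p^e-1}{r}L$ exactly and the symmetry $m \leftrightarrow \tfrac{p^e-1}{r}-m$ is exact, obtaining the bound $\FA(X,\Delta;L)\le \tfrac{1}{2r}$ directly from \Cref{dualIe} and \Cref{alternatecharofalpha}(2); then handle general $\Delta$ by the sandwiching limit argument with sequences $a_n\Delta \le \Delta \le b_n\Delta$ exactly as in the last part of the proof of \Cref{alternatecharofalpha}, using the continuity in \Cref{continuityofFalpha}. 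A secondary technical point is ensuring $e$ can be chosen in the range $e \ge e(\lambda)$ \emph{and} divisible by $e_0$ \emph{and} with $p^e$ large enough that the vanishing range contains a degree $m$ with $h^0(mL)\ge 2$; all three are compatible since each is an eventual condition.
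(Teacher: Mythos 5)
Your overall strategy — using the Frobenius-duality pairing of \Cref{dualIe} together with the characterization in \Cref{alternatecharofalpha} and the perturbation/sandwich reductions — matches the paper's proof. However, the derivation of the contradiction contains a genuine logical error.

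You write that ``the nondegeneracy of the pairing forces $H^0(mL)/I_e^{\tilde\Delta}(mL)$ to be at most one-dimensional as well,'' reasoning from the fact that the target $H^0(D_2)/\Ied(D_2)$ is at most one-dimensional. This inference is false: a non-degenerate bilinear pairing $V \times W \to U$ with $\dim_k U = 1$ forces $\dim_k V = \dim_k W$ (because the induced maps $V \to W^*$ and $W \to V^*$ are injective), not $\dim_k V \le 1$. The standard dot product $k^n \times k^n \to k$ is non-degenerate for every $n$. Concretely, non-degeneracy says that for each $f \ne 0$ there exists \emph{some} $g$ (depending on $f$) with $fg \ne 0$ in the target; it does not say a single $g$ works for all $f$. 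So the contradiction ``$H^0(mL)$ is at most one-dimensional, violating ampleness'' does not follow, and the final packaging (``choose $m$ with $h^0(mL) \ge 2$'') rests on this bad step.

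The correct contradiction, which your setup is already positioned to supply, comes from the dimension \emph{equality} in \Cref{dualIe} combined with an \emph{asymmetric} choice of $m$. Assume $\FA(X,\Delta) > 1/2 + \varepsilon$ for some $\varepsilon > 0$, so the vanishing range extends past $\tfrac{1}{2r}(p^e-1)$ by an amount of order $\varepsilon p^e$. Now choose $m$ slightly \emph{below} $\tfrac{1}{2r}(p^e-1)$, so its dual degree $m' \approx \tfrac{1}{r}(p^e-1) - m$ lies slightly \emph{above} $\tfrac{1}{2r}(p^e-1)$ but, thanks to the $\varepsilon$-slack, is still in the vanishing range. Then $\Ied(mL) = 0$ and $\Ied(D_1) = 0$ (with $D_1 \sim m'L + $ bounded correction), so \Cref{dualIe} gives $\dim_k H^0(mL) = \dim_k H^0(D_1)$. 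Since $L$ is ample and $\dim X \ge 1$, once $m' - m$ is large enough (which is arranged by letting $e \to \infty$, since the gap grows like $\varepsilon p^e$ while the corrections are bounded) one has $\dim_k H^0(D_1) > \dim_k H^0(mL)$, a contradiction. This is the step your proposal is missing: the asymmetry of $m$ vs.\ $m'$, which requires the strict inequality $\FA > 1/2$ to supply room, is essential, and it is dimension equality (not a one-dimensionality claim) that must be violated.
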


The proof of \Cref{alphathm} relies on the duality for the Frobenius morphism (see \Cref{section:duality}) and its consequences, namely, \Cref{dualIe}. Since the proof of the general case involves some technical perturbation tricks, we first present a sketch of the proof in the case where $\Delta = 0$ and $-K_X$ is an ample Cartier divisor, which we hope will make the key idea transparent.
\medskip

\paragraph{\textbf{Sketch proof of \Cref{alphathm}:}}
For simplicity, assume that $-K_X$ is Cartier and ample, $p$ is an odd prime, $\Delta = 0$, and $k$ is algebraically closed. For any $m \geq 1$ and $e \geq 1$, the number $a_e (m) = \dim_k \frac{H^0 (X, \cO_X(-mK_X))}{I_e (-mK_X)}$ is equal to the maximum number of copies of $\cO_X$ that appear as summands of the sheaf $\Fe \omega_X ^{-m}$ (see \cite[Proof of Theorem~4.7]{LeePandeFsignaturefunction} for a justification). Then, for any $e \geq 1$, since we know that $\Fe \omega_X^{-m}$ and $\Fe \omega_X ^{m+1- p^e}$ are dual (as $\cO_X$-modules) by \Cref{section:duality}, they must have the same number of copies of $\cO_X$ appearing as summands. In particular, we must have
\[\dim_k \frac{H^0 (X, \omega_X^{-(\frac{p^e -1}{2} -1) })}{I_e (-(\frac{p^e-1}{2} - 1)K_X)} = \dim_k \frac{H^0 (X, \omega_X ^{-(\frac{p^e-1}{2} +1)})} {I_e(-(\frac{p^e-1}{2} +1)K_X)}.  \]
Now, since $\omega_X ^{-1}$ is ample, for $e \gg 0$, we must have $\dim_k H^0 (\omega_X ^{-(\frac{p^e -1}{2} -1)}) < \dim_k (H^0 (\omega_X ^{-(\frac{p^e -1}{2} +1)}))$, which implies that $I_e (-(\frac{p^e -1}{2} + 1)K_X) \neq 0$ for $e \gg 0$. This implies that $\FA(X) \leq 1/2$ by \Cref{alternatecharofalpha}. \qedhere

\begin{proof}[\textbf{Proof of \Cref{alphathm}}]
Firstly, by using \Cref{changeofbasefield}, we may assume that $k$ is algebraically closed and that $X$ is geometrically connected over $k$. In other words, we have $H^0 (X, \cO_X) = k$.

Let $d$ denote the dimension of $X$, and $r \gg 0 $ be an integer divisible by the index of $(X, \Delta)$, $r \Delta$ is $\ZZ$-Weil and 
such that $H^0 (X, \cO_X(-mr(K_X + \Delta)) \neq 0$ for all $m \geq 1$. Let $\cL = \cO_X( -r(K_X + \Delta))$ denote the corresponding ample line bundle and $L$ denote any Cartier divisor linearly equivalent to $\cL$.


Next, having chosen an $r$ as above, we pick an integer $n_1$ such that $H^0 (X, \cO_X(sK_X) \otimes \cL^{n_1}) \neq 0$ for all integers $s$ such that $-r \leq s \leq r$. Again, this is possible since $\cL$ is ample and since we are considering only finitely many sheaves. 

Finally, we pick an integer $n_2 >0$ such that
\[ \dim_k H^0 (X, \cL^m) < \dim_k H^0 (X, \cL^{m+n}) \]
for all $n \geq n_2 $ and all $m \geq 0$.
This is clear, since by the asymptotic Riemann-Roch formula, $\dim_k H^0 (X, \cL^m)$ is eventually a polynomial in $m$ of degree $d>0$ and a positive leading term (because $\cL$ is ample).


Assume, for the sake of contradiction, that $\FA(X) > \frac{1}{2} + \varepsilon$ for some small $\varepsilon > 0$. Then, note that by \Cref{alternatecharofalpha}, for all $e \gg 0$, we have
\begin{equation} \label{vanishingIeequation}
  \Ied(-mr (K_X + \Delta)) = 0 \, \, \text{for all } m \leq \frac{p^{e} - 1}{2r} + \frac{\varepsilon}{r} (p^e -1).  
\end{equation}

Now, for all $e \gg 0$, we can find an integer $m$ satisfying the following properties:
\begin{enumerate}
    \item $mr < \frac{p^e - 1}{2}$.
    \item $ p^e -1 - mr + n_1 r < \frac{p^e - 1}{2} +  \varepsilon (p^e -1) $, 
    \item $n_2 r + r \leq p^e -1 - 2mr - n_1r$.
\end{enumerate}
This is equivalent to finding an integer $m$ such that 
\[ \frac{p^e -1}{2r}  - \frac{\varepsilon}{r} (p^e -1) + n_1 r \leq m \leq \frac{p^e -1}{2r} - \frac{n_1}{2r} - \frac{n_2}{2r} - 1,
\]
which is possible since $r, n_1,$ and $n_2$ are fixed and $\frac{\varepsilon}{2} (p^e -1) \to \infty$ as $e \to \infty$.

\paragraph{\textbf{Claim:}} We claim that the above conditions on $e$ and $m$ guarantee that, setting $D = (1 - p^e) K_X  - \lceil (p^e - 1) \Delta \rceil - mL$, we have:
\begin{enumerate}
    \item $ \Ied(mL) = 0$,
    \item   $\Ied(D) = 0$, and
    \item $\dim_k H^0 (X, \cO_X (D)) > \dim_k H^0 (X, \cO_X(mL)) $. 
    \end{enumerate}
 Assuming this claim,  we apply \Cref{dualIe} to the divisor $ mL$ to note that
\[ \dim_{k} \frac{H^0 (X, \cO_X(mL))}{\Ied (mL)} = \dim_k  \frac{H^0 (X, \cO_X(D))}{\Ied(D)},  \]
which is in contradiction to the claim. This proves that $\FA(X, \Delta)$ is at most 1/2.

 Thus, it remains to check the three conditions from the claim above.
 \begin{enumerate}
     \item By \Cref{vanishingIeequation} we see that $\Ied(mL) = 0$ if $m \leq \frac{p^e - 1}{2}$.
     \item First we check that since $r\Delta$ is 
     assumed to be a $\ZZ$-Weil divisor, we have $\lceil (p^e - 1) \Delta \rceil  = \lfloor \frac{p^e - 1}{r} \rfloor r \Delta + E$ for some effective Weil-divisor $E$. Furthermore, we note that since we have 
     \[ p^e - 1  - \left( \lfloor \frac{p^e -1}{r} \rfloor r \right)  \leq r  \]
     and by our choice of $n_1$, we may pick an effective divisor $E'$ such that
      \[ (1-p^e)K_X  + E' \sim \left( - \lfloor \frac{p^e -1}{r} \rfloor r K_X \right ) + n_1 L. \]
     Combining these two observations, we see that
      \[  (1-p^e)K_X -  \lceil (p^e - 1) \Delta \rceil  + E+E' \sim -\left( \lfloor \frac{p^e -1}{r} \rfloor r K_X + \lfloor \frac{p^e - 1}{r} \rfloor r \Delta \right )  + n_1 L \sim m'L,\]
      where $m' = \lfloor  \frac{p^e -1}{r} \rfloor + n_1$.
     By the second condition on $m$ and \Cref{vanishingIeequation}, we see that $\Ied((m' - m)L) = 0$. Since $E+E'$ is an effective Weil divisor, by applying \Cref{lem:addingeffective} we get that  $\Ied((1 - p^e) K_X  - \lceil (p^e - 1) \Delta \rceil - mL) = 0$.
     \item Similarly, in the other direction, we have $\lceil (p^e - 1) \Delta \rceil + E =  \lceil \frac{p^e - 1}{r} \rceil r\Delta$ for some effective Weil-divisor $E$. And since $\lceil  \frac{p^e -1}{r} \rceil r - ( p^e -1) \leq r $, we may choose an effective divisor $E'$ such that
     \[  (1-p^e)K_X -  \lceil (p^e - 1) \Delta \rceil - E - E' - mL  \sim -\left( \lceil \frac{p^e -1}{r} \rceil r K_X + \lceil \frac{p^e - 1}{r} \rceil r \Delta \right )  - mL -  n_1 L \sim m'L,\]
     where $m'$ now denotes $\lceil \frac{p^e -1 }{r} \rceil - m - n_1 $. By the third condition on $m$, we know that $m'- m \geq n_2$ and hence by the choice of $n_2$ and the fact that $E$ and $E'$ are effective Weil-divisors, we have
     \[ \dim_k H^0 (X, \cO_X(D)) \geq \dim_k H^0 (X, \cO_X(m'L)) > \dim_k H^0 (X, \cO_X(mL)).\]
 \end{enumerate}
 This completes the proof of the claim and hence of \Cref{alphathm}.
\end{proof}

For the rest of this section, we assume that $H^0 (X, \cO_X) = k$, i.e., that $X$ is geometrically connected. However, see \Cref{reductiontogeometricallyconnected} for ways to extend the results to more general cases. In the case of Fano varieties and log Fano pairs, we have a stronger version of comparison of the $\FA$-invariant to the $F$-signature than the formula in \Cref{positivityofalpha}:

\begin{thm} \label{alphavss}
    Let $(X, \Delta)$ be a $d$-dimensional globally $F$-regular log Fano pair over $k$. Assume that $H^0 (X, \cO_X) = k$ and that $d$ is positive. Set $\alpha = \FA(X, \Delta)$ (\Cref{FAXdfn}). Then, we have the following inequalities relating the $F$-signature and the $\FA$-invariant:
    
    \begin{equation} \label{alphavssequation}
        \frac{2 \, \alpha ^{d+1} \,  \vol(X,\Delta)}{(d+1)!} \leq \s(X, \Delta) \leq  \frac{2 \, \big( (\frac{1}{2})^{d+1} - (\frac{1}{2} - \alpha) ^{d+1} \big) \, \vol(X, \Delta)}{(d+1)!}.
    \end{equation}
   
\end{thm}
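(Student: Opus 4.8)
The plan is to mimic the argument already used for \Cref{positivityofalpha}, but sharpened to exploit the extra structure of the log Fano situation: namely that $L = -r(K_X+\Delta)$ and that, by \Cref{alphathm}, $\alpha \le 1/2$, which lets me take the constant $C$ in \Cref{positivityofalpha} to be exactly $1/r$ after normalizing. More precisely, set $S = S(X,L)$ with $L = -r(K_X+\Delta)$ for $r$ divisible by the index, let $\tilde\Delta$ be the cone over $\Delta$, and recall $\s(X,\Delta) = r\,\s(S,\tilde\Delta)$ and $\FA(X,\Delta) = r\,\FA(S,\tilde\Delta)$ and $\dim(S) = d+1$. The key observation is that by \Cref{dualIe} applied with $D = mL$ and using $(1-p^e)K_X - \lceil(p^e-1)\Delta\rceil \sim (p^e-1)L/r$ (up to the bounded correction handled exactly as in the proof of \Cref{alphathm}), the function $m \mapsto \dim_k H^0(mL)/I_e^{\Delta}(mL)$ is symmetric about $m = \tfrac{p^e-1}{2r}$, and it is supported on $0 \le m \le \tfrac{p^e-1}{r}$ (this is where $C = 1/r$ comes from: $I_e^\Delta(mL) = H^0(mL)$ once $mr > p^e-1$, since then the relevant sheaf is a negative power of an ample bundle twisted appropriately — cf. the last condition in the proof of \Cref{positivityofalpha}).

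For the lower bound I would proceed as in the left inequality of \Cref{positivityofalpha}: by \Cref{finitedegreeapprox}, $I_e^\Delta(mL) = 0$ for all $m \le m_e \approx \alpha p^e/r \cdot r = \alpha p^e \cdot (1/r)\cdot r$… more carefully, $m_e(S,\tilde\Delta)/p^e \to \FA(S,\tilde\Delta) = \alpha/r$, so $I_e^\Delta(mL) = 0$ for $m$ up to roughly $(\alpha/r)p^e$, and by the symmetry also for $m$ from roughly $(1/r - \alpha/r)p^e$ up to $(1/r)p^e$. Summing $\dim_k H^0(mL)/I_e^\Delta(mL) = \dim_k H^0(mL)$ over those two symmetric ranges and dividing by $p^{e(d+1)}$, asymptotic Riemann–Roch (with $\vol(L) = r^{d}\vol(-K_X-\Delta) =: r^d\,V$ where $V = \vol(X,\Delta)$, so that $\vol(L)/(d+1)! \cdot (1/r)^{d+1}\cdot r = \tfrac{2V}{(d+1)!}$ after the factor of $r$ from $\s(X,\Delta) = r\s(S,\tilde\Delta)$) gives the left-hand term $\tfrac{2\alpha^{d+1}V}{(d+1)!}$ — I need the factor $2$ precisely because both symmetric intervals of length $\alpha p^e/r$ contribute.

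For the upper bound I apply \Cref{postivitylemma} (equivalently, the right inequality of \Cref{comparisonwithFsig}) with the optimal $C = 1/r$: this gives $\s(S,\tilde\Delta) \le \tfrac{\vol(L)}{(d+1)!}\big((1/r)^{d+1} - (1/r - \FA(S,\tilde\Delta))^{d+1}\big)$; multiplying by $r$ and substituting $\vol(L) = r^d V$, $\FA(S,\tilde\Delta) = \alpha/r$ yields $\tfrac{V}{(d+1)!}\big(1 - (1-\alpha)^{d+1}\big)$ — but this is weaker than the claimed bound, so I must instead re-run the argument of \Cref{postivitylemma} using the symmetry: the point is that an element $f \in I_{e_0}^\Delta$ of degree $n$ kills the graded pieces $S_{m-n}$ inside $S_m/I_{e_0}^\Delta(mL)$, and combined with the fact that the cokernel dimensions already vanish for $m$ in the top interval $[\tfrac{p^{e_0}-1}{r} - \tfrac{n}{r}\cdot(\text{stuff}), \tfrac{p^{e_0}-1}{r}]$ by duality, the effective support of $\dim S_m/I^\Delta_{e_0}(mL)$ shrinks to a band of width controlled by $\min(n/(p^{e_0}-1), \text{dist to center})$, doubling the subtracted volume. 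Carrying out the Riemann–Roch estimate over $0 \le m \le \tfrac{p^{e_0}-1}{2r}$ (using the midpoint symmetry to fold the full range in half, contributing the outer factor $2$) and letting $n/(p^{e_0}-1) \to \alpha/r$ produces $\tfrac{2V}{(d+1)!}\big((1/2)^{d+1} - (1/2-\alpha)^{d+1}\big)$ as claimed.

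The main obstacle I anticipate is bookkeeping the bounded-error terms coming from $\lceil(p^e-1)\Delta\rceil$ versus $\tfrac{p^e-1}{r}\,r\Delta$ and from $(1-p^e)K_X$ versus a multiple of $L$ — exactly the perturbation juggling done in the proof of \Cref{alphathm} with the auxiliary integers $n_1, n_2$. These contribute only $o(p^{e(d+1)})$ to every sum and hence disappear in the limit, but they must be tracked carefully (ideally by first reducing, via \Cref{continuityofFalpha} and the sandwiching $a_n\Delta \le \Delta \le b_n\Delta$ trick, to the case where $r\Delta$ is already $\ZZ$-Weil and $-K_X-\Delta$ is a genuine $\QQ$-multiple of a Cartier divisor). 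The other delicate point is justifying that the constant $C$ in the invocation of \Cref{postivitylemma} can be taken to be $1/r$ rather than something larger: this is precisely the statement that $H^0(mL) = I_e^\Delta(mL)$ whenever $mr \ge p^e - 1$, which follows because in that range $(1-p^e)K_X - \lceil(p^e-1)\Delta\rceil - mL$ has negative degree against the ample class and hence $\Hom_{\cO_X}(\Fe\cO_X(\lceil(p^e-1)\Delta\rceil + mL), \cO_X) = \Fe H^0(X, \cO_X((1-p^e)K_X - \lceil(p^e-1)\Delta\rceil - mL)) = 0$, forcing every section into $I_e^\Delta(mL)$.
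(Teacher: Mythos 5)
Your proposal is correct and takes essentially the same route as the paper: exploit the duality symmetry of \Cref{dualIe} (which halves the support of $m \mapsto \dim_k H^0(mL)/I_e^\Delta(mL)$ to $0 \le m \le \tfrac{p^e-1}{2r}$, with the vanishing past $\tfrac{p^e-1}{r}$ justified exactly as you indicate via negativity of $(1-p^e)K_X - \lceil(p^e-1)\Delta\rceil - mL$), then rerun both sides of the argument of \Cref{positivityofalpha} over this half-range. The paper packages the symmetry and support claim into a standalone formula (\Cref{Formulafors}, with \Cref{stoppingfreerank} as the supporting lemma) before substituting into the \Cref{postivitylemma} computation, whereas you invoke the symmetry inline; this is only an organizational difference. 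Two small slips to fix when writing it up: on the top interval $m \in [\tfrac{p^e-1}{r} - m_e,\ \tfrac{p^e-1}{r}]$ the cokernel dimension equals $\dim H^0(m'L)$ for the reflected index $m'$, not $\dim H^0(mL)$ (it is the \emph{sum} over the two symmetric ranges that doubles, not the pointwise value), and the displayed identity $\vol(L)/(d+1)!\cdot(1/r)^{d+1}\cdot r = 2V/(d+1)!$ is off by the factor of $2$ — which, as you correctly note in the following clause, comes from the two symmetric intervals and not from that arithmetic.
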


\begin{notation}
     The \emph{volume} $\vol(X, \Delta)$ of a log Fano pair $(X, \Delta)$ is the volume of the $\QQ$-Cartier
    divisor $-K_X - \Delta$. Note that since $-K_X - \Delta$ is assumed to be $\QQ$-ample, the volume is also equal to the top intersection number $(-K_X - \Delta)^{d}$, where $d = \dim(X)$.
\end{notation}

\begin{Cor} \label{alphavsscor}
     Let $(X, \Delta)$ be a globally $F$-regular log Fano pair of dimension $d>0$. Then,
      \begin{enumerate}
        \item We have
       $$ \s(X, \Delta) \leq \frac{\vol(X, \Delta)}{2^{d} (d+1)!}.$$
       
        \item Moreover, $\FA (X, \Delta)$ is equal to $ 1/2$ if and only if the value of the $F$-signature 
        $ \s(X, \Delta)$ is equal to 
        $$ \frac{\vol(X, \Delta)}{2^{d} (d+1)!}.$$
       
    \end{enumerate}
\end{Cor}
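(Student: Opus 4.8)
The plan is to deduce both statements directly from \Cref{alphavss} together with \Cref{alphathm}, with no new argument needed beyond elementary manipulation of the chain of inequalities in \eqref{alphavssequation}. Throughout, I will write $\alpha = \FA(X,\Delta)$ and use that $\vol(X,\Delta) = (-K_X-\Delta)^d > 0$, since $-K_X-\Delta$ is ample and $d>0$; this strict positivity is what allows the volume factor to be cancelled freely.

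For Part (1), I would substitute the bound $\alpha \le 1/2$ from \Cref{alphathm} into the right-hand inequality of \eqref{alphavssequation}. Since $\tfrac12 - \alpha \ge 0$ and $d+1 \ge 1$, we have $(\tfrac12 - \alpha)^{d+1} \ge 0$, hence
\[ \s(X,\Delta) \ \le\ \frac{2\big((\tfrac12)^{d+1} - (\tfrac12 - \alpha)^{d+1}\big)\,\vol(X,\Delta)}{(d+1)!} \ \le\ \frac{2\,(\tfrac12)^{d+1}\,\vol(X,\Delta)}{(d+1)!} \ =\ \frac{\vol(X,\Delta)}{2^{d}(d+1)!}, \]
which is exactly the asserted inequality.

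For Part (2), the reverse implication is immediate: if $\alpha = 1/2$ then both the left- and right-hand sides of \eqref{alphavssequation} collapse to $\tfrac{2(\tfrac12)^{d+1}\vol(X,\Delta)}{(d+1)!} = \tfrac{\vol(X,\Delta)}{2^{d}(d+1)!}$, so the sandwiched quantity $\s(X,\Delta)$ must equal this common value. For the forward implication, I would assume $\s(X,\Delta) = \tfrac{\vol(X,\Delta)}{2^{d}(d+1)!}$, feed this into the upper bound of \eqref{alphavssequation}, and cancel the positive factor $\tfrac{2\,\vol(X,\Delta)}{(d+1)!}$ to obtain $(\tfrac12)^{d+1} \le (\tfrac12)^{d+1} - (\tfrac12 - \alpha)^{d+1}$, i.e. $(\tfrac12 - \alpha)^{d+1} \le 0$. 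Since $\tfrac12 - \alpha \ge 0$ by \Cref{alphathm}, this gives $(\tfrac12 - \alpha)^{d+1} = 0$, hence $\alpha = 1/2$.

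Since the whole corollary is extracted mechanically from \eqref{alphavssequation}, there is no genuine obstacle at this stage; all the substance lives upstream in the proofs of \Cref{alphavss} and \Cref{alphathm}. The only point requiring a moment's care is to remember to invoke the bound $\alpha \le 1/2$, so that the sign-ambiguous inequality $(\tfrac12 - \alpha)^{d+1}\le 0$ can be upgraded to the genuine equality $\alpha = 1/2$ rather than left as a one-sided statement.
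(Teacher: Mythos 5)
Your proposal is correct and follows essentially the same route as the paper: both Part (1) and Part (2) are read off mechanically from the two-sided inequality in \Cref{alphavss} together with $\FA(X,\Delta)\le 1/2$ from \Cref{alphathm}. Your presentation of the forward implication in Part (2) is simply the direct version (cancel the positive volume factor and deduce $(\tfrac12-\alpha)^{d+1}=0$) of what the paper phrases as the contrapositive ($\alpha<\tfrac12 \Rightarrow \s<\vol/(2^d(d+1)!)$), so there is no substantive difference.
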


These results rely on a more refined formula for the $F$-signature than \Cref{formulaforFsigwithdelta} that holds for $\QQ$-Fano varieties and log Fano pairs, which we prove first:

\begin{Pn} \label{Formulafors}
    Let $(X, \Delta)$ be a globally $F$-regular log Fano pair over $k$. Assume that $H^0 (X, \cO_X) = k$. Let $r$ be a positive integer such that $r\Delta$ is $\ZZ$-Weil and $r(K_X+ \Delta)$ is Cartier. 
Then, the $F$-signature of $(X, \Delta)$ can be computed as
$$ \s(X, \Delta) = \lim _{e \to \infty } \frac{ 2r \,  \sum \limits _{m = 0} ^{\lfloor \frac{p^e - 1}{2r}\rfloor}  \dim_{k} \frac{H^{0}(-mr(K_{X}+ \Delta))}{\Ied(-mr(K_{X}+ \Delta))}}{ p^{e(\dim(X)+1)}},$$
where $\Ied$ denotes the subspace defined in \Cref{Iedfnwithdelta}.
\end{Pn}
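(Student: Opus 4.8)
The plan is to start from the formula for the $F$-signature in \Cref{formulaforFsigwithdelta} (applied to the orbifold cone of $(X,\Delta)$ with respect to $L := -r(K_X+\Delta)$, as in \Cref{def:Fsigoforbifoldcones}), namely
\[
\s(X,\Delta) \;=\; r\,\s(S,\tilde\Delta) \;=\; r\,\lim_{e\to\infty}\frac{1}{p^{e(d+1)}}\sum_{m=0}^{\infty}\dim_k\frac{H^0(X,mL)}{\Ied(mL)},
\]
since $H^0(X,\cO_X)=k$ makes the factor $[k':k]$ equal to $1$. The idea is then to fold the tail of this sum ($m$ beyond roughly $\tfrac{p^e-1}{2r}$) onto its initial segment by using the duality pairing of \Cref{dualIe}, so that each term with large $m$ is matched with a term with small $m$, and the whole sum becomes (asymptotically) $2r$ times the truncated sum over $0\le m\le\lfloor\tfrac{p^e-1}{2r}\rfloor$.

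Here is how I would carry it out. Fix $e\ge 1$. The key is to understand, for the divisor $D_m := mL = -mr(K_X+\Delta)$, what the "dual" divisor $D_m' := (1-p^e)K_X - \lceil(p^e-1)\Delta\rceil - D_m$ from \Cref{dualIe} is, up to effective error. Since $r\Delta$ is $\ZZ$-Weil and $r(K_X+\Delta)$ is Cartier, one computes $(1-p^e)K_X - \lceil(p^e-1)\Delta\rceil$ and finds it is linearly equivalent to $\big(\tfrac{p^e-1}{r} + O(1)\big)L$ modulo an effective Weil divisor whose support is controlled independently of $e$ (this is exactly the bookkeeping already carried out in the proof of \Cref{alphathm}, splitting $\lceil(p^e-1)\Delta\rceil$ as $\lfloor\tfrac{p^e-1}{r}\rfloor r\Delta$ plus an effective remainder, and absorbing the discrepancy between $p^e-1$ and the nearest multiple of $r$ into a fixed number $n_1$ of copies of $L$). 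Thus $D_m'$ is, up to a uniformly bounded effective correction, linearly equivalent to $(M_e - m)L$ where $M_e := \lfloor\tfrac{p^e-1}{r}\rfloor + O(1)$. By \Cref{dualIe}, $\dim_k\frac{H^0(mL)}{\Ied(mL)} = \dim_k\frac{H^0(D_m')}{\Ied(D_m')}$, and each such quotient is either $0$ or $1$-dimensional (it injects into $H^0(D_2)/\Ied(D_2)$, which is at most one-dimensional). So the sum $\sum_m \dim_k\frac{H^0(mL)}{\Ied(mL)}$ literally counts the number of $m$ for which this quotient is nonzero, and the involution $m \leftrightarrow M_e - m$ pairs these indices up; the "diagonal" $m \approx M_e/2 \approx \tfrac{p^e-1}{2r}$ contributes a bounded number of terms. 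Using \Cref{twistedFsig} (or \Cref{lem:addingeffective} directly) to absorb the bounded effective corrections and the $O(1)$ shifts at a cost of $O(p^{e(d-1)})$ per shifted term — which is $o(p^{ed})$ after summing over the $O(p^e)$ relevant values of $m$ — one gets
\[
\sum_{m=0}^{\infty}\dim_k\frac{H^0(mL)}{\Ied(mL)} \;=\; 2\sum_{m=0}^{\lfloor\frac{p^e-1}{2r}\rfloor}\dim_k\frac{H^0(mL)}{\Ied(mL)} \;+\; o(p^{ed}).
\]
Dividing by $p^{e(d+1)}$, multiplying by $r$, and taking $e\to\infty$ gives the claimed formula.

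The main obstacle is making the pairing argument genuinely symmetric and uniform: the divisor $D_m'$ is not exactly $(M_e-m)L$ but only linearly equivalent to it modulo an effective divisor that itself grows with $e$ (the remainder $E$ in $\lceil(p^e-1)\Delta\rceil$ has coefficients growing like $p^e$ along components of $\Delta$), so one cannot simply quote \Cref{twistedFsig} term-by-term. The fix is to handle the two inequalities separately as in the proof of \Cref{alphathm}: in one direction one throws in extra effective divisors to pass from $D_m'$ to a genuine multiple $m'L$ with $m' = \lfloor\tfrac{p^e-1}{r}\rfloor - m + n_1$ (using \Cref{lem:addingeffective} to get $\supseteq$ or $\subseteq$ on the $\Ied$ subspaces and hence inequalities on dimensions), and in the other direction one does the analogous thing with $\lceil\cdot\rceil$ in place of $\lfloor\cdot\rfloor$; the gap between the two bounds is $O(1)$ values of $m$, each contributing $O(p^{ed})$ to the sum, hence negligible after dividing by $p^{e(d+1)}$. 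The only other point requiring care is the cutoff of the sum at $m \sim C p^e$ coming from \cite[Theorem~4.9]{LeePandeFsignaturefunction} ($\Ied(mL) = H^0(mL)$ for $m$ large), which guarantees the sum is effectively finite and that the "tail" being folded in is exactly the range $\lfloor\tfrac{p^e-1}{2r}\rfloor < m \lesssim \tfrac{p^e-1}{r}$, beyond which all terms vanish.
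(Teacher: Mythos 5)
Your claim that each quotient $\dim_k\frac{H^0(mL)}{\Ied(mL)}$ is $0$ or $1$-dimensional, because ``it injects into $H^0(D_2)/\Ied(D_2)$'', is wrong. \Cref{dualIe} gives a non-degenerate pairing
$\frac{H^0(D)}{\Ied(D)} \times \frac{H^0(D_1)}{\Ied(D_1)} \to \frac{H^0(D_2)}{\Ied(D_2)}$
in which only the \emph{target} is at most one-dimensional; non-degeneracy into a line forces the two factors to be mutually dual, hence of equal dimension, but there is no injection of either factor into the target and no bound of $1$ on their dimension. In fact these quotients are typically of order $p^{e\dim X}$: if they were all $\leq 1$, the free rank $a_e^{\Delta}$ would be $O(p^e)$, forcing $\s(X,\Delta)=0$ and contradicting global $F$-regularity. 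Once this claim is removed, the ``counting'' interpretation of the sum and the involution-on-indices picture collapse, and the comparison has to be done quantitatively. (Your own later $O(p^{e(d-1)})$-per-term estimate via \Cref{twistedFsig} would be pointless if the quotients were $0/1$, so the proposal is also internally inconsistent on this point.)

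Your concern that the effective correction inside $\lceil(p^e-1)\Delta\rceil$ grows with $e$, so that \Cref{twistedFsig} seemingly cannot be quoted term by term, is well taken but misses the clean fix used in the paper: write $(1-p^e)K_X - \lceil(p^e-1)\Delta\rceil - mL = (\lfloor\tfrac{p^e-1}{r}\rfloor - m)L - D'_e$ where
$D'_e = (p^e-1-r\lfloor\tfrac{p^e-1}{r}\rfloor)K_X + \lceil(p^e-1)\Delta\rceil - \lfloor\tfrac{p^e-1}{r}\rfloor r\Delta$,
and observe that since $r\Delta$ is $\ZZ$-Weil and $p^e-1-r\lfloor\tfrac{p^e-1}{r}\rfloor$ lies in $\{0,\dots,r-1\}$, the divisor $D'_e$ ranges over only \emph{finitely many} Weil divisors as $e$ varies. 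Then \Cref{twistedFsig} applies for each of these finitely many $D'_e$ with a single uniform constant, and combined with \Cref{stoppingfreerank} (which gives the refined cutoff $m \le \tfrac{p^e-1}{r}+n$ rather than the coarse $m \le Cp^e$ you cite) the error estimate falls out directly. Your alternative ``two-sided inequality'' fix modeled on the proof of \Cref{alphathm} is not carried out, and you have not checked that the dimension comparisons point the right way in both directions; the finitely-many-$D'_e$ observation sidesteps all of this.
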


\begin{lem} \label{stoppingfreerank} Let $(X, \Delta)$ be a globally $F$-regular log Fano pair that is geometrically connected over $k$. Then, for any $r>0$ such that $r \Delta$ is $\ZZ$-Weil and $-r(K_X + \Delta)$ is Cartier, there exists an integer $n \geq 0$ (depending on $r$) such that for all $e \geq 1$, we have $H^{0}(-mr(K_{X}+ \Delta)) = \Ied(-mr(K_{X}+ \Delta)) $ whenever $m > \frac{p^e - 1}{r}+n$. \end{lem}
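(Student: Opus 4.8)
The plan is to translate the desired identity $\Ied(-mr(K_X+\Delta)) = H^0(X,\cO_X(-mr(K_X+\Delta)))$ into a vanishing statement for the space of sections of the Frobenius-dual divisor — by means of \Cref{dualIe} — and then to control the range of that vanishing uniformly in $e$ using the ampleness of $-r(K_X+\Delta)$.

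Write $L := -r(K_X+\Delta)$; by hypothesis this is an ample Cartier divisor, and since $r\Delta$ is $\ZZ$-Weil, $mL = -mr(K_X+\Delta)$ is a genuine $\ZZ$-Weil divisor for every $m \geq 0$. Fix $e \geq 1$ and $m \geq 1$ and set $D_1 := (1-p^e)K_X - \lceil (p^e-1)\Delta\rceil - mL$. Applying \Cref{dualIe} with $D = mL$ gives
\[ \dim_k \frac{H^0(X,\cO_X(mL))}{\Ied(mL)} \;=\; \dim_k \frac{H^0(X,\cO_X(D_1))}{\Ied(D_1)} \;\leq\; \dim_k H^0(X,\cO_X(D_1)), \]
so it suffices to produce an integer $n \geq 0$, depending only on $r$, with $H^0(X,\cO_X(D_1)) = 0$ whenever $m > \tfrac{p^e-1}{r}+n$.

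The next step is to compute $D_1$ explicitly. Writing $p^e - 1 = qr + s$ with $q = \lfloor (p^e-1)/r\rfloor \geq 0$ and $0 \leq s \leq r-1$, and using that $qr\Delta$ is integral (hence $\lceil (p^e-1)\Delta\rceil = qr\Delta + \lceil s\Delta\rceil$) together with $r(K_X+\Delta) = -L$, one finds
\begin{align*}
D_1 &= (1-p^e)K_X - \lceil (p^e-1)\Delta\rceil - mL \\
&= \bigl((m-q)r - s\bigr)K_X + (m-q)r\Delta - \lceil s\Delta\rceil \\
&= -(m-q)L + G_s, \qquad G_s := -sK_X - \lceil s\Delta\rceil .
\end{align*}
Since $s$ lies in the fixed finite set $\{0,1,\dots,r-1\}$, the sheaf $\cG := \bigoplus_{s=0}^{r-1}\cO_X(G_s)$ is a fixed coherent sheaf, independent of $e$; because $L$ is ample, the standard fact that a coherent sheaf twisted by a sufficiently negative power of an ample line bundle has no nonzero global sections (reduce to the very ample case and use that $\Gamma_*$ of a coherent sheaf on a projective space is a finitely generated, hence bounded-below, graded module) provides an integer $n \geq 0$ such that $H^0\bigl(X,\cG \otimes \cO_X(-jL)\bigr) = 0$ for all $j > n$. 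Now if $m > \tfrac{p^e-1}{r}+n$ then $m - q > n$ (as $q \leq \tfrac{p^e-1}{r}$), hence $H^0(X,\cO_X(D_1)) = H^0\bigl(X,\cO_X(G_s)\otimes\cO_X(-(m-q)L)\bigr) = 0$; together with the displayed inequality this forces $\Ied(-mr(K_X+\Delta)) = H^0(X,\cO_X(-mr(K_X+\Delta)))$, which is the claim.

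No step here is hard — the crux is simply having \Cref{dualIe} available so that the statement about splitting ideals becomes a statement about vanishing of $H^0$. The one point that must be handled carefully is the uniformity of $n$ in $e$: this is why one works with a fixed $r$ for which $r\Delta$ is integral, so that the remainder $s$ in the division of $p^e-1$ by $r$ is confined to the fixed set $\{0,\dots,r-1\}$ and the auxiliary sheaf $\cG$ does not depend on $e$. (This uniform, linear-in-$p^e$ stopping bound is the refinement of the cruder estimate $I_e(mL) = H^0(mL)$ for $m \geq C p^e$ that was used in the proof of \Cref{positivityofalpha}.)
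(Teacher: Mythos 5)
Your proof is correct and follows essentially the same route as the paper's: reduce, via Frobenius duality, to showing that $H^0\bigl(X,(1-p^e)K_X - \lceil(p^e-1)\Delta\rceil - mL\bigr)$ vanishes, and then obtain the uniform bound from the ampleness of $L$ together with the observation that the residual contribution depends only on $s = (p^e-1) \bmod r$, which ranges over the fixed set $\{0,\dots,r-1\}$. The only cosmetic differences are that you route through \Cref{dualIe} (valid here because $X$ is geometrically connected) whereas the paper cites the duality isomorphism \Cref{dualityiso} directly, and that your explicit decomposition $D_1 = -(m-q)L + G_s$ with the auxiliary sheaf $\cG = \bigoplus_s \cO_X(G_s)$ is a slightly tidier packaging of the same effectivity estimate the paper makes via the choice of $n$ with $H^0(X,\cO_X(sK_X + nL)) \neq 0$ for $0 \leq s \leq r$.
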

\begin{proof} 
The argument is similar to the proof of \Cref{alphathm}. Let $L$ denote the Cartier divisor $-rK_X - r\Delta$, and $n$ be such that $H^0 (X, \cO_X(sK_X + nL)) \neq 0$ for all $0 \leq s \leq r$. Let $m > \frac{p^e -1}{r} + n$. 

By definition of the subspace $\Ied$ (\Cref{Iedfnwithdelta}), it is sufficient to show that there are no non-zero maps $\phi: \Fe( \cO_X(\lceil (p^e -1) \Delta \rceil + mL)) \to \cO_X$. By duality for the Frobenius map (\Cref{dualityiso}), we have 
\[ \Hom_{\cO_X}( \Fe (\cO_X(\lceil (p^ e -1 ) \Delta \rceil + mL)) , \cO_X) \isom H^0 (X, (1-p^e)K_X - mL - \lceil (p^e -1 ) \Delta \rceil ).\]
But we see that $\lceil (p^e -1) \Delta \rceil \geq \lfloor \frac{p^e -1}{r} \rfloor r \Delta $. And by the choice of $n$, we can choose an effective divisor $E$ such that $(1- p^e)K_X = -r\lfloor  \frac{p^e -1 }{r} \rfloor K_X - E + nL$. Therefore, we have
\[ (1-p^e)K_X - mL - \lceil (p^e -1 ) \Delta \rceil \leq (\lfloor \frac{p^e -1 }{r} \rfloor -m + n)L . \]

Finally, by the assumption on $m$, we have $\lfloor \frac{p^e -1}{r} \rfloor + n - m < 0$, and since $L$ is ample, this implies that $H^0 (X, \cO_X((\lfloor \frac{p^e -1 }{r} \rfloor + n - m)L)) = 0 $ as required. Hence, there are no non-zero maps $\phi: \Fe \cO_{X}(\lceil (p^e -1 )\Delta \rceil +mL) \to \cO_X$, which proves the lemma.
\end{proof}

\begin{proof}[\textbf{Proof of \Cref{Formulafors}}]
Let $(S, \tilde{\Delta})$ denote the cone over $(X, \Delta)$ with respect to $L := -r(K_X+ \Delta)$. Fix an $e >0$ and let $a_{e} ^\Delta$ denote the $\Delta$-free rank of $\Fe S$ as an $S$-module (\Cref{frrkkdfn}). Recall that by \Cref{formulaforFsigwithdelta}, we have
\begin{equation} \label{Fanofreerankformula}
    a_{e} ^\Delta = \sum _{m = 0} ^{\infty} \dim_{k} \frac{H^{0}(mL)}{\Ied(mL)}
\end{equation}
so that
\[ \s(X, \Delta) = r \, \lim _{e \to \infty} \frac{a_e ^\Delta}{p^{e (\dim(X) + 1)}}  .\]
Then, by \Cref{stoppingfreerank} there is an $n>0$ such that for any $e \geq 1$, the terms of the sum in \Cref{Fanofreerankformula} are zero for $m > \frac{p^e - 1}{r} +n$. For $e \geq 1$, let $D_e $ denote the effective divisor $\lceil (p^e -1) \Delta \rceil$. Therefore, by \Cref{dualIe}, we have
$$ \sum _{m = \lceil \frac{p^e -1}{2r}\rceil} ^{\lfloor \frac{p^e -1}{r} \rfloor} \dim_{k} \frac{H^{0}(mL)}{\Ied(mL)} = \sum _{m = \lceil \frac{p^e -1}{2r}\rceil} ^{\lfloor \frac{p^e -1}{r} \rfloor} \dim_{k} \frac{H^{0}((1-p^e) K_{X} - D_e - mL)}{\Ied((1- p^e)K_{X} - D_e -mL)}.$$
Next, for each $e \geq 1$, we may write $(1- p^e) K_X - D_e - mL = (\lfloor \frac{p^e -1}{r} \rfloor - m) L - D'_e$ for some Weil-divisor $D'_e$.
Moreover, we observe that as $e$ varies, the divisor $D'_e$ varies over only finitely many different Weil-divisors. This is clear by writing
\[ D'_e =  (p^e - 1 - r \lfloor \frac{p^e -1}{r} \rfloor )K_X + \lceil (p^e -1) \Delta \rceil  -  \lfloor \frac{p^e -1}{r} \rfloor r \Delta .\] 
Thus, using \Cref{twistedFsig}, we see that there is a constant $C>0 $ such that
\[ \left| \frac{H^{0}(nL- D'_e)}{\Ied(nL-D'_e)} - \frac{H^{0}(nL)}{\Ied(nL)} \right| < C p^{e (\dim(X) -1)} \]
for all $n \geq 1$ and all $e \geq 1$.

Thus, there exists a postive constant $C' > 0$ such that for all $e \geq 1$,
\[ \left | a_e ^\Delta - \sum _{m = 0} ^{\lfloor \frac{p^e -1}{2r} \rfloor} \dim_{k} \frac{H^{0}(mL)}{\Ied(mL)}  - \sum _{m = \lceil \frac{p^e -1}{2r} \rceil } ^{\lfloor \frac{p^e -1}{r} \rfloor} \dim_{k} \frac{H^{0}((\lfloor \frac{p^e -1}{r} \rfloor -m)L)}{\Ied((\lfloor \frac{p^e -1}{r} \rfloor -m)L)} \right | < C' p^{e\dim(X)}. \]

Finally, this implies that
\[ \left| a _e ^\Delta -  2 \sum \limits _{m = 0} ^{\lfloor \frac{p^e - 1}{2r}\rfloor}  \dim_{k} \frac{H^{0}(mL)}{\Ied(mL)} \right| < C'p^{e\dim (X)}. \]
The proof is now complete since the right hand side limits to zero when divided by $p^{e (\dim(X) +1)}$ and as $e \to \infty$. 
\end{proof}

\begin{proof}[\textbf{Proof of \Cref{alphavss}}]
     The proof of this Theorem is similar to the proof of \Cref{comparisonwithFsig} in \Cref{positivityofalpha} (see the proof of \Cref{postivitylemma}), once we replace the formula from  \Cref{formulaforFsigwithdelta} with the formula from \Cref{Formulafors} to compute the $F$-signature. Let $r$ be a positive integer such that $r\Delta$ is $\ZZ$-Weil and $L := r(K_X+ \Delta)$ is Cartier. Let us denote $S:= S(X, L)$, the section ring of $X$ with respect to $L$ and $ \tilde{\Delta}$, the cone over $\Delta$ with respect to $L$.  If we pick a homogeneous element $f \in S$ of degree $n$ such that $\fpt (S, \tilde{\Delta}; f) < \frac{1}{p^{e_0} -1} < \frac{\lambda}{n}$ for a real number $\lambda > 0$, then the argument of \Cref{postivitylemma} gives us that 
     \[
  \dim_{k} \frac{S_m}{I_{e_{0}}^\Delta (mL)} \leq \dim_{k} S_m - \dim_k   S_{m-n} \]
 for all $m \geq n$. Similarly, setting $v_{r} = \frac{p^{re_{0}} -1}{p^{e_0 } - 1} $ for any integer $r$, we have that $\fpt(S, \tilde{\Delta}; f^{v_r}) < \frac{1}{p^{re_0} -1}$, and so $f^{v_r} $ belongs to  $I_{re_{0}} ^{\Delta}(n v_{r} L)$. Therefore, we have
\begin{equation} 
  \dim_{k} \frac{S_m}{I_{ne_{0}} ^\Delta(mL)} \leq \dim_{k} S_m - \dim_{k} S_{m-nv_{r}}  
\end{equation}
for all $m \geq n v_{r}$. Then, using \Cref{Formulafors} we may compute the $F$-signature $\s(S, \tilde{\Delta})$ as the limit (denoting the dimension of $X$ by $d$):
$$ \s(X, \Delta) = \lim _{e \to \infty } \frac{ 2r \,  \sum \limits _{m = 0} ^{\lfloor \frac{p^e - 1}{2r}\rfloor}  \dim_{k} \frac{H^{0}(-mL}{\Ied(-mL)}}{ p^{e(d+ 1)}} \leq \Bigg( \lim _{r \to \infty } \frac{2r \,  \sum \limits _{m = 0} ^{\lfloor \frac{p^e - 1}{2r}\rfloor}  \dim_{k} S_m}{p^{re_{0} (d+1)}} - \frac{ 2r \,  \sum \limits _{m = nv_{r}} ^{\lfloor \frac{p^e - 1}{2r}\rfloor}  \dim_{k} S_{m- n v_{r}}}{p^{re_{0} (d+1)}} \Bigg).$$

Computing the dimensions using the asymptotic Riemann-Roch formula, we obtain
\[\s(X, \Delta) \leq \frac{2r \, \vol(L)}{ (2r)^{d+1} (d+1)!} \big( 1 - (1 - \frac{2rn}{p^{e_{0}} -1}) ^{d+1} \big).  \]
Note that $\vol(L) = r^d \vol((-K_X - \Delta)) $, and so the above formula can be simplified to
\begin{equation} \label{eqn:finalsinequality} \s(X, \Delta) \leq \frac{ \, \vol(-(K_X + \Delta))}{ (2^d \,  (d+1)!} \big( 1 - (1 - \frac{2rn}{p^{e_{0}} -1}) ^{d+1} \big) \leq \frac{ \, \vol(-(K_X + \Delta))}{ (2^d \,  (d+1)!} \big( 1 - (1 - 2r \lambda) ^{d+1} \big). \end{equation}
Now, we may pick a sequence of functions $f_t$ of degree $n_t >0$ such that
\[ \lim_{t \to \infty} (n_t \, \fpt(S, \tilde{\Delta}; f_t)) = \frac{1}{r} \, \FA(X, \Delta). \] 
By replacing $f_t$ by suitable powers if necessary, we may also assume that $\fpt(S, \tilde{\Delta}; f_t) \leq \frac{1}{p^{e_t -1}}$ for some $e_t >0$ such that $\lim_{t \to \infty} e_t = \infty$. Therefore, combining \Cref{eqn:finalsinequality} with the fact that $\lim_{t \to \infty} \frac{n_t}{p^{e_t -1}} = \frac{\FA(X, \Delta)}{r}$ we obtain the desired right inequality in \Cref{alphavsscor}.

The proof of the left inequality is exactly the same as in \Cref{positivityofalpha}, so we omit the details here.
\end{proof}

\begin{proof}[Proof of \Cref{alphavsscor}]
    Part (1) follows immediately from the right-hand inequality in \Cref{alphavss}, since we know that $\FA(X, \Delta) \leq \frac{1}{2}$ by \Cref{alphathm}. We also see that if $\FA(X, \Delta) < \frac{1}{2}$, we must have
    \[ \s(X, \Delta) < \frac{\vol(X, \Delta)}{2^{d} (d+1)!}. \] Thus, Part (2) also follows from \Cref{alphavssequation} once we note that when $\FA(X, \Delta) = \frac{1}{2}$, both sides of the inequality in \Cref{alphavssequation} are equal to $ \frac{\vol(X, \Delta)}{2^{d} (d+1)!} $.
    \end{proof}

\begin{rem}
    A special case of Part (1) of \Cref{alphavsscor}, when $-K_X$ is Cartier (so that $S(X, -K_X)$ is a graded Gorenstein ring) was proved in \cite{SannaiWatanabeFsignatureofGorensteinrings}.
    Similarly, a local version was proved for $3$-dimensional Gorenstein rings in \cite[Theorem~4.15]{JNSWYboundsonhilbertkunzandFsignature}. In this spirit, part (1) of \Cref{alphavsscor} can be viewed as a vast generalization of these results to section rings of $\QQ$-Fano varieties and log Fano pairs. Moreover, in practice, estimates on the $\FA$-invariant provide a natural strengthening of the bounds from \cite{SannaiWatanabeFsignatureofGorensteinrings} and \cite{JNSWYboundsonhilbertkunzandFsignature} via \Cref{alphavss}. Furthermore, when combined with \Cref{Prop:Addingeffective} and \Cref{positivityofalpha}, we may obtain upper bounds on the $F$-signature for section rings that are not Gorenstein as well.
\end{rem}

\subsection{Analogy with the complex $\alpha$-invariant.}

Now we explain why the $\FA$-invariant can be considered to be a ``Frobenius-version" of the complex $\alpha$-invariant. We refer to \cite{BlumJonssonDeltaInvariant} for the details.

Let $X$ be a $\QQ$-Fano variety over $\CC$. This means that $X$ is a normal variety, $-K_X$ is $\QQ$-Cartier and ample and $X$ has only klt singularities.

\begin{dfn} (\cite[Theorem A.3]{CheltsovShramovDemaillyalphainvariant}, \cite{TianAlphaDefinition})  \label{complexalphadfn}
    The $\alpha$-invariant of $X$, denoted by $\alpha (X)$ is defined as
\[ \alpha (X) := \inf \{ \, \text{lct}(X, D) \, | \, \text{$D$ - effective $\QQ$-divisor on $X$ with } D \sim_\QQ -K_X \}. \]
\end{dfn}
Here, $\text{lct}(X,D)$ is the \emph{log canonical threshold}, a numeric invariant that measures the log canonicity (or log terminality) of the pair $(X, D)$. This invariant is closely related to the $F$-pure threshold (\Cref{Fptdfn}) considered in this paper in the following sense: It follows from \cite{HaraYoshidaGeneralizationOfTightClosure} that if $(R, \fm)$ is a normal, $\QQ$-Gorenstein local ring, essentially of finite type over $\CC$, and if $(Y = \Spec(R), D)$ is log-pair (i.e., $K_Y + D$ is $\QQ$-Cartier), then we have
\begin{equation} \label{fptapproacheslct} \lim_{p \to \infty} \text{fpt}_{\fm_p}(Y_p, D_p) = \text{lct}_\fm (Y,D) \end{equation}
where $(Y_p, D_p)$ denotes the reduction to characteristic $p \gg 0$ (via a model over a finitely generated $\ZZ$-algebra) of $(Y,D)$. In this sense, the $F$-pure threshold is a Frobenius-analog of the log canonical threshold. Moreover, the $F$-pure threshold satisfies many of the formal properties of the log canonical threshold.

However, while the log canonical threshold in the definition of the $\alpha_\CC$-invariant is computed on $X$, the definition of the $\FA$-invariant in \Cref{FASdfn} involves the $F$-pure threshold on the cone over $X$. This is justified as follows:

\begin{rem} \label{comparisontoCremark}
     Let $r$ be such that $-rK_X$ is Cartier, and $S = S(X, -rK_X)$ be the corresponding section ring. Then, for any effective $\QQ$-divisor $D$ on $X$ with $D \sim _\QQ  -K_X$, we have
    \[  \text{lct}_\fm (S, D_S) = \min \, \{ \, \text{lct}(X, D), \, \frac{1}{r} \,  \}  \]
    where lct denotes the log canonical threshold and $\Delta_S$ denotes the cone over $\Delta$. This follows from \cite[Lemma 3.1]{KollarKovacsSingularitiesBook}. Thus, if we let 
    \[ \tilde{\alpha} (X) = r \, \inf \,  \{ \text{lct}_\fm (S , \Delta_S) \, | \, \Delta \geq 0 \, \text{is a $\QQ$-divisor on $X$ such that $\Delta \sim _\QQ -K_X$} \, \}, \]
    then, we have that
    \[  \tilde{\alpha}(X) = \min \, \{ \alpha(X), \, 1 \}.\]
\end{rem}
Therefore, for any $\QQ$-Fano variety $X$, the $\FA$-invariant (\Cref{FAXdfn}) can be considered to be the Frobenius-version of the quantity $  \min \, \{ \alpha(X), \, 1 \}$. Thus, at least when $\alpha(X) \leq 1$, we may expect that the $\FA$-invariant and the complex $\alpha$-invariant are related by the reduction modulo $p$ process thanks to \Cref{fptapproacheslct}. However, \Cref{alphathm} tells us that this doesn't have to be the case.

\begin{rem}
    The complex $\alpha$-invariant of the cubic surface defined by $x^3 + y^3 + z^3 + w^3$ is equal to $2/3$ (see \cite[Theorem 1.7]{CheltsovAlphaofDelPezzo}). But by \Cref{alphathm}, in every characteristic $p \geq 5$, we have $\FA(X_p) \leq 1/2$, where $X_p$ denotes the same cubic surface now considered over $\FF_p$ (or $\overline{\FF}_p$). This example points to the limitations of approximating the log canonical threshold by $F$-pure threshold for an unbounded family of divisors on $X$. Also see \Cref{cubicsurfaceeg} for more on the $\FA$-invariant of this cubic surface.
\end{rem}

\subsection{The $\FA$-invariant of toric Fano varieties}

Now we prove that for toric $\QQ$-Fano varieties, the $\FA$-invariant (in any characteristic) is the same as the complex $\alpha$-invariant.

Let $k$ denote an algebraically closed field of prime characteristic $p> 0$. Fix a lattice $N \isom \ZZ^d$ and let $M$ be the dual lattice (where $d$ is some positive integer).

\begin{thm} \label{Toricalphathm}
    Let $X_{p}$ be a $\QQ$-Fano toric variety over $k$ defined by a fan $\cF$ in $N$. Let $X_{\CC}$ be the corresponding complex toric variety (which is also automatically $\QQ$-Fano). Then, we have
    $$ \FA(X_{p}) = \alpha (X_{\CC}). $$ 
\end{thm}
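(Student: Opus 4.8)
The plan is to compute both sides combinatorially in terms of the fan $\cF$ and the polytope $P = P_{-K}$ associated to the anticanonical divisor, and to check they agree. On the complex side, the $\alpha$-invariant of a toric Fano variety has a well-known combinatorial description: by work on toric log canonical thresholds (and the torus-invariance of lct, which reduces the infimum over all effective anticanonical $\QQ$-divisors to the infimum over torus-invariant ones), $\alpha(X_\CC)$ equals the reciprocal of the maximal ``lattice width-type'' quantity of $P$ relative to the origin — concretely, $\alpha(X_\CC) = \min_{v} \mathrm{lct}$ achieved on the torus-fixed boundary divisors, which works out to a ratio determined by where the vertices of $P$ and the rays of $\cF$ sit. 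I would first recall this description precisely (citing the relevant toric lct computation), so that the target value $\alpha(X_\CC)$ is an explicit rational number read off from $\cF$.

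Next I would compute $\FA(X_p)$ using the characterization in \Cref{alternatecharofalpha} together with the explicit combinatorics of Frobenius splittings on toric varieties. The key point is that for a toric variety, the section ring $S$ is a normal semigroup ring, and the splitting subspaces $I_e(mL)$ have a completely explicit monomial description: a lattice point $u$ in $mP$ (viewed as a monomial in $H^0(X_p, mL)$) fails to lie in $I_e(mL)$ exactly when the corresponding monomial ``splits from $F^e_*$'', which by the standard toric Frobenius-splitting criterion happens iff $p^e$ times a suitable translate of $u$ lands in the interior of the $p^e$-dilated region determined by $-K_X$ — i.e. iff $u$ (normalized) lies in the open polytope $\mathrm{int}((p^e-1) P^\circ)$ or a closely related set. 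Thus $m_e$, the largest $m$ with $I_e(mL) = 0$, is governed by the largest dilation $mP$ that contains no such interior lattice point, which as $e \to \infty$ and after dividing by $p^e$ converges to exactly the combinatorial quantity appearing in $\alpha(X_\CC)$. Then \Cref{finitedegreeapprox} gives $\FA(X_p) = \lim_e m_e/p^e$, matching the complex value.

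The main obstacle, and the step needing the most care, is making the toric Frobenius-splitting bookkeeping match the complex lct computation on the nose, including the normalization by the index $r$ (since $-K_X$ need only be $\QQ$-Cartier for a toric Fano). Concretely I must: (i) identify $H^0(X_p, -mr(K_X))$ with the lattice points of $mrP$; (ii) show the monomial $x^u$ lies in $I_e(-mr K_X)$ iff $u$, suitably recentered, is \emph{not} in the interior of $(p^e - 1) \cdot (rP)$, using the classical description of $\Hom_{\cO_X}(F^e_*\cO_X((1-p^e)K_X), \cO_X)$ as spanned by a monomial trace together with the duality of \Cref{section:duality}; and (iii) take the limit and reconcile with the combinatorial formula for $\alpha(X_\CC)$. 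A secondary technical point is that the infimum in \Cref{complexalphadfn} ranges over \emph{all} effective anticanonical $\QQ$-divisors while the $\FA$-side computation naturally sees only the ``monomial'' directions; on the Frobenius side this is automatic from \Cref{alternatecharofalpha} (which already quantifies over all homogeneous $f$), and on the complex side it is the standard reduction of toric lct to invariant divisors, so I would invoke both rather than reprove them. Since all the relevant polytope geometry is characteristic-independent, once (i)–(iii) are in place the equality $\FA(X_p) = \alpha(X_\CC)$ — and in particular its independence of $p$ — falls out immediately.
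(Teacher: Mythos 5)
Your proposal is correct in spirit but takes a genuinely different route from the paper, and it is worth noting where the two diverge. The paper modularizes the argument into two black boxes plus one new lemma: (i) Blum--Jonsson's combinatorial formula $\alpha(X_\CC) = \min_u r\,\mathrm{lct}(X_\CC, D_u)$ reducing the complex side to torus-invariant divisors; (ii) Blickle's theorem that $\fpt = \mathrm{lct}$ for monomial divisors on a toric ring; and (iii) a new degeneration lemma (\Cref{fptofinitialtermlemma}) showing $\fpt(S,f) \geq \fpt(S, \mathrm{in}_>(f))$, which reduces the infimum in \Cref{FASdfn} to monomials. You instead propose to compute $m_e$ directly via explicit polytope geometry — identify which lattice points in $mrP$ lie in $I_e$ via the monomial trace map — and pass to the limit via \Cref{finitedegreeapprox}. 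That works, but it is essentially reproving the content of Blickle's theorem by hand inside the bookkeeping; the paper's route hands this off as a citation. Your version also needs an independent derivation of the polytope formula for $\alpha(X_\CC)$ and a careful match, whereas the paper gets the matching for free from the $\fpt=\mathrm{lct}$ equality.

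One loose statement in your sketch deserves a warning: you say that the restriction to monomial directions on the Frobenius side is ``automatic from \Cref{alternatecharofalpha}.'' It is not automatic from that characterization alone, which only says $\FA$ is detected by the condition $I_e(mL) = 0$; that condition refers to \emph{all} degree-$m$ elements. What makes it ``automatic'' is the further fact that $I_e$ is a monomial ideal in the toric semigroup ring — which is exactly what the paper's \Cref{fptofinitialtermlemma} leans on, and which you do establish elsewhere in your sketch (via the monomial trace map and duality). So the logic holds, but the load-bearing ingredient should be flagged: the monomiality of $I_e$ is the step that replaces the paper's initial-term degeneration lemma, not \Cref{alternatecharofalpha} itself.
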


\begin{proof}
Let $v_1, \dots, v_n$ denote the primitive generators for the one dimensional cones in $\cF$ and write $-K_X = \sum_i b_i v_i$ for rational numbers $b_i$.

    First we choose an $r > 0$ such that $rb_i \in \ZZ$ for each $i$ and the section ring $S(X, -rK_X)$ is generated in degree one. Let $P \subset M $ denote the polytope associated to $-rK_X$, and  defined by:
    \[ P = \{ u \in M_\RR = M \otimes_\ZZ \RR \, | \, \langle u, v_i \rangle \geq -b_i \, \, \text{for all } 1 \leq i \leq n \}. \]
    Since we are assuming that $S(X, -rK_X)$ is generated in degree $1$, the vertices of $P$ are lattice points of $M$. For any $u \in P \cap M$, let $D_u$ be the corresponding effective divisor in the linear system $|-rK_X|$. By \cite[Corollary 7.16]{BlumJonssonDeltaInvariant}, we have that
    \begin{equation} \label{alphahomogeneouslct} \alpha(X_\CC) =  \min_{u \in P \cap M} \, r\, \text{lct}(X_\CC, D_u) \end{equation}
    where $\text{lct}(X_\CC, -)$ denotes the log canonical threshold of a divisor on $X_\CC$. Note that since the vertices of $P$ are lattice points, just the vertices are sufficient to compute $\alpha (X_\CC)$.
    
    Let $\tilde{P}$ denote the polytope $P \times \{1\} \subset M \times \ZZ $. Then, the section ring $S(X, -rK_X)$ is the semigroup ring associated to the cone over $\tilde{P}$ in $(M \times \ZZ) \otimes_\ZZ \RR$. Note that $S$ is $\QQ$-Gorenstein. Therefore, by \cite[Theorem 3]{BlickleMultiplierIdealsAndModulesOnToric}, we see that for any $\tilde{u} \in \tilde{P} \cap (M \times \ZZ)$, we have 
    \begin{equation} \label{fpt=lct} \fpt(S(X_p, - K_{X_p}), D_{\tilde{u}}) = \text{lct}_\fm (S(X_\CC, -K_{X_\CC}), D_{\tilde{u}}). \end{equation}

    Next, note that since $X$ is a normal toric variety, it is automatically globally $F$-regular. Now we prove that the $\FA$-invariant of $X_p$ can also be computed by only considering the torus invariant divisors. To see this, let $S = S(X_p, -rK_{X_p})$ and let $f \in S $ be a non-zero homogeneous element. Then, following the discussion in  \cite[Section 7.4]{BlumJonssonDeltaInvariant} and \cite[Theorem 15.17]{EisenbudCommutativeAlgebraWithAView}, there exists an integral weight vector $\mu = (\mu_1, \dots, \mu_{d+1})$ with $\mu_i \in \ZZ_{>0}$ such that $\text{in}_{>_\mu} (f) = \text{in}_{>} (f)$. Here $>_\mu$ denotes the weight monomial order with respect to $\mu$ and $>$ denotes the graded lexicographic monomial order on $S$. Then,  we have a flat degeneration of $f$ to its initial term. In other words, if $f = \sum _u \beta_u \chi^u$ for monomials $\chi^u \in S$, then setting $w = \max \{\langle \mu ,  u \rangle\, | \, \beta_u \neq 0 \} $, the element 
    \[\tilde{f} = t^w \, \sum _u \beta_u t^{ -\langle \mu, 
    u \rangle} \chi^u \in S[t] \]
    satisfies the following properties:
    \begin{itemize}
        \item Viewing $ S[t]$ as a $k[t]$-algebra, the ring $  S[t]/ (\tilde{f})$ is a flat $k[t]$-module.
    \item The image of $\tilde{f}$ modulo $t$ is equal to $\text{in}_{>}(f)$, the initial term of $f$ with respect to the graded lex monomial order on $S$.
    \item   For any point $0 \neq \lambda \in k$, the image  $f_\lambda $ of $\tilde{f}$ in $S[t]/(t - \lambda)$ satisfies
        \[ \fpt(S, f_{\lambda}) = \fpt(S, f).  \]
    \end{itemize}
  With this construction in place, we conclude the proof of the theorem with the following lemma:
  \begin{lem} \label{fptofinitialtermlemma}
      For any non-zero homogeneous element $f$ of $S$, we have 
      \[ \fpt(S, f) \geq \fpt (S, \text{in}_> (f)) . \]
  \end{lem}

  Assuming this lemma for a moment, we see that
  \[ \FA(X_p) = \inf _{\tilde{u} \in \tilde{P} \cap (M \times \ZZ)}  \fpt (S, D_{\tilde{u}}). \]
Furthermore, by \Cref{fpt=lct}, we have
\begin{equation} \label{toricalphalct} \FA(X_p) = \inf _{\tilde{u} \in \tilde{P} \cap (M \times \ZZ)}  \text{lct}_\fm (S(X_\CC, -rK_{X_\CC}), D_{\tilde{u}}). \end{equation}
Since by \Cref{alphathm} we have $\FA(X_p) \leq 1/2$, we must have $\text{lct} (S(X_\CC, -rK_{X_\CC}), D_{\tilde{u}})  < \frac{1}{nr} $ for some $\tilde{u} =  (u, n) \in \tilde{P} \cap (M \times \ZZ)$. Note that $D_u$ corresponds to a torus-invariant divisor on $X_\CC$ linearly equivalent to $-nr K_{X_\CC}$.  Therefore, by \Cref{comparisontoCremark}, we have
\[ \text{lct} (X_\CC, D_u)  = \text{lct}_\fm (S(X_\CC, -rK_{X_\CC}), D_{\tilde{u}}) \]
for any $\tilde{u} = (u, n)$ such that $\text{lct} (S(X_\CC, -rK_{X_\CC}), D_{\tilde{u}})  < \frac{1}{nr}$.
Putting this together with \Cref{toricalphalct} and \Cref{alphahomogeneouslct}, we get that 
\[  \FA(X_p) = \alpha(X_\CC) \]
as required.
\end{proof}

Finally, it remains to prove \Cref{fptofinitialtermlemma}.

\begin{proof}[\textbf{Proof of \Cref{fptofinitialtermlemma}}]
     By \Cref{fptvssplitting}, it is sufficient to show that for all rational numbers of the form $\frac{a}{p^e -1}$ such that 
     \[ \frac{a}{p^e -1} < \fpt (S, \text{in}_> (f)), \] the map 
     $S \to \Fe S$ sending $1$ to $ \Fe f^a$
     splits. Equivalently, for all such $\frac{a}{p^e -1}$, it suffices to show that $f^a  \notin I_e (S)$. Since the pair $(S,  \text{in}_> (f)^{\frac{a}{p^e -1}})$ is strongly $F$-regular, in particular it is sharply $F$-split. By \Cref{fptvssplitting} again, we know that $\text{in}_> (f^a) = (\text{in}_> (f))^a \notin I_e(S)$. Now, since $S$ is a toric ring, $I_e (S)$ is a monomial ideal of $S$. Therefore, if $\text{in}_> (f^a) \notin I_e(S)$, we also have $f^a \notin I_e(S)$ as required.
\end{proof}

\begin{rem}
    Combining \Cref{Toricalphathm} with \Cref{alphathm}, we recover the well-known fact that the $\alpha$-invariant of a toric Fano variety is at most 1/2 (see \cite[Corollary~3.6]{LiuZhuangSharpnessofTianscriterion}). 
\end{rem}

\section{Examples}

In this section, we compute some examples of the $\FA$-invariant for non-toric varieties and highlight some interesting features.

\subsection{Quadric hypersurfaces}

Fix any algebraically closed field $k$ of characteristic $p \neq 2$ and let $Q_d \subset \PP^{d+1}$ be the $d$-dimensional smooth quadric hypersurface over $k$. Note that by the adjunction formula, $-K_{Q_d}=  d H$ where $H$ denotes a hyperplane section.

\begin{eg} \label{quadriceg}
    Then, $\FA (Q_d) = \frac{1}{d}$. Equivalently, if $S$ denotes the section ring 
    \[ S:= S(Q_d, \cO_{Q_d} (1)) \isom k[x_0, \dots, x_{d+1}]/(x_0 ^2 + \dots x_{d+1} ^2), \]
    then $\FA(S) = \FA(Q_d, \cO_{Q_d}(1)) = 1$. This follows from a description of the structure of the sheaves $\Fe ( \cO_{Q_d} (m))$ proved in \cite{LangerFrobeniusPushforwardsonQuadrics} and \cite{AchingerFrobeniuspushforwardonQuadrics}. More precisely, for any $e \geq 1$ and $0 \leq m \leq p^e -1$, \cite[Theorem 2]{AchingerFrobeniuspushforwardonQuadrics} tells us that $\Fe(\cO_{Q_d} (m))$ is a direct sum of $\cO_{Q_d} (-t)$ and $\cS (-t)$ for $t \geq 0$, where $\cS$ is an ACM bundle that sits in an exact sequence of the form
    \[ 0 \to \cO_{\PP^{d+1}} (-2) ^{\oplus a} \to \cO_{\PP^{d+1}} (-1)  ^{\oplus b} \to i_* \cS \to 0 \]
    for suitable positive integers $a$ and $b$. Here $i: Q_d \hookrightarrow \PP^{d+1}$ is the inclusion. See \cite[Section 1.3]{AchingerFrobeniuspushforwardonQuadrics} for the details. Since $H^1 (\PP^{d+1}, \cO_{\PP^{d+1}}(-2)) = 0$, we deduce from the exact sequence above that $\cS(-t)$ has no global sections for any  $t \geq 0$. Therefore, all global sections of $\Fe(\cO_{Q_d} (m))$ appear in the trivial summands. In other words, $I_e (\cO_{Q_d}(m)) = 0$ for any $e \geq 1$ and any $m \leq p^e -1$. Moreover, since $0 \neq x_1 ^{p^e}\in I_e (S)$, we know that $m_e = p^e -1$. Therefore, by \Cref{finitedegreeapprox}, we have
    \[  \FA(S) = \FA(Q_d , \cO_{Q_d}(1) = \lim _{e \to \infty} \frac{p^e -1 }{p^e} = 1.\]
    We also note that as a consequence, $X = \PP^1 \times \PP^1$ is a del Pezzo surface with $\FA(X) = 1/2$ (since $-K_X = \cO_X(2)$). Thus, by \Cref{alphavsscor}, we immediately obtain that $\s(X) = \frac{1}{3}$. Of course, since $X$ is a toric surface, this $F$-signature can be computed in various different ways as well.
\end{eg}

\begin{rem}
    This example shows that the $\FA$-invariant does not characterize regularity of section rings, since the $\FA$-invariant of a polynomial ring is also equal to $1$.
\end{rem}

\begin{rem}
    Another interesting feature of this example is that the $\FA$-invariant of smooth quadrics is independent of the characteristic $p$ (for $p \neq 2$). This is far from true in general. Furthermore, for any $d > 2$, the $F$-signature of $Q_d$ is known to depend on $p$ in a rather complicated way (see \cite{TrivediHKDensitfunctionofquadrics}).
\end{rem}

\subsection{Full flag varieties.}
\begin{eg} \label{eg:fullflagvars} Let $k$ be an algebraically closed field of characteristic $p>0$ and let $X_n$ denote the full flag variety parametrizing complete flags in a fixed $n$-dimensional vector space $V$ over $k$. Recall that for each $n \geq 2$, $X_n$ is a smooth Fano variety of dimension $d := \frac{n(n-1)}{2}$. Moreover, $X_n$ is also known to be globally $F$-regular in every positive characteristic $p >0$ (see \cite{LauritzenRabenThomsenGlobalFRegularityOfSchuertVarieties}). We have that $\FA(X_n) = \frac{1}{2} $ for $n \geq 2$, and consequently by \Cref{alphavsscor}, we get that
\[ \s(X_n) = \frac{\vol(-K_{X_n})}{2^d (d+1)!} = \frac{1}{d+1}.\]

Recall that the Picard group of $X$ can be identified with the quotient $\ZZ^{n}/\langle (1, \dots, 1) \rangle$ such that the ample line bundles correspond to the positive dominant roots, which is defined as the set
\[ \text{Amp}_{X_n} = \{(\lambda_1, \dots, \lambda_n) \,| \, \lambda_{1} > \lambda_2> \dots >\lambda_n \}. \]
In this set, the anti-canonical line bundle $\omega_{X_n} ^{-1}$ corresponds to the root $(2n, 2n -2, \dots, 2)$. Let $L$ denote the line bundle corresponding to the root $(n, n-1, \dots, 1)$. Then, the Hilbert polynomial of $L$ is given by $H(m) = (m+1) ^d$ (see \cite[Section 2.1]{FakhruddinTrivediHilbertKunzMultiplicity}). Together with the Kempf vanishing theorem (which follows from global $F$-regularity and \Cref{vanishingforGFR}), we conclude that for each $m \geq 1$, we have $\dim_k (H^0 (X_n, L^m)) = (m+1) ^d \neq 0$. In particular, we have that $\vol (L) = d!$ and $\vol(-K_{X_n}) = 2^d \, d!. $

Now, to justify that $\FA(X_n) = 1/2$, by \Cref{alternatecharofalpha}, it is enough to show that for each $e \geq 1$, and $m \leq p^e -1$, we have $I_e (mL) = 0$ (where $I_e$ is the subspace defined in \Cref{Iedfnwithdelta}). Using \Cref{lem:addingeffective} and the fact that we can choose an effective divisor corresponding to each $mL$ (with $m>0$), it is enough to see that $I_e ((p^e -1) L) = 0$. But this follows from the definition of $I_e$ and the following result proved by Haboush (\cite[Corollary~2.2]{HaboushAShortProofOfKempf}): for each $ e \geq 1$, the sheaf $\Fe (L^{p^e -1})$ is free. In other words, we have an isomorphism
\[ \Fe (L^{p^e -1}) \isom \cO_{X_n} ^{p^{ed}}. \]
This key result is a consequence of the irreducibility of the Steinberg representations.

The value of the $F$-signature now follows immediately from \Cref{alphavsscor} and the computation that $\vol(-K_{X_n}) = 2^d \, d!$.
\end{eg}

\begin{rem}
    Using similar inputs from representation theory, the Hilbert-Kunz multiplicity of full flag varieites was computed by Fakhruddin and Trivedi in \cite{FakhruddinTrivediHilbertKunzMultiplicity}.
\end{rem}

\begin{rem}
    We thank Vijaylaxmi Trivedi for pointing us to Haboush's result and Claudiu Raicu for valuable conversations about this example.
\end{rem}

\subsection{Diagonal cubic surface}
Let $k= \FF_p$ for some prime number $p \geq 5$ and $X_p \subset \PP^3$ be the diagonal cubic surface defined by $x^3 + y^3 + z^3 + w^3 = 0$ over $k$.

\begin{eg} \label{cubicsurfaceeg}
    For each $p \geq 5$, we have $\FA(X_p) < \frac{1}{2}$. However,
    \[ \lim _{p \to \infty} \FA(X_p) = \frac{1}{2}. \]
    To see this, we using the following result proved by Caminata, Shideler, Tucker and Zerman in \cite[Theorem~7.1]{CaminataShidelerTuckerZermanFsignatureofdiagonalhypersurfaces} (see also \cite[Section 5]{ShidelerUGthesis}), building on the techniques of Han and Monsky: Let $\s_p$ denote the $F$-signature of $X_p$, equivalently, of the ring $\FF_p [x,y,z,w]/(x^3 + y^3 + z^3 + w^3)$. Then for any $p \geq 5$, writing $p = 3n+1 $ or $p = 3n+2$, we have
    \begin{equation} \label{equation:Fsigcubicsurface} \s_p = \frac{1}{8} - \frac{1}{4 (27n^2 + 27 n +8)}  . \end{equation}
    In particular, for all $p \geq 5$, we have $  \s_p < \frac{1}{8}$ .
    Moreover,
    \[ \lim _{p \to \infty} \s_p = \frac{1}{8}.\]
    Using this, our claims about the $\FA$-invariant of $X_p$ follow from \Cref{alphavss} and \Cref{alphavsscor}, once we observe that
    \[ \frac{\vol(-K_{X_p})}{2^2 \, 3!} = \frac{1}{8}. \]
\end{eg}

\begin{eg} \label{cubicsurfacecontd}
    Continuing with the previous example, let $S_p$ denote the polynomial ring $\FF_p [x,y,z,w]$ and $G$ denote the polynomial $x^3 + y^3 + z^3 + w^3$, and set $R_p = S_p / (G)$.
    Then, we may use the theory of the $\FA$-invariant to directly verify (at least for small primes $p$) that $\s_p < 1/8$. This can be done by showing that the ideal $I_1 = ((x^p, y^p, z^p, w^p) :_{S_p} (G^{p-1}))R$ contains a non-zero element of degree equal to $\frac{p-1}{2}$ (see \Cref{eg:hypersurfaceIe}). Indeed, by Part (2) of \Cref{alternatecharofalpha}, this implies that $\FA(X_p) < 1/2$, which in turn implies that $\s_p < 1/8$ by \Cref{alphavsscor}. For small values of $p$, we may compute this ideal using Macaulay2 (\cite{M2}) to see that
    \begin{enumerate}
        \item for $p = 5$, we have $x^2 \in I_1$,
        \item for $p =7$, we have $xyz \in I_1$,
        \item for $p = 11$, we have $x^2 (z^3 - w^3) \in I_1$.
        \item for $p = 31$, we have $xyw(z^{12} - 10 z^9 w^3+ 15z^6 w^6 - 4 z^3 w^9 + 12 w^{12}) \in I_1$.
    \end{enumerate}
    From these calculations, it seems reasonable to predict that $I_1$ always contains a non-zero element of degree $\frac{p-1}{2}$. But there does not seem to be a clear pattern in these elements as $p$ varies.
\begin{question}
    Does there exist a non-zero element of degree $\frac{p-1}{2}$ in $I_1$ of $R_p$ for each $p \geq 5$? Is there a geometric description of such elements (in terms of the divisors on the cubic surface)?
\end{question}

    In the other direction, using the bounds in \Cref{alphavss} and the formula in \Cref{equation:Fsigcubicsurface} we see that for \emph{each} $p \geq 5$, we have $\FA(X_p) \geq \frac{1}{3}$. This implies (by \Cref{alternatecharofalpha}) that the ideal $I_1$ does not contain any non-zero elements of degree less than $\frac{p-1}{3}$. We do not know any direct way to prove this without relying on the calculations from \cite{CaminataShidelerTuckerZermanFsignatureofdiagonalhypersurfaces} and the theory of the $\FA$-invariant.
    \end{eg}

\subsection{Further questions}
By analogy with the complex $\alpha$-invariant, we hope that the $\FA$-invariant may have extensive applications in the study of Fano varieties in positive characteristics. In this direction, we end by stating two important questions.

\begin{question}
    Is the infimum in the definition of the $\FA$-invariant (\Cref{FASdfn}) a minimum?
\end{question}

In the case of the complex $\alpha$-invariant, this question has an affirmative answer under the assumption that $\alpha(X) \leq 1$ thanks to the deep results of Birkar proving the BAB-conjecture (see \cite[Theorem 1.7]{BirkarBABConjecture}).

\begin{question}
    Suppose $V$ and $\alpha $ are fixed positive real numbers and $d$ be a fixed positive integer. Then, does the set of globally $F$-regular $\QQ$-Fano varieties satisfying $\FA(X) \geq \alpha$ and $\vol(-K_X) \geq V$ form a bounded family?
\end{question}

    This question is significant from the viewpoint of moduli theory of Fano varieties. The analog for complex Fano varieties was proved by Jiang in \cite{Jiangboundednesviaalphainvariant}, again relying on the resolution of the BAB conjecture by Birkar.

 \bibliographystyle{alpha}
    \bibliography{Main}
\end{document}